\def \RR {\mathbb{R}}
\def \NN {\mathbb{N}}
\def \ZZ {\mathbb{Z}}
\def \L {\mathcal{L}}
\def \N {\mathcal{N}}
\def\EE {\mathbb{E}}
\newtheorem{prop}{Proposition}[section]
\newtheorem{lem}[prop]{Lemma}
\newtheorem{cond}[prop]{Condition}
\newtheorem{defi}{Definition}[section]
\newtheorem{rem}[defi]{Remark}
\newcommand{\argmin}{\mathop{\mathrm{argmin}}}
\newcommand{\diag}{\mathop{\mathrm{diag}}}
\newcommand{\var}{\mathop{\mathrm{var}}}
\newcommand{\cov}{\mathop{\mathrm{cov}}}
\title{Asymptotic analysis of the role of spatial sampling for covariance parameter estimation of Gaussian processes}
\author[fb1,fb2]{Fran\c cois Bachoc\corref{cor1}}
\address[fb1]{CEA-Saclay, DEN, DM2S, STMF, LGLS, F-91191 Gif-Sur-Yvette, France \\}
\address[fb2]{Laboratoire de Probabilit\'es et Mod\`eles Al\'eatoires, Universit\'e Paris Diderot, \\
75205 Paris cedex 13}
\begin{document}

\begin{abstract}
   
Covariance parameter estimation of Gaussian processes is analyzed in an asymptotic framework.
The spatial sampling is a randomly perturbed regular grid and its deviation from the perfect regular grid is controlled by a single scalar regularity parameter.
Consistency and asymptotic normality are proved for the Maximum Likelihood
and Cross Validation estimators of the covariance parameters. The asymptotic covariance matrices
of the covariance parameter estimators are deterministic functions of the regularity parameter.
By means of an exhaustive study of the asymptotic covariance matrices,
it is shown that the estimation is improved when the regular grid is strongly perturbed.
Hence, an asymptotic confirmation is given to the commonly admitted fact that using groups of observation points with small spacing
is beneficial to covariance function estimation.
Finally, the prediction error, using a consistent estimator of the covariance parameters, is analyzed in details.
\end{abstract}

\begin{keyword}
Uncertainty quantification \sep metamodel \sep Kriging \sep covariance parameter estimation \sep maximum likelihood\sep leave-one-out \sep increasing-domain asymptotics
\end{keyword}

\maketitle

\section{Introduction}  \label{section: introduction}

In many areas of science that involve measurements or data acquisition, one often has
to answer the question of how the set of experiments should be designed \citep{DAE}.
It is known that in many situations, an irregular, or even random, spatial sampling is preferable to a regular one. Examples of
these situations are found in many fields. For numerical integration, Gaussian quadrature rules generally yield irregular grids \citep[ch.4]{NRASC}.
The best known low-discrepancy sequences for quasi-Monte Carlo methods (van der Corput, Halton, Sobol, Faure, Hammersley,...) are not regular either
\citep{RNGQMCM}.
In the compressed sensing domain, it has been shown that one can
recover a signal very efficiently, and at a small cost, by using random measurements \citep{NOSRFRPUES}. 

In this paper, we are focused on the role of spatial sampling for meta-modeling. Meta-modeling is particularly
relevant for the analysis of complex computer models \citep{TDACE}. We will address the case of Kriging models,
which consist in interpolating the values of a Gaussian random field given observations at a finite set of observation points.
Kriging has become a popular method for a
large range of applications, such as numerical code approximation \citep{DACE,TDACE} and calibration \citep{CCMMO} or global optimization \citep{EGOEBBF}.

One of the main issues regarding Kriging is the choice of the covariance function
for the Gaussian process.
Indeed, a Kriging model yields an unbiased predictor with minimal variance and a correct predictive variance only if the correct covariance
function is used. The most common practice is to statistically estimate the covariance function, from a set
of observations of the Gaussian process, and to plug \citep[ch.6.8]{ISDSTK} the estimate in the Kriging equations.
Usually, it is assumed that the covariance function belongs to a given parametric family (see \cite{RGRFCF} for a review of classical families).
In this case, the estimation boils down to estimating the corresponding covariance parameters.

The spatial sampling, and particularly its degree of regularity,
plays an important role for the covariance function estimation.
In chapter 6.9 of \cite{ISDSTK}, it is shown that adding three observation points with small spacing
to a one-dimensional regular grid of twenty points dramatically improves the estimation in two ways. First, it enables to detect
without ambiguities that a Gaussian covariance model is poorly adapted, when the true covariance function is Mat\'ern $\frac{3}{2}$.
Second, when the Mat\'ern model is used for estimation, it subsequently improves the estimation of the smoothness parameter.
It is shown in \cite{SSDUIAF} that the optimal samplings, for maximizing the log of the determinant of the Fisher information matrix,
averaged over a Bayesian prior on the true covariance parameters, contain closely spaced points. Similarly, in the geostatistical community,
it is acknowledged that adding sampling crosses, that are small crosses of observation points making the different input quantities vary
slightly, enables a better identification of the small scale behavior of the random field, and therefore a better overall estimation of its covariance
function \citep{Jeannee2008geostatistical}.
The common conclusion of the three examples we have given is that irregular samplings, in the sense that they contain at least pairs of observation points
with small spacing, compared to the average density of observation points in the domain, work better for covariance function estimation
than regular samplings, that is samplings with evenly spaced points. This conclusion has become a commonly admitted fact in the Kriging literature.

In this paper, we aim at confirming this fact in an asymptotic framework.
Since exact finite-sample results are generally not reachable
and not meaningful as they are specific to the situation, asymptotic theory
is widely used to give approximations of the estimated covariance parameter distribution.

The two most studied asymptotic frameworks are the increasing-domain and fixed-domain asymptotics \citep[p.62]{ISDSTK}.
In increasing-domain asymptotics, a minimal spacing
exists between two different observation points, so that the infinite sequence of observation points is unbounded. In fixed-domain asymptotics, the sequence
is dense in a bounded domain.

In fixed-domain asymptotics, significant results are available concerning the estimation of the covariance function, and its influence on
Kriging predictions. In this asymptotic framework, two types of covariance parameters can be distinguished:
microergodic and non-microergodic covariance parameters. Following the definition
in \cite{ISDSTK}, a covariance parameter is microergodic if two covariance functions are orthogonal whenever they differ for it (as in \cite{ISDSTK}, we say that
two covariance functions are orthogonal if the two underlying Gaussian measures are orthogonal). Non-microergodic covariance parameters cannot be consistently
estimated, but have no asymptotic influence on Kriging predictions \citep{AEPRFMCF,BELPUICF,UAOLPRFUISOS,IEAEIMBG}. On the contrary,
there is a
fair amount of literature on consistently estimating microergodic covariance parameters, notably using the Maximum Likelihood (ML) method. Consistency has been proved for
several models \citep{APMLEDGP,MLEPUSSS,ESCMSGRFM,IEAEIMBG,FDASMTGRF,CSSVGRF}.
Microergodic covariance parameters have an asymptotic influence on predictions, as shown in \cite[ch.5]{MCSLMMNA}.

Nevertheless, the fixed-domain asymptotic framework is not well adapted to study the influence of the
irregularity of the spatial sampling on covariance parameter estimation. Indeed, we would like to compare sampling techniques
by inspection of the asymptotic distributions
of the covariance parameter estimators. In fixed-domain asymptotics, when an asymptotic distribution is proved for ML \citep{APMLEDGP,MLEPUSSS,FDAPTMLE},
it turns out that it is independent of the dense sequence of observation points.
This makes it impossible to compare the effect of spatial sampling on covariance parameter estimation using fixed-domain asymptotics techniques.

The first characteristic of increasing-domain asymptotics is that, as shown in subsection \ref{subsection: inf_hyp_miss},
all the covariance parameters have strong asymptotic influences on predictions.
The second characteristic is that all the covariance parameters
(satisfying a very general identifiability assumption) can be consistently estimated,
and that asymptotic normality generally holds \citep{UANMLE,MLEMRCSR,ADREMLE}.
Roughly speaking, increasing-domain asymptotics is characterized by a vanishing dependence between
observations from distant observation points.
As a result, a large sample size
gives more and more information about the covariance structure. 
Finally, we show that the asymptotic variances of the covariance parameter estimators strongly depend on the spatial sampling. This is why we address
the increasing-domain asymptotic framework
to study the influence of the spatial sampling on the covariance parameter estimation.

We propose a sequence of random spatial samplings of size $n \in \NN$.
The regularity of the spatial sampling sequence is characterized by a regularity parameter $  \epsilon \in [0,\frac{1}{2})$.
$\epsilon=0$ corresponds to a regular grid, and the
irregularity is increasing with $\epsilon$.
We study the ML estimator, and also a Cross Validation (CV) estimator \citep{PACHGP,KCVMSD}, for which, to the best of our knowledge, no asymptotic results
are yet available in the literature. For both estimators, we prove an asymptotic normality result for the estimation, with a $\sqrt{n}$ convergence, and an
asymptotic covariance matrix which is a deterministic function of $\epsilon$.
The asymptotic normality yields, classically, approximate confidence intervals for finite-sample estimation.
Then, carrying out an exhaustive analysis of the asymptotic covariance matrix,
for the one-dimensional Mat\'ern model, we show that large values of the regularity parameter $\epsilon$ always yield an improvement
of the ML estimation. We also show that ML has a smaller asymptotic variance than CV, which is expected since we address the well-specified case here,
in which the true covariance function does belong to the parametric set used for estimation. Thus, our general conclusion is a confirmation
of the aforementioned results in the literature: using a large regularity parameter $\epsilon$ yields groups of observation points
with small spacing, which improve the ML estimation, which is the preferable method to use.

The rest of the article is organized as follows. In section \ref{section: context}, we introduce the random sequence of observation points, that is parameterized
by the regularity parameter $\epsilon$. We also present the ML and CV estimators. In section \ref{section: consistency_asymptotic_normality},
we give the asymptotic normality results. In section \ref{section: numerical_study}, we carry out an exhaustive study of the asymptotic covariance matrices
for the Mat\'ern model in dimension one. In section \ref{section: analysis_prediction}, we analyze the Kriging prediction for the asymptotic
framework we consider.
In sections \ref{section: appendix_proof_normality}, \ref{section: appendix_proof_echange_limite_derivee} and
\ref{section: proof_prediction_independente_estimation},
we prove the results of sections \ref{section: consistency_asymptotic_normality}, \ref{section: numerical_study}
and \ref{section: analysis_prediction}.
In section \ref{section: appendix_technical_results}, we state and prove several technical results. Finally, section
\ref{section: Toeplitz} is dedicated to the one-dimensional case, with $\epsilon=0$.
We present an efficient calculation of the asymptotic variances for ML and CV
and of the second derivative
of the asymptotic variance of ML, at $\epsilon=0$, using properties of Toeplitz matrix sequences.

\section{Context} \label{section: context}

\subsection{Presentation and notation for the spatial sampling sequence}

We consider a stationary Gaussian process $Y$ on $\RR^d$. We denote $\Theta = [\theta_{inf},\theta_{sup}]^p$.
The subset $\Theta$ is compact in $(- \infty,\infty)^p$. The covariance function of $Y$
is $K_{\theta_0}$ with
$ \theta_{inf} < \left(\theta_0\right)_i < \theta_{sup}$, for $1 \leq i \leq p$.
$K_{\theta_0}$ belongs to a parametric model
$\{ K_{\theta} , \theta \in \Theta \}$, with $K_{\theta}$ a stationary covariance function.

We shall assume the following condition for the parametric model, which is satisfied in all the most classical cases, and especially for the Mat\'ern model that
we will analyze in detail in sections \ref{section: numerical_study} and \ref{section: analysis_prediction}.

\begin{cond}  \label{cond: Ktheta}
 \begin{itemize}
  \item For all $\theta \in \Theta$, the covariance function $K_{\theta}$ is stationary.
\item The covariance function $K_{\theta}$ is three times differentiable with respect to $\theta$.
For all $q \in \left\{0,...,3 \right\}$, $i_1,...,i_q \in  \left\{ 1,...,p \right\}$, there exists $C_{i_1,...,i_q} < + \infty$ so that for all $\theta \in \Theta$,
$t \in \RR^d$,
\begin{equation} \label{eq: controleCov}
\left| \frac{\partial}{\partial \theta_{i_1}}...\frac{\partial}{\partial \theta_{i_q}}  K_{\theta}\left(t\right) \right| \leq \frac{C_{i_1,...,i_q}}{ 1+|t|^{d+1} },
\end{equation}
\end{itemize}
where $|t|$ is the Euclidean norm of $t$.
We define the Fourier transform of a function $h: \RR^d \to \RR$ by $\hat{h}(f) = \frac{1}{(2 \pi)^d} \int_{\RR^d} h(t) e^{- \mathrm{i} f \cdot t } dt$,
where $\mathrm{i}^2 = -1$.
Then, for all $\theta \in \Theta$, the covariance function $K_{\theta}$ has a Fourier transform $\hat{K}_{\theta}$ that is continuous and bounded.
 \begin{itemize}
  \item For all $\theta \in \Theta$, $K_{\theta}$ satisfies
\[
K_{\theta}(t) = \int_{\RR^d} \hat{K}_{\theta}(f)  e^{ \mathrm{i} f \cdot t } df. 
\]
\item $\hat{K}_{\theta}\left(f\right): (\RR^d \times \Theta) \to \RR$ is continuous and positive.
 \end{itemize}

\end{cond}

Let $\NN^*$ be the set of positive integers. We denote by $\left(v_i\right)_{i \in \NN^*}$ a sequence of deterministic points in $(\NN^*)^d$ so that for all
$N \in \NN^*$, $\left\{v_i,1\leq i \leq N^d\right\} = \{ 1,...,N \}^d$.
$Y$ is observed at the points $v_i + \epsilon X_i$, $1 \leq i \leq n$, $n \in \NN^*$, with $ \epsilon \in [0,\frac{1}{2})$
and $X_i \sim_{iid} \mathcal{L}_X$. $\mathcal{L}_X$ is a symmetric probability distribution with support $S_X \subset [-1,1]^d$, and
with a positive probability density function on $S_X$. Two remarks can be made on this sequence of observation points:

\begin{itemize}
 \item This is an increasing-domain asymptotic context. The condition $ \epsilon \in [0,\frac{1}{2})$ ensures a minimal spacing between two
distinct observation points.
\item The observation sequence we study is random, and the parameter $\epsilon$ is a regularity parameter. $\epsilon = 0$ corresponds to a regular
observation grid, while when $\epsilon$ is close to $\frac{1}{2}$, the observation set is irregular
and, more specifically, groups of observation points with small spacing appear.
Examples of observation sets are given in figure \ref{fig: grilles_perturbees}, with $d=2$, $n=8^2$, and different values of $\epsilon$.
\end{itemize}

\begin{figure}[]
\centering
 \hspace*{-2cm}

\begin{tabular}{c c c}
\includegraphics[width=5cm,angle=0]{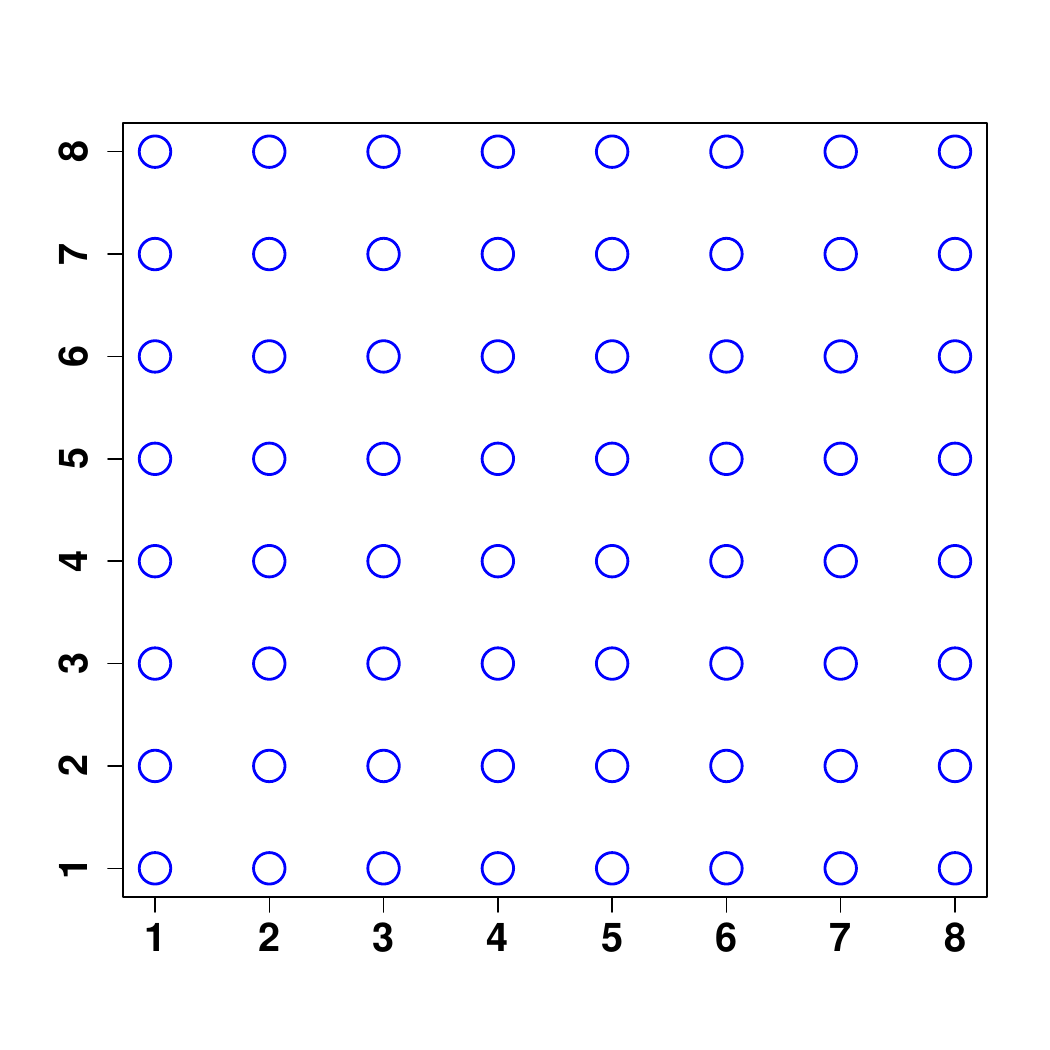} &
\includegraphics[width=5cm,angle=0]{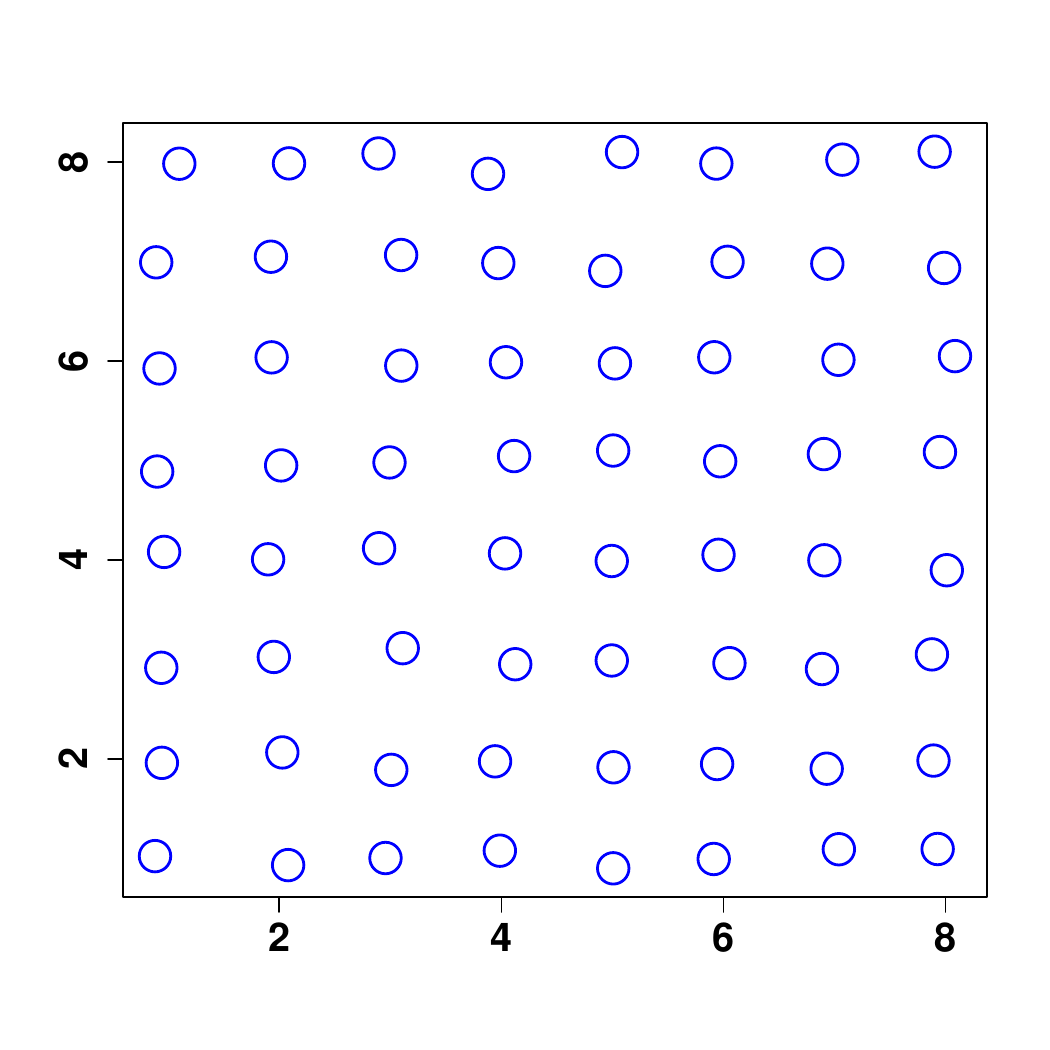} &
\includegraphics[width=5cm,angle=0]{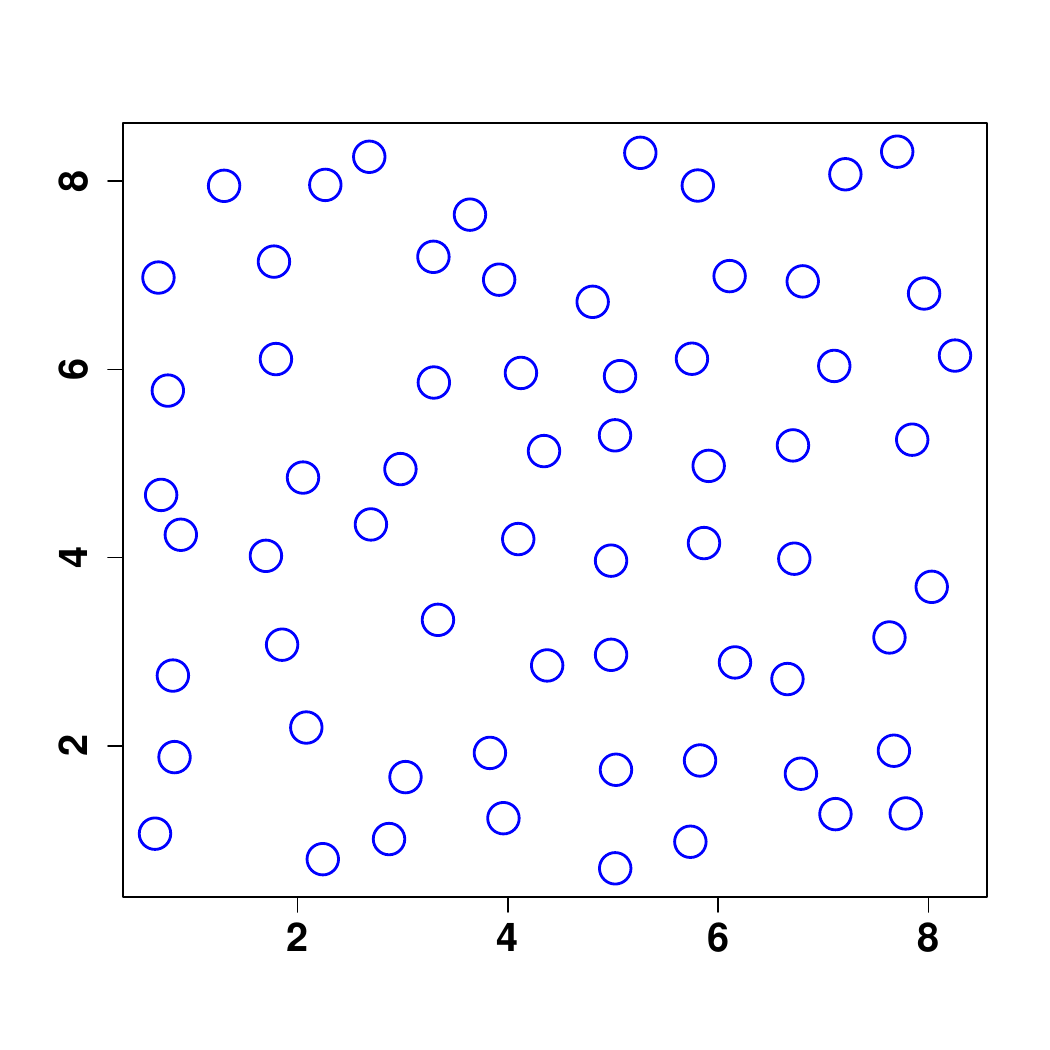}
\end{tabular}
\caption{Examples of three perturbed grids. The dimension is $d=2$ and the number of observation points is $n=8^2$.
From left to right, the values of the regularity parameter are $0$, $\frac{1}{8}$
and $\frac{3}{8}$. $\epsilon = 0$ corresponds to a regular
observation grid, while, when $\epsilon$ is close to $\frac{1}{2}$, the observation set is highly irregular.}
\label{fig: grilles_perturbees}
\end{figure}

Naturally, the parameterized irregularity that we consider here is governed by a single scalar parameter, and
thus cannot be representative of all the notions of spatial sampling irregularity that exist. We can already discuss two examples
for this. First, because the observation points are linked to a regular grid, there is a limit for the number of points
that can form a cluster with very small spacing. For instance in dimension 2, this limit number is $4$ (for $4$ points
stemming from perturbations of elementary squares of the form $(i,j),(i+1,j),(i,j+1),(i+1,j+1)$).
Second, still because the random sampling relies on a regular grid,
it remains rather regular at a global level, as one can observe in figure \ref{fig: grilles_perturbees}.
In \cite{Lahiri2003central} and \cite{Lahiri2004asymptotic}, an asymptotic framework is studied in which the observation points
are $iid$ realizations stemming from a common probability distribution, not necessarily uniform. This purely random sampling
solves the two limitations we have mentioned: the sampling is globally not uniform, and clusters of arbitrary numbers of points can appear.  
Nevertheless, the spatial sampling family we propose is simple to interpret, and is parameterized by a scalar regularity parameter, which enables us to compare the irregularities
of two samplings stemming from it with no ambiguity. Furthermore, the conclusions of section \ref{section: numerical_study} obtained from the perturbed regular grid
are of interest, since they confirm that using groups of observation points with small spacing is beneficial to estimation, compared to
evenly spaced points.

We denote $C_{S_X} = \left\{ t_1-t_2,t_1 \in S_X, t_2 \in S_X \right\}$, the set of all possible differences between two points
in $S_X$.
We denote, for $n \in \NN^*$, $X = \left(X_1,...,X_n\right)^t$, where we do not write explicitly the dependence on $n$ for clarity.
$X$ is a random variable with distribution ${\mathcal{L}}_X^{\otimes n}$.
We also denote $x = \left(x_1,...,x_n\right)^t$, an element of $\left(S_X\right)^n$, as a realization of $X$. 

We define the $n \times n$ random matrix $R_{\theta}$ by $\left(R_{\theta}\right)_{i,j} = K_{\theta} \left( v_i-v_j + \epsilon\left(X_i-X_j\right) \right)$. We
do not write explicitly the dependence of $R_{\theta}$ with respect to
$X$, $\epsilon$ and $n$. 
We shall denote, as a simplification, $R := R_{\theta_0}$.
We define the random vector $y$ of size $n$ by $y_i = Y\left(v_i + \epsilon X_i\right)$. We do not write explicitly the dependence of $y$ with respect to $X$, $\epsilon$ and $n$.

We denote as in \cite{TCMR}, for a real $n \times n$ matrix $A$, $|A|^2 = \frac{1}{n} \sum_{i,j=1}^n A_{i,j}^2$ and $||A||$ the largest singular value of $A$.
$|.|$ and $||.||$ are norms and $||.||$ is a matrix norm. We denote by $\phi_i\left(M\right)$, $1 \leq i \leq n$, the eigenvalues of a symmetric matrix
$M$. We denote, for two sequences of square matrices $A$ and $B$, depending on $n \in \NN^*$, $A \sim B$ if $|A-B| \to_{n \to + \infty} 0$ and $||A||$ and $||B||$
are bounded with respect to $n$. For a square matrix $A$, we denote by $\diag\left(A\right)$ the matrix obtained by setting to $0$ all non diagonal elements of $A$.

Finally, for a sequence of real random variables $z_n$, we denote $z_n \to_p 0$ and $z_n = o_p\left(1\right)$
when $z_n$ converges to zero in probability.

\subsection{ML and CV estimators}

We denote $L_{\theta} := \frac{1}{n} \left\{ \log\left(\det\left(R_{\theta}\right)\right) + y^t R_{\theta}^{-1} y \right\}$ the modified
opposite log-likelihood, where we
do not write explicitly the dependence on $X$, $Y$, $n$ and $\epsilon$. We denote by $\hat{\theta}_{ML}$ the Maximum Likelihood estimator, defined by
\begin{equation} \label{eq: thetaML}
 \hat{\theta}_{ML} \in \argmin_{\theta \in \Theta} L_{\theta},
\end{equation}
where we do not write explicitly the dependence of $\hat{\theta}_{ML}$ with respect to $X$, $Y$, $\epsilon$ and $n$.

\begin{rem}  \label{rem: definition_ML}
The ML estimator in \eqref{eq: thetaML} is actually not uniquely defined, since the likelihood function of \eqref{eq: thetaML}
can have more than one global
minimizer. Nevertheless, the convergence results of $\hat{\theta}_{ML}$, as $n \to + \infty$, hold when $\hat{\theta}_{ML}$ is any random variable
belonging to the set of the
global minimizers of the likelihood of \eqref{eq: thetaML}, regardless of the value chosen in this set. Furthermore, it can be shown that,
with probability converging to one,
as $n \to \infty$ (see the proof of proposition \ref{prop: normaliteEstimateur} in section \ref{section: appendix_technical_results}),
the likelihood function has a unique global minimizer. To define a measurable function
$\hat{\theta}_{ML}$ of $Y$ and $X$, belonging to the set of the minimizers of the likelihood, one possibility is the
following. For a given realization of $Y$ and $X$, let $\mathcal{K}$ be the set of the minimizers of the likelihood. Let
$\mathcal{K}_0 = \mathcal{K}$ and, for $0 \leq k \leq p-1$, $\mathcal{K}_{k+1}$ is the subset of $\mathcal{K}_k$
whose elements have their coordinate $k+1$ equal to $\min{ \left\{ \tilde{\theta}_{k+1},\tilde{\theta}  \in \mathcal{K}_k \right\} }$.
Since, $\mathcal{K}$ is compact
(because the likelihood function is continuous with respect to $\theta$ and defined on the compact set $\Theta$),
the set $\mathcal{K}_{p}$ is composed of a unique element, that we define as $\hat{\theta}_{ML}$, which is a measurable function of
$X$ and $Y$. The same remark can be made for the Cross Validation estimator of \eqref{eq: thetaCV}.
\end{rem}

When the increasing-domain asymptotic sequence of observation points is deterministic, it is shown
in \cite{MLEMRCSR} that $\hat{\theta}_{ML}$ converges to a centered Gaussian random vector. The asymptotic covariance matrix is the inverse of the
Fisher information matrix.
For fixed $n$, the Fisher information matrix is the $p \times p$
matrix with element $(i,j)$ equal to
$ \frac{1}{2} {\rm{Tr}}\left( R_{\theta_0}^{-1} \frac{\partial R_{\theta_0}}{\partial \theta_i} R_{\theta_0}^{-1} \frac{\partial R_{\theta_0}}{\partial \theta_j} \right)  $.
Since the literature has not addressed
yet the asymptotic distribution of $\hat{\theta}_{ML}$ in increasing-domain asymptotics with random observation points, we give complete proofs about it in section
\ref{section: appendix_proof_normality}. Our techniques are original and
not specifically oriented towards ML contrary to \cite{UANMLE,MLEMRCSR,ADREMLE}, so that they allow us to
address the asymptotic distribution of the CV estimator in the same fashion.

The CV estimator is defined by
\begin{equation}  \label{eq: thetaCV}
\hat{\theta}_{CV} \in \argmin_{\theta \in \Theta} \sum_{i=1}^n \{ y_{i} - \hat{y}_{i,\theta} \left(y_{-i}\right) \}^2,
\end{equation} 
where, for $1 \leq i \leq n$, $\hat{y}_{i,\theta} \left(y_{-i}\right)  := \EE_{\theta|X} \left( y_i | y_1,...,y_{i-1},y_{i+1},...,y_n \right)$ is the Kriging Leave-One-Out prediction
of $y_i$ with covariance parameters $\theta$. $\EE_{\theta|X}$ denotes the expectation with respect to the distribution of $Y$
with the covariance function $K_{\theta}$, given $X$.

The CV estimator selects the covariance parameters according to the criterion of the point-wise prediction errors. This criterion
does not involve the Kriging predictive variances.
Hence, the CV estimator of \eqref{eq: thetaCV} cannot estimate a covariance parameter impacting only on the variance of the Gaussian process.
Nevertheless, all the
classical parametric models $\{K_{\theta}, \theta \in \Theta\}$
satisfy the decomposition $ \theta = \left( \sigma^2 , \tilde{\theta}^t \right)^t $ and $\{K_{\theta}, \theta \in \Theta\} =
\{ \sigma^2 \tilde{K}_{\tilde{\theta}}, \sigma^2 > 0 , \tilde{\theta} \in \tilde{\Theta} \}$, with $\tilde{K}_{\tilde{\theta}}$ a correlation function.
Hence, in this case, $\tilde{\theta}$ would be estimated by \eqref{eq: thetaCV}, and $\sigma^2$ would be estimated by the equation
$\hat{\sigma}_{CV}^2 (\tilde{\theta}) = \frac{1}{n} \sum_{i=1}^n \frac{\{y_i - \hat{y}_{i,\tilde{\theta}} \left(y_{-i}\right) \}^2}{ c_{i,-i,\tilde{\theta}}^2}$,
where $c_{i,-i,\tilde{\theta}}^2 := \var_{\tilde{\theta}|X}\left(y_i|y_1,...,y_{i-1},y_{i+1},...,y_n\right)$ is the Kriging Leave-One-Out predictive variance for $y_i$
with covariance parameters $\sigma^2 = 1$ and $\tilde{\theta}$. $\var_{\tilde{\theta}|X}$ denotes the variance
with respect to the distribution of $Y$ with the covariance function
$K_{\theta}$, $\theta = (1,\tilde{\theta}^t)^t$, given $X$. To summarize, the general CV procedure we study is a two-step procedure. In a first step, the correlation parameters are
selected according to a mean square error criterion. In a second step, the global variance parameter is selected, so that the predictive variances are
adapted to the Leave-One-Out prediction errors. Here, we address the first step, so we focus on the CV estimator
defined in \eqref{eq: thetaCV}.

The criterion \eqref{eq: thetaCV} can be computed with a single matrix inversion, by means of
virtual LOO formulas (see e.g \cite[ch.5.2]{SS} for the zero-mean case addressed here, and \cite{CVKUN} for the universal Kriging case).
These virtual LOO formulas yield
\[
 \sum_{i=1}^n \left\{ y_{i} - \hat{y}_{i,\theta} \left(y_{-i}\right) \right\}^2 = y^t R_{\theta}^{-1} \diag\left(R_{\theta}^{-1}\right)^{-2} R_{\theta}^{-1} y,
\]
which is useful both in practice (to compute quickly the LOO errors) and in the proofs on CV. We then define
\[
CV_{\theta} := \frac{1}{n} y^t R_{\theta}^{-1} \diag\left(R_{\theta}^{-1}\right)^{-2} R_{\theta}^{-1} y
\]
as the CV criterion, where we do not write explicitly the
dependence on $X$, $n$, $Y$ and $\epsilon$. Hence we have, equivalently to $\eqref{eq: thetaCV}$,
$\hat{\theta}_{CV} \in \argmin_{\theta \in \Theta} CV_{\theta}$.

Since the asymptotic covariance matrix of the ML estimator is the inverse of the Fisher information matrix, this estimator should be used when $\theta_0 \in \Theta$ holds,
which is the case of interest here. However, in practice, it is likely that the true covariance function of the Gaussian process does not belong to the
parametric family used for the estimation. In \cite{Bachoc2013cross}, it is shown that CV
is more efficient than ML in this case, provided that the sampling is not too regular.
Hence, the CV estimator is relevant in practice, which is a reason for studying it in the case $\theta_0 \in \Theta$
addressed here.

In the sequel, we call the case $\theta_0 \in \Theta$ the well-specified case, and we call the case where the true covariance function of $Y$ does not
belong to $\{ K_{\theta}, \theta \in \Theta \}$ the misspecified case, or the case of model misspecification.

Hence, we aim at studying the influence of the spatial sampling on CV as well as on ML, in the well-specified case addressed here.
Furthermore, since it is expected that ML performs better than CV in the well-specified case, we are interested in quantifying this fact.

\section{Consistency and asymptotic normality}  \label{section: consistency_asymptotic_normality}

Proposition \ref{prop: consistance_ML} addresses the consistency of the ML estimator. The only assumption on the parametric family of covariance functions
is an identifiability assumption. Basically, for a fixed $\epsilon$, there should not exist two distinct covariance parameters so that the two associated covariance functions
are the same, on the set of inter-point distances covered by the random spatial sampling. The identifiability assumption is clearly minimal.

\begin{prop} \label{prop: consistance_ML}
Assume that condition \ref{cond: Ktheta} is satisfied.

For $\epsilon = 0$, if there does not exist $\theta \neq \theta_0$ so that
$K_{\theta}\left(v\right) = K_{\theta_0}\left(v\right)$ for all $v \in \ZZ^d$, then the ML estimator is consistent.

For $\epsilon \neq 0$, we denote $D_{\epsilon} = \cup_{v \in \ZZ^d \backslash 0} \left(v + \epsilon C_{S_X}\right)$,
with $C_{S_X} = \left\{ t_1-t_2,t_1 \in S_X, t_2 \in S_X \right\}$.
Then, if there does not exist $\theta \neq \theta_0$ so that
$K_{\theta} = K_{\theta_0}$ a.s. on $D_{\epsilon}$, according to the Lebesgue measure on $D_{\epsilon}$, and
$K_{\theta}\left(0\right) = K_{\theta_0}\left(0\right)$, then the
ML estimator is consistent.
 \end{prop}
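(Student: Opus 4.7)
The plan is to apply the classical two-step consistency argument for $M$-estimators. Denote $\bar{L}_\theta := \EE_{\theta_0|X}(L_\theta) = \frac{1}{n}[\log\det(R_\theta) + {\rm Tr}(R_\theta^{-1}R)]$, where $R = R_{\theta_0}$. The program is to establish: (a) uniform convergence $\sup_{\theta \in \Theta}|L_\theta - \bar{L}_\theta| \to_p 0$; and (b) a well-separated minimum of $\bar{L}_\theta$ at $\theta_0$, i.e.\ for every $\eta>0$, $\liminf_{n\to\infty}\inf_{\|\theta-\theta_0\|\geq\eta}\bigl[\bar{L}_\theta - \bar{L}_{\theta_0}\bigr] > 0$ almost surely in $X$. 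Standard argmin continuity then forces $\hat{\theta}_{ML} \to_p \theta_0$.

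For (a), the key technical prerequisite is that, under Condition \ref{cond: Ktheta}, the eigenvalues of $R_\theta$ are bounded above and away from $0$ uniformly in $\theta\in\Theta$, in $n$, and (for $\epsilon\neq 0$) in realizations of $X$. The upper bound follows from a Gershgorin-type row-sum estimate and (\ref{eq: controleCov}), using that the minimal spacing between observation points is $1-2\epsilon > 0$. The lower bound comes from continuity and strict positivity of $\hat K_\theta$ on $\RR^d\times\Theta$. With these bounds, pointwise convergence $L_\theta - \bar{L}_\theta\to_p 0$ is immediate from $\var_{\theta_0|X}(L_\theta)=(2/n^2){\rm Tr}((R_\theta^{-1}R)^2)=O(1/n)$ via Chebyshev. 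Uniformity in $\theta$ is then obtained by bounding $\partial L_\theta/\partial\theta_k$ uniformly in $\theta$ and $n$ (in probability) using the derivative bounds in Condition \ref{cond: Ktheta}, together with compactness of $\Theta$ and a finite-cover argument.

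For (b), the Taylor expansion of $x\mapsto x-1-\log x$ around $x=1$, combined with the eigenvalue bounds above, gives the quadratic lower bound
\[
\bar{L}_\theta - \bar{L}_{\theta_0} = \frac{1}{n}\sum_i\bigl(\lambda_i - 1 - \log\lambda_i\bigr) \geq c\,|R_\theta - R|^2,
\]
where $\lambda_i$ are the eigenvalues of $R_\theta^{-1}R$ and $c>0$ is a constant. The remaining task is to bound $|R_\theta - R|^2$ from below on $\{\|\theta-\theta_0\|\geq\eta\}\cap\Theta$, almost surely in $X$. For each fixed $v\in\ZZ^d$, the number of pairs $(i,j)$ with $v_i - v_j = v$ grows linearly in $n$, so for any finite $V\subset\ZZ^d$,
\[
|R_\theta - R|^2 \;\geq\; \sum_{v\in V}\frac{1}{n}\sum_{v_i-v_j=v}\bigl(K_\theta(v+\epsilon(X_i-X_j)) - K_{\theta_0}(v+\epsilon(X_i-X_j))\bigr)^2.
\]
A $U$-statistic law of large numbers applied to the independent $X_i$'s shows that each inner average converges almost surely to a positive multiple of $\EE\bigl[(K_\theta(v+\epsilon(X_1-X_2)) - K_{\theta_0}(v+\epsilon(X_1-X_2)))^2\bigr]$. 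The identifiability hypothesis on $D_\epsilon$, together with the positive density of the distribution of $X_1-X_2$ on $C_{S_X}$, implies that at least one such expectation is strictly positive for each $\theta\ne\theta_0$ (the $v=0$ contribution being handled separately via the assumption $K_\theta(0)=K_{\theta_0}(0)$ when $\epsilon\ne 0$; for $\epsilon=0$ the inequality reduces to a deterministic pointwise comparison on $\ZZ^d$). Continuity of $K_\theta$ in $\theta$ and compactness of $\{\|\theta-\theta_0\|\ge\eta\}\cap\Theta$ upgrade this pointwise positivity to the required uniform lower bound.

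The main obstacle is the last step, namely converting the Lebesgue-a.s.\ pointwise inequality of covariance functions on $D_\epsilon$ into a non-vanishing matrix-norm separation of $R_\theta$ from $R$, uniformly over $\theta$ bounded away from $\theta_0$. The $\epsilon=0$ case is especially delicate because $R_\theta$ is then block Toeplitz and fully deterministic, so the separation must be extracted from the discrete identifiability on $\ZZ^d$ together with the tail decay in Condition \ref{cond: Ktheta}; the $\epsilon\ne 0$ case additionally requires $U$-statistic control of the dependence between the pairs $(X_i,X_j)$ that share an index.
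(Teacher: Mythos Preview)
Your argument is correct and follows essentially the same route as the paper: uniform convergence of $L_\theta$ to its conditional mean via the eigenvalue bounds and bounded $\theta$-derivatives, then the quadratic lower bound $\bar L_\theta - \bar L_{\theta_0} \geq c\,|R_\theta - R_{\theta_0}|^2$ from $x-1-\log x$, and finally separation of $|R_\theta - R_{\theta_0}|^2$ away from zero via identifiability and compactness. The only cosmetic difference is in the last step: the paper passes to the full deterministic limit $D_{\infty,\theta,\theta_0}$ (equal to $\sum_{v\in\ZZ^d}(K_\theta(v)-K_{\theta_0}(v))^2$ for $\epsilon=0$ and to the analogous integral over $D_\epsilon$ for $\epsilon\neq 0$, using Proposition~\ref{prop: convergenceTrace}) and then argues continuity of that limit, whereas you truncate to a finite $V\subset\ZZ^d$ chosen by compactness and apply an $m$-dependent law of large numbers to each diagonal band---both reach the same conclusion, and incidentally the $\epsilon=0$ case is the easier one (everything is deterministic) rather than the more delicate one.
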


In proposition \ref{prop: normaliteML}, we address the asymptotic normality of ML. The convergence rate is $\sqrt{n}$, as in a classical $iid$ framework, and we
prove the existence of a deterministic asymptotic covariance matrix of $\sqrt{n} \hat{\theta}_{ML}$ which depends only on the regularity parameter $\epsilon$.
In proposition \ref{prop: info>0_ML}, we prove that this asymptotic covariance matrix is positive, as long as the different derivative functions
with respect to $\theta$ at $\theta_0$ of the covariance function are non redundant on the set of inter-point distances covered by the random spatial sampling.
This condition is minimal, since when these derivatives are redundant, the Fisher information matrix is singular for all finite sample-size $n$ and its kernel is
independent of $n$. 

\begin{prop} \label{prop: normaliteML}
Assume that condition \ref{cond: Ktheta} is satisfied.

For $1 \leq i,j \leq p$, define the sequence of $n \times n$ matrices, indexed by $n \in \NN^*$,
$M_{ML}^{i,j}$ by,
$M_{ML}^{i,j} := \frac{1}{2} R^{-1} \frac{\partial R}{\partial \theta_i} R^{-1} \frac{\partial R}{\partial \theta_j}$.
Then, the random trace $\frac{1}{n} {\rm{Tr}}\left( M_{ML}^{i,j} \right)$
has an almost sure limit as $n \to +\infty$.

We thus define the $p \times p$ deterministic matrix $\Sigma_{ML}$ so that $\left(\Sigma_{ML}\right)_{i,j}$
is the almost sure limit, as $n \to + \infty$, of
$\frac{1}{n} {\rm{Tr}}\left( M_{ML}^{i,j} \right)$.

Then, if $\hat{\theta}_{ML}$ is consistent and if $\Sigma_{ML}$ is positive, then
\[
 \sqrt{n} \left( \hat{\theta}_{ML} - \theta_0 \right) \to_{\L} \N\left(0,\Sigma_{ML}^{-1}\right).
\]

\end{prop}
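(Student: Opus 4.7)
The plan is the classical M-estimator argument. Since $\hat\theta_{ML}$ is assumed consistent and $\theta_0$ lies in the interior of $\Theta$, with probability tending to one $\nabla_\theta L_{\hat\theta_{ML}}=0$. A componentwise mean value theorem applied to $\nabla_\theta L_\theta$ at $\theta_0$ gives
\[
\sqrt{n}(\hat\theta_{ML}-\theta_0) \;=\; -H_n^{-1}\sqrt{n}\,\nabla_\theta L_{\theta_0},
\]
with $(H_n)_{i,j} = \partial^2_{i,j} L_{\bar\theta_n^{(i)}}$ for some $\bar\theta_n^{(i)}$ on the segment between $\theta_0$ and $\hat\theta_{ML}$. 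I would then prove: (a) $\sqrt{n}\,\nabla_\theta L_{\theta_0}\to_{\L}\N(0,4\Sigma_{ML})$; (b) $H_n\to_{p} 2\Sigma_{ML}$; and conclude by Slutsky, using the assumed invertibility of $\Sigma_{ML}$.

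For (a), direct differentiation yields
\[
\sqrt{n}(\nabla_\theta L_{\theta_0})_i \;=\; -\frac{1}{\sqrt{n}}\bigl(y^t R^{-1}\partial_i R\,R^{-1} y - {\rm{Tr}}(R^{-1}\partial_i R)\bigr).
\]
Set $z = R^{-1/2}y$ and $B_i = R^{-1/2}\partial_i R\,R^{-1/2}$; conditionally on $X$, $z\sim\N(0,I_n)$ and the display equals $-\frac{1}{\sqrt n}(z^t B_i z-{\rm{Tr}}\, B_i)$, a centered Gaussian quadratic form with conditional covariance $\frac{2}{n}{\rm{Tr}}(B_iB_j) = \frac{4}{n}{\rm{Tr}}(M_{ML}^{i,j})$. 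I would first establish the almost sure convergence of $\frac{1}{n}{\rm{Tr}}(M_{ML}^{i,j})$ to a deterministic limit, which \emph{defines} $\Sigma_{ML}$: the polynomial decay (\ref{eq: controleCov}), combined with the uniform lower bound on $\hat K_\theta$ to control $\|R^{-1}\|$, makes $R^{-1}$ effectively banded, so that the trace becomes a local functional of the i.i.d.\ perturbations $X_1,\dots,X_n$, to which an ergodic/law-of-large-numbers argument applies. A conditional CLT for centered Gaussian quadratic forms, verified through the Lyapunov-type bound $\|B_i\|^2/{\rm{Tr}}(B_i^2)\to 0$ (itself a consequence of condition \ref{cond: Ktheta} and the $|\cdot|$-versus-$\|\cdot\|$ inequalities of section \ref{section: appendix_technical_results}), then gives conditional normality with deterministic limiting variance, whence unconditional normality after integrating against the law of $X$.

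For (b), one computes $\partial^2_{i,j} L_\theta$ explicitly. At $\theta_0$, the deterministic trace contributions combine into $\frac{2}{n}{\rm{Tr}}(M_{ML}^{i,j})$, while the remaining quadratic forms in $y$ have conditional expectation equal to the same quantity and fluctuate around it at rate $1/\sqrt n$ by the Gaussian concentration used in (a); hence $\partial^2_{i,j} L_{\theta_0}\to_{p} 2(\Sigma_{ML})_{i,j}$. The replacement of $\theta_0$ by the intermediate points $\bar\theta_n^{(i)}\to_{p}\theta_0$ is handled uniformly on a neighborhood of $\theta_0$ using the three-times differentiability of $K_\theta$ in condition \ref{cond: Ktheta} together with the uniform matrix bounds of section \ref{section: appendix_technical_results}.

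The genuinely new ingredient compared with deterministic-sampling analyses such as \cite{MLEMRCSR} is showing that the random trace $\frac{1}{n}{\rm{Tr}}(M_{ML}^{i,j})$ has a deterministic almost-sure limit. Since $R^{-1}$ couples all $X_k$'s, the trace is not an i.i.d.\ sum; the step I expect to be most delicate is approximating $R^{-1}$ by a banded matrix whose error vanishes in $|\cdot|$-norm as the band width grows (using (\ref{eq: controleCov}) and the uniform control on $\|R^{-1}\|$), rewriting the truncated trace as an ergodic average over an array indexed by the grid nodes $v_i$, and passing to the limit jointly in $n$ and in the band width. Once this almost sure convergence is secured, the remaining Taylor expansion and Gaussian quadratic-form CLT are careful but standard.
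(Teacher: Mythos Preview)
Your proposal is correct and follows the same overall architecture as the paper: derive the score, apply a CLT for centered Gaussian quadratic forms conditionally on $X$, show the Hessian converges in probability to $2\Sigma_{ML}$ via convergence of quadratic forms, bound the third derivatives uniformly, and conclude with a standard $M$-estimator Taylor argument (the paper packages the last step as proposition \ref{prop: normaliteEstimateur}). Your computation of the score, the identification of the conditional covariance $\frac{4}{n}{\rm Tr}(M_{ML}^{i,j})$, and the Lindeberg/Lyapunov verification via the bounded spectral norm of $B_i$ all match the paper's proposition \ref{prop: lindeberg_presque_sur}.

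The one place where your route differs in substance is the proof that $\frac{1}{n}{\rm Tr}(M_{ML}^{i,j})$ has a deterministic almost-sure limit. You propose to approximate $R^{-1}$ by a banded matrix and then treat the truncated trace as an ergodic average of an $m$-dependent array. The paper (proposition \ref{prop: convergenceTrace}) instead approximates $R_\theta$ itself by a \emph{block-diagonal} matrix $\tilde R_\theta$ obtained by partitioning the grid into cubes of side $N_1$, shows $|\tilde R_\theta - R_\theta|\to 0$ and hence $|\tilde R_\theta^{-1}-R_\theta^{-1}|\to 0$, and then applies the ordinary i.i.d.\ strong law to the resulting sum of independent block traces. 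The block-diagonal route has the advantage that each block of $\tilde R_\theta^{-1}$ depends only on the $X_k$'s inside that block, so independence is immediate; by contrast, simply banding $R^{-1}$ does not by itself localize the dependence of $(R^{-1})_{i,j}$ on the $X_k$'s, and you would still need an additional argument (e.g.\ exponential decay of inverse entries together with stability under perturbations of distant $X_k$'s) to reduce to a genuinely local functional. Your approach can be completed along those lines, but the paper's block decomposition gets to i.i.d.\ structure more directly. One minor imprecision: at $\theta_0$ the deterministic trace part of $\partial^2_{i,j}L_{\theta_0}$ is not itself $\frac{2}{n}{\rm Tr}(M_{ML}^{i,j})$; rather the trace part and the conditional mean of the quadratic-form part combine to give it, which is what the paper's proposition \ref{prop: convergence_forme_quadratique} exploits.
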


\begin{prop} \label{prop: info>0_ML}
Assume that condition \ref{cond: Ktheta} is satisfied.

For $\epsilon = 0$, if there does not exist $v_{\lambda} = \left(\lambda_1,...,\lambda_p\right)^t \in \RR^p$,
$v_{\lambda}$ different from zero, so that
$\sum_{k=1}^p \lambda_k \frac{\partial}{\partial \theta_k} K_{\theta_0}\left(v\right) = 0$
for all $v \in \ZZ^d$, then $\Sigma_{ML}$ is positive.

For $\epsilon \neq 0$, we denote $D_{\epsilon} = \cup_{v \in \ZZ^d \backslash 0} \left(v + \epsilon C_{S_X}\right) $,
with $C_{S_X} = \left\{ t_1-t_2,t_1 \in S_X, t_2 \in S_X \right\}$.
If there does not exist $v_{\lambda} = \left(\lambda_1,...,\lambda_p\right)^t \in \RR^p$,
$v_{\lambda}$ different from zero, so that
$\sum_{k=1}^p \lambda_k \frac{\partial}{\partial \theta_k} K_{\theta_0}\left(t\right):
\RR^d \to \RR$
is almost surely zero on $D_{\epsilon}$, with respect to the Lebesgue measure on $D_{\epsilon}$,
and that $\sum_{k=1}^p \lambda_k \frac{\partial}{\partial \theta_k} K_{\theta_0}\left(0\right)$
is null, then $\Sigma_{ML}$ is positive.
\end{prop}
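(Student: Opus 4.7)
The plan is to show that $v_\lambda^t \Sigma_{ML} v_\lambda > 0$ for every nonzero $v_\lambda = (\lambda_1,\ldots,\lambda_p)^t \in \RR^p$. By linearity in $(i,j)$ of $M_{ML}^{i,j}$, this quadratic form coincides with the almost sure limit of $\frac{1}{2n}{\rm{Tr}}(R^{-1} A R^{-1} A)$, where the symmetric matrix $A := \sum_{k=1}^p \lambda_k \partial R/\partial \theta_k$ has entries $A_{i,j} = f(v_i - v_j + \epsilon(X_i - X_j))$ with $f(t) := \sum_{k=1}^p \lambda_k (\partial K_{\theta_0}/\partial \theta_k)(t)$.

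The first step is to reduce positivity to a lower bound on $|A|^2$. Since $A$ is symmetric, ${\rm{Tr}}(R^{-1} A R^{-1} A) = \|R^{-1/2} A R^{-1/2}\|_F^2$ where $\|\cdot\|_F$ is the Frobenius norm, and writing $A = R^{1/2}(R^{-1/2} A R^{-1/2})R^{1/2}$ together with $\|XY\|_F \leq \|X\|_{op}\|Y\|_F$ gives $\|A\|_F \leq \|R\| \cdot \|R^{-1/2} A R^{-1/2}\|_F$. Hence
\[
\frac{1}{2n}{\rm{Tr}}(R^{-1} A R^{-1} A) \geq \frac{|A|^2}{2\|R\|^2}.
\]
The operator norm $\|R\|$ is bounded above by a deterministic constant independent of $n$ and $X$, by the row-sum bound $\|R\| \leq \max_i \sum_j |R_{i,j}|$ combined with the decay estimate from condition \ref{cond: Ktheta}; such a bound is already present among the technical lemmas of section \ref{section: appendix_technical_results}. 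The task therefore reduces to an asymptotic positive lower bound on $|A|^2$.

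For $\epsilon = 0$, $|A|^2$ is deterministic. Restricting to the subsequence $n = N^d$ so that $\{v_1,\ldots,v_n\} = \{1,\ldots,N\}^d$, one rewrites $|A|^2 = \sum_{u \in \ZZ^d} f(u)^2 \prod_{k=1}^d \max(0, 1-|u_k|/N)$, and dominated convergence (applicable because $|f(u)|$ is bounded by $C/(1+|u|^{d+1})$) yields $|A|^2 \to \sum_{u \in \ZZ^d} f(u)^2$, a sum that is strictly positive by the assumed non-redundancy of the derivatives on $\ZZ^d$. For $\epsilon \neq 0$, I would work with the expectation. With $g$ the density of $X_i - X_j$, which is strictly positive on the interior of $C_{S_X}$, and again along $n = N^d$, one obtains
\[
\EE[|A|^2] = f(0)^2 + \sum_{v \in \ZZ^d \setminus \{0\}} \alpha_{N,v} \int_{C_{S_X}} f(v + \epsilon u)^2 g(u)\, du,
\]
with $0 \leq \alpha_{N,v} \leq 1$ and $\alpha_{N,v} \to 1$ for each fixed $v$. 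Dominated convergence yields the limit $f(0)^2 + \sum_{v \neq 0} \int f(v+\epsilon u)^2 g(u) du$, and this is strictly positive under the hypothesis: either $f(0) \neq 0$, or else $f$ fails to vanish almost everywhere on $D_\epsilon$, which forces at least one of the integrals to be strictly positive since $g > 0$ on the interior of $C_{S_X}$.

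Finally, I would combine the almost sure convergence of $\frac{1}{2n}{\rm{Tr}}(R^{-1} A R^{-1} A)$ from proposition \ref{prop: normaliteML} with the fact that this quantity is uniformly bounded in $n$ and $X$ (so the a.s.\ convergence upgrades to $L^1$ convergence of the expectations), obtaining $v_\lambda^t \Sigma_{ML} v_\lambda \geq \lim_N \EE[|A|^2]/(2C^2) > 0$. The main obstacle I expect is precisely this bridge between the almost sure definition of $\Sigma_{ML}$ and the expectation-based lower bound on $|A|^2$; an alternative would be to prove a direct a.s.\ lower bound on $|A|^2$ via a law of large numbers for weakly dependent arrays indexed by $(i,j)$, but the $L^1$ route is much lighter. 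Handling values of $n$ not of the form $N^d$ is a minor adaptation, since adding or removing observation points only perturbs $|A|^2$ by $O(1/n)$ given the decay of $f$.
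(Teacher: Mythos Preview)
Your proof is correct and follows essentially the same line as the paper's: both lower-bound $\frac{1}{n}{\rm Tr}(R^{-1}AR^{-1}A)$ by a constant times $|A|^2$ using the uniform eigenvalue bound on $R$, and then show $|A|^2$ has a positive limit by appealing to the same computation carried out in the consistency proof. The paper packages the reduction to a scalar situation by introducing the one-parameter directional model $K_\delta := K_{\theta_0+\delta v_\lambda}$ and treating $p=1$, whereas you work directly with the linear combination $A$; these are the same thing, since the paper's $\partial R/\partial\delta|_{\delta=0}$ is exactly your $A$.

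Regarding the ``bridge'' you flag between the almost-sure definition of $\Sigma_{ML}$ and the expectation-based lower bound on $|A|^2$: this is not actually an obstacle. Proposition~\ref{prop: convergenceTrace} already gives that $|A|^2 = \frac{1}{n}{\rm Tr}(A^2)$ converges almost surely (and in $L^2$) to a deterministic limit, and that limit therefore coincides with the limit of expectations you compute. Hence you may pass to the limit directly in the pointwise inequality $\frac{1}{2n}{\rm Tr}(R^{-1}AR^{-1}A)\ge |A|^2/(2C^2)$ without any additional $L^1$ argument; this is exactly how the paper proceeds (implicitly, by pointing back to the proof of proposition~\ref{prop: consistance_ML}).
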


\begin{rem}  \label{rem: Lahiri}
The asymptotic distribution of $M$-estimators of the regression parameters in a linear regression framework
with dependent errors, with a stochastic sampling, has been addressed in \cite{Lahiri2004asymptotic}.
In this reference, the observation errors stem from a general random field, and increasing-domain asymptotics is addressed, as well
as mixed increasing-domain asymptotics, in which both the observation domain and the density of the observation points go to infinity.
In \cite{Lahiri2004asymptotic}, the rate of convergence of the estimation of the regression parameters is not the square root of the
number of observation points anymore, but the square root of the volume of the observation domain.
This is the rate we also find in proposition \ref{prop: normaliteML} for the covariance parameters, in the particular case of increasing-domain asymptotics, where
the number of observation points grows like the volume of the observation domain.
It is nevertheless possible that this analogy would not happen in the mixed domain-asymptotic framework, since it is known that
the estimation of microergodic covariance parameters can have a $\sqrt{n}$ rate of convergence even in the fixed-domain asymptotic framework \citep{APMLEDGP}.

Notice also that, similarly to us, it is explicit in \cite{Lahiri2004asymptotic} that the asymptotic covariance matrix of the estimators
depends on the distribution of the random observation points.
\end{rem}

Proposition \ref{prop: consistance_CV} addresses the consistency of the CV estimator.
The identifiability assumption is required, like in the ML case. Since the CV estimator is designed for estimating correlation parameters, we assume
that the parametric model $\{K_{\theta} , \theta \in \Theta\}$ contains only correlation functions. This assumption holds in most classical cases, and
yields results that are easy to express and interpret. The case of hybrid covariance parameters, specifying both a variance and a correlation structure should
be consistently estimated by the CV estimator. Nevertheless, since such covariance parameters
do not exist in the classical families of covariance functions, we do not address this case.

\begin{prop} \label{prop: consistance_CV}

Assume that condition \ref{cond: Ktheta} is satisfied and that for all $\theta \in \Theta$, $K_{\theta}\left(0\right) = 1$.

For $\epsilon = 0$, if there does not exist $\theta \neq \theta_0$ so that
$K_{\theta}\left(v\right) = K_{\theta_0}\left(v\right)$ for all $v \in \ZZ^d$, then the CV estimator is consistent.

For $\epsilon \neq 0$, we denote $D_{\epsilon} = \cup_{v \in \ZZ^d \backslash 0} \left(v + \epsilon C_{S_X}\right)$,
with $C_{S_X} = \left\{ t_1-t_2,t_1 \in S_X, t_2 \in S_X \right\}$.
Then, if their does not exist $\theta \neq \theta_0$ so that
$K_{\theta} = K_{\theta_0}$ a.s. on $D_{\epsilon}$, with respect to the Lebesgue measure on $D_{\epsilon}$, the CV estimator is consistent.
\end{prop}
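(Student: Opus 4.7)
The plan is to follow the standard $M$-estimator consistency argument: establish a uniform (in $\theta \in \Theta$) convergence in probability of $CV_\theta$ to a deterministic function $\overline{CV}(\theta)$ on the compact $\Theta$, and then verify that $\theta_0$ is the unique minimizer of $\overline{CV}$; the conclusion $\hat\theta_{CV} \to_p \theta_0$ is then automatic from $CV_{\hat\theta_{CV}} \leq CV_{\theta_0}$.

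First I would split $CV_\theta = \EE[CV_\theta \mid X] + (CV_\theta - \EE[CV_\theta \mid X])$. Write $M_\theta := R_\theta^{-1} \diag(R_\theta^{-1})^{-2} R_\theta^{-1}$, so $\EE[CV_\theta \mid X] = \frac{1}{n} \mathrm{Tr}(M_\theta R)$ and $\mathrm{var}(CV_\theta \mid X) = \frac{2}{n^2} \mathrm{Tr}((M_\theta R)^2)$. Using the technical results of section \ref{section: appendix_technical_results} (bounds on $\|R_\theta\|$, $\|R_\theta^{-1}\|$ and $\min_i (R_\theta^{-1})_{i,i}$ uniform in $n$ and $\theta$, derived from condition \ref{cond: Ktheta}), I would bound $\|M_\theta R\|$ uniformly, so that the conditional variance is $O(1/n)$. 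For the conditional mean, the same tools provide an almost sure limit of the normalized trace, by the same mechanism already used to define $\Sigma_{ML}$ in proposition \ref{prop: normaliteML}. Combining the two pieces and invoking equicontinuity of $\theta \mapsto CV_\theta$ on $\Theta$ (which follows from the differentiability in condition \ref{cond: Ktheta} and the uniform bounds, e.g.\ via a bound on $\sup_\theta \|\partial_\theta R_\theta^{-1}\|$), I get uniform convergence of $CV_\theta$ to a deterministic $\overline{CV}(\theta)$.

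Second, I would identify $\overline{CV}$ and show it is minimized uniquely at $\theta_0$. Writing $CV_\theta = \frac{1}{n} \sum_i e_{i,\theta}^2$ with $e_{i,\theta} := y_i - \hat y_{i,\theta}(y_{-i})$ and exploiting the orthogonality of the $\theta_0$-Kriging predictor (the true BLUP) gives, conditionally on $X$,
\[
\EE[e_{i,\theta}^2 \mid X] = \EE[e_{i,\theta_0}^2 \mid X] + \EE[(\hat y_{i,\theta_0}(y_{-i}) - \hat y_{i,\theta}(y_{-i}))^2 \mid X],
\]
so $\EE[CV_\theta \mid X] \geq \EE[CV_{\theta_0} \mid X]$ for every finite $n$, and therefore $\overline{CV}(\theta) \geq \overline{CV}(\theta_0)$. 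The strict inequality for $\theta \neq \theta_0$ has to be obtained from the identifiability hypothesis: if $\overline{CV}(\theta) = \overline{CV}(\theta_0)$, the extra nonnegative term must vanish in the limit, which forces, for a subset of indices $i$ of positive asymptotic density, the $\theta$-Kriging weights to coincide with the $\theta_0$-Kriging weights. Translating this back through the Kriging linear systems and using the stationarity of the sampling perturbations, this coincidence propagates to the equality of $K_\theta$ and $K_{\theta_0}$ on a full-measure subset of $D_\epsilon$ (respectively on all of $\ZZ^d$ when $\epsilon = 0$), contradicting the identifiability assumption.

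Third, once the unique-minimizer property and uniform convergence are in hand, the M-estimator conclusion is standard: any subsequential limit of $\hat\theta_{CV}$ in the compact $\Theta$ must minimize $\overline{CV}$ and therefore equal $\theta_0$, giving $\hat\theta_{CV} \to_p \theta_0$. The main obstacle, I expect, is the argument in the second step that turns the (averaged) equality of the BLUPs into the pointwise equality of the two covariance functions on $D_\epsilon$: the $i$-th Kriging weights depend on $K_\theta$ evaluated at all the differences $v_i - v_j + \epsilon(X_i - X_j)$, and one must exploit the fact that, as $i$ ranges over a large proportion of indices away from the boundary of the grid, these differences fill $D_\epsilon$ densely in a quantitative sense, so that the Kriging system uniquely determines $K_\theta - K_{\theta_0}$ on $D_\epsilon$. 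This is the analogue for CV of proposition \ref{prop: consistance_ML} and, as in the ML case, the assumption $K_\theta(0) = 1$ removes the ambiguity at the origin.
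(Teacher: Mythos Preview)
Your overall architecture matches the paper's: show $\sup_\theta |CV_\theta - \EE(CV_\theta\mid X)|\to_p 0$, set $D_{\theta,\theta_0}:=\EE(CV_\theta\mid X)-\EE(CV_{\theta_0}\mid X)$, and use the BLUP orthogonality to write
\[
D_{\theta,\theta_0}=\frac{1}{n}\sum_{i=1}^n\bigl(r_{i,\theta}^tR_{-i,\theta}^{-1}-r_{i,\theta_0}^tR_{-i,\theta_0}^{-1}\bigr)R_{-i,\theta_0}\bigl(R_{-i,\theta}^{-1}r_{i,\theta}-R_{-i,\theta_0}^{-1}r_{i,\theta_0}\bigr)\ge 0.
\]
Up to here you and the paper agree.

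The gap is precisely the step you flag as ``the main obstacle'': you argue qualitatively that if the limit of $D_{\theta,\theta_0}$ vanishes then the Kriging weight vectors must coincide and hence $K_\theta=K_{\theta_0}$ on $D_\epsilon$. This inference is not justified as written. What you actually have is that the \emph{averaged} squared difference of weight vectors tends to zero, not that individual weight vectors coincide; and even exact equality of the weight vectors $R_{-i,\theta}^{-1}r_{i,\theta}$ for all $i$ does not obviously pin down $K_\theta$, since the map from covariance values to Kriging weights is nonlinear (through $R_{-i,\theta}^{-1}$) and is invariant under row-wise rescalings of the form $R_\theta\mapsto D_\lambda R_\theta$ at the level of the relevant matrix expression. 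Saying ``$K_\theta(0)=1$ removes the ambiguity'' is the right intuition but is not an argument.

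The paper does not try to invert the Kriging map. Instead it proves the \emph{quantitative} lower bound $D_{\theta,\theta_0}\ge C\,|R_\theta-R_{\theta_0}|^2$ directly, as follows: (i) use the virtual LOO formula to rewrite the weight difference as $\diag(R_\theta^{-1})^{-1}R_\theta^{-1}-\diag(R_{\theta_0}^{-1})^{-1}R_{\theta_0}^{-1}$; (ii) after elementary matrix-norm manipulations this is bounded below by $\inf_{\lambda_1,\dots,\lambda_n}|D_\lambda R_\theta-R_{\theta_0}|^2$, which makes the row-scale ambiguity explicit; (iii) the elementary scalar inequality (Lemma~\ref{lem: lambda_compensateur})
\[
\inf_\lambda\Bigl\{(\lambda-1)^2+\sum_j(\lambda b_j-a_j)^2\Bigr\}\ \ge\ \frac{\sum_j(a_j-b_j)^2}{1+\sum_j b_j^2},
\]
applied row by row and using $(R_\theta)_{ii}=(R_{\theta_0})_{ii}=1$, eliminates the infimum and yields $D_{\theta,\theta_0}\ge C|R_\theta-R_{\theta_0}|^2$. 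From there one is back on the ML track: $|R_\theta-R_{\theta_0}|^2$ converges (uniformly in $\theta$) to the explicit $D_{\infty,\theta,\theta_0}$ already analyzed in the proof of Proposition~\ref{prop: consistance_ML}, and identifiability gives the strict separation $\inf_{|\theta-\theta_0|\ge\alpha}D_{\infty,\theta,\theta_0}>0$.

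So your first and third steps are fine, but the second step needs this quantitative bound (or an equivalent device) rather than the qualitative ``weights coincide $\Rightarrow$ covariances coincide'' claim; as stated, that claim is where the proof would fail.
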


Proposition \ref{prop: gradients_CV} gives the expression of the covariance matrix of the gradient of the CV criterion $CV_{\theta}$
and of the mean matrix of its Hessian.
These moments are classically used in statistics to prove asymptotic distributions of consistent estimators. We also prove the convergence of these moments,
the limit matrices being functions of the $p \times p$ matrices $\Sigma_{CV,1}$ and $\Sigma_{CV,2}$, for which we prove the existence. These matrices are deterministic and
depend only on the regularity parameter $\epsilon$.

\begin{prop} \label{prop: gradients_CV}

Assume that condition \ref{cond: Ktheta} is satisfied.

With, for $1 \leq i \leq p$,
\[
 M^i_{\theta}= R_{\theta}^{-1} \diag\left(R_{\theta}^{-1}\right)^{-2} \left\{ \diag\left( R_{\theta}^{-1} \frac{\partial R_{\theta}}{\partial \theta_i} R_{\theta}^{-1} \right)
\diag\left( R_{\theta}^{-1} \right)^{-1} - R_{\theta}^{-1} \frac{\partial R_{\theta}}{\partial \theta_i} \right\} R_{\theta}^{-1},
\]
we have, for all $1 \leq i,j \leq p$,
 \[
  \frac{\partial}{ \partial \theta_i } CV_{\theta} =
\frac{1}{n} 2 y^t M^i_{\theta} y.
 \]
We define the sequence of $n \times n$
matrices, indexed by $n \in \NN^*$,
$M_{CV,1}^{i,j}$ by
\[
M_{CV,1}^{i,j} = 2 \left\{M^i_{\theta_0}+\left(M^i_{\theta_0}\right)^t\right\}R_{\theta_0} \left\{M^j_{\theta_0}+\left(M^j_{\theta_0}\right)^t\right\}R_{\theta_0}.
\]
Then,
\begin{eqnarray} \label{eq: cov_d1_CV}
 \cov \left( \sqrt{n} \frac{\partial}{ \partial \theta_i } CV_{\theta_0} , \sqrt{n} \frac{\partial}{ \partial \theta_j } CV_{\theta_0} | X \right) = \frac{1}{n}
{\rm{Tr}}\left( M_{CV,1}^{i,j} \right).
\end{eqnarray}
Furthermore, the random trace $\frac{1}{n}
{\rm{Tr}}\left( M_{CV,1}^{i,j} \right)$ converges a.s. to the element $\left(\Sigma_{CV,1}\right)_{i,j}$ of a $p \times p$
deterministic matrix $\Sigma_{CV,1}$ as $n \to +\infty$.

Defining
\begin{eqnarray*}  
 M_{CV,2}^{i,j} 
& = & -8  \diag\left(R_{\theta_0}^{-1}\right)^{-3} \diag\left( R_{\theta_0}^{-1} \frac{\partial R_{\theta_0}}{\partial \theta_i} R_{\theta_0}^{-1}\right) 
R_{\theta_0}^{-1} \frac{\partial R_{\theta_0}}{\partial \theta_j} R_{\theta_0}^{-1}   \\
& & +2  \diag\left(R_{\theta_0}^{-1}\right)^{-2}  R_{\theta_0}^{-1} \frac{\partial R_{\theta_0}}{\partial \theta_i} R_{\theta_0}^{-1}
\frac{\partial R_{\theta_0}}{\partial \theta_j} R_{\theta_0}^{-1} \\ 
& & +6  \diag\left(R_{\theta_0}^{-1}\right)^{-4}  \diag\left( R_{\theta_0}^{-1} \frac{\partial R_{\theta_0}}{\partial \theta_i} R_{\theta_0}^{-1}\right)
\diag\left( R_{\theta_0}^{-1} \frac{\partial R_{\theta_0}}{\partial \theta_j} R_{\theta_0}^{-1}\right) R_{\theta_0}^{-1},  
\end{eqnarray*}
we also have
\begin{equation}  \label{eq: esp_d2_CV}
 \EE\left( \frac{\partial^2}{ \partial \theta_i \partial \theta_j } CV_{\theta_0} | X \right) 
 = \frac{1}{n}
{\rm{Tr}}\left( M_{CV,2}^{i,j} \right).
\end{equation}

Furthermore, the random trace $\frac{1}{n}
{\rm{Tr}}\left( M_{CV,2}^{i,j} \right)$ converges a.s. to the element $\left(\Sigma_{CV,2}\right)_{i,j}$ of a $p \times p$
deterministic matrix $\Sigma_{CV,2}$ as $n \to +\infty$.

\end{prop}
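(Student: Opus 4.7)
My plan is to establish each of the four assertions by direct matrix differentiation of the CV criterion and by invoking the Gaussian quadratic-form moment identities conditionally on $X$, with the almost-sure convergence of the normalized traces left for a self-averaging argument that exploits the entry-wise decay of $R_{\theta_0}^{-1}$.

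For the gradient formula, I would write $CV_\theta = \tfrac{1}{n} y^t A_\theta y$ with $A_\theta := R_\theta^{-1} \diag(R_\theta^{-1})^{-2} R_\theta^{-1}$ and differentiate, using $\partial R_\theta^{-1}/\partial \theta_i = -R_\theta^{-1}(\partial R_\theta/\partial \theta_i) R_\theta^{-1}$ together with $\partial \diag(R_\theta^{-1})^{-2}/\partial \theta_i = 2\diag(R_\theta^{-1})^{-3}\diag\bigl(R_\theta^{-1}(\partial R_\theta/\partial \theta_i)R_\theta^{-1}\bigr)$. Commutativity of diagonal matrices pools the $\diag(R_\theta^{-1})^{-k}$ factors, so the three resulting summands in $\partial A_\theta/\partial \theta_i$ coincide with $M^i_\theta + (M^i_\theta)^t$; the identity $y^t M y = y^t M^t y$ then gives $\partial CV_\theta/\partial \theta_i = (2/n) y^t M^i_\theta y$.

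For the covariance identity \eqref{eq: cov_d1_CV} at $\theta=\theta_0$, I would invoke the standard Isserlis identity: for $y\mid X \sim \N(0,R_{\theta_0})$ and any matrices $U,V$, $\cov(y^t U y, y^t V y \mid X) = \tfrac{1}{2}{\rm{Tr}}\bigl((U+U^t)R_{\theta_0}(V+V^t)R_{\theta_0}\bigr)$; taking $U = M^i_{\theta_0}$, $V = M^j_{\theta_0}$ and rescaling by $n$ reproduces the stated trace, with the leading $2$ absorbed into the definition of $M_{CV,1}^{i,j}$. For the Hessian expectation \eqref{eq: esp_d2_CV}, I would differentiate $\partial A_\theta/\partial \theta_i = M^i_\theta + (M^i_\theta)^t$ once more and use $\EE[y^t B y \mid X] = {\rm{Tr}}(B R_{\theta_0})$; since ${\rm{Tr}}(B^t R_{\theta_0}) = {\rm{Tr}}(B R_{\theta_0})$ by symmetry of $R_{\theta_0}$ and trace cyclicity, only $\partial M^i_{\theta_0}/\partial \theta_j$ needs expansion. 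Applying the same derivative identities as in step one to each of the three factors in $M^i_\theta$, then absorbing one copy of $R_{\theta_0}^{-1}$ through the trace via cyclicity and grouping like terms, should produce exactly the three summands in $M_{CV,2}^{i,j}$.

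The genuine obstacle is the final claim, the almost-sure convergence of $\tfrac{1}{n}{\rm{Tr}}(M_{CV,1}^{i,j})$ and $\tfrac{1}{n}{\rm{Tr}}(M_{CV,2}^{i,j})$ to deterministic limits. My plan is to prove a generic self-averaging statement: for any fixed polynomial $B_{\theta_0}$ in $R_{\theta_0}$, $R_{\theta_0}^{-1}$, $\partial R_{\theta_0}/\partial \theta_i$, and $\diag(R_{\theta_0}^{-1})^{\pm 1}$, the diagonal-average $\tfrac{1}{n}\sum_{k=1}^n (B_{\theta_0})_{k,k}$ has an almost-sure limit depending only on $\mathcal{L}_X$ and $\epsilon$. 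Two ingredients drive the argument. First, the decay \eqref{eq: controleCov} together with the minimum-spacing property guaranteed by $\epsilon < \tfrac{1}{2}$ yields uniform bounds on $\|R_{\theta_0}\|$ and $\|R_{\theta_0}^{-1}\|$, and then---via a Wiener-type or Neumann-series comparison with banded matrices---a summable off-diagonal decay of the entries of $R_{\theta_0}^{-1}$. Second, this decay allows me to approximate $(B_{\theta_0})_{k,k}$ by a truncation depending only on the $X_j$ with $v_j$ in a bounded neighborhood of $v_k$; as $k$ ranges over $\{1,\ldots,N\}^d$, the truncated quantities form a functional of a stationary iid field on $\ZZ^d$ to which a strong law of large numbers applies, and sending the truncation radius to infinity after passing to the limit in $n$ kills the remainder in $L^1$ uniformly in $n$. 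Making this uniformity precise is the main technical burden, and I expect to rely on the auxiliary lemmas in section \ref{section: appendix_technical_results}.
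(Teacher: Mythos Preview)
Your derivations of the gradient, the conditional covariance \eqref{eq: cov_d1_CV}, and the Hessian expectation \eqref{eq: esp_d2_CV} match the paper's proof essentially line by line: the paper cites \cite{Bachoc2013cross} for the gradient, uses the same quadratic-form covariance identity (written there for symmetric $A,B$ as $\cov(y^tAy,y^tBy\mid X)=2\,{\rm Tr}(AR_{\theta_0}BR_{\theta_0})$), and obtains $M_{CV,2}^{i,j}$ by expanding $\partial^2 CV_\theta/\partial\theta_i\partial\theta_j$ as a nine-term quadratic form, taking the conditional mean, and noting that four of the resulting trace terms cancel pairwise via the identity ${\rm Tr}\{\diag(R^{-1})^{-3}\diag(M)R^{-1}\}={\rm Tr}\{\diag(R^{-1})^{-2}M\}$. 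Your organization (differentiating $M^i_\theta$ once more and using ${\rm Tr}(B^tR_{\theta_0})={\rm Tr}(BR_{\theta_0})$) is the same computation arranged slightly differently; the cancellations you allude to under ``grouping like terms'' are exactly the two pairwise cancellations the paper writes out.

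Where you genuinely diverge from the paper is the almost-sure convergence of $\tfrac{1}{n}{\rm Tr}(M_{CV,k}^{i,j})$. The paper simply invokes proposition~\ref{prop: convergenceTrace}, whose proof does \emph{not} pass through entrywise decay of $R_{\theta_0}^{-1}$. Instead, it tiles $\{1,\dots,N\}^d$ into cubes of side $N_1$, replaces $R_\theta$ by its block-diagonal truncation $\tilde R_\theta$ (zeroing entries between distinct cubes), shows $|R_\theta-\tilde R_\theta|\to0$ uniformly in $x$ as $N_1,n_2\to\infty$ from \eqref{eq: controleCov} and lemma~\ref{lem: sommabilité3}, transfers this to the inverse and to products via the weak-norm perturbation results of \cite{TCMR}, and finally uses that the block traces are iid over the cubes. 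Your route---polynomial off-diagonal decay of $R_{\theta_0}^{-1}$ via a Wiener/Jaffard inverse-closedness theorem, then truncation of $(B_{\theta_0})_{k,k}$ to a local window and an ergodic SLLN for the resulting $m$-dependent field---is also sound, and is perhaps more familiar from the random-field literature. The trade-off is that your argument imports a nontrivial external ingredient (inverse-closedness under polynomial decay; a Neumann series alone will not do it since the off-diagonal part of $R_{\theta_0}$ need not have small norm), whereas the paper's block approach needs only the uniform eigenvalue bounds of proposition~\ref{prop: minorationValeursPropres} and lemma~\ref{lem: controle_valeurs_propres} together with the $|\cdot|$-norm calculus from \cite{TCMR}. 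Since proposition~\ref{prop: convergenceTrace} is already available in section~\ref{section: appendix_technical_results} and its class $\mathcal{M}_\theta$ is designed precisely to contain $M_{CV,1}^{i,j}$ and $M_{CV,2}^{i,j}$, the path of least resistance is to verify membership in $\mathcal{M}_\theta$ and cite it.
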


In proposition \ref{prop: normaliteCV}, we address the asymptotic normality of CV. The conditions are, as for the consistency, identifiability and
that the set of covariance functions contains only correlation functions.
The convergence rate is also $\sqrt{n}$, and we have the expression of
the deterministic asymptotic covariance matrix of $\sqrt{n} \hat{\theta}_{CV}$, depending only of the matrices $\Sigma_{CV,1}$ and $\Sigma_{CV,2}$
of proposition \ref{prop: gradients_CV}.
In proposition \ref{prop: info>0_CV}, we prove that the asymptotic matrix $\Sigma_{CV,2}$ is positive. The minimal assumption is, as for the ML case, that
the different derivative functions
with respect to $\theta$ at $\theta_0$ of the covariance function are non redundant on the set of inter-point distances covered by the random spatial sampling.

\begin{prop} \label{prop: normaliteCV}

Assume that condition \ref{cond: Ktheta} is satisfied.

 If $\hat{\theta}_{CV}$ is consistent and if $\Sigma_{CV,2}$ is positive, then
\[
 \sqrt{n} \left( \hat{\theta}_{CV} - \theta_0 \right) \to_{\L} \N\left(0 ,  \Sigma_{CV,2}^{-1} \Sigma_{CV,1} \Sigma_{CV,2}^{-1} \right) ~ ~ \mbox{as $n \to +\infty$}.
\]

\end{prop}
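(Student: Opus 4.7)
The plan is to follow the classical M-estimator argument, adapted to the setting where the score is a quadratic form in a Gaussian vector whose covariance is itself randomized through $X$.

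First, since $\hat{\theta}_{CV}$ is consistent by assumption and $\theta_0$ belongs to the interior of the compact set $\Theta$, with probability tending to one $\hat{\theta}_{CV}$ is an interior minimizer of $\theta \mapsto CV_\theta$, hence satisfies $\nabla_\theta CV_{\hat{\theta}_{CV}} = 0$. A coordinate-wise Taylor expansion around $\theta_0$ then yields, for $i = 1,\ldots,p$,
\begin{equation*}
0 \;=\; \sqrt{n}\,\frac{\partial CV_{\theta_0}}{\partial \theta_i} \;+\; \sum_{j=1}^p \frac{\partial^2 CV_{\bar{\theta}^{(i)}}}{\partial \theta_i \partial \theta_j}\, \bigl[\sqrt{n}(\hat{\theta}_{CV}-\theta_0)\bigr]_j ,
\end{equation*}
with $\bar{\theta}^{(i)}$ on the segment joining $\theta_0$ and $\hat{\theta}_{CV}$.

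Second, I would establish the CLT for the score, namely $\sqrt{n}\,\nabla CV_{\theta_0} \to_{\L} \N(0,\Sigma_{CV,1})$. By Proposition \ref{prop: gradients_CV}, each component equals $\frac{2}{\sqrt{n}}\,y^t M^i_{\theta_0} y$, a quadratic form in the conditionally Gaussian vector $y$. Conditionally on $X$, the covariance matrix of $\sqrt{n}\,\nabla CV_{\theta_0}$ converges almost surely to $\Sigma_{CV,1}$; a Lindeberg-type CLT for Gaussian quadratic forms then applies, provided the largest singular value of $R^{1/2}\bigl(M^i_{\theta_0}+(M^i_{\theta_0})^t\bigr)R^{1/2}$ stays bounded in $n$, which follows from the decay estimate \eqref{eq: controleCov} and the minimal spacing between observation points. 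The conditional characteristic function thus converges a.s.\ to that of $\N(0,\Sigma_{CV,1})$, and dominated convergence transfers the convergence to the unconditional law.

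Third, I would show that the Hessian component evaluated at the intermediate point $\bar{\theta}^{(i)}$ converges in probability to $(\Sigma_{CV,2})_{i,j}$. At $\theta = \theta_0$, Proposition \ref{prop: gradients_CV} gives $\EE(\partial^2_{ij} CV_{\theta_0}\mid X) \to_{a.s.} (\Sigma_{CV,2})_{i,j}$; a trace-norm bound, analogous to the one used in step two, shows $\var(\partial^2_{ij} CV_{\theta_0}\mid X) \to_p 0$, so $\partial^2_{ij} CV_{\theta_0} \to_p (\Sigma_{CV,2})_{i,j}$. Three-times differentiability of $K_\theta$ together with \eqref{eq: controleCov} provides a stochastic Lipschitz bound for $\theta \mapsto \partial^2_{ij} CV_\theta$ on a fixed neighborhood of $\theta_0$; combined with consistency of $\hat{\theta}_{CV}$, this transfers the limit from $\theta_0$ to $\bar{\theta}^{(i)}$. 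Slutsky's theorem then gives, using positivity of $\Sigma_{CV,2}$ to invert,
\begin{equation*}
\sqrt{n}(\hat{\theta}_{CV}-\theta_0) \;=\; -\Sigma_{CV,2}^{-1}\,\sqrt{n}\,\nabla CV_{\theta_0} + o_p(1) \;\to_{\L}\; \N\bigl(0,\,\Sigma_{CV,2}^{-1}\Sigma_{CV,1}\Sigma_{CV,2}^{-1}\bigr).
\end{equation*}

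The main obstacle is step two: a CLT for a Gaussian quadratic form whose matrix $M^i_{\theta_0}$ is itself random through $X$. The cleanest route is conditional on $X$, exploiting that the conditional limit variance $\Sigma_{CV,1}$ is deterministic so that convergence of the conditional characteristic function passes to the unconditional one via dominated convergence. Making this work requires uniform-in-$X$ control of the spectral norms of the relevant $n\times n$ matrices, which is why the covariance decay in Condition \ref{cond: Ktheta} (together with the $\epsilon<1/2$ spacing condition) is the critical structural assumption.
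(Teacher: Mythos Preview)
Your proposal is correct and follows essentially the same route as the paper's own proof. The paper packages your three steps into dedicated technical propositions: the conditional Lindeberg argument for Gaussian quadratic forms with random-in-$X$ matrices and a.s.\ deterministic limit trace (your step two) is exactly Proposition~\ref{prop: lindeberg_presque_sur}; the mean-square convergence of the Hessian at $\theta_0$ (your step three, first half) is Proposition~\ref{prop: convergence_forme_quadratique}; and the $M$-estimator wrap-up via a uniform bound on third derivatives (your stochastic Lipschitz control) together with Slutsky is Proposition~\ref{prop: normaliteEstimateur}. The only point you leave implicit that the paper's machinery makes explicit is the centering of the score, i.e.\ ${\rm Tr}\bigl((M^i_{\theta_0}+(M^i_{\theta_0})^t)R_{\theta_0}\bigr)=0$, which is needed to invoke the Lindeberg-type CLT and follows from the orthogonality of the LOO residual at $\theta_0$ to linear functions of $y_{-i}$.
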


\begin{prop} \label{prop: info>0_CV}
Assume that condition \ref{cond: Ktheta} is satisfied and that for all $\theta \in \Theta$, $K_{\theta}\left(0\right) = 1$.

For $\epsilon = 0$, if there does not exist $v_{\lambda} = \left(\lambda_1,...,\lambda_p\right)^t \in \RR^p$,
$v_{\lambda}$ different from zero, so that
$\sum_{k=1}^p \lambda_k \frac{\partial}{\partial \theta_k} K_{\theta_0}\left(v\right) = 0$
for all $v \in \ZZ^d$, then $\Sigma_{CV,2}$ is positive.

For $\epsilon \neq 0$, we denote $D_{\epsilon} = \cup_{v \in \ZZ^d \backslash 0} \left(v + \epsilon C_{S_X}\right)$,
with $C_{S_X} = \left\{ t_1-t_2,t_1 \in S_X, t_2 \in S_X \right\}$.
If there does not exist $v_{\lambda} = \left(\lambda_1,...,\lambda_p\right)^t \in \RR^p$,
$v_{\lambda}$ different from zero, so that
$\sum_{k=1}^p \lambda_k \frac{\partial}{\partial \theta_k} K_{\theta_0}\left(t\right): \RR^d \to \RR$
is almost surely zero on $D_{\epsilon}$, with respect to the Lebesgue measure on $D_{\epsilon}$, then $\Sigma_{CV,2}$ is positive.
\end{prop}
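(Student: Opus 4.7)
The plan is to identify $\Sigma_{CV,2}$ as the almost sure limit of conditional expected Hessians of $CV_\theta$ at $\theta_0$, which immediately gives positive semi-definiteness, and then to exclude degenerate directions using the identifiability condition on the first derivatives of $K_{\theta_0}$.

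The first step is to rewrite $\frac{1}{n}{\rm{Tr}}(M_{CV,2}^{i,j})$ in probabilistic terms. With $e_k(\theta) := y_k - \hat{y}_{k,\theta}(y_{-k})$, the virtual LOO identity $CV_\theta = \frac{1}{n}\sum_{k=1}^n e_k(\theta)^2$ recalled above and \eqref{eq: esp_d2_CV} yield, by differentiating twice under the expectation,
\begin{equation*}
\frac{1}{n}{\rm{Tr}}(M_{CV,2}^{i,j}) = \frac{2}{n}\sum_{k=1}^n \EE\bigl(\partial_{\theta_i} e_k(\theta_0)\,\partial_{\theta_j} e_k(\theta_0) \mid X\bigr) + \frac{2}{n}\sum_{k=1}^n \EE\bigl(e_k(\theta_0)\,\partial_{\theta_i}\partial_{\theta_j} e_k(\theta_0) \mid X\bigr).
\end{equation*}
The second sum vanishes: under $\theta_0$, $e_k(\theta_0) = y_k - \EE(y_k \mid y_{-k}, X)$ is orthogonal given $X$ to every function of $y_{-k}$, while $\partial_{\theta_i}\partial_{\theta_j} e_k(\theta_0) = -\partial_{\theta_i}\partial_{\theta_j}\hat{y}_{k,\theta_0}(y_{-k})$ is linear in $y_{-k}$. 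Writing $\partial_\lambda := \sum_{\ell=1}^p \lambda_\ell \partial_{\theta_\ell}$, the surviving sum is a Gram matrix and
\begin{equation*}
\frac{1}{n}\sum_{i,j=1}^p \lambda_i \lambda_j \, {\rm{Tr}}\bigl(M_{CV,2}^{i,j}\bigr) = \frac{2}{n}\sum_{k=1}^n \var\bigl(\partial_\lambda \hat{y}_{k,\theta_0}(y_{-k}) \mid X\bigr) \ge 0
\end{equation*}
for every $n$. The almost sure convergence in proposition \ref{prop: gradients_CV} gives $v_\lambda^t \Sigma_{CV,2} v_\lambda \ge 0$ for every $v_\lambda$.

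The second step derives strict positivity from the identifiability condition. Suppose, for contradiction, that $v_\lambda^t \Sigma_{CV,2} v_\lambda = 0$ for some $v_\lambda = (\lambda_1,\dots,\lambda_p)^t \neq 0$. Then
\begin{equation*}
\lim_{n \to \infty} \frac{1}{n} \sum_{k=1}^n \var\bigl( \partial_\lambda \hat{y}_{k,\theta_0}(y_{-k}) \mid X \bigr) = 0 \quad \text{a.s.}
\end{equation*}
Expanding $\hat{y}_{k,\theta}(y_{-k}) = r_{\theta,k}^t R_{\theta,-k,-k}^{-1} y_{-k}$, the vector of coefficients of $y_{-k}$ in $\partial_\lambda \hat{y}_{k,\theta_0}(y_{-k})$ has entries that are explicit linear combinations of evaluations of $g_\lambda(t) := \sum_{\ell=1}^p \lambda_\ell \partial_{\theta_\ell} K_{\theta_0}(t)$ at the random displacements $v_i - v_j + \epsilon(X_i - X_j)$ in the neighborhood of index $k$. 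Hence the asymptotic vanishing of the averaged variances constrains $g_\lambda$ on the support of these displacements.

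The third step, which I expect to be the main obstacle, is to turn this asymptotic vanishing into pointwise vanishing of $g_\lambda$: Lebesgue-a.e.\ on $D_\epsilon$ when $\epsilon \neq 0$, and identically on $\ZZ^d$ when $\epsilon = 0$. The argument mirrors the analogous step in the proof of proposition \ref{prop: info>0_ML}: averaging over $k$ is, by the near-translation invariance of the perturbed grid, an ergodic-type averaging over displacement patterns, and the uniform spectral bound \eqref{eq: controleCov} of condition \ref{cond: Ktheta} together with the boundedness away from zero of the eigenvalues of $R_{\theta_0}$ in increasing-domain asymptotics allows the diagonal normalizations $\diag(R_{\theta_0}^{-1})^{-1}$ hidden in $M_{CV,2}^{i,j}$ to be absorbed as uniformly bounded weights. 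Once the pointwise vanishing of $g_\lambda$ on the displacement set is obtained, the assumption $K_\theta(0) = 1$ automatically gives $g_\lambda(0) = 0$, and applying the identifiability hypothesis in each of the cases $\epsilon = 0$ and $\epsilon \neq 0$ yields the contradiction $v_\lambda = 0$.
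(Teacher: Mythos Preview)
Your first step is correct and is essentially what the paper does: since $\EE(CV_\theta\mid X)$ equals a $\theta$-independent term plus the sum of squared prediction differences $(r_{i,\theta}^tR_{-i,\theta}^{-1}-r_{i,\theta_0}^tR_{-i,\theta_0}^{-1})R_{-i,\theta_0}(\cdots)$, the Hessian at $\theta_0$ is
\[
\EE\Bigl(\tfrac{\partial^2}{\partial\theta_i\partial\theta_j}CV_{\theta_0}\mid X\Bigr)=\frac{2}{n}\sum_{k=1}^n\bigl\{\partial_{\theta_i}(R_{-k,\theta_0}^{-1}r_{k,\theta_0})\bigr\}^tR_{-k,\theta_0}\bigl\{\partial_{\theta_j}(R_{-k,\theta_0}^{-1}r_{k,\theta_0})\bigr\},
\]
which is your Gram identity in different notation. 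So positive semi-definiteness of $\Sigma_{CV,2}$ is secured.

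The gap is in your third step. You claim that this step ``mirrors the analogous step in the proof of proposition~\ref{prop: info>0_ML}'' and that the $\diag(R_{\theta_0}^{-1})^{-1}$ factors ``can be absorbed as uniformly bounded weights.'' But the ML proof has no analogous difficulty: there the relevant quantity is $|R_{\theta_0}^{-1/2}\partial_\lambda R_{\theta_0}\,R_{\theta_0}^{-1/2}|^2$, which is trivially bounded below by a constant times $|\partial_\lambda R_{\theta_0}|^2$. In the CV case, after peeling off the bounded invertible factors you are left with
\[
\bigl|\diag(R_{\theta_0}^{-1}\partial_\lambda R_{\theta_0}R_{\theta_0}^{-1})\diag(R_{\theta_0}^{-1})^{-1}-R_{\theta_0}^{-1}\partial_\lambda R_{\theta_0}\bigr|^2,
\]
and this is \emph{not} directly comparable to $|\partial_\lambda R_{\theta_0}|^2$: the first term is a diagonal matrix that could in principle cancel the diagonal of the second. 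Boundedness of the weights says nothing about coercivity of this difference.

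The paper handles this by relaxing the specific diagonal to an infimum over \emph{all} diagonal matrices $D_\lambda$, reducing to $\inf_{\lambda_1,\dots,\lambda_n}|R_{\theta_0}D_\lambda-\partial_\lambda R_{\theta_0}|^2$, and then using the elementary inequality
\[
\inf_\lambda\Bigl\{\lambda^2+\sum_{j\neq i}(\lambda b_j-a_j)^2\Bigr\}\ge\frac{\sum_{j\neq i}a_j^2}{1+\sum_{j\neq i}b_j^2}
\]
row by row. This step uses $K_\theta(0)=1$ twice: first so that the diagonal of $R_{\theta_0}$ is $1$ (giving the isolated $\lambda^2$ term), and second so that the diagonal of $\partial_\lambda R_{\theta_0}$ vanishes (so that the surviving lower bound is exactly $|\partial_\lambda R_{\theta_0}|^2$ up to a constant). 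Only then does one land on the quantity whose positive limit follows from the identifiability hypothesis as in the ML case. Your sketch does not supply this mechanism, and without it the implication ``averaged variances $\to 0$ $\Rightarrow$ $g_\lambda\equiv 0$'' is not established.
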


The conclusion for ML and CV is that, for all the most classical parametric families of covariance functions,
consistency and asymptotic normality hold, with deterministic positive
asymptotic covariance matrices depending only on the regularity parameter $\epsilon$. Therefore, these covariance matrices are analyzed in
section \ref{section: numerical_study}, to address the influence of the irregularity of the spatial sampling on the ML and CV estimation.

\FloatBarrier

\section{Analysis of the asymptotic covariance matrices}   \label{section: numerical_study}

The limit distributions of the ML and CV estimators
only depend on the regularity parameter $\epsilon$ through the asymptotic covariance matrices in propositions \ref{prop: normaliteML} and
\ref{prop: normaliteCV}. The aim of this section is to numerically study the influence of $\epsilon$ on these asymptotic covariance matrices.
We address the case $d=1$, with $p=1$ in subsections \ref{subsection: numeric_local} and \ref{subsection: numeric_global}
and $p=2$ in subsection \ref{subsection: joint}.

The asymptotic covariance matrices of propositions \ref{prop: normaliteML} and \ref{prop: normaliteCV}
are expressed as functions of a.s. limits of traces of sums, products and inverses of random matrices. In the case $\epsilon=0$, for $d=1$,
these matrices are deterministic Toeplitz matrices, so that the limits can be expressed using Fourier transform techniques (see \cite{TCMR}). In section
\ref{section: Toeplitz}, we give the closed form  expression of $\Sigma_{ML}$, $\Sigma_{CV,1}$ and $\Sigma_{CV,2}$ for $\epsilon = 0$ and $d=1$.
In the case $\epsilon \neq 0$, there does not exist, to the best of our knowledge, any random matrix technique that would give a closed form expression of
$\Sigma_{ML}$, $\Sigma_{CV,1}$ and $\Sigma_{CV,2}$. Therefore, for the numerical study with $\epsilon \neq 0$, these matrices will be approximated by the random traces
for large $n$.

\subsection{The derivatives of $\Sigma_{ML}$, $\Sigma_{CV,1}$ and $\Sigma_{CV,2}$}  \label{subsection: echange_limite_derivee}

In proposition \ref{prop: deriveesSigma} we show that, under the mild condition \ref{cond: ddt_Ktheta}, the asymptotic covariance matrices obtained from
$\Sigma_{ML}$, $\Sigma_{CV,1}$ and $\Sigma_{CV,2}$,
of propositions \ref{prop: normaliteML} and \ref{prop: normaliteCV},
are twice differentiable with respect to $\epsilon$. This result is useful for the numerical study of subsection \ref{subsection: numeric_local}.

\begin{cond}  \label{cond: ddt_Ktheta}
\begin{itemize}
\item Condition \ref{cond: Ktheta} is satisfied. 
\item $K_{\theta}(t)$ and $\frac{\partial}{ \partial \theta_i } K_{\theta}\left(t\right)$, for $1 \leq i\leq p$,
are three times differentiable in $t$ for $t \neq 0$.
\item For all $T >0$, $\theta \in \Theta$, $1 \leq i \leq p$, $k \in \left\{ 1,2,3 \right\}$, $i_1,...,i_k \in \{1,...,d \}^k$ ,
there exists $C_{T} < + \infty$ so that for $|t| \geq T$,
\begin{eqnarray} \label{eq: controleDeriveesK}
\left| \frac{\partial}{ \partial t_{i_1} },...,\frac{\partial}{ \partial t_{i_k} } K_{\theta}\left(t\right) \right| & \leq & \frac{C_{T}}{1 + |t|^{d+1}},  \\ \nonumber
 \left| \frac{\partial}{ \partial t_{i_1} },...,\frac{\partial}{ \partial t_{i_k} } \frac{\partial}{ \partial \theta_i } K_{\theta}\left(t\right) \right|
& \leq & \frac{C_{T}}{1 + |t|^{d+1}}.  \\ \nonumber
\end{eqnarray}
\end{itemize}
\end{cond}

\begin{prop}  \label{prop: deriveesSigma}

Assume that condition \ref{cond: ddt_Ktheta} is satisfied.

Let us fix $1 \leq i,j \leq p$. The elements $\left(\Sigma_{ML}\right)_{i,j}$, $\left(\Sigma_{CV,1}\right)_{i,j}$ and $\left(\Sigma_{CV,2}\right)_{i,j}$
(as defined in propositions \ref{prop: normaliteML} and \ref{prop: gradients_CV}) are
$C^2$ in $\epsilon$ on $[ 0, \frac{1}{2} )$. Furthermore, with $\frac{1}{n}  {\rm{Tr}}\left( M_{ML}^{i,j} \right) \to_{a.s.} \left(\Sigma_{ML}\right)_{i,j}$,
$\frac{1}{n}  {\rm{Tr}}\left( M_{CV,1}^{i,j} \right) \to_{a.s.} \left(\Sigma_{CV,1}\right)_{i,j}$ and
$\frac{1}{n}  {\rm{Tr}}\left( M_{CV,2}^{i,j} \right) \to_{a.s.} \left(\Sigma_{CV,2}\right)_{i,j}$
(propositions \ref{prop: normaliteML} and \ref{prop: gradients_CV}), we have,
for $\left(\Sigma\right)_{i,j}$ being $\left(\Sigma_{ML}\right)_{i,j}$, $\left(\Sigma_{CV,1}\right)_{i,j}$ or $\left(\Sigma_{CV,2}\right)_{i,j}$ and $M^{i,j}$ being $M_{ML}^{i,j}$, $M_{CV,1}^{i,j}$ or $M_{CV,2}^{i,j}$,
\[
 \frac{\partial^2}{\partial \epsilon^2} \left(\Sigma\right)_{i,j} = \underset{n \to + \infty}{\lim} \frac{1}{n} \EE\left\{ \frac{\partial^2}{\partial \epsilon^2} {\rm{Tr}}\left(M^{i,j}\right) \right\}.
\]
\end{prop}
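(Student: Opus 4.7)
The plan is to show that, for each compact subinterval $I \subset [0, \tfrac{1}{2})$, the deterministic functions $f_n(\epsilon) := \tfrac{1}{n}\EE[\mathrm{Tr}(M^{i,j})]$ converge to $(\Sigma)_{i,j}(\epsilon)$ in the $C^2(I)$ topology. This simultaneously gives the $C^2$ regularity asserted and the identification of $\partial^2_\epsilon (\Sigma)_{i,j}$ with $\lim_n \tfrac{1}{n}\EE[\partial^2_\epsilon \mathrm{Tr}(M^{i,j})]$. Pointwise convergence $f_n(\epsilon) \to (\Sigma)_{i,j}(\epsilon)$ is inherited from the almost sure convergence in propositions \ref{prop: normaliteML} and \ref{prop: gradients_CV}, combined with the uniform boundedness of the random traces $\tfrac{1}{n}\mathrm{Tr}(M^{i,j})$ via dominated convergence. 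The bulk of the work is therefore to obtain uniform bounds on $f_n,f_n',f_n'',f_n'''$, so that Arzel\`a--Ascoli applies in $C^2(I)$.

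First I would verify that $\mathrm{Tr}(M^{i,j})$ is $C^3$ in $\epsilon$ almost surely, and that differentiation may be exchanged with $\EE$. Since $v_k,v_l \in (\NN^*)^d$ and $|\epsilon(X_k-X_l)| < 1$, one has $v_k - v_l + \epsilon(X_k - X_l) \neq 0$ for $k\neq l$, hence each off-diagonal entry $(R_\theta)_{k,l}$ is $C^3$ in $\epsilon$ by condition \ref{cond: ddt_Ktheta}, while the diagonal entries are constant. The chain rule together with $\partial_\epsilon (R_\theta^{-1}) = -R_\theta^{-1}(\partial_\epsilon R_\theta) R_\theta^{-1}$ then expresses the $a$-th $\epsilon$-derivative of $M^{i,j}$ as a finite sum of matrix products drawn from the family $\{R_{\theta_0}^{-1},\partial_\epsilon^b R_{\theta_0}, \partial_\epsilon^b (\partial R_{\theta_0}/\partial \theta_l), \diag(\cdots), \diag(\cdots)^{-1}\}$ for $b \leq a$. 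Interchange with $\EE$ will be justified by the uniform bounds of the next step.

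The key technical step is a uniform Schur-type estimate. Writing $\partial_\epsilon^b (R_\theta)_{k,l}$ as a combination of products of components of $X_k-X_l$ and of mixed spatial derivatives of $K_\theta$ evaluated at $v_k-v_l + \epsilon(X_k-X_l)$, condition \ref{cond: ddt_Ktheta} together with the uniform lower bound $|v_k-v_l + \epsilon(X_k-X_l)| \geq c_I\,(1+|v_k-v_l|)$ for $k\neq l$ (which holds for every $\epsilon \in I$ since $I$ is compactly contained in $[0,\tfrac{1}{2})$ and $|X_k-X_l|\leq 2$) yields
\[
\left| \partial_\epsilon^b (R_\theta)_{k,l} \right| \;\leq\; \frac{C_{I,b}}{1+|v_k-v_l|^{d+1}},
\]
so the row sums of $\partial_\epsilon^b R_\theta$ are bounded uniformly in $n$ and in $\epsilon \in I$. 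The same holds for $\partial_\epsilon^b(\partial R_\theta/\partial \theta_l)$. Combining with the standard spectral lower bound on $R_\theta$ supplied by section \ref{section: appendix_technical_results} (which gives $\|R_\theta^{-1}\|$ uniformly bounded in $n$, $\theta$ and $\epsilon$), and noting that extracting diagonal parts does not increase the operator norm, one obtains $\|\partial_\epsilon^a M^{i,j}\| \leq C_{I,a}$ uniformly in $n$ and $\epsilon \in I$ for $a \in \{0,1,2,3\}$. Consequently $|f_n^{(a)}(\epsilon)| \leq C_{I,a}$, which also justifies the exchange of $\EE$ and $\partial_\epsilon^a$.

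To conclude, the sequence $\{f_n\}$ is bounded in $C^3(I)$, hence relatively compact in $C^2(I)$ by Arzel\`a--Ascoli. Since every $C^2(I)$-accumulation point must coincide with the pointwise limit $(\Sigma)_{i,j}$, one concludes that $(\Sigma)_{i,j} \in C^2(I)$ and that the whole sequence $f_n$ converges to $(\Sigma)_{i,j}$ in $C^2(I)$. In particular $f_n''(\epsilon) \to (\Sigma)_{i,j}''(\epsilon)$, which is the claimed identity. The main obstacle is the operator-norm control: one must simultaneously leverage the decay in \eqref{eq: controleDeriveesK} and the strict inequality $\epsilon < \tfrac{1}{2}$ to prevent any collapse of the shifted grid distances, while differentiating products involving $R_\theta^{-1}$ up to order three without losing the uniform spectral lower bound.
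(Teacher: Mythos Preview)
Your argument is correct and runs along the same rails as the paper's: define $f_n(\epsilon)=\tfrac{1}{n}\EE[\mathrm{Tr}(M^{i,j})]$ on a compact subinterval, use condition~\ref{cond: ddt_Ktheta} to get uniform-in-$n$ operator-norm bounds on $\partial_\epsilon^a M^{i,j}$ for $a\le 3$ (via Schur/Gershgorin row-sum control and the spectral lower bound on $R_\theta$), and deduce $C^2$ convergence. The one genuine difference is in the last step. The paper does not use Arzel\`a--Ascoli; instead it separately establishes \emph{pointwise} convergence of $f_n'$ and $f_n''$ by observing that $\partial_\epsilon^a M^{i,j}$ again lies in an enlarged class $\tilde{\mathcal{M}}_{\theta_0}$ (the class of proposition~\ref{prop: convergenceTrace} augmented with $\epsilon$-derivative matrices), so the same trace-limit machinery applies, and then upgrades to uniform convergence via the $C^3$ equicontinuity bound. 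Your compactness route is slightly more economical in that it avoids re-running the trace-convergence argument for $f_n'$ and $f_n''$; the paper's route, on the other hand, explicitly identifies $\partial_\epsilon^2(\Sigma)_{i,j}$ as the trace limit of a concrete matrix product in $\tilde{\mathcal{M}}_{\theta_0}$, which is what is actually exploited in the closed-form computations of section~\ref{section: Toeplitz}.
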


Proposition \ref{prop: deriveesSigma} shows that we can compute numerically the derivatives of $\left(\Sigma_{ML}\right)_{i,j}$, $\left(\Sigma_{CV,k}\right)_{i,j}$, $k=1,2$,
with respect to $\epsilon$ by computing the derivatives of $M_{ML}^{i,j}$, $M_{CV,k}^{i,j}$, $k=1,2$, for $n$ large. The fact that it is possible to
exchange the limit in $n$ and the derivative in $\epsilon$ was not {\it a priori} obvious.

In the rest of the section, we address specifically the case where $d=1$, and the distribution of the $X_i$, $1 \leq i \leq n$, is uniform on $[-1,1]$.
We focus on
the case of the Mat\'ern covariance function. In dimension one, this covariance model is parameterized by the
correlation length
$\ell$ and the smoothness parameter $\nu$. The covariance function $K_{\ell,\nu}$ is Mat\'ern $\left(\ell,\nu\right)$ where
\begin{equation} \label{eq: Rmat}
K_{\ell,\nu}\left(t\right) = \frac{1}{\Gamma\left(\nu\right)2^{\nu-1}} \left(  2 \sqrt{\nu} \frac{|t|}{\ell} \right)^{\nu} K_{\nu} \left( 2 \sqrt{\nu} \frac{|t|}{\ell} \right),
\end{equation}
with $\Gamma$ the Gamma function and $K_{\nu}$ the modified Bessel function of second order.
See e.g \cite[p.31]{ISDSTK} for a presentation of the Mat\'ern correlation function.

In subsections \ref{subsection: numeric_local} and \ref{subsection: numeric_global}, we estimate $\nu$
when $\ell$ is known and conversely. This case $p=1$ enables us to deal with a scalar asymptotic variance, which
is an unambiguous criterion for addressing the impact of the irregularity of the spatial sampling.
In subsection \ref{subsection: joint}, we address the case $p=2$, where $\ell$ and $\nu$ are jointly estimated.

\subsection{Small random perturbations of the regular grid} \label{subsection: numeric_local}

In our study, the two true covariance parameters $\ell_0,\nu_0$ vary over $0.3 \leq \ell_0 \leq 3$ and $0.5 \leq \nu_0 \leq 5$.
We will successively address the two cases where $\ell$ is estimated and $\nu$ is known, and where $\nu$ is estimated and $\ell$ is known.
It is shown in section \ref{subsection: echange_limite_derivee} that for both ML and CV, the asymptotic variances are smooth functions of $\epsilon$.
The quantity of interest is the ratio of the second derivative with respect to $\epsilon$ at $\epsilon=0$ of the
asymptotic variance over its value at $\epsilon=0$.
When this quantity is negative, this means that the asymptotic variance of the covariance parameter estimator decreases with $\epsilon$, and therefore
that a small perturbation of the regular grid improves the estimation.
The second derivative is calculated exactly for ML, using the results of section \ref{section: Toeplitz},
and is approximated by finite differences for $n$ large for CV. Proposition \ref{prop: deriveesSigma} ensures that this approximation is numerically
consistent (because the limits in $n$ and the derivatives in $\epsilon$ are exchangeable).

On figure \ref{fig: d2surVal0_hatLc}, we show the numerical results for the estimation of $\ell$. First we see that the relative improvement of the estimation
due to the perturbation
is maximum when the true correlation length $\ell_0$ is small. Indeed, the inter-observation distance being $1$, a correlation length of approximatively $0.3$ means that
the observations are almost independent, making the estimation of the covariance very hard. Hence, the perturbations of the grid create pairs of observations
that are less independent and subsequently facilitate the estimation. For large $\ell_0$, this phenomenon does not take place anymore, and thus the relative effect of the
perturbations is smaller. Second, we observe that for ML the small perturbations are always an advantage for the estimation of $\ell$.
This is not the case for CV, where the asymptotic variance can increase with $\epsilon$. Finally, we can see that the two particular points
$\left(\ell_0=0.5,\nu_0=5\right)$ and $\left(\ell_0=2.7,\nu_0=1\right)$ are particularly interesting and representative. Indeed, $\left(\ell_0=0.5,\nu_0=5\right)$ corresponds to
covariance parameters for which the small perturbations of the regular grid have a strong and favorable impact on the estimation for ML and CV,
while $\left(\ell_0=2.7,\nu_0=1\right)$ corresponds to
covariance parameters for which they have an unfavorable impact on the estimation for CV.
We retain these two points for a further global investigation for $0 \leq \epsilon \leq 0.45$
in subsection \ref{subsection: numeric_global}.

\begin{figure}[]
\centering
 \hspace*{-2cm}

\begin{tabular}{c c}
\includegraphics[width=8cm,angle=0]{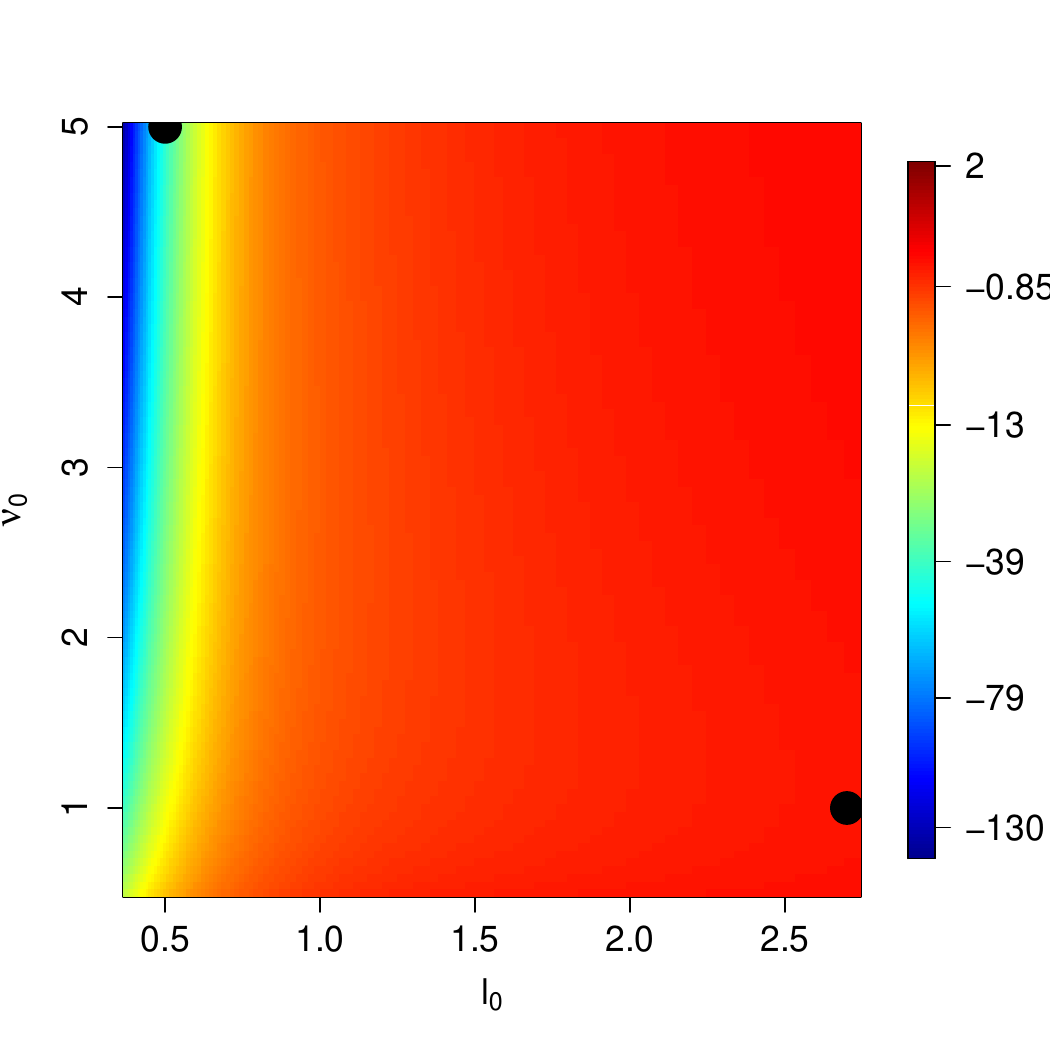} &
\includegraphics[width=8cm,angle=0]{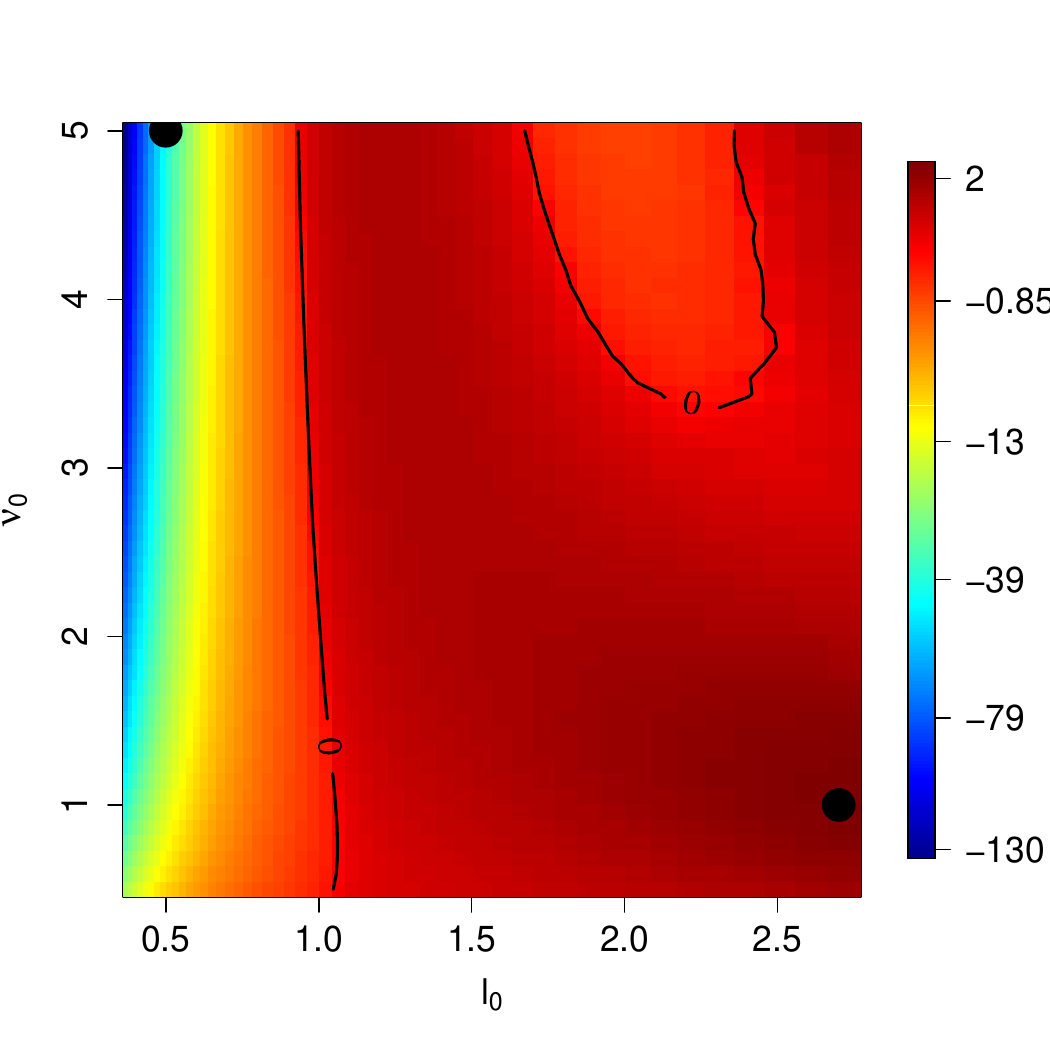}
\end{tabular}
\caption{Estimation of $\ell$. Plot of the ratio of the second derivative of the
asymptotic variance over its value at $\epsilon=0$, for ML (left) and CV (right).
The true covariance function is Mat\'ern with varying $\ell_0$ and $\nu_0$. We retain the two particular points
$(\ell_0=0.5,\nu_0=5)$ and $(\ell_0=2.7,\nu_0=1)$ for further investigation in subsection \ref{subsection: numeric_global}
(these are the black dots).}
\label{fig: d2surVal0_hatLc}
\end{figure}

On figure \ref{fig: d2surVal0_hatNu}, we show the numerical results for the estimation of $\nu$. We observe
that for $\ell_0$ relatively small, the asymptotic variance is an increasing function of $\epsilon$ (for small $\epsilon$).
This happens
approximatively in the band $0.4 \leq \ell_0 \leq 0.6$, and for both ML and CV. There is a plausible explanation
from this fact, which is not easy to interpret at first sight. It can be seen that for $\ell \approx 0.73$,
the value of the one-dimensional Mat\'ern covariance function at $t=1$ is almost independent of $\nu$ for $\nu \in [1,5]$. As an illustration,
for $\nu=2.5$, the derivative of this value with respect to $\nu$ is $-3.7 \times 10^{-5}$ for a value of $0.15$.
When $0.4 \leq \ell_0 \leq 0.6$, $\ell_0$ is small so that most of the information for estimating $\nu$
is obtained from the pairs of successive observation points. Perturbing the regular grid creates pairs of successive observation
points $i + \epsilon x_i, i + 1 + \epsilon x_{i+1}$ verifying $\frac{|1 + \epsilon (x_{i+1} - x_i )  |}{\ell_0} \approx \frac{1}{0.73}$, so that the
correlation of the two observations becomes almost independent of $\nu$. Thus, due to a specificity of the Mat\'ern covariance function,
decreasing the distance between two successive observation points unintuitively removes information on $\nu$.

For $0.6 \leq \ell_0 \leq 0.8$ and 
$\nu_0 \geq 2$, the relative improvement is maximum. This is explained the same way as above, this time the case $\epsilon=0$
yields successive observation points for which the correlation is independent of $\nu$, and increasing $\epsilon$
changes the distance between two successive observation points, making the correlation of the observations dependent of $\nu$.

In the case $\ell_0 \geq 0.8$, there is no more impact of the specificity of the case $\ell_0 \approx 0.73$ and
the improvement of the estimation when $\epsilon$ increases remains significant, though smaller. 
Finally, we see the three particular points
$\left(\ell_0=0.5,\nu_0=2.5\right)$, $\left(\ell_0=0.7,\nu_0=2.5\right)$ and $\left(\ell_0=2.7,\nu_0=2.5\right)$ as representative
of the discussion above, and we retain them for further global investigation
for $0 \leq \epsilon \leq 0.45$ in subsection \ref{subsection: numeric_global}.

\begin{figure}[]
\centering
 \hspace*{-2cm}

\begin{tabular}{c c}
\includegraphics[width=8cm,angle=0]{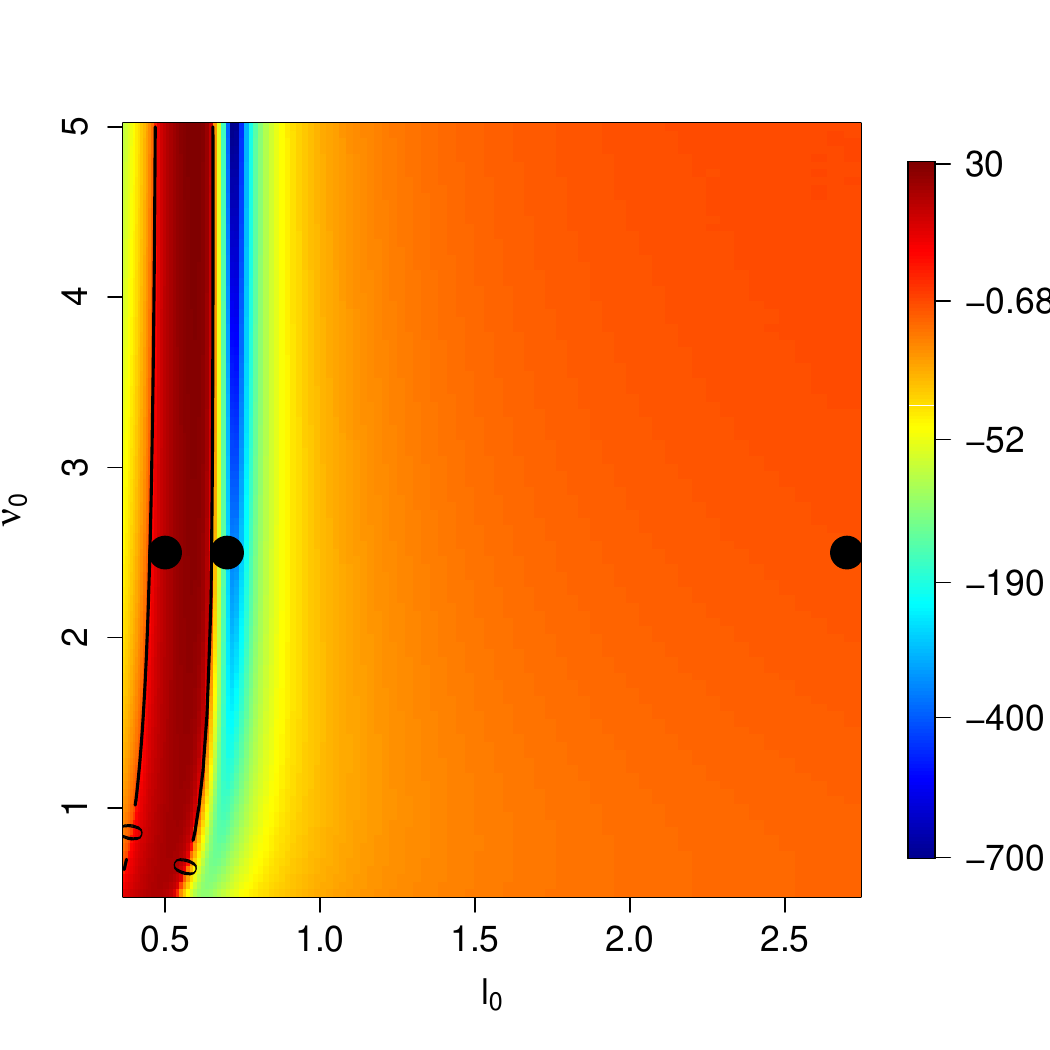} &
\includegraphics[width=8cm,angle=0]{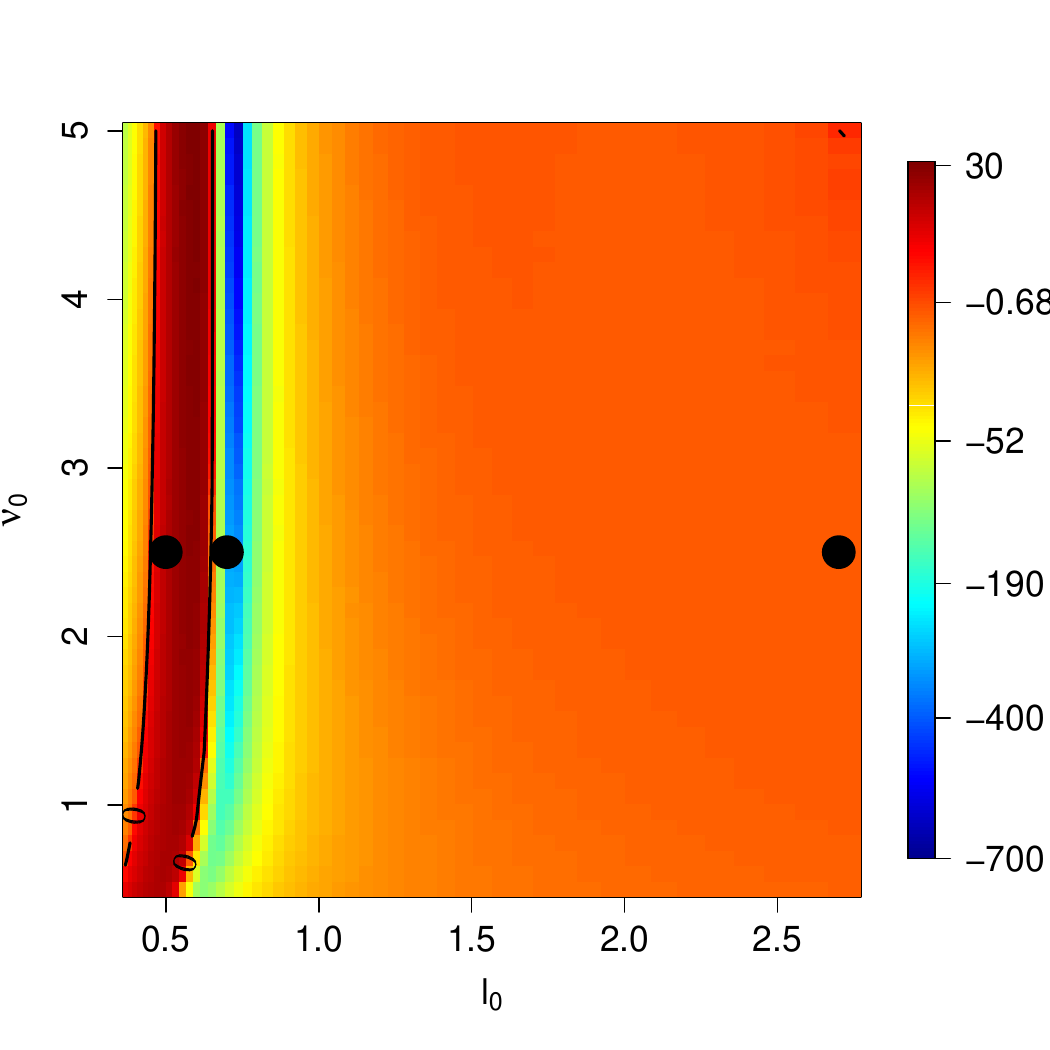}
\end{tabular}
\caption{Same setting as figure \ref{fig: d2surVal0_hatLc}, but for the estimation of $\nu$. We retain the three particular points
$\left(\ell_0=0.5,\nu_0=2.5\right)$, $\left(\ell_0=0.7,\nu_0=2.5\right)$ and $\left(\ell_0=2.7,\nu_0=2.5\right)$ for further
investigation in subsection \ref{subsection: numeric_global}.}
\label{fig: d2surVal0_hatNu}
\end{figure}

\subsection{Large random perturbations of the regular grid} \label{subsection: numeric_global}

On figures \ref{fig: Var0surVar45_hatLc} and \ref{fig: Var0surVar45_hatNu},
we plot the ratio of the
asymptotic variance for $\epsilon=0$ over the asymptotic variance for $\epsilon=0.45$,
with varying $\ell_0$ and $\nu_0$, for ML and CV
and in the two cases where $\ell$ is estimated and $\nu$ known and conversely.
We observe that this ratio is always larger than one for ML, that is strong perturbations of the regular
grid are always beneficial to ML estimation. This is the most important numerical conclusion of this section \ref{section: numerical_study}.
As ML is the preferable method to use in the well-specified case addressed here, we reformulate this conclusion by saying that,
in our experiments, using pairs of closely spaced observation points is always beneficial for covariance parameter estimation compared to evenly spaced
observation points. This is an important practical conclusion, that is in agreement with the references \cite{ISDSTK}
and \cite{SSDUIAF} discussed in section \ref{section: introduction}.

For CV, on the contrary, we exhibit cases for which strong perturbations of the regular grid decrease the accuracy of the estimation,
particularly for the estimation of $\ell$.
This can be due to the fact
that the Leave-One-Out errors in the CV functional \eqref{eq: thetaCV} are unnormalized. Hence, when the regular grid is perturbed, roughly speaking, error terms concerning
observation points with close neighbors are small, while error terms concerning observation points without close neighbors are large.
Hence, the CV functional mainly depends on the large error terms and hence has a larger variance.
This increases the variance of the CV estimator minimizing it.

\begin{figure}[]
\centering
 \hspace*{-2cm}

\begin{tabular}{c c}
\includegraphics[width=8cm,angle=0]{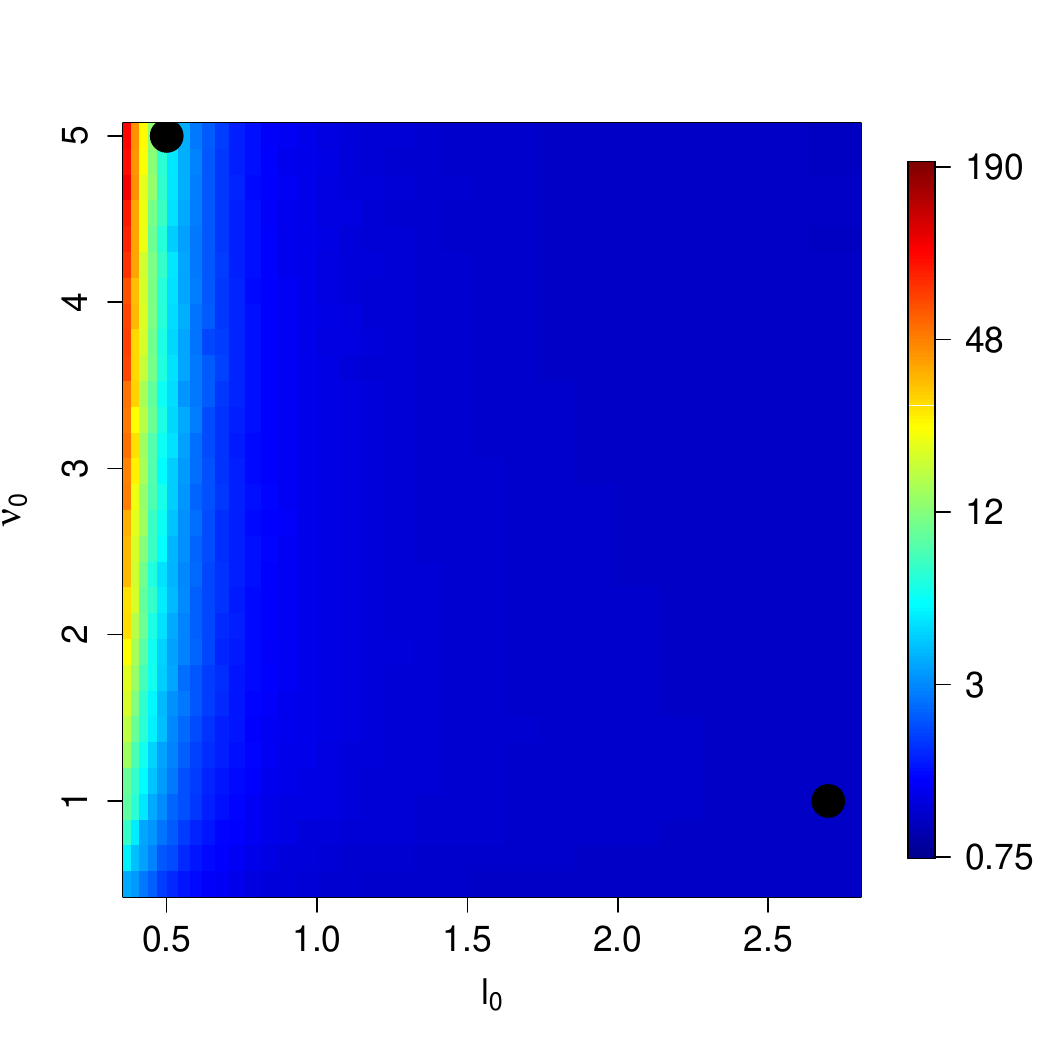} &
\includegraphics[width=8cm,angle=0]{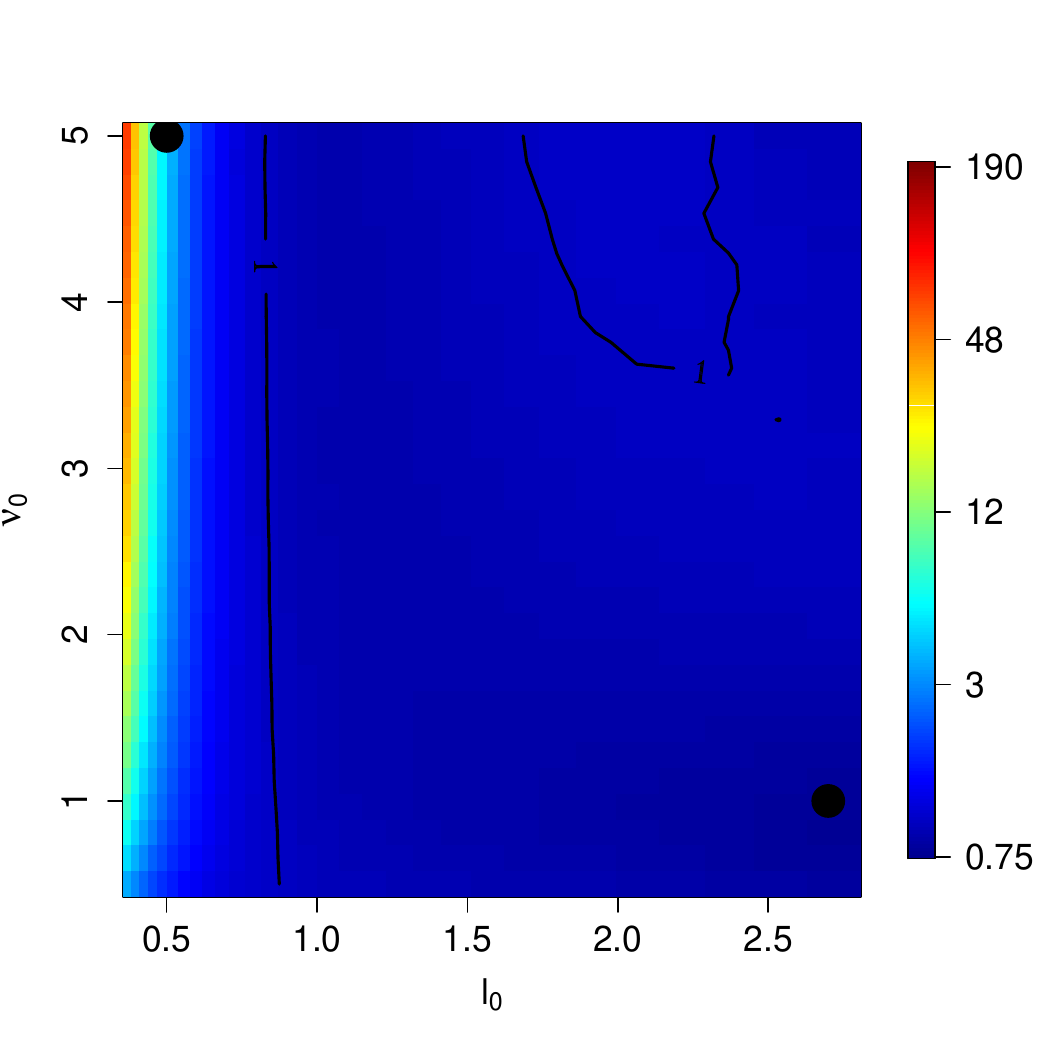}
\end{tabular}
\caption{Estimation of $\ell$. Plot of the ratio of the
asymptotic variance for $\epsilon=0$ over the asymptotic variance for $\epsilon=0.45$ for ML (left) and CV (right).
The true covariance function is Mat\'ern with varying $\ell_0$ and $\nu_0$. We retain the two particular points
$(\ell_0=0.5,\nu_0=5)$ and $(\ell_0=2.7,\nu_0=1)$ for further investigation below in this subsection \ref{subsection: numeric_global}
(these are the black dots).}
\label{fig: Var0surVar45_hatLc}
\end{figure}

\begin{figure}[]
\centering
 \hspace*{-2cm}

\begin{tabular}{c c}
\includegraphics[width=8cm,angle=0]{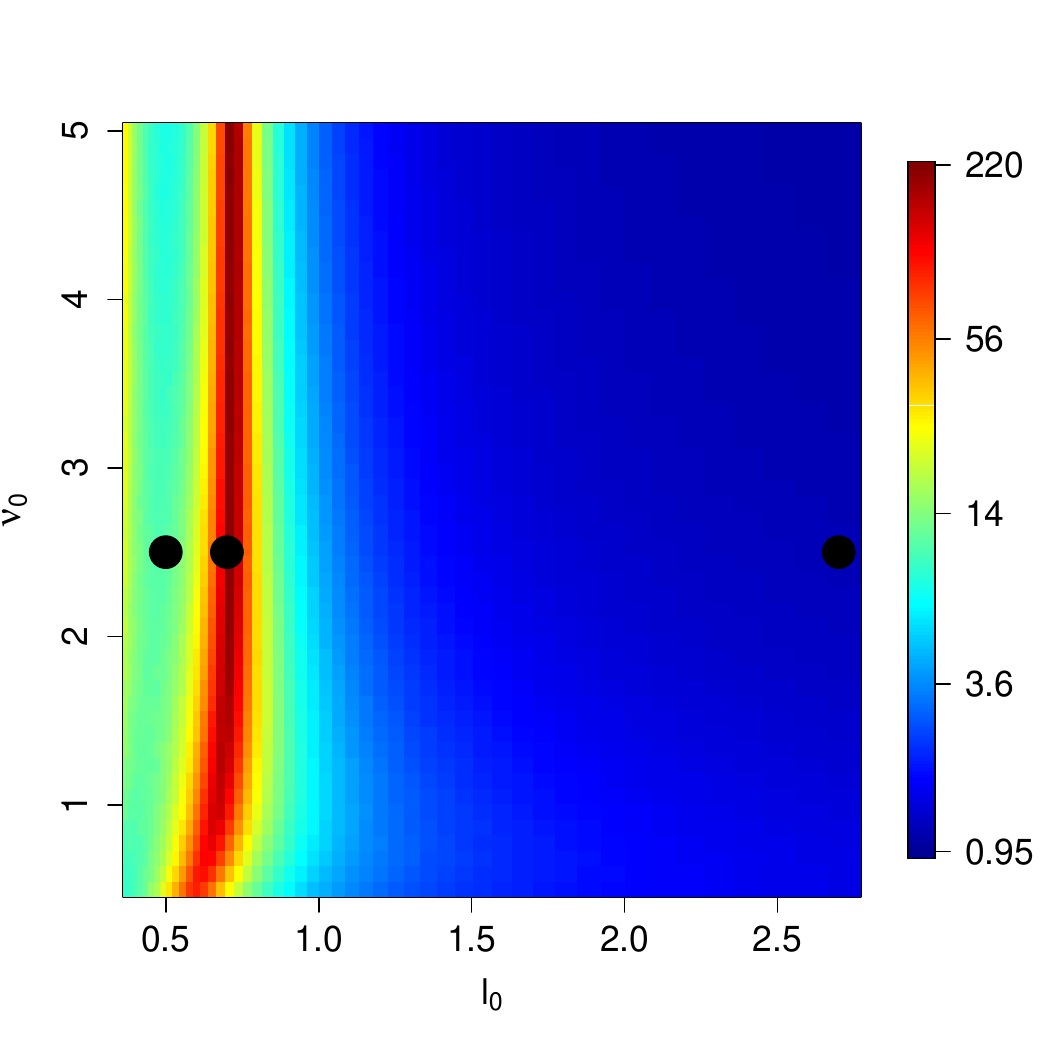} &
\includegraphics[width=8cm,angle=0]{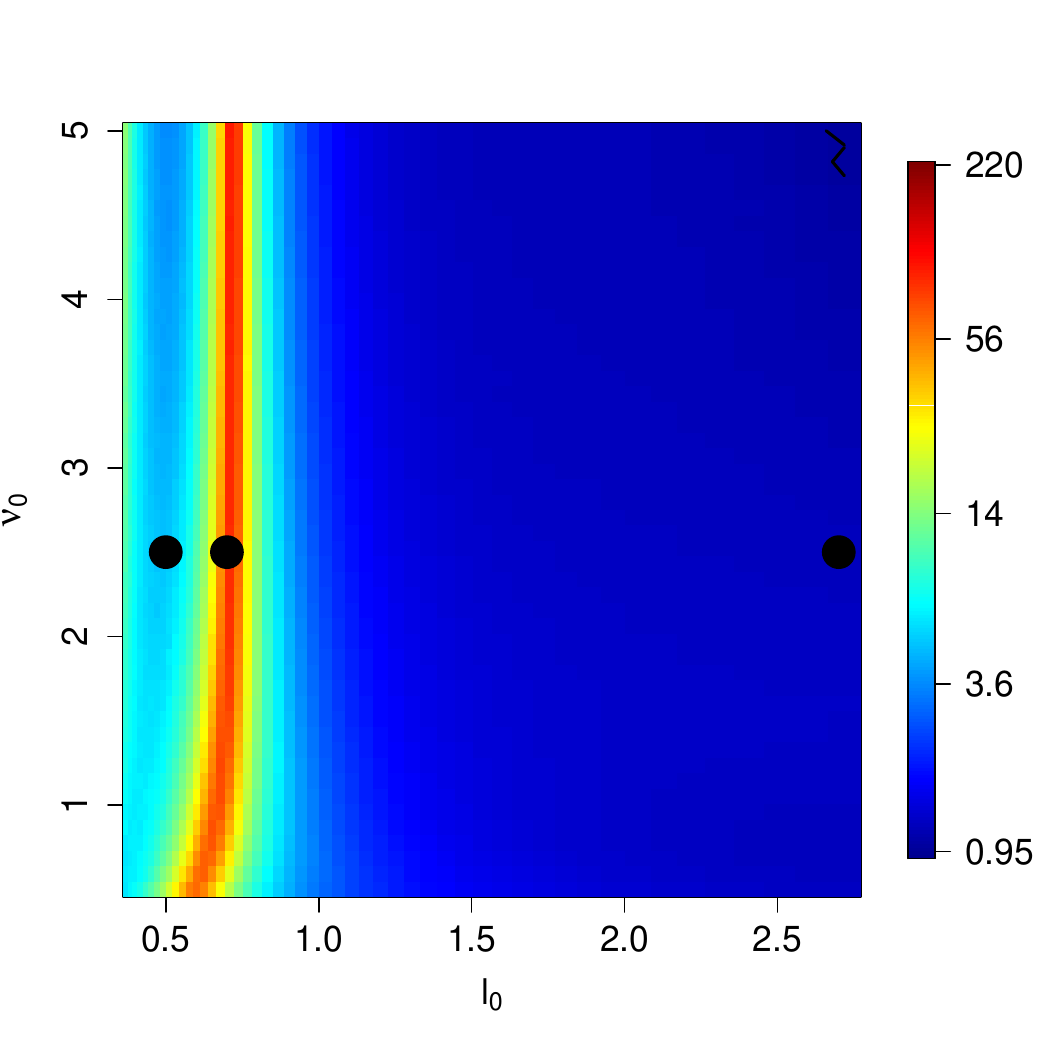}
\end{tabular}
\caption{Same setting as in figure \ref{fig: Var0surVar45_hatLc}, but for the estimation of $\nu$. We retain the three particular points
$\left(\ell_0=0.5,\nu_0=2.5\right)$, $\left(\ell_0=0.7,\nu_0=2.5\right)$ and $\left(\ell_0=2.7,\nu_0=2.5\right)$ for further
investigation below in this subsection \ref{subsection: numeric_global}.}
\label{fig: Var0surVar45_hatNu}
\end{figure}

We now consider the five particular points that we have discussed in subsection \ref{subsection: numeric_local}:
$(\ell_0=0.5,\nu_0=5)$ and $(\ell_0=2.7,\nu_0=1)$ for the estimation of $\ell$
and $\left(\ell_0=0.5,\nu_0=2.5\right)$, $\left(\ell_0=0.7,\nu_0=2.5\right)$ and $\left(\ell_0=2.7,\nu_0=2.5\right)$ for the estimation
of $\nu$. For these particular points, we plot the asymptotic variances of propositions \ref{prop: normaliteML} and \ref{prop: normaliteCV} as functions of $\epsilon$
for $-0.45 \leq \epsilon \leq 0.45$.
The asymptotic variances are even functions of $\epsilon$ since $(\epsilon X_i)_{1 \leq i \leq n}$ has the same distribution as $(- \epsilon X_i)_{1 \leq i \leq n}$. Nevertheless, they are approximated by empirical means of $iid$
realizations of the random traces in propositions \ref{prop: normaliteML} and \ref{prop: gradients_CV}, for $n$ large enough. Hence, the functions we plot
are not exactly even. The fact that they are almost even is a graphical verification that the random fluctuations of the
results of the calculations, for finite (but large) $n$, are very small.
We also plot the second-order Taylor-series expansion given by the value at $\epsilon=0$
and the second derivative at $\epsilon=0$.

On figure \ref{fig: varAss_hatLc_lc=0.5_nu=5}, we show the numerical results for the estimation of $\ell$ with $\left(\ell_0=0.5,\nu_0=5\right)$.
The first observation is that the asymptotic variance is slightly larger for CV than for ML. This is expected: indeed we address a well-specified case,
so that the asymptotic variance of ML is the almost sure limit of the Cramer-Rao bound.
Therefore, this observation turns out to be true
in all the subsection, and we will not comment on it anymore.  
We see that, for both ML and CV, the improvement of the estimation given by the irregularity of the spatial sampling is true for all values of $\epsilon$.
One can indeed gain up to a factor six for the
asymptotic variances. This is explained by the reason mentioned in subsection \ref{subsection: numeric_local}, for $\ell_0$ small, increasing $\epsilon$
yields pairs of observations that become dependent, and hence give information on the covariance structure.

\begin{figure}[]
\centering
 \hspace*{-2cm}

\begin{tabular}{c c}
\includegraphics[width=8cm,angle=0]{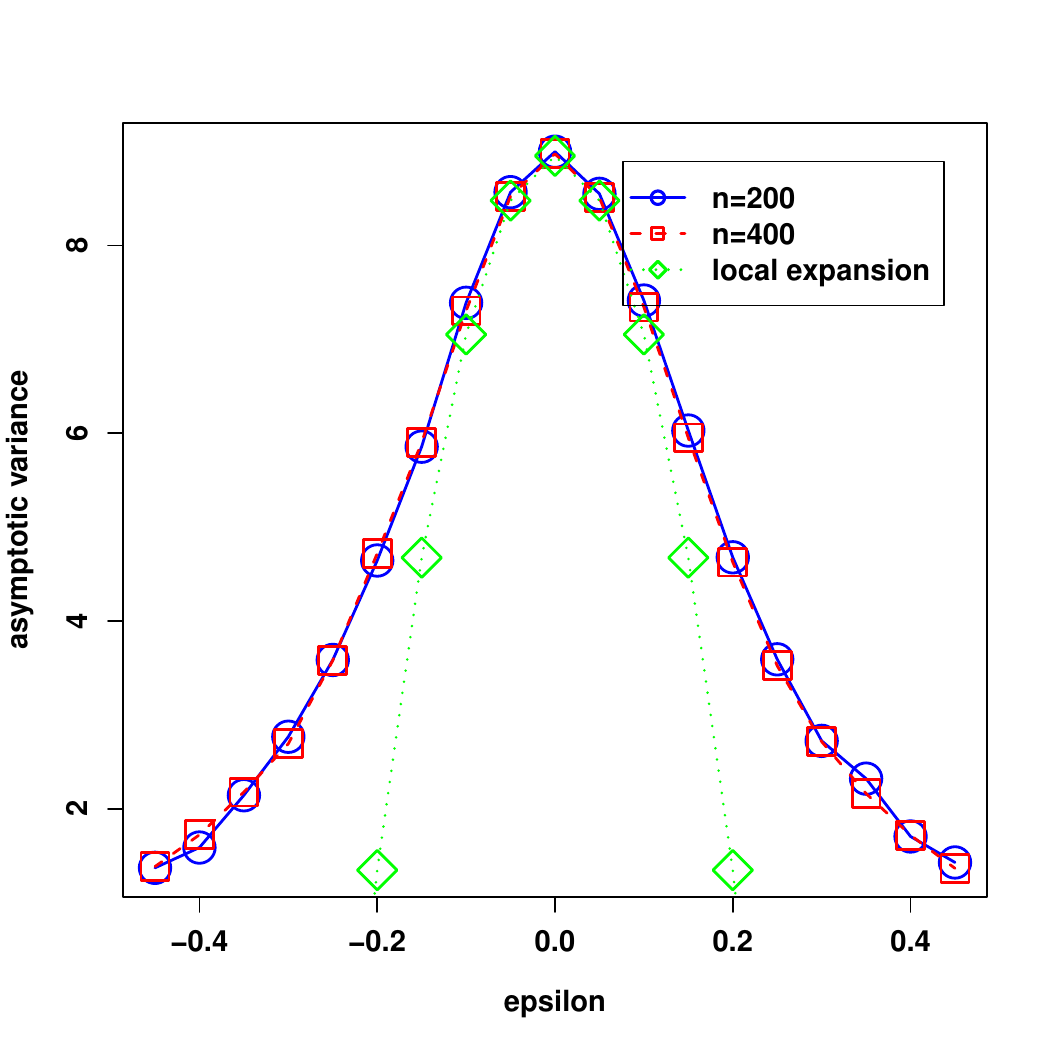} &
\includegraphics[width=8cm,angle=0]{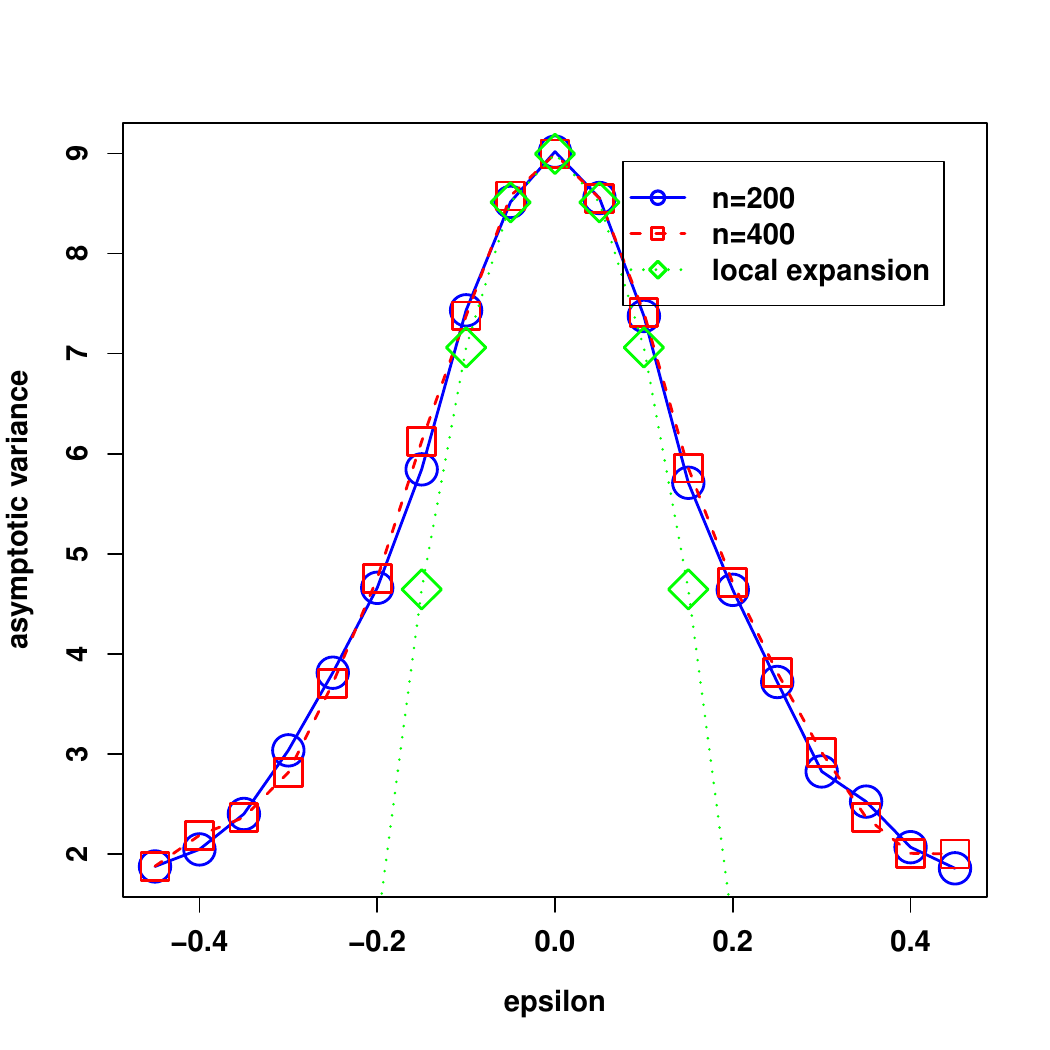}
\end{tabular}
\caption{Global influence of $\epsilon$ for the estimation of the correlation length $\ell$.
Plot of the
asymptotic variance for ML (left) and CV (right), calculated with varying $n$,
and of the second order Taylor series expansion given by the value at $\epsilon=0$ and the second derivative at $\epsilon=0$.
The true covariance function is Mat\'ern with $\ell_0=0.5$ and $\nu_0=5$.
}
\label{fig: varAss_hatLc_lc=0.5_nu=5}
\end{figure}

On figure \ref{fig: varAss_hatLc_lc=2.7_nu=1}, we show the numerical results for the estimation of $\ell$ with $\left(\ell_0=2.7,\nu_0=1\right)$.
For ML, there is a slight global improvement of the estimation with the irregularity of the spatial sampling. However, for CV, there is a
significant degradation of the estimation. Hence the irregularity of the spatial sampling has more relative influence on CV than on ML.
Finally, the advantage of ML over CV for the estimation is by a factor seven, contrary to the case $\ell_0 = 0.5$, where this factor was close to one.

\begin{figure}[]
\centering
 \hspace*{-2cm}

\begin{tabular}{c c}
\includegraphics[width=8cm,angle=0]{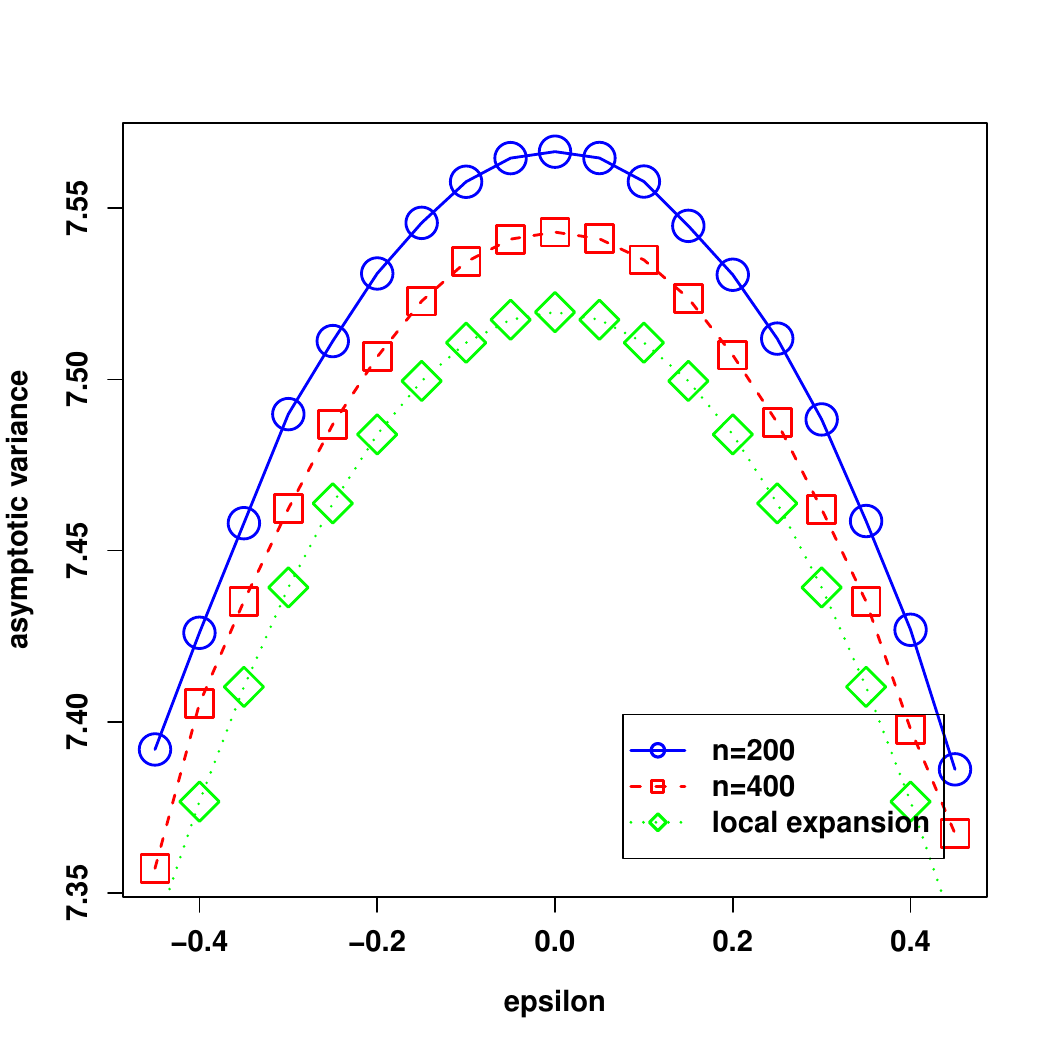} &
\includegraphics[width=8cm,angle=0]{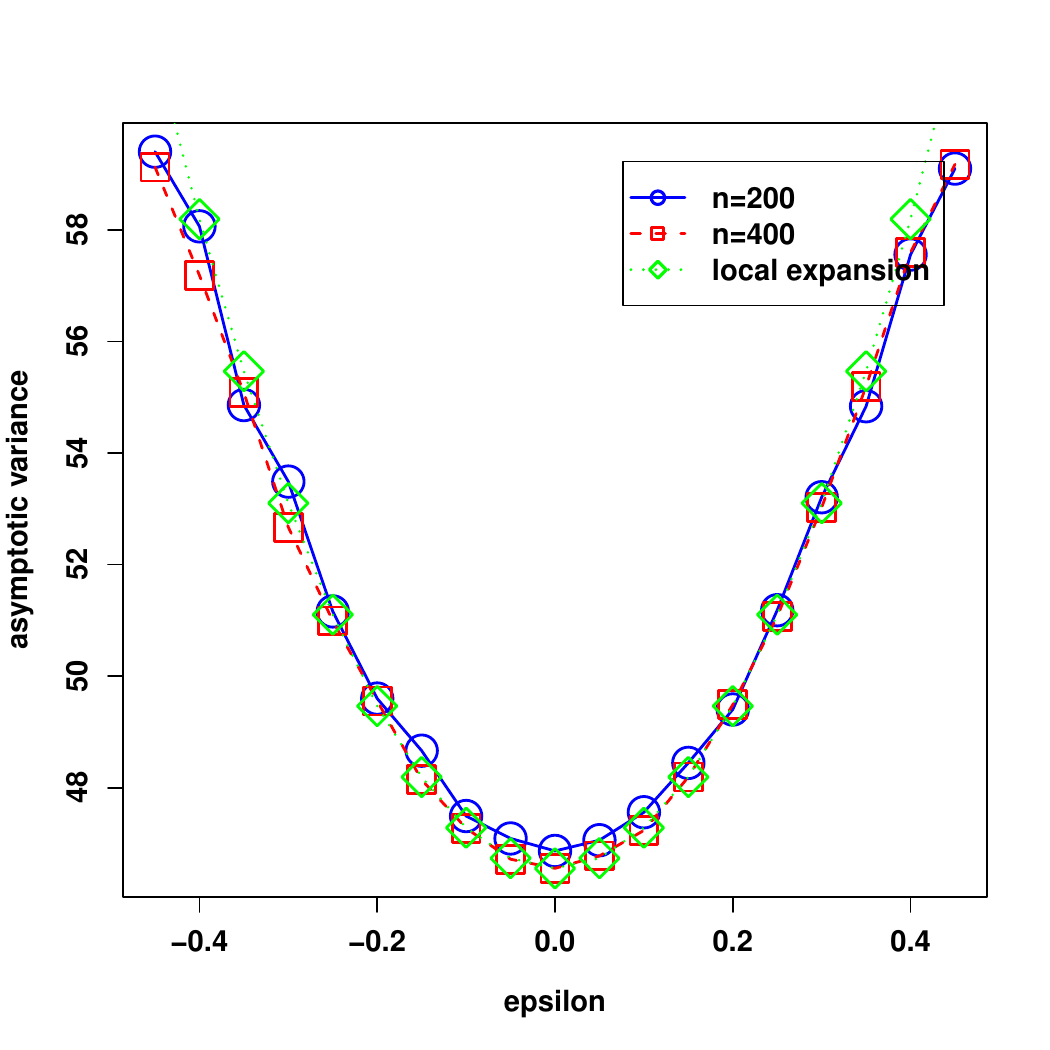}
\end{tabular}
\caption{Same setting as in figure \ref{fig: varAss_hatLc_lc=0.5_nu=5} but with $\ell_0=2.7$ and $\nu_0=1$.
}
\label{fig: varAss_hatLc_lc=2.7_nu=1}
\end{figure}

On figure \ref{fig: varAss_hatNu_lc=0.5_nu=2.5}, we show the numerical results for the estimation of $\nu$ with $\left(\ell_0=0.5,\nu_0=2.5\right)$.
The numerical results are similar for ML and CV. For $\epsilon$ small, the asymptotic variance is very large, because, $\ell_0$ being small, the observations
are almost independent, as the observation points are further apart than the correlation length, making inference on the dependence structure very difficult.
We see that, for $\epsilon=0$, the asymptotic variance is several orders of magnitude larger than for the estimation of $\ell$ in figure \ref{fig: varAss_hatLc_lc=0.5_nu=5},
where $\ell_0$ has the same value. Indeed, in the Mat\'ern model, $\nu$ is a smoothness parameter, and its estimation is very sensitive to the absence of observation points
with small spacing.
We observe, as
discussed in figure \ref{fig: d2surVal0_hatNu}, that for $\epsilon \in [0,0.2]$, the asymptotic variance increases with $\epsilon$
because pairs of observation points can reach the state where the covariance of the two observations is almost independent of $\nu$.
For $\epsilon \in [0.2,0.5)$,
a threshold is reached where pairs of subsequently dependent observations start to appear, greatly reducing the asymptotic variance for the estimation
of $\nu$.

\begin{figure}[]
\centering
 \hspace*{-2cm}

\begin{tabular}{c c}
\includegraphics[width=8cm,angle=0]{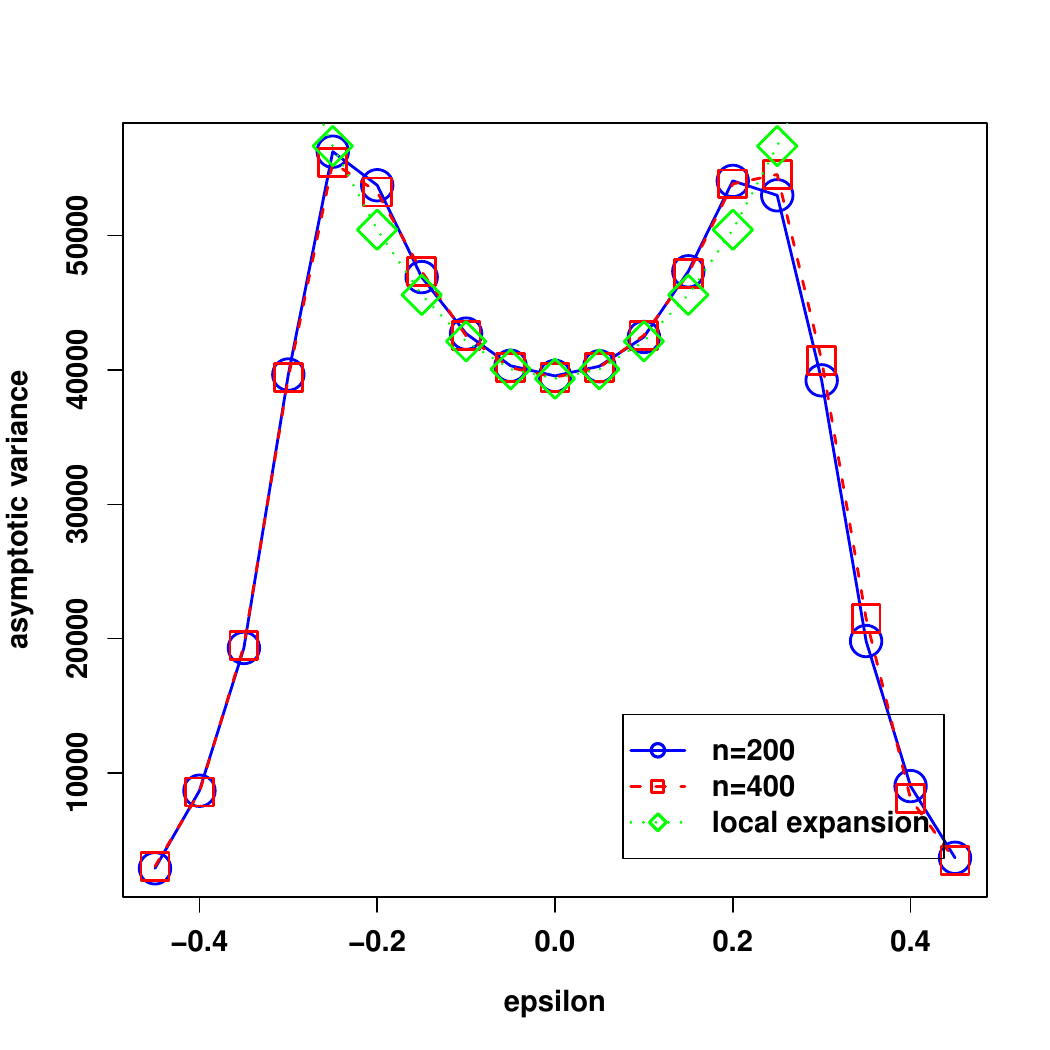} &
\includegraphics[width=8cm,angle=0]{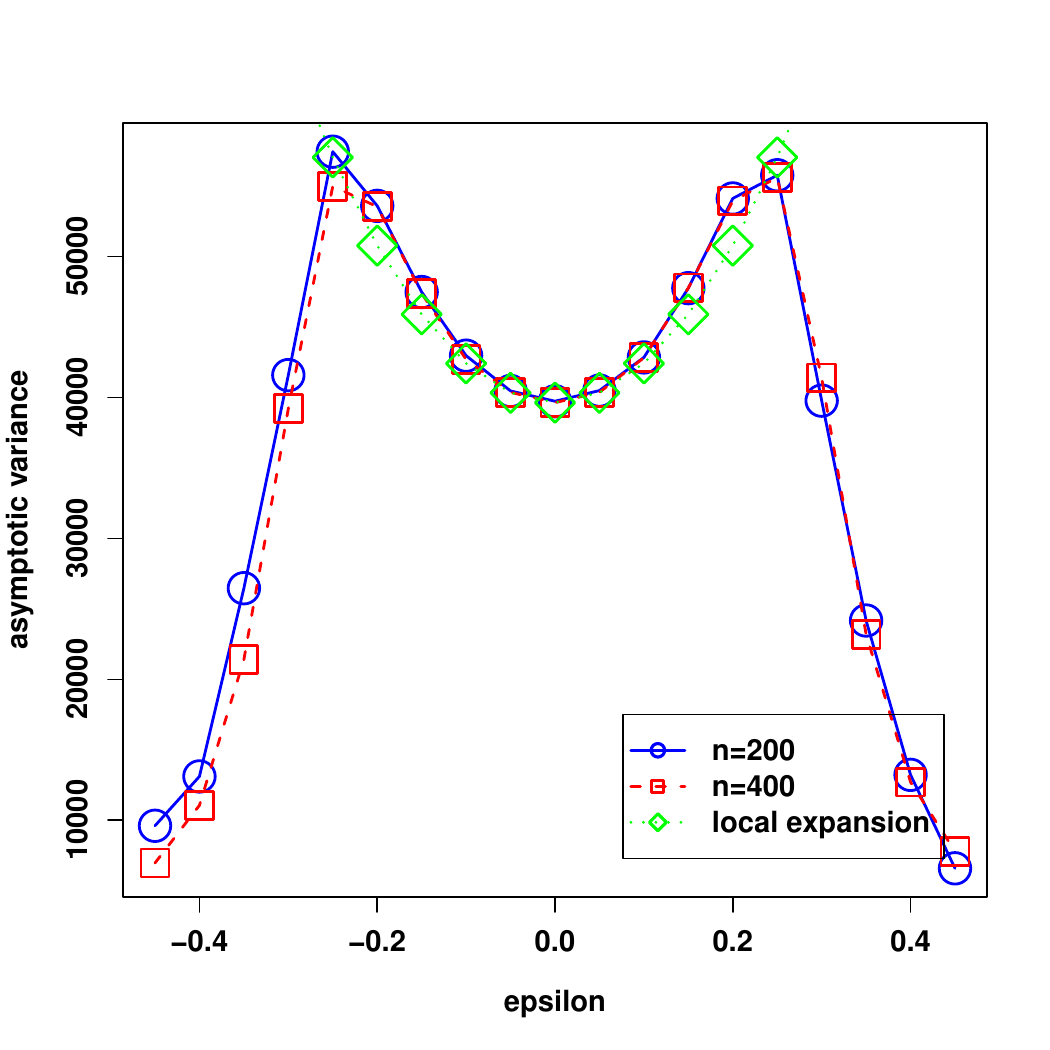}
\end{tabular}
\caption{Same setting as in figure \ref{fig: varAss_hatLc_lc=0.5_nu=5} but for the estimation of $\nu$ and with $\ell_0=0.5$ and $\nu_0=2.5$.}
\label{fig: varAss_hatNu_lc=0.5_nu=2.5}
\end{figure}

On figure \ref{fig: varAss_hatNu_lc=0.7_nu=2.5}, we show the numerical results for the estimation of $\nu$ with $\left(\ell_0=0.7,\nu_0=2.5\right)$.
The numerical results are similar for ML and CV. Similarly to figure \ref{fig: varAss_hatNu_lc=0.5_nu=2.5}, the asymptotic variance is very large, because the
observations are almost independent. For $\epsilon=0$, it is even larger than in figure \ref{fig: varAss_hatLc_lc=0.5_nu=5}
because we are in the state where the covariance between two successive observations is almost independent of $\nu$.
As an illustration,
for $\ell=0.7$ and $\nu=2.5$, the derivative of this covariance with respect to $\nu$ is $-1.3 \times 10^{-3}$ for a value of $0.13$ ($1\%$ relative variation),
while for $\ell=0.5$ and $\nu=2.5$, this derivative is $-5 \times 10^{-3}$ for a value of $0.037$ ($13\%$ relative variation).
Hence, the asymptotic variance is globally decreasing with $\epsilon$ and the decrease is very strong for small $\epsilon$. The variance is several orders of magnitude
smaller for large $\epsilon$, where pairs of dependent observations start to appear.

\begin{figure}[]
\centering
 \hspace*{-2cm}

\begin{tabular}{c c}
\includegraphics[width=8cm,angle=0]{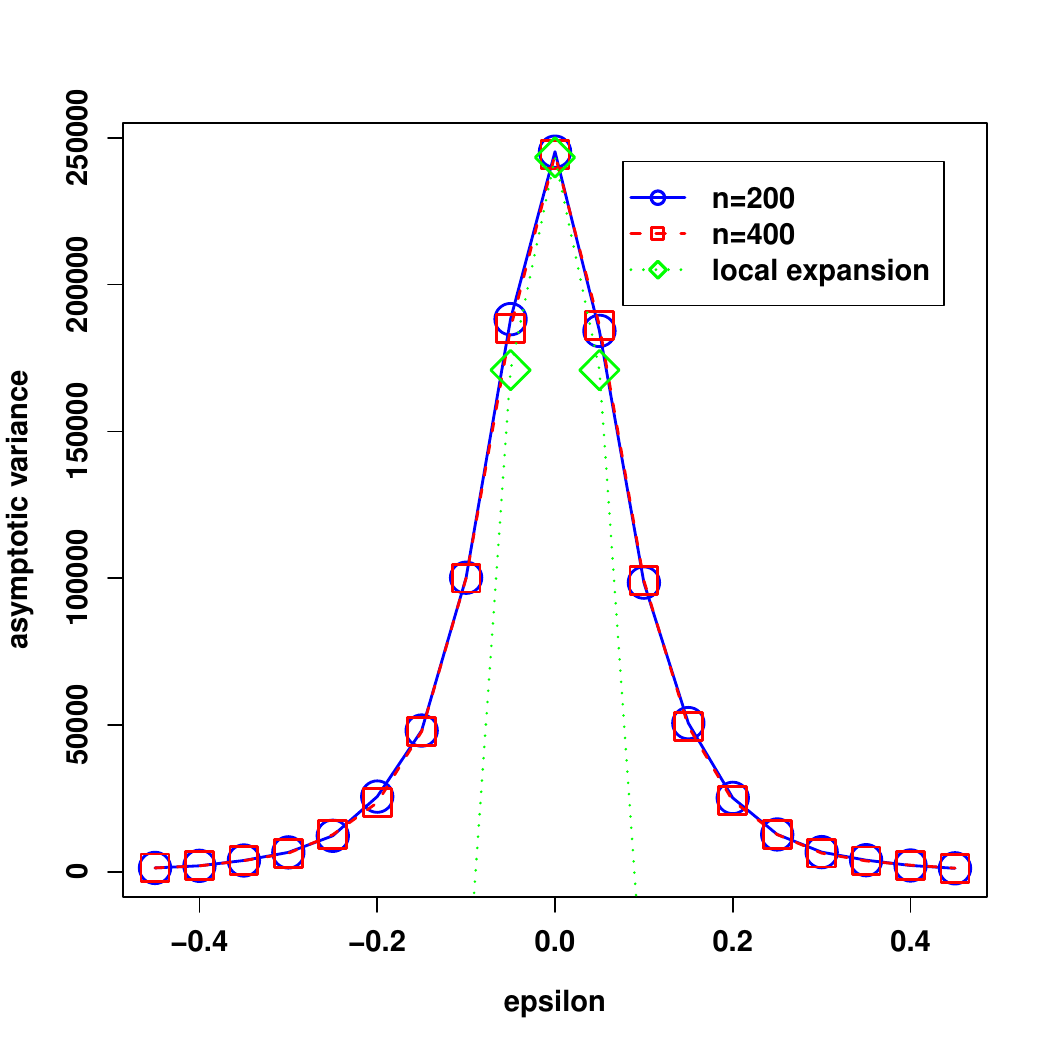} &
\includegraphics[width=8cm,angle=0]{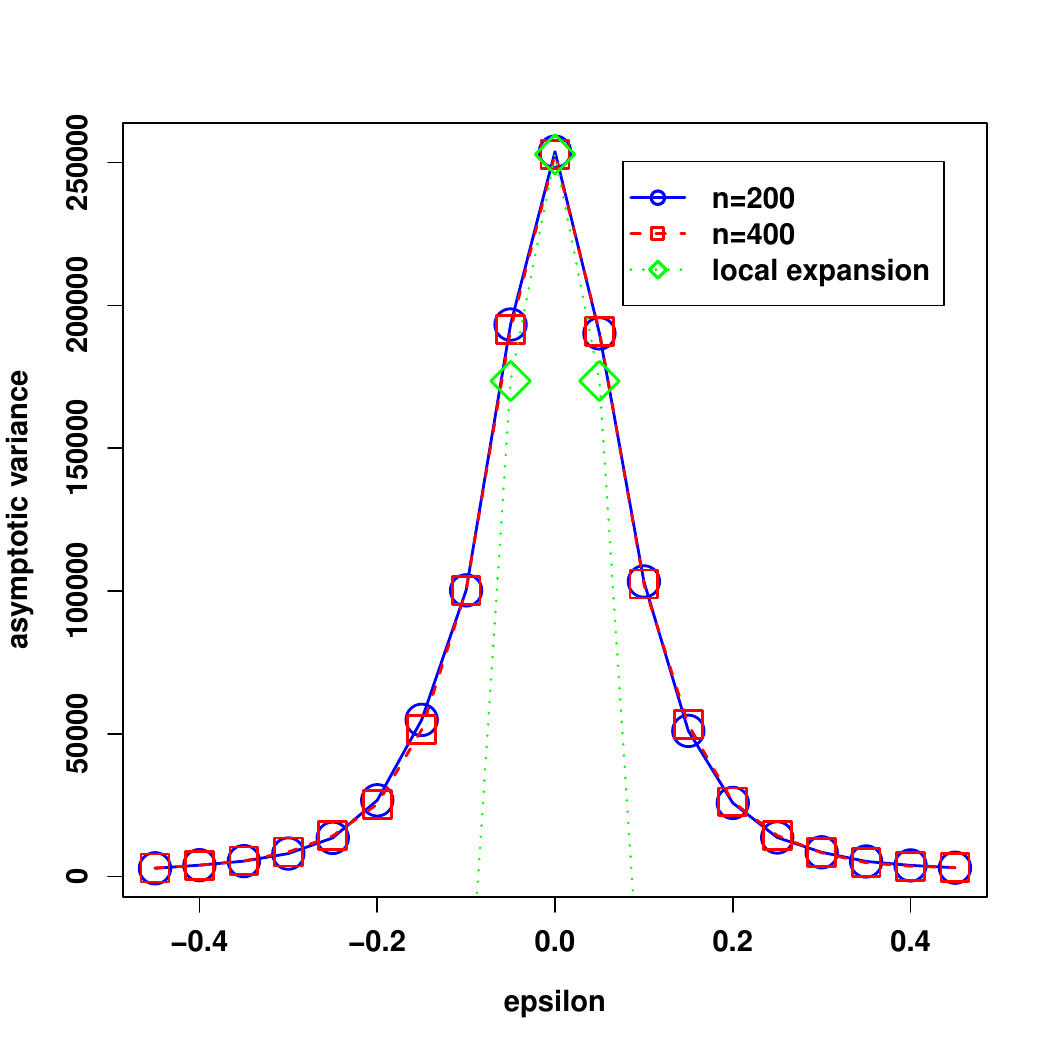}
\end{tabular}
\caption{Same setting as in figure \ref{fig: varAss_hatLc_lc=0.5_nu=5} but for the estimation of $\nu$ and with $\ell_0=0.7$ and $\nu_0=2.5$.}
\label{fig: varAss_hatNu_lc=0.7_nu=2.5}
\end{figure}

On figure \ref{fig: varAss_hatNu_lc=2.7_nu=2.5}, we show the numerical results for the estimation of $\nu$ with $\left(\ell_0=2.7,\nu_0=2.5\right)$.
For both ML and CV, there is a global improvement of the estimation with the irregularity of the spatial sampling.
Moreover,
the advantage of ML over CV for the estimation, is by a factor seven,
contrary to figures \ref{fig: varAss_hatNu_lc=0.5_nu=2.5} and \ref{fig: varAss_hatNu_lc=0.7_nu=2.5}, where this factor was close to one.

\begin{figure}[]
\centering
 \hspace*{-2cm}

\begin{tabular}{c c}
\includegraphics[width=8cm,angle=0]{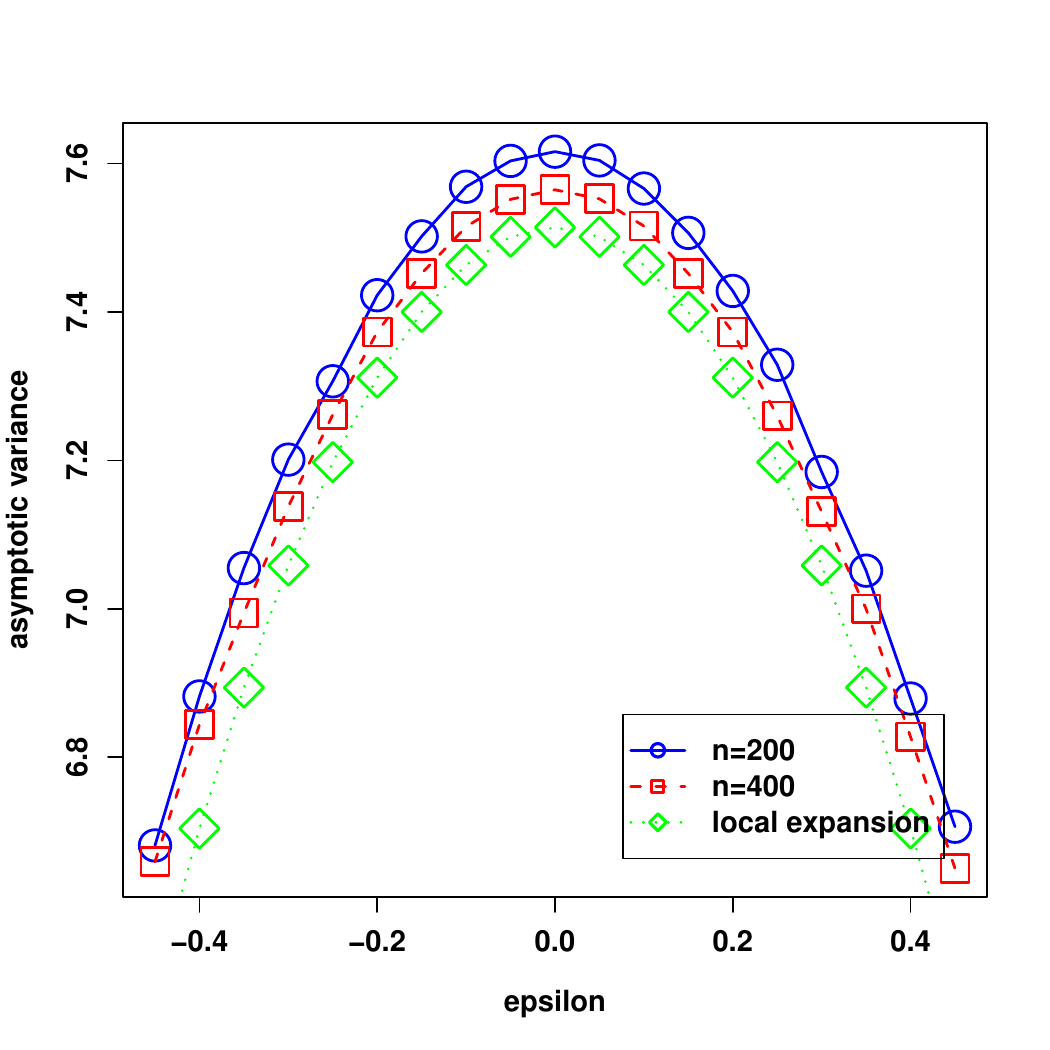} &
\includegraphics[width=8cm,angle=0]{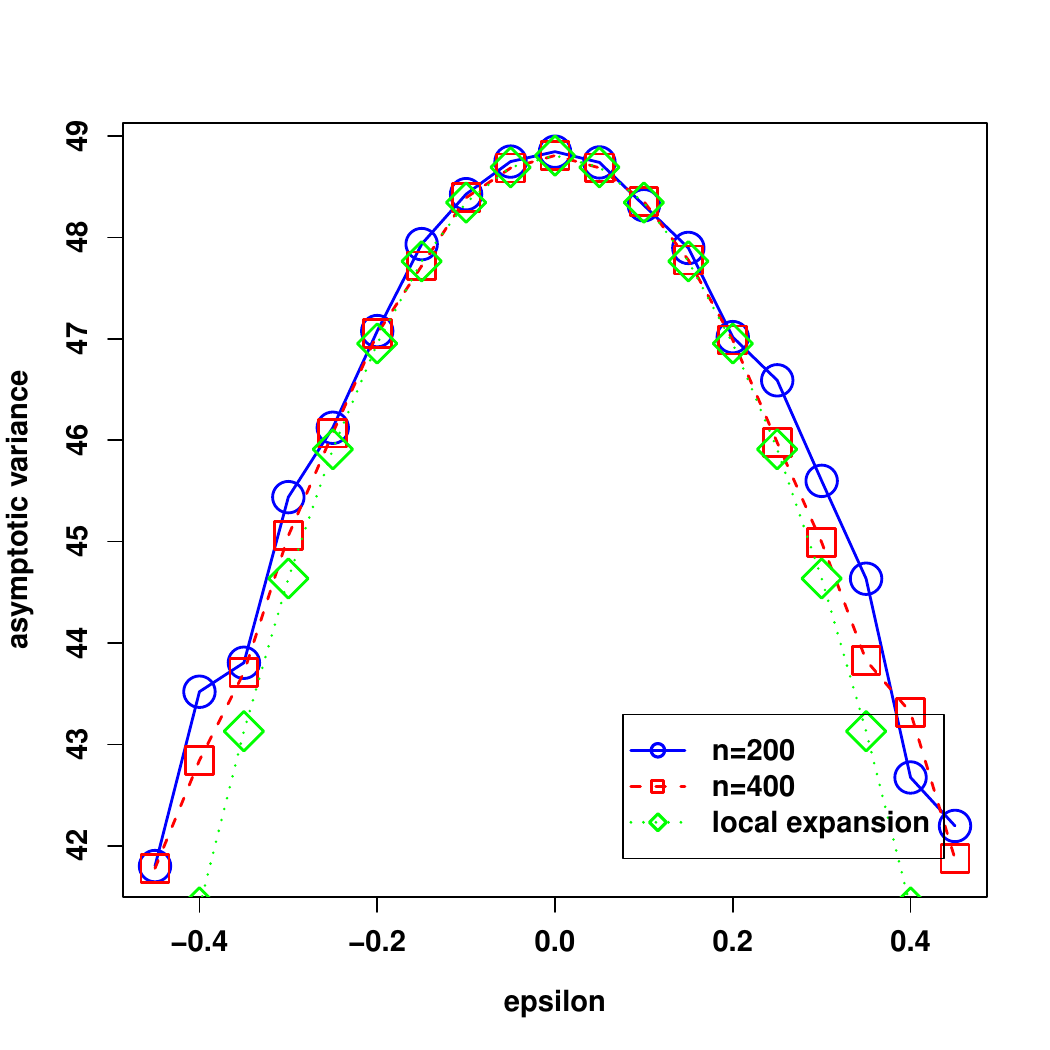}
\end{tabular}
\caption{Same setting as in figure \ref{fig: varAss_hatLc_lc=0.5_nu=5} but for the estimation of $\nu$ and with $\ell_0=2.7$ and $\nu_0=2.5$.}
\label{fig: varAss_hatNu_lc=2.7_nu=2.5}
\end{figure}

\subsection{Estimating both the correlation length and the smoothness parameter}  \label{subsection: joint}

In this subsection \ref{subsection: joint}, the case of the joint estimation of $\ell$ and $\nu$ is addressed.
We denote, for ML and CV, $V_{\ell}$, $V_{\nu}$ and $C_{\ell,\nu}$, the asymptotic variances of $\sqrt{n} \hat{\ell}$
and $\sqrt{n}\hat{\nu}$ and the asymptotic covariance of $\sqrt{n}\hat{\ell}$ and $\sqrt{n}\hat{\nu}$ (propositions \ref{prop: normaliteML}
and \ref{prop: normaliteCV}).

Since we here address $2 \times 2$ covariance matrices, the impact of the irregularity parameter $\epsilon$
on the estimation is now more complex to assess. For instance, increasing $\epsilon$ could increase $V_{\ell}$ and at the same time decrease $V_{\nu}$.
Thus, it is desirable to build scalar criteria, defined in terms of $V_{\ell}$, $V_{\nu}$ and $C_{\ell,\nu}$, measuring the quality of the estimation.
In \cite{SSDUIAF}, the criterion used is the average, over a prior distribution on $(\ell_0,\nu_0)$, of
$\log{(V_{\ell} V_{\nu} - C_{\ell,\nu}^2 )}$, that is the averaged logarithm of the determinant of the covariance matrix. This criterion corresponds
to $D$-optimality in standard linear regression with uncorrelated errors, as noted in \cite{SSDUIAF}. In our case, we know
the true $(\ell_0,\nu_0)$, so that the Bayesian average is not needed. The first scalar criterion we study is thus
$D_{\ell,\nu} := V_{\ell} V_{\nu} - C_{\ell,\nu}^2$. This criterion is interpreted as a general objective-free estimation criterion, in the sense that
the impact of the estimation on Kriging predictions that would be made afterward, on new input points, is not directly addressed in $D_{\ell,\nu}$.

One could build other scalar criteria, explicitly addressing the impact of the covariance function estimation error, on the quality of the Kriging predictions
that are made afterward. In \cite{SSDUIAF}, the criterion studied for the prediction error is the integral over the prediction domain of
$\EE \left[ \left( \hat{Y}_{\theta_0}(t) - \hat{Y}_{\hat{\theta}}(t)  \right)^2 \right]$, where $\hat{Y}_{\theta}(t)$ is the prediction
of $Y(t)$, from the observation vector, and under covariance function $K_{\theta}$. This criterion is the difference of integrated
prediction mean square error, between the true and estimated covariance functions. In \cite{abt99estimating}
and in \cite{SSDUIAF}, two different asymptotic approximations of this criterion are studied. In \cite{SSDUIAF},
another criterion, focusing on the accuracy of the Kriging predictive variances built from $\hat{\theta}$, is also treated, together with a
corresponding asymptotic approximation.
In \cite{Bachoc2013cross}, a criterion for the accuracy of the Kriging predictive variances, obtained from an estimator of the variance parameter, is studied, when the correlation function is fixed and misspecified.
Since we specifically address the case of Kriging prediction in the asymptotic
framework addressed here in section \ref{subsection: inf_hyp_est}, we refer to \cite{abt99estimating,SSDUIAF,Bachoc2013cross}
for details on the aforementioned criteria. In this subsection \ref{subsection: joint},
we study the estimation criteria $V_{\ell}$, $V_{\nu}$, $C_{\ell,\nu}$ and $D_{\ell,\nu}$.

In figure \ref{fig: Var0surVar45_joint_ML}, we consider the ML estimation, with varying $(\ell_0,\nu_0)$.
We study the ratio of $V_{\ell}$, $V_{\nu}$ and $D_{\ell,\nu}$, between $\epsilon=0$ and $\epsilon=0.45$.
We first observe that $V_{\nu}$ is always smaller for $\epsilon=0.45$ than for $\epsilon=0$, that is to say
there is an improvement of the estimation of $\nu$ when using a strongly irregular sampling. For $V_{\ell}$,
this is the same, except in a thin band around $\ell_0 \approx 0.73$. Our explanation for this fact is the same as
for a similar singularity in figure \ref{fig: d2surVal0_hatNu}.
For $\ell_0 = 0.73$ and $\epsilon=0$, the correlation between two successive points is approximatively only a function of $\ell$.
For instance, the derivative of this correlation with respect to $\nu$ at $\ell= 0.73,\nu=2.5$ is $-3.7 \times 10^{-5}$ for a correlation of $0.15$.
Thus, the very large uncertainty on $\nu$ has no negative impact on the information brought by the pairs of successive observation points on $\ell$
for $\epsilon=0$. These pairs of successive points bring most of the information on the covariance function, since $\ell_0$ is small.
When $\epsilon = 0.45$, this favorable case is broken by the random perturbations, and the large uncertainty on $\nu$ has a negative impact
on the estimation of $\ell$, even when considering the pairs of successive observation points.

Nevertheless, in the band around $\ell_0 \approx 0.73$, when going from $\epsilon=0$ to $\epsilon=0.45$, the improvement of the estimation of $\nu$
is much stronger than the degradation of the estimation of $\ell$.
This is confirmed by the plot of $D_{\ell,\nu}$, which always decreases
when going from $\epsilon=0$ to $\epsilon=0.45$. Thus, we confirm our global conclusion of subsection \ref{subsection: numeric_global}:
strong perturbations of the regular grid create pairs of observation points with small spacing, which is always beneficial for ML in the cases we address.

Finally, notice that we have discussed a case where the estimation of a covariance parameter is degraded, while the estimation of the other one is improved.
This justifies the use of scalar criteria of the estimation, such as $D_{\ell,\nu}$, or the ones related with prediction discussed above.

We retain the particular point
$\left(\ell_0=0.73,\nu_0=2.5\right)$, that corresponds to the case where
going from $\epsilon=0$ to $\epsilon=0.45$ decreases $V_{\nu}$ and increases $V_{\ell}$, for further global
investigation in figure \ref{fig: global_joint_ML}.

\begin{figure}[]
\centering
 \hspace*{-2cm}

\begin{tabular}{c c c}
\includegraphics[width=5cm,angle=0]{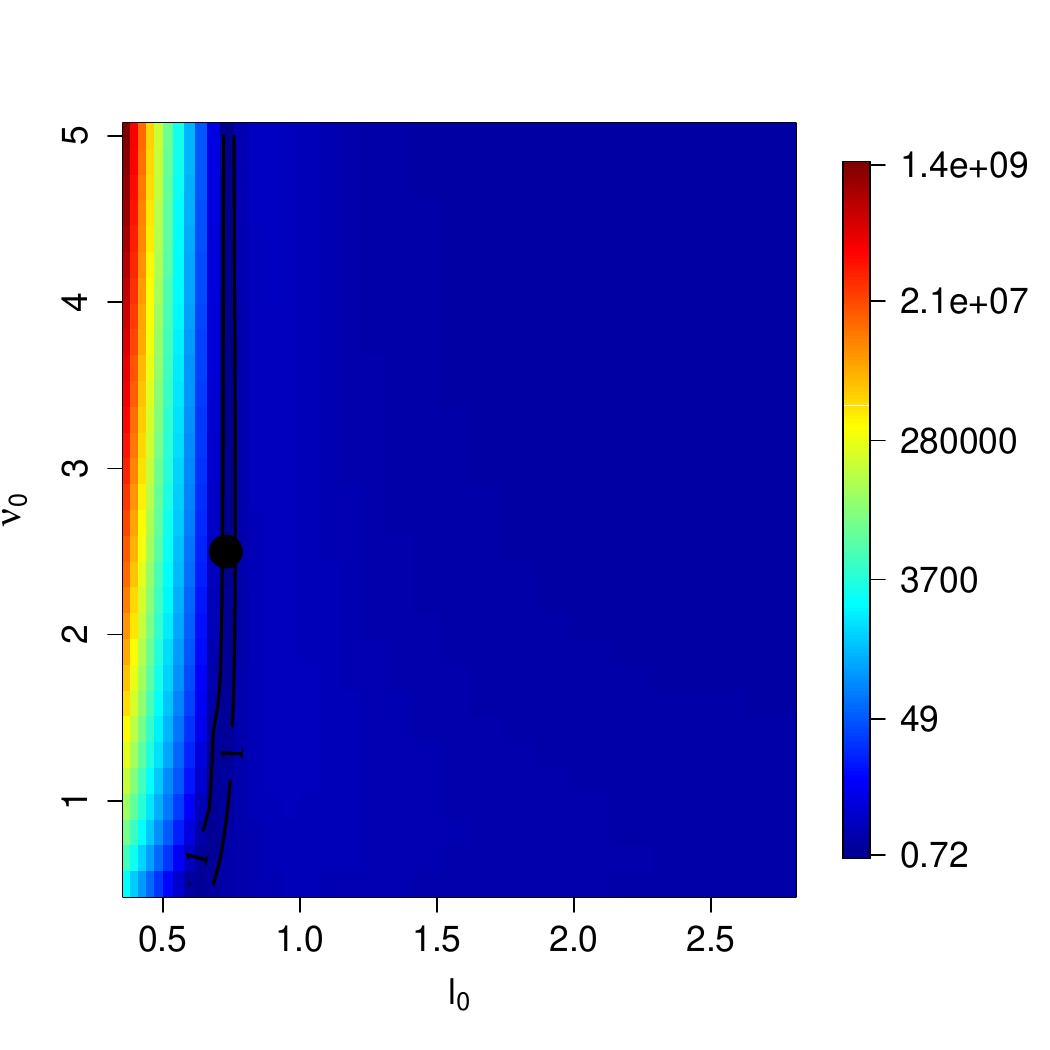} &
\includegraphics[width=5cm,angle=0]{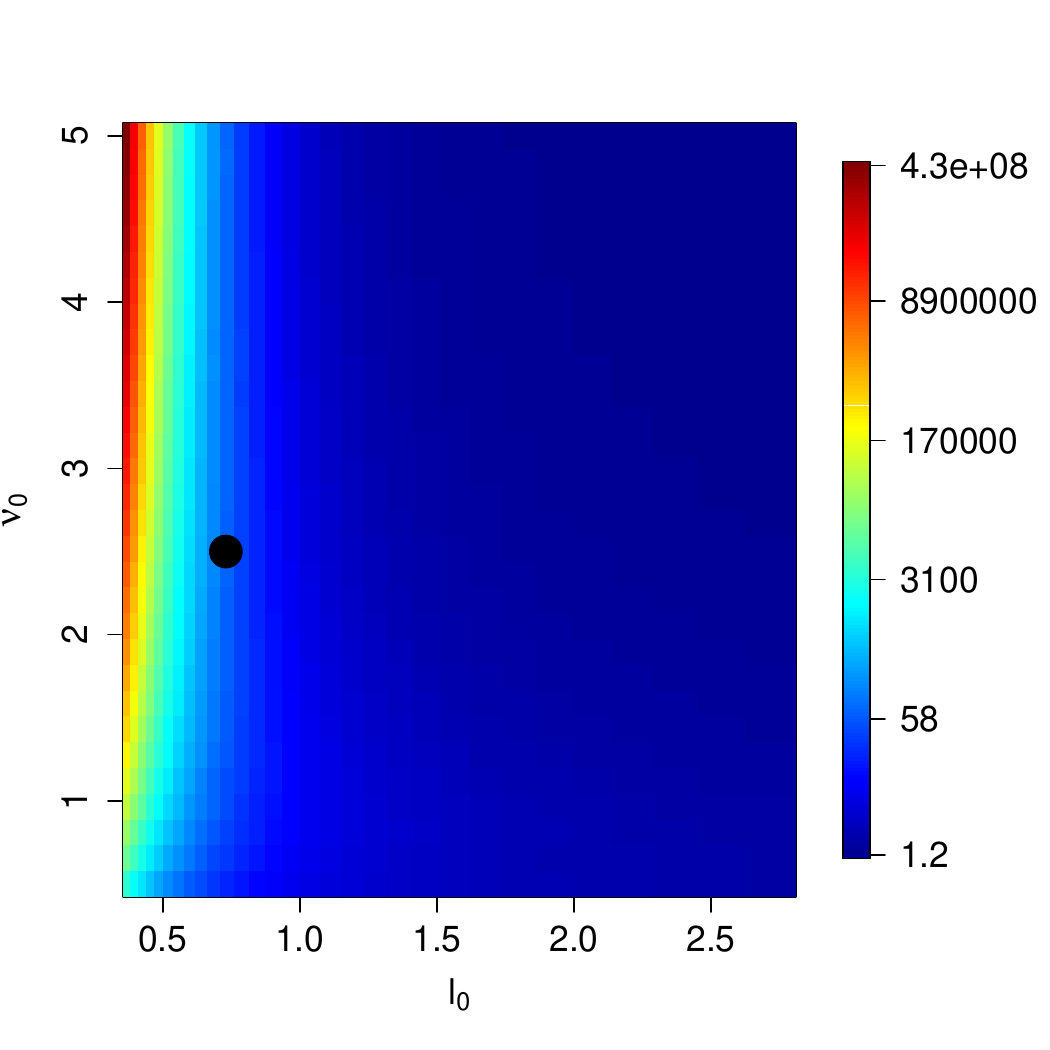} &
\includegraphics[width=5cm,angle=0]{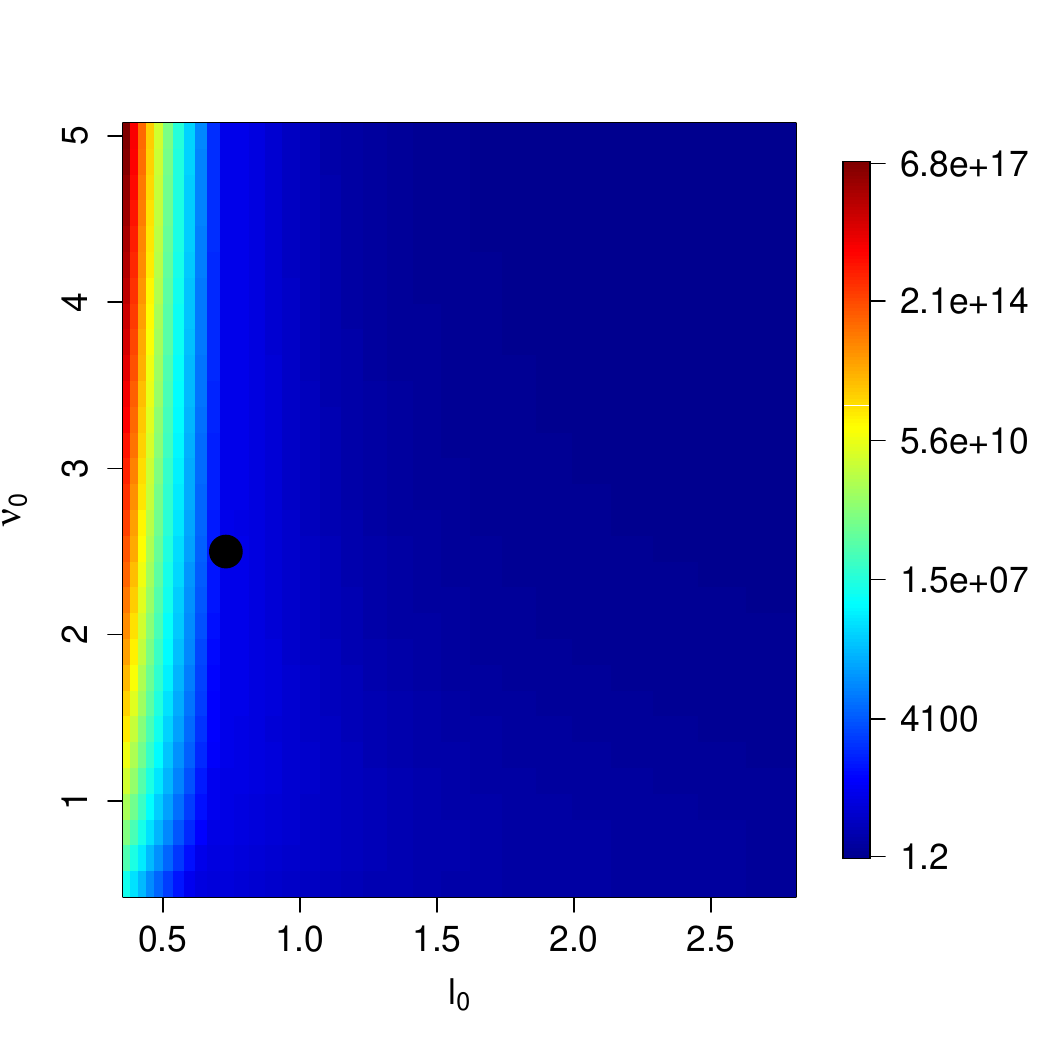}
\end{tabular}
\caption{For ML, plot of the ratio, between $\epsilon=0$ and $\epsilon=0.45$, of $V_{\ell}$ (left), $V_{\nu}$ (center) and $D_{\ell,\nu}$ (right).
The true covariance function is Mat\'ern with varying $\ell_0$ and $\nu_0$. We jointly estimate  $\ell$ and $\nu$.
We retain the particular point
$\left(\ell_0=0.73,\nu_0=2.5\right)$ for further
investigation below in this subsection \ref{subsection: joint}.}
\label{fig: Var0surVar45_joint_ML}
\end{figure}

In figure \ref{fig: Var0surVar45_joint_CV}, we address the same setting as in figure \ref{fig: Var0surVar45_joint_ML}, but for
the CV estimation. We observe that going from $\epsilon=0$ to $\epsilon=0.45$ can increase $D_{\ell,\nu}$.
This is a confirmation of what was observed in figure \ref{fig: Var0surVar45_hatLc}:
strong irregularities of the spatial sampling can globally damage the CV estimation. The justification is the same as
before: the LOO error variances become heterogeneous when the regular grid is perturbed.

We also observe an hybrid case, in which the estimation of $\ell$ and $\nu$ is improved by the irregularity, but the determinant of their asymptotic covariance
matrix increases, because the absolute value of their asymptotic covariance decreases. This case happens for instance
around the point $\left(\ell_0=1.7,\nu_0=5\right)$, that we retain for a further global investigation in figure \ref{fig: global_joint_CV}.

\begin{figure}[]
\centering
 \hspace*{-2cm}

\begin{tabular}{c c c}
\includegraphics[width=5cm,angle=0]{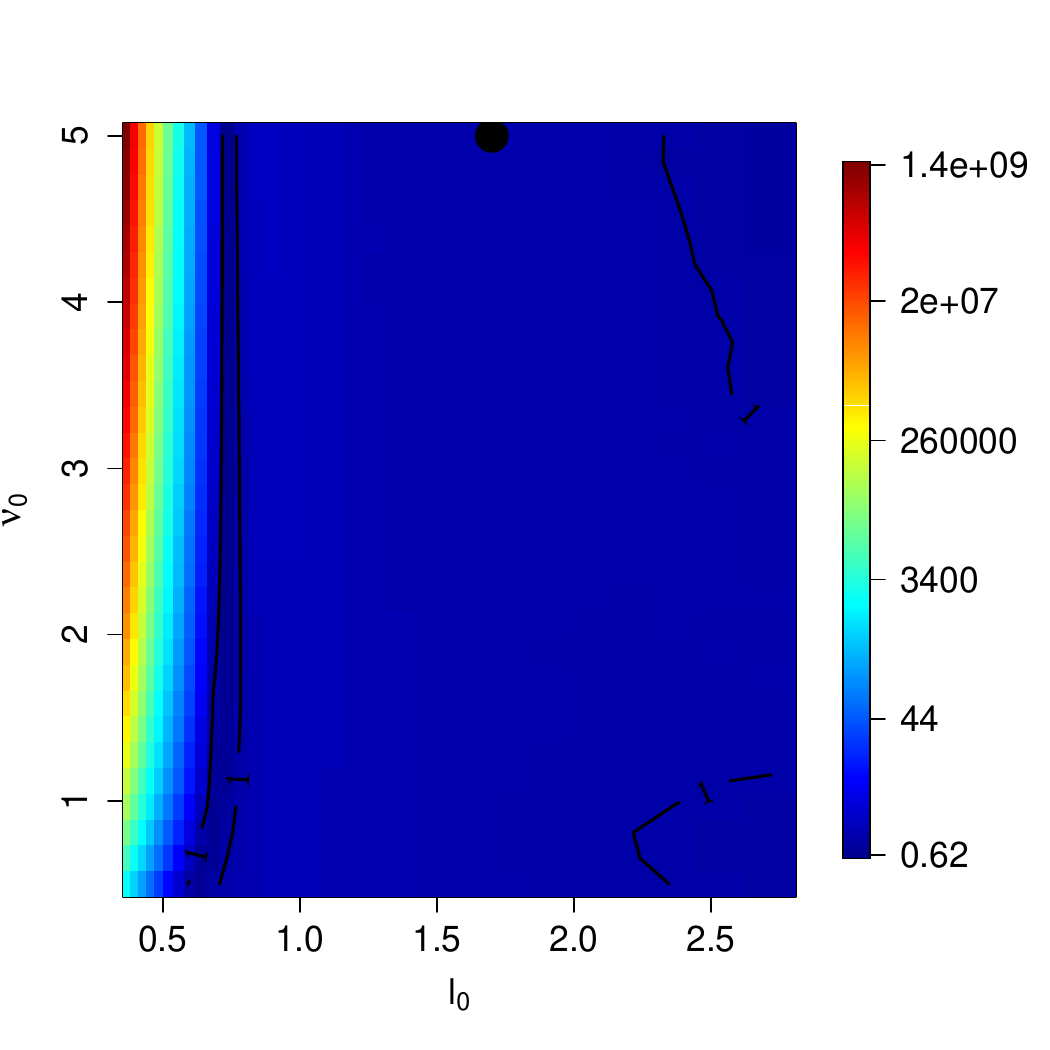} &
\includegraphics[width=5cm,angle=0]{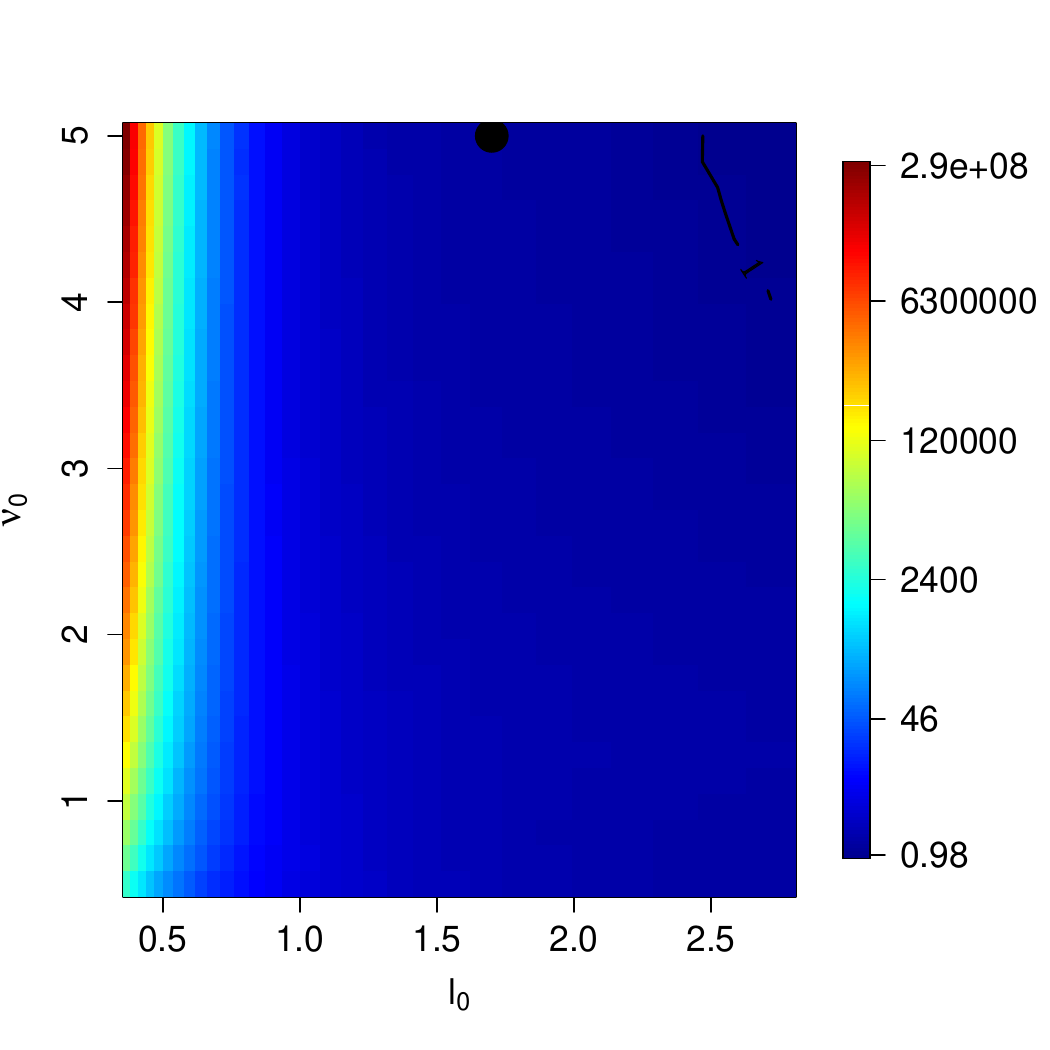} &
\includegraphics[width=5cm,angle=0]{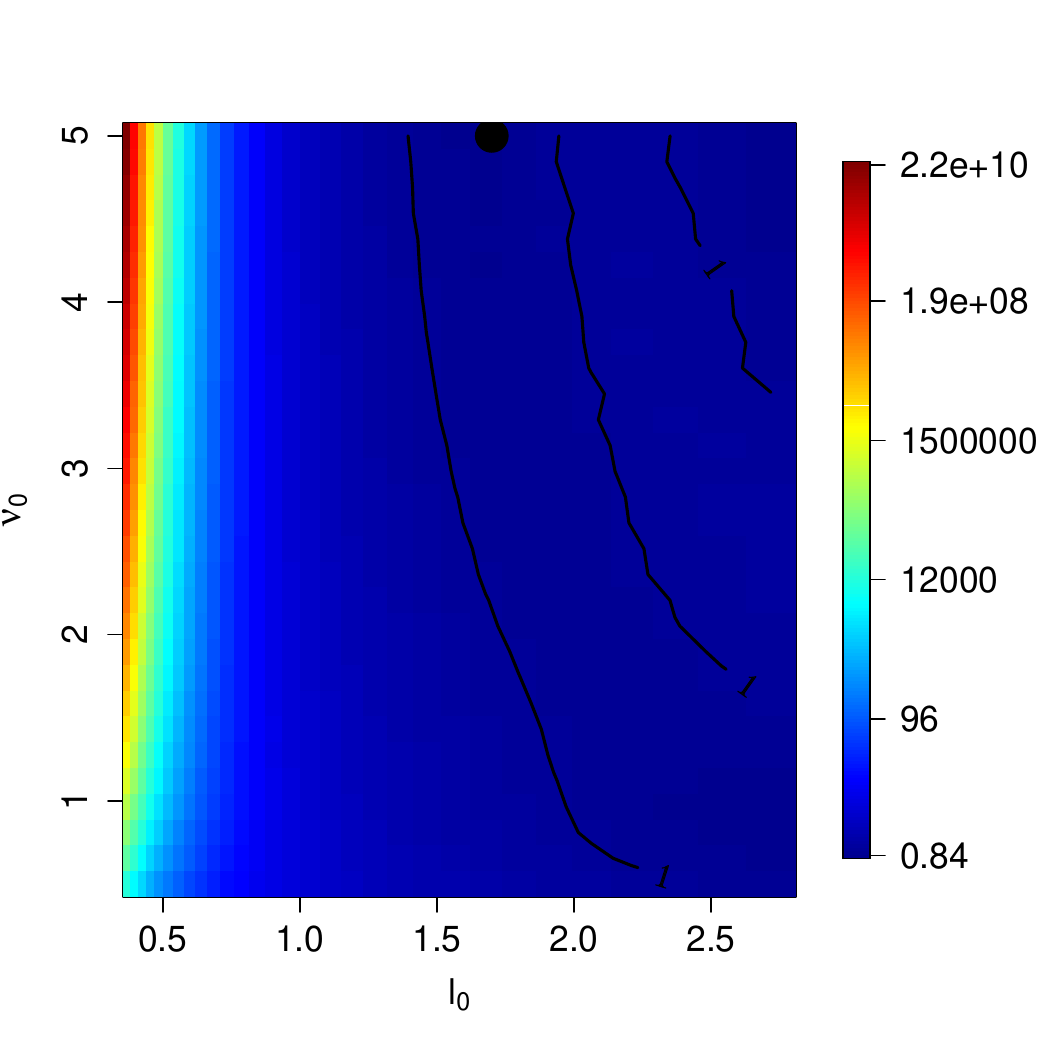}
\end{tabular}
\caption{Same setting as in figure \ref{fig: Var0surVar45_joint_ML} but for CV.
We retain the particular point
$\left(\ell_0=1.7,\nu_0=5\right)$ for further
investigation below in this subsection \ref{subsection: joint}.}
\label{fig: Var0surVar45_joint_CV}
\end{figure}

In figure \ref{fig: global_joint_ML}, for $\ell_0 = 0.73,\nu_0 = 2.5$ and for ML, we plot $V_{\ell}$, $V_{\nu}$
and $D_{\ell,\nu}$ with respect to $\epsilon$, for $\epsilon \in [0, 0.45 ]$. We confirm that
when $\epsilon$ increases, the decrease of $V_{\nu}$ is much stronger than the increase of $V_{\ell}$.
As a result, there is a strong decrease of $D_{\ell,\nu}$. This is a confirmation of our main conclusion
on the impact of the spatial sampling on the estimation: using pairs of closely spaced observation points improves the ML estimation.

\begin{figure}[]
\centering
 \hspace*{-2cm}

\begin{tabular}{c c c}
\includegraphics[width=5cm,angle=0]{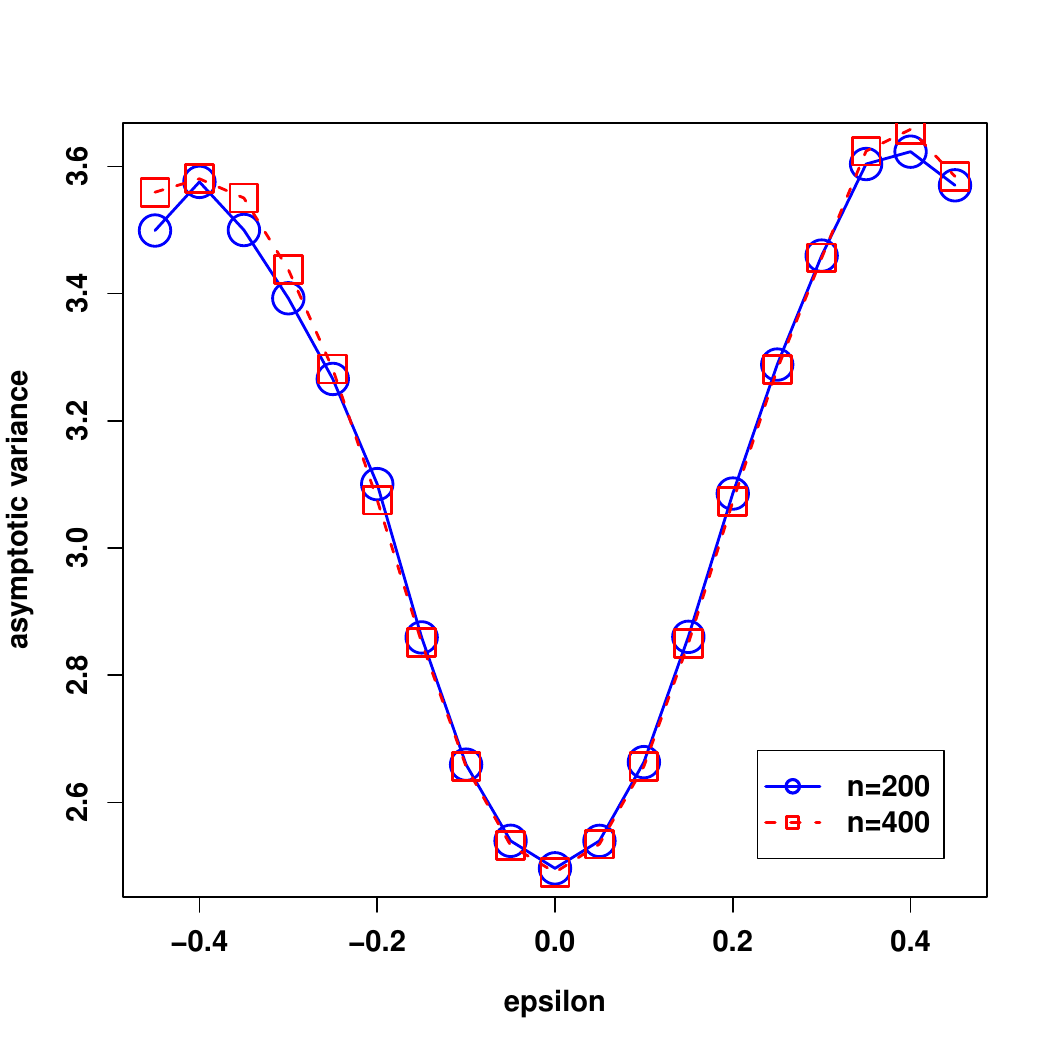} &
\includegraphics[width=5cm,angle=0]{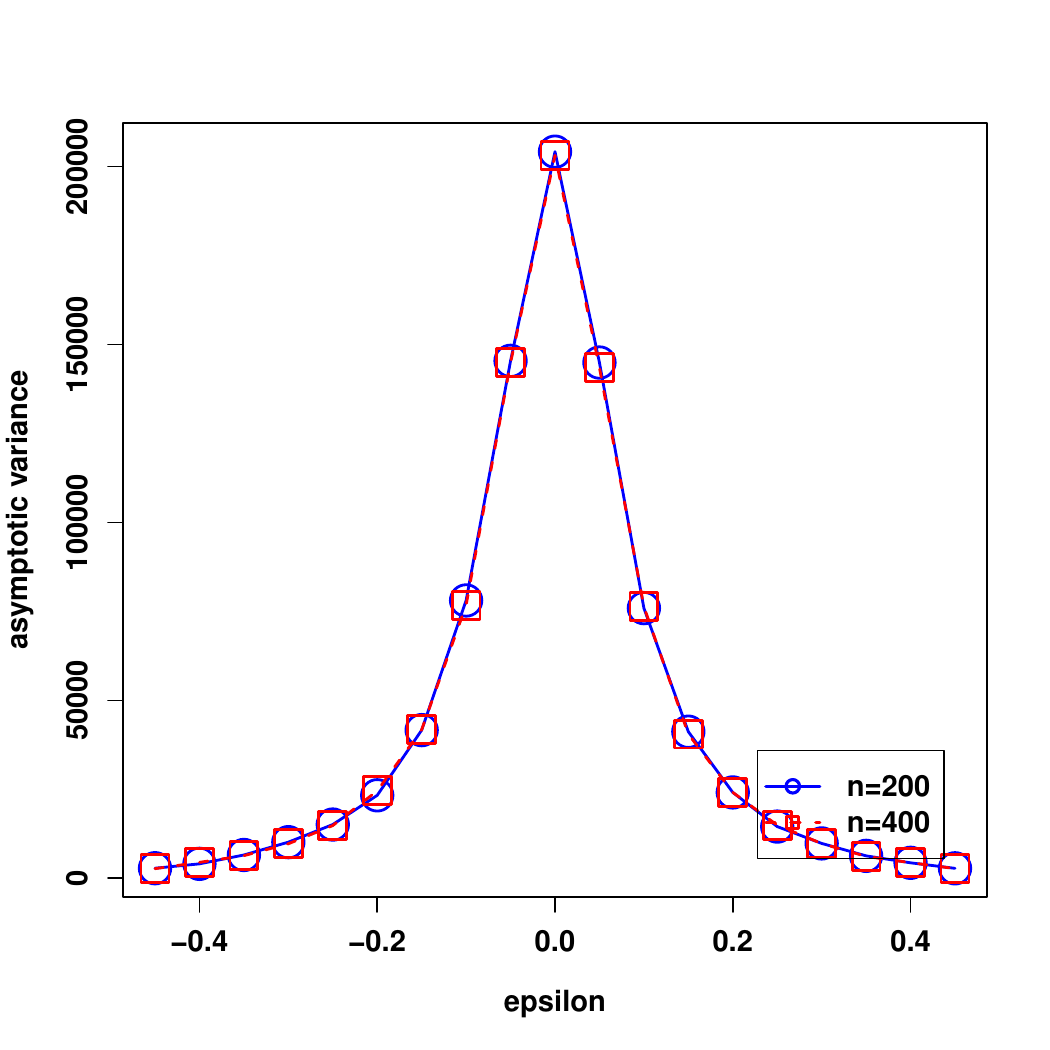} &
\includegraphics[width=5cm,angle=0]{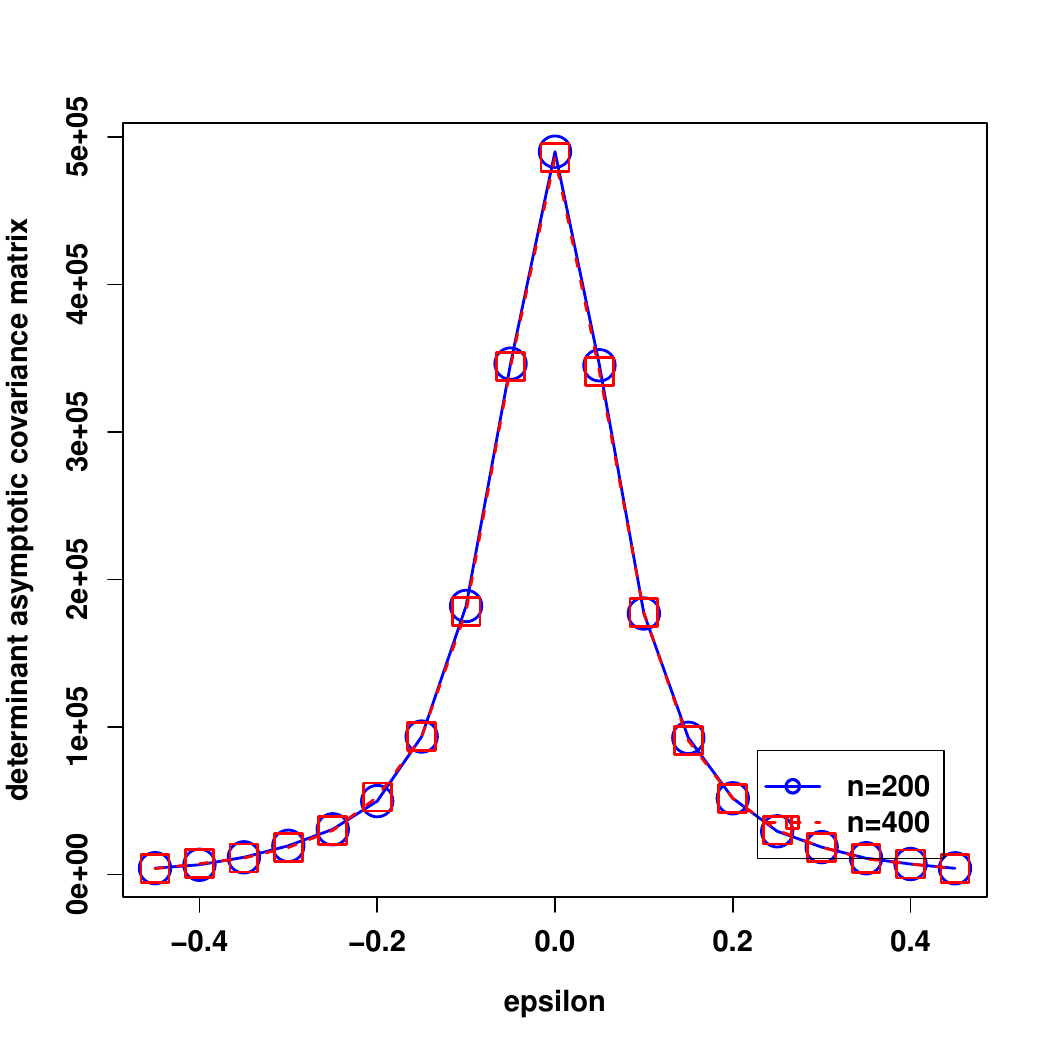}  
\end{tabular}
\caption{Joint estimation of $\ell$ and $\nu$ by ML. $\ell_0 = 0.73$ and $\nu_0 = 2.5$.
Plot of $V_{\ell}$ (left), $V_{\nu}$ (center) and $D_{\ell,\nu}$ (right) with respect to $\epsilon$.}
\label{fig: global_joint_ML}
\end{figure}

In figure \ref{fig: global_joint_CV}, for $\ell_0 = 1.7,\nu_0 = 5$ and for CV, we plot $V_{\ell}$, $V_{\nu}$, $C_{\ell, \nu}$
and $D_{\ell,\nu}$ with respect to $\epsilon$, for $\epsilon \in [0, 0.45 ]$.
We observe the particular case mentioned in figure \ref{fig: Var0surVar45_joint_CV},
in which the estimation of $\ell$ and $\nu$ is improved by the irregularity, but the determinant of their asymptotic covariance
matrix increases, because the absolute value of their asymptotic covariance decreases. This particular case is again a confirmation
that the criteria $V_{\ell}$ and $V_{\nu}$ can be insufficient for evaluating the impact of the irregularity on the estimation,
in a case of joint estimation.

\begin{figure}[]
\centering
 \hspace*{-2cm}

\begin{tabular}{c c}
\includegraphics[width=6cm,angle=0]{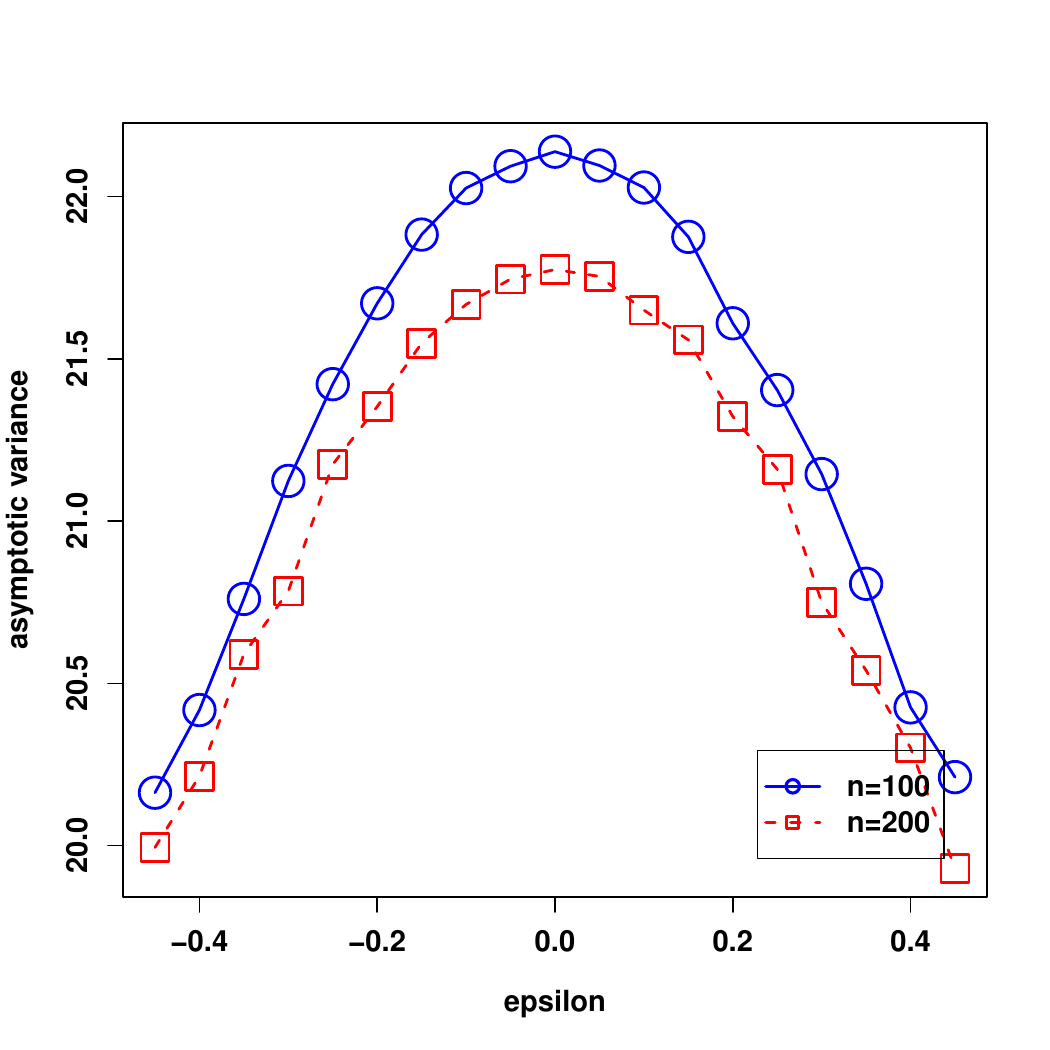} &
\includegraphics[width=6cm,angle=0]{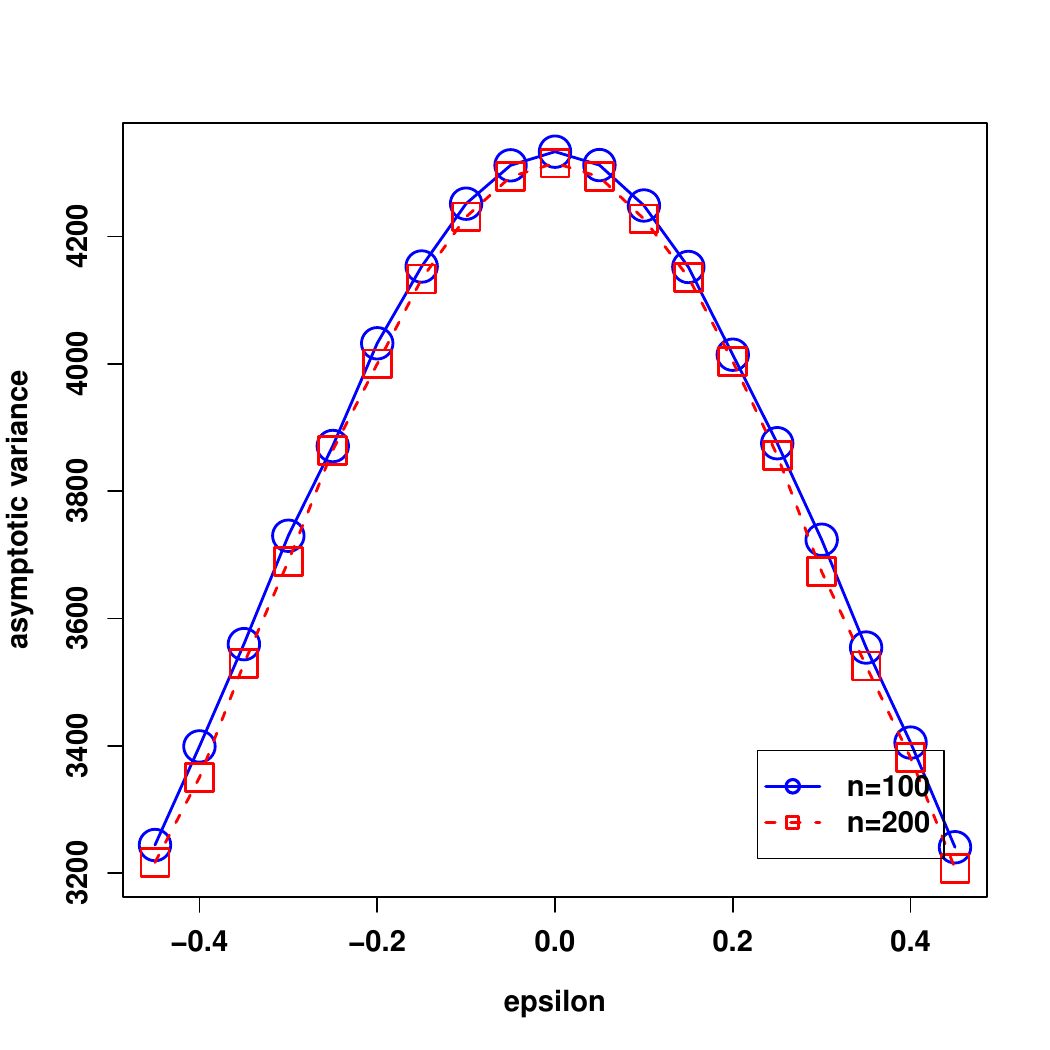} \\
\includegraphics[width=6cm,angle=0]{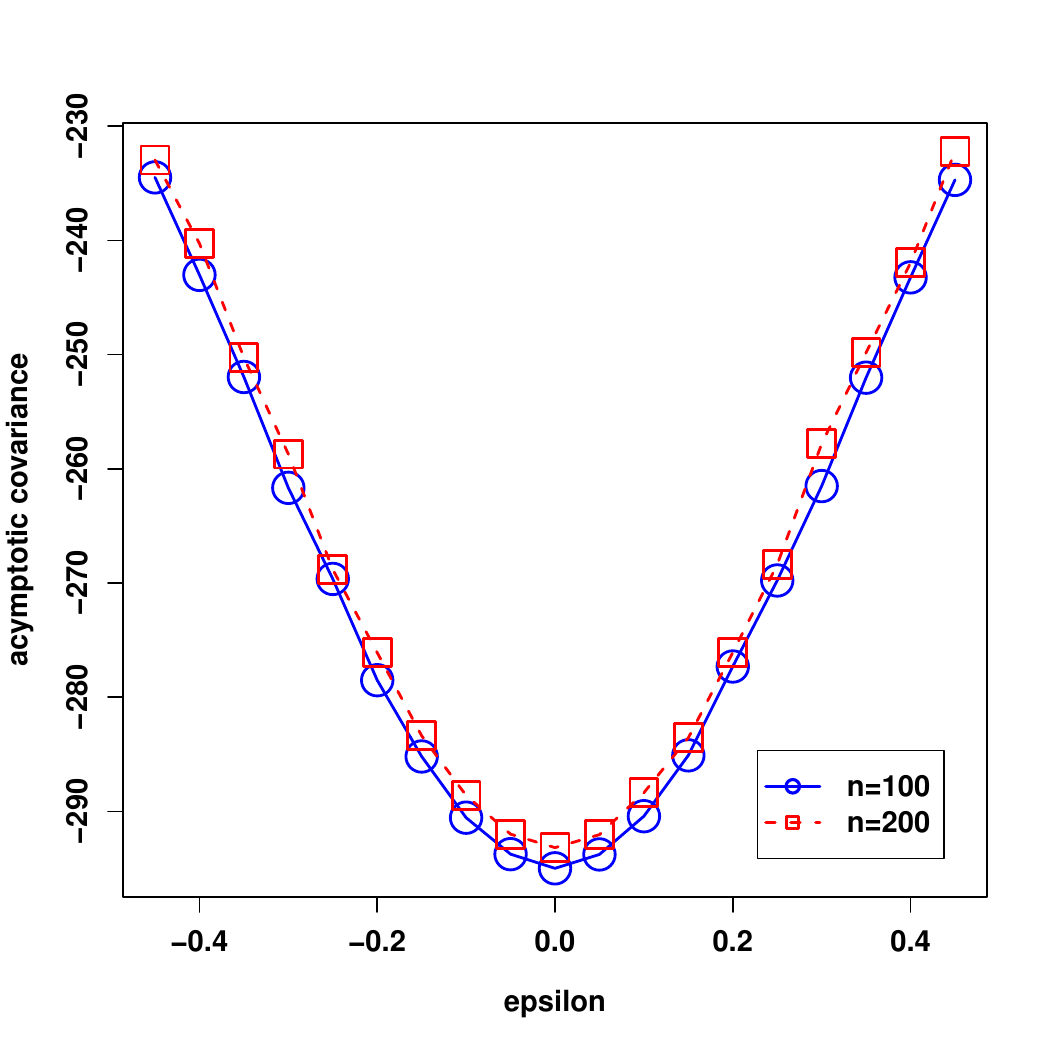} &
\includegraphics[width=6cm,angle=0]{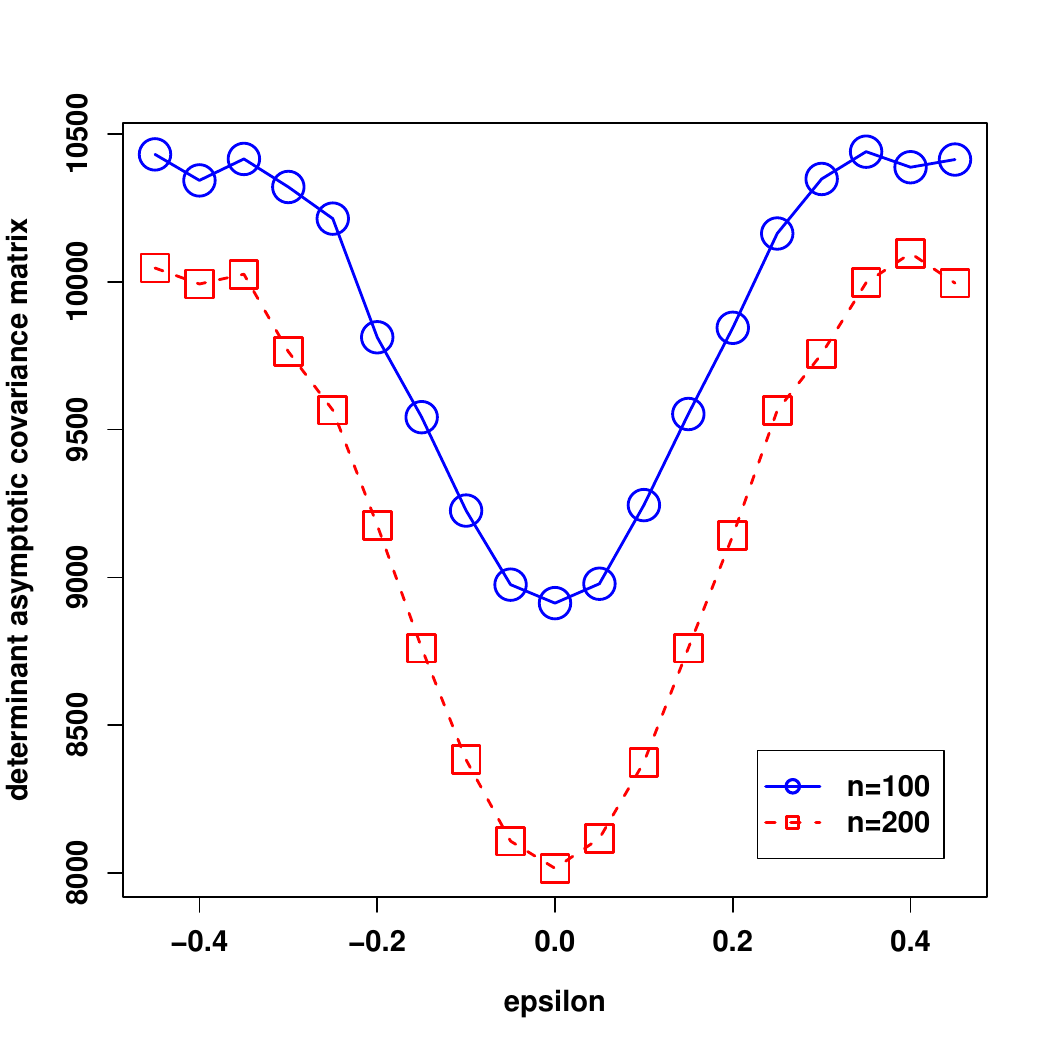} 
\end{tabular}
\caption{Joint estimation of $\ell$ and $\nu$ by CV. $\ell_0 = 1.7$ and $\nu_0 = 5$.
Plot of $V_{\ell}$ (top-left), $V_{\nu}$ (top-right), $C_{\ell,\nu}$ (bottom-left) and $D_{\ell,\nu}$ (bottom-right) with respect to $\epsilon$.}
\label{fig: global_joint_CV}
\end{figure}

\subsection{Discussion}

We have seen that local perturbations of the regular grid can damage both the ML and the CV estimation
(figure \ref{fig: d2surVal0_hatNu}). The CV estimation can even be damaged for strong perturbations
of the regular grid (figure \ref{fig: Var0surVar45_hatLc}). As we have discussed, 
our interpretation is that perturbing the regular grid creates LOO
errors with heterogeneous variances in \eqref{eq: thetaCV}, increasing the variance of the CV estimator minimizing their sum of squares.

Our main conclusion is that strong perturbations of the regular grid ($\epsilon = 0.45$) are beneficial to the ML
estimation in all the cases we have addressed (figures \ref{fig: Var0surVar45_hatLc}, \ref{fig: Var0surVar45_hatNu}, \ref{fig: Var0surVar45_joint_ML}).
Furthermore, ML is shown to be the preferable estimator in the well-specified case addressed here.
This main result is in agreement with the references \cite{ISDSTK,SSDUIAF,Jeannee2008geostatistical} discussed in section
\ref{section: introduction}. The global conclusion is that using groups of observation points with small spacing, compared to
the observation point density in the prediction domain, is beneficial for estimation.

Notice also that, in the reference \cite{Bachoc2013cross} addressing the misspecified case, it is shown that
using a sparse regular grid of observation points, compared to a sampling
with $iid$ uniform observation points, strongly damages the CV estimation, and has considerably less influence on the ML estimation.
This result is also an argument against using only evenly spaced observation points, from an estimation point of view.

\section{Analysis of the Kriging prediction}   \label{section: analysis_prediction}

The asymptotic analysis of the influence of the spatial sampling on the covariance parameter estimation being complete, we now address the
case of the Kriging prediction error, and its interaction with the covariance function estimation. In short words, we study
Kriging prediction with estimated covariance parameters \cite{ISDSTK}.

In subsection \ref{subsection: inf_hyp_miss}, we show that any fixed, constant, covariance function error
has a non-zero asymptotic impact on the prediction error. This fact is interesting in that the conclusion is different
in a fixed-domain asymptotic context, for which we have discussed in section \ref{section: introduction} that there exist
non-microergodic covariance parameters that have no asymptotic influence on prediction.
In subsection \ref{subsection: inf_hyp_est}, we show that, in the expansion-domain asymptotic context we address, the
covariance function estimation procedure has, however, no impact on the prediction error, as long as it is consistent.
Thus, the prediction error is a new criterion for the spatial sampling, that is independent of the estimation criteria
we address in section \ref{section: numerical_study}. In subsection \ref{subsection: impact_sampling_prediction}, we study numerically,
in the case of the Mat\'ern covariance function, the impact of the regularity parameter $\epsilon$ on the mean square prediction error
on the prediction domain.

\subsection{Influence of covariance parameter misspecification on prediction}  \label{subsection: inf_hyp_miss}

In proposition \ref{prop: inf_hyp_pred}, we show that the misspecification of correlation parameters has an asymptotic influence on the prediction errors.
Indeed, the difference of the asymptotic Leave-One-Out mean square errors,
between incorrect and correct covariance parameters, is lower and upper bounded by
finite positive constants times the integrated square difference between the two correlation functions.

\begin{prop} \label{prop: inf_hyp_pred}
 Assume that condition \ref{cond: Ktheta} is satisfied and that for all $\theta \in \Theta$, $K_{\theta}(0)=1$. 

Let, for $1 \leq i \leq n$, $\hat{y}_{i,\theta} \left(y_{-i}\right)  := \EE_{\theta|X} \left( y_i | y_1,...,y_{i-1},y_{i+1},...,y_n \right)$ be the
Kriging Leave-One-Out prediction of $y_i$ with covariance-parameter $\theta$. We then denote
\[
 D_{p} (\theta,\theta_0) := \EE \left[ \frac{1}{n} \sum_{i=1}^n \left\{ y_i -  \hat{y}_{i,\theta} (y_{-i}) \right\}^2 \right]
- \EE \left[ \frac{1}{n} \sum_{i=1}^n \left\{ y_i -  \hat{y}_{i,\theta_0} (y_{-i}) \right\}^2 \right].
\]
Then there exist constants $0 < A < B < +\infty$ so that, for $\epsilon=0$,
\[
A \sum_{v \in \ZZ^d}  \left\{ K_{\theta}(v) - K_{\theta_0}(v) \right\}^2
\leq \underset{ n \to + \infty }{\underline{\lim}} D_p(\theta,\theta_0)
\]
and
\[
\underset{ n \to + \infty }{\overline{\lim}} D_p(\theta,\theta_0)
\leq B \sum_{v \in \ZZ^d}  \left\{ K_{\theta}(v) - K_{\theta_0}(v) \right\}^2.
\]
For $\epsilon \neq 0$, we denote $D_{\epsilon} = \cup_{v \in \ZZ^d \backslash 0} \left(v + \epsilon C_{S_X}\right)$,
with $C_{S_X} = \left\{ t_1-t_2,t_1 \in S_X, t_2 \in S_X \right\}$. Then
\[
A \int_{D_{\epsilon}}  \left\{ K_{\theta}(t) - K_{\theta_0}(t) \right\}^2 dt
\leq \underset{ n \to + \infty }{\underline{\lim}} D_p(\theta,\theta_0)
\]
and
\[
\underset{ n \to + \infty }{\overline{\lim}} D_p(\theta,\theta_0)
\leq B \int_{ D_{\epsilon}}  \left\{ K_{\theta}(t) - K_{\theta_0}(t) \right\}^2 dt.
\]

\end{prop}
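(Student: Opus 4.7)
The plan is to begin from the orthogonality of Kriging errors under the true law $P_{\theta_0}$. Because $\hat{y}_{i,\theta_0}(y_{-i}) = \EE_{\theta_0|X}(y_i \mid y_{-i})$, the residual $y_i - \hat{y}_{i,\theta_0}(y_{-i})$ is orthogonal in $L^2(P_{\theta_0|X})$ to every function of $y_{-i}$, and in particular to $\hat{y}_{i,\theta_0}(y_{-i}) - \hat{y}_{i,\theta}(y_{-i})$. This gives the Pythagorean identity
\[
\EE\bigl[(y_i - \hat{y}_{i,\theta}(y_{-i}))^2 \mid X\bigr] = \EE\bigl[(y_i - \hat{y}_{i,\theta_0}(y_{-i}))^2 \mid X\bigr] + \EE\bigl[(\hat{y}_{i,\theta}(y_{-i}) - \hat{y}_{i,\theta_0}(y_{-i}))^2 \mid X\bigr],
\]
so that $D_p(\theta,\theta_0) = \frac{1}{n}\EE\bigl[\sum_{i=1}^{n}(\hat{y}_{i,\theta}(y_{-i}) - \hat{y}_{i,\theta_0}(y_{-i}))^{2}\bigr] \geq 0$. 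Using the virtual LOO identity $y_i - \hat{y}_{i,\theta}(y_{-i}) = (R_\theta^{-1}y)_i/(R_\theta^{-1})_{ii}$, I set $D_\theta := \diag(R_\theta^{-1})$ and $U_\theta := D_\theta^{-1}R_\theta^{-1} - D_{\theta_0}^{-1}R_{\theta_0}^{-1}$, which has zero diagonal, and obtain
\[
D_p(\theta,\theta_0) = \frac{1}{n}\EE\!\left[\operatorname{Tr}\!\bigl(U_\theta^{t}U_\theta R_{\theta_0}\bigr)\right].
\]

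The key analytic input is uniform spectral control. Condition \ref{cond: Ktheta} (continuity and strict positivity of $\hat{K}_\theta$ on the compact set $\RR^d\times\Theta$, together with the $1/(1+|t|^{d+1})$ decay of $K_\theta$) produces constants $0<m<M<\infty$ such that $mI \preceq R_\theta \preceq MI$ uniformly in $\theta\in\Theta$, $n$ and the realization of $X$, and the same uniform bounds on $D_\theta$; these are of the kind collected in section \ref{section: appendix_technical_results}. From these it follows that, for any $n\times n$ matrix $M$, one has $c |M| \leq |AMB| \leq C|M|$ whenever $A, B \in \{R_\theta, R_{\theta_0}, R_\theta^{-1}, R_{\theta_0}^{-1}, D_\theta, D_{\theta_0}, D_\theta^{-1}, D_{\theta_0}^{-1}\}$, for constants independent of $\theta$ and $n$.

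For the upper bound I apply the resolvent identity $R_\theta^{-1}-R_{\theta_0}^{-1} = -R_\theta^{-1}\Delta R_{\theta_0}^{-1}$ with $\Delta := R_\theta - R_{\theta_0}$ to write $U_\theta = -D_\theta^{-1}R_\theta^{-1}\Delta R_{\theta_0}^{-1} + (D_\theta^{-1}-D_{\theta_0}^{-1})R_{\theta_0}^{-1}$. The first summand has Frobenius norm at most $C|\Delta|$; for the second, since $(D_\theta^{-1})_{ii}-(D_{\theta_0}^{-1})_{ii}$ is a smooth function of $(R_\theta^{-1})_{ii}-(R_{\theta_0}^{-1})_{ii} = -(R_\theta^{-1}\Delta R_{\theta_0}^{-1})_{ii}$ bounded away from singularities by the uniform lower bound on $D_\theta$, one gets $|D_\theta^{-1}-D_{\theta_0}^{-1}|^2 \leq C|R_\theta^{-1}\Delta R_{\theta_0}^{-1}|^2 \leq C'|\Delta|^2$. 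Hence $\frac{1}{n}\operatorname{Tr}(U_\theta^{t}U_\theta R_{\theta_0}) \leq M|U_\theta|^2 \leq B'|\Delta|^2$. The matching lower bound $\frac{1}{n}\operatorname{Tr}(U_\theta^{t}U_\theta R_{\theta_0}) \geq m|U_\theta|^2$ reduces the task to showing $|U_\theta|^2 \geq c|\Delta|^2$; this is the main obstacle, since $U_\theta$ is a difference of two matrices and a priori the two pieces could partially cancel. My plan is to multiply by the bounded invertible operators $R_\theta$ on the left and $R_{\theta_0}$ on the right, which preserves the Frobenius norm up to constants, and to rearrange the resulting identity so that $\Delta$ appears as the dominant term with the remainder expressible as a rank-type correction controlled by the diagonal corrections already bounded above. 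This reduces, up to universal constants, $|U_\theta|^2$ to $|\Delta|^2$. Combining gives $A'|\Delta|^2 \leq \frac{1}{n}\operatorname{Tr}(U_\theta^{t}U_\theta R_{\theta_0}) \leq B'|\Delta|^2$ almost surely.

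It remains to identify the $n\to\infty$ limit of $\EE[|\Delta|^{2}]$. For $\epsilon=0$, $|\Delta|^2 = \frac{1}{n}\sum_{i,j}(K_\theta(v_i-v_j)-K_{\theta_0}(v_i-v_j))^2$ is deterministic, and the $\{v_i\}$ exhausting $\{1,\dots,N\}^{d}$ together with the summability guaranteed by \eqref{eq: controleCov} gives, by a standard Cesàro-type argument, the limit $\sum_{v\in\ZZ^d}(K_\theta(v)-K_{\theta_0}(v))^2$. For $\epsilon\neq 0$, the assumption $K_\theta(0)=K_{\theta_0}(0)=1$ kills the diagonal contribution, and for $i\neq j$ with $v_i-v_j = v$, taking expectation over $X$ yields $\int (K_\theta(v+\epsilon u)-K_{\theta_0}(v+\epsilon u))^2 q(u)\,du$ where $q$ is the density of $X_i-X_j$ supported on $C_{S_X}$. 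Summing over $v\in\ZZ^{d}\setminus\{0\}$ and using the change of variables $t = v+\epsilon u$ turns this into an integral over $D_\epsilon$ against the measure $q((t-v)/\epsilon)\,dt/\epsilon^{d}$; since $q$ is bounded above on $C_{S_X}$ and bounded below on compact subsets of its interior by the strict positivity of the density of $\mathcal{L}_X$ on $S_X$, one can sandwich this weighted integral between constant multiples of $\int_{D_\epsilon}(K_\theta(t)-K_{\theta_0}(t))^2\,dt$, absorbing the constants into $A$ and $B$. Passing the liminf and limsup through finally yields the two stated inequalities.
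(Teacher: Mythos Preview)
Your overall architecture matches the paper: the Pythagorean decomposition, the virtual LOO formula giving $D_p(\theta,\theta_0)=\frac{1}{n}\EE[\operatorname{Tr}(U_\theta^t U_\theta R_{\theta_0})]$ with $U_\theta=\diag(R_\theta^{-1})^{-1}R_\theta^{-1}-\diag(R_{\theta_0}^{-1})^{-1}R_{\theta_0}^{-1}$, the uniform spectral bounds from section \ref{section: appendix_technical_results}, and the identification of the limit of $\EE|R_\theta-R_{\theta_0}|^2$ are all essentially what the paper does (the paper refers back to the proof of proposition \ref{prop: consistance_CV} for the lower bound and says the upper bound follows by ``similar techniques'', which is your resolvent argument). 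Your upper bound is correct.

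The lower bound, however, has a genuine gap. You write that after multiplying $U_\theta$ by $R_\theta$ and $R_{\theta_0}$ you will ``rearrange so that $\Delta$ appears as the dominant term with the remainder controlled by the diagonal corrections already bounded above''. If one carries this out (e.g.\ multiply $D_\theta U_\theta$ by $R_\theta$ on the left and $R_{\theta_0}$ on the right) one gets $-\Delta + R_\theta(I-D_\theta D_{\theta_0}^{-1})$, and the second term has $|\cdot|$-norm bounded by $C|\Delta|$ with a constant $C$ that has no reason to be smaller than $1$. So the triangle inequality gives nothing. The circularity is exactly that the ``diagonal corrections already bounded above'' are bounded by $|\Delta|$, not by $|U_\theta|$, so they cannot be absorbed when you are trying to prove $|U_\theta|\gtrsim|\Delta|$.

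The paper resolves this differently. After reaching $|U_\theta|^2\gtrsim|\diag(R_{\theta_0}^{-1})\diag(R_\theta^{-1})^{-1}R_\theta^{-1}-R_{\theta_0}^{-1}|^2$, it observes that $\diag(R_{\theta_0}^{-1})\diag(R_\theta^{-1})^{-1}$ is an arbitrary diagonal matrix with eigenvalues in a fixed compact interval, and hence passes to $\inf_{\lambda_1,\dots,\lambda_n}|D_\lambda R_\theta - R_{\theta_0}|^2$ (via theorem~1 of \cite{TCMR}). This infimum is then computed \emph{row by row}: for row $i$, using $K_\theta(0)=K_{\theta_0}(0)=1$ so that $(R_\theta)_{i,i}=(R_{\theta_0})_{i,i}=1$, the problem becomes $\inf_\lambda\{(\lambda-1)^2+\sum_{j\neq i}(\lambda(R_\theta)_{i,j}-(R_{\theta_0})_{i,j})^2\}$, and lemma \ref{lem: lambda_compensateur} gives the explicit lower bound $\frac{\sum_{j\neq i}((R_\theta)_{i,j}-(R_{\theta_0})_{i,j})^2}{1+\sum_{j\neq i}(R_\theta)_{i,j}^2}$. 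The denominator is uniformly bounded by condition \ref{cond: Ktheta} and lemma \ref{lem: sommabilité}, and the numerators sum to $|R_\theta-R_{\theta_0}|^2$. The point is that the hypothesis $K_\theta(0)=1$ is used in an essential way to anchor the row-wise optimization; without it the diagonal multiplier could indeed cancel $\Delta$, which is precisely the obstruction your sketch runs into.
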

\begin{proof}
The minoration is proved in the proof of proposition \ref{prop: consistance_CV}. The majoration is obtained with similar techniques.
\end{proof}

\subsection{Influence of covariance parameter estimation on prediction} \label{subsection: inf_hyp_est}

In proposition \ref{prop: infl_est_pred}, proved in section \ref{section: proof_prediction_independente_estimation},
we address the influence of covariance parameter estimation on prediction.

\begin{prop} \label{prop: infl_est_pred}
Assume that condition \ref{cond: Ktheta} is satisfied and that the Gaussian process $Y$, with covariance function $K_{\theta_0}(t)$, yields almost surely continuous trajectories.
Assume also that for every $\theta \in \Theta$, $1 \leq i \leq p$,
$\frac{\partial}{\partial \theta_i} K_{\theta}(t)$
is continuous with respect to $t$.
Let $\hat{Y}_{\theta}(t)$ be the Kriging prediction of the
Gaussian process $Y$ at $t$, under correlation function $K_{\theta}$ and given the observations $y_1,...,y_n$.
For any $n$, let $N_{1,n}$ so that $ N_{1,n}^d \leq n < (N_{1,n}+1)^d$. Define
\begin{equation} \label{eq: def_E_eps_theta}
E_{\epsilon,\theta} := \frac{1}{N_{1,n}^d} \int_{[0,N_{1,n}]^d} \left( \hat{Y}_{\theta}(t) - Y(t) \right)^2 dt.
\end{equation}
Consider a consistent estimator $\hat{\theta}$ of $\theta_0$. Then
\begin{equation} \label{eq: no_inf_est_on _pred}
| E_{\epsilon,\theta_0} - E_{\epsilon,\hat{\theta}} | = o_p(1). 
\end{equation}
Furthermore, there exists a constant $A >0$ so that for all
$n$,
\begin{equation} \label{eq: no_vanishing_pred_error}
\EE \left( E_{\epsilon,\theta_0} \right) \geq A.
\end{equation}
\end{prop}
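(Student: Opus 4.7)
The plan is to treat the two conclusions \eqref{eq: no_inf_est_on _pred} and \eqref{eq: no_vanishing_pred_error} separately, since they are essentially independent.

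For \eqref{eq: no_inf_est_on _pred}, I would start from the identity
\[
E_{\epsilon,\hat\theta} - E_{\epsilon,\theta_0} = \frac{1}{N_{1,n}^d} \int_{[0,N_{1,n}]^d} \bigl(\hat Y_{\hat\theta}(t) - \hat Y_{\theta_0}(t)\bigr) \bigl(\hat Y_{\hat\theta}(t) + \hat Y_{\theta_0}(t) - 2Y(t)\bigr)\, dt
\]
and apply the Cauchy--Schwarz inequality on $L^2([0,N_{1,n}]^d,\,dt/N_{1,n}^d)$. The companion factor is bounded by $2(E_{\epsilon,\hat\theta}^{1/2} + E_{\epsilon,\theta_0}^{1/2})$, which is $O_p(1)$ because its $L^2$ envelope is controlled by $K_{\theta_0}(0)$. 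The problem thus reduces to proving
\[
I_1 := \frac{1}{N_{1,n}^d} \int_{[0,N_{1,n}]^d} \bigl(\hat Y_{\hat\theta}(t) - \hat Y_{\theta_0}(t)\bigr)^2 \, dt = o_p(1).
\]
By the mean value theorem in $\theta$, $I_1 \leq p \|\hat\theta - \theta_0\|^2 \cdot N_{1,n}^{-d} \int \sup_{\theta \in \Theta,\,1\leq i\leq p} |\partial_{\theta_i} \hat Y_\theta(t)|^2 \, dt$. Since $\partial_{\theta_i} \hat Y_\theta(t) = a_{\theta,t,i}^T y$ with $a_{\theta,t,i} = R_\theta^{-1}\partial_{\theta_i} r_\theta(t) - R_\theta^{-1}(\partial_{\theta_i} R_\theta) R_\theta^{-1} r_\theta(t)$ and $r_\theta(t)_j = K_\theta(t - v_j - \epsilon X_j)$, one has $\EE[(a^T y)^2] = a^T R_{\theta_0} a \leq \|R_{\theta_0}\| \|a\|^2$. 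Combining the decay bounds from condition \ref{cond: Ktheta} (to control $\|r_\theta(t)\|^2$, $\|\partial_{\theta_i} r_\theta(t)\|^2$ and $\|\partial_{\theta_i} R_\theta\|$ uniformly in $n$ and $t$) with the uniform bounds $\|R_\theta^{-1}\|,\|R_{\theta_0}\| \leq C$ from the technical lemmas of section \ref{section: appendix_technical_results} yields $\EE[\sup_\theta |\partial_{\theta_i} \hat Y_\theta(t)|^2] \leq C$ uniformly in $n$ and $t$; the supremum over $\theta$ is handled via an envelope argument using the second derivative in $\theta$, which is under control by condition \ref{cond: Ktheta}. Consistency of $\hat\theta$ then closes the argument.

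For \eqref{eq: no_vanishing_pred_error}, I would write
\[
\EE(E_{\epsilon,\theta_0}) = \EE\!\left[ \frac{1}{N_{1,n}^d} \int_{[0,N_{1,n}]^d} \sigma^2_{t,X} \, dt \right], \qquad \sigma^2_{t,X} := K_{\theta_0}(0) - r_{\theta_0}(t)^T R_{\theta_0}^{-1} r_{\theta_0}(t),
\]
and exhibit a set $S \subset [0,1]^d$ of positive Lebesgue measure and a constant $\beta>0$ (both independent of $n$ and of $X$) such that, for every translate $v + S \subset [0,N_{1,n}]^d$ with $v \in \{0,\ldots,N_{1,n}-1\}^d$ and every $t \in v+S$, $\sigma^2_{t,X} \geq \beta$ almost surely. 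Summing the contributions over $v$ gives the claim with $A = \beta |S|$. The pointwise lower bound would be obtained by working in the spectral domain: using the stochastic integral representation of $Y$ with spectral density $\hat K_{\theta_0}$ one has
\[
\sigma^2_{t,X} = \min_{c \in \RR^n} \int_{\RR^d} \left| e^{\mathrm{i} f \cdot t} - \sum_{j=1}^n c_j \, e^{\mathrm{i} f \cdot (v_j + \epsilon X_j)} \right|^2 \hat K_{\theta_0}(f) \, df,
\]
and one restricts the integral to a high-frequency shell where, by an aliasing/Poisson-summation argument applied to a set of $t$ bounded away from the integer lattice, no linear combination of the sampling exponentials can approximate $e^{\mathrm{i} f \cdot t}$. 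The positivity and continuity of $\hat K_{\theta_0}$ from condition \ref{cond: Ktheta} then produce the uniform lower bound $\beta>0$.

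The main obstacle is the uniform-in-$n$ and uniform-in-$t$ control of $\EE[\sup_\theta |\partial_{\theta_i} \hat Y_\theta(t)|^2]$ in part~1: one must combine the polynomial decay of $K_\theta$ and its $\theta$-derivatives with the operator-norm bounds on $R_\theta^{-1}$ in such a way that the supremum over $\theta$ passes safely inside the expectation and the resulting bound does not blow up with $n$. For part~2, the delicate point is that the naive estimate $\sigma^2_{t,X} \geq K_{\theta_0}(0) - \|R_{\theta_0}^{-1}\| \|r_{\theta_0}(t)\|^2$ is generally not strong enough to yield positivity, so one really needs the spectral/aliasing argument to see that a positive-measure band of frequencies is irreducibly out of reach of the Kriging predictor uniformly in $n$ and in the random perturbation $X$.
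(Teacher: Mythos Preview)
Your approach to \eqref{eq: no_inf_est_on _pred} is essentially the paper's, up to a cosmetic difference in the initial decomposition: the paper differentiates $E_{\epsilon,\theta}$ directly and shows $\sup_{i,\theta}\lvert\partial_{\theta_i}E_{\epsilon,\theta}\rvert = O_p(1)$, whereas you factor the difference and reduce to controlling $N_{1,n}^{-d}\int\sup_{\theta,i}\lvert\partial_{\theta_i}\hat Y_\theta(t)\rvert^2\,dt$. Both routes hinge on the same core estimate: a bound on $\EE\bigl[\sup_{\theta\in\Theta}(w_\theta(t)^T y)^2\bigr]$ uniform in $n$, $x$ and $t$, for vectors $w_\theta(t)$ built from $r_\theta(t)$, $R_\theta^{-1}$ and their $\theta$-derivatives. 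The paper makes the passage of $\sup_\theta$ inside $\EE$ explicit via the Sobolev embedding $W^{1,p}(\Theta)\hookrightarrow L^\infty(\Theta)$ applied to $\theta\mapsto(w_\theta(t)^T y)^2$, combined with the Gaussian moment identity $\EE(Z^{2p})=C_p'\var(Z)^p$; this is the precise form of what you call an ``envelope argument using the second derivative''. After that, the summability lemmas and lemma~\ref{lem: controle_valeurs_propres} give $\sup_\theta\|w_\theta(t)\|^2\leq C$ and close the argument exactly as you sketch.

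For \eqref{eq: no_vanishing_pred_error}, your reduction to a safe set $S$ of positive measure in each unit cell, on which $\sigma_{t,X}^2\geq\beta$ uniformly, matches the paper. But your spectral/aliasing route to the pointwise lower bound is substantially more involved than needed, and the Poisson-summation step is not obviously adaptable to the randomly perturbed grid. The paper takes a much shorter path: by the virtual LOO formula, $\sigma_{t,X}^2 = 1/(\tilde R_{\theta_0}^{-1})_{1,1}$, where $\tilde R_{\theta_0}$ is the $(n{+}1)\times(n{+}1)$ covariance matrix of $(Y(t),y_1,\ldots,y_n)$. Since $(\tilde R_{\theta_0}^{-1})_{1,1}\leq\|\tilde R_{\theta_0}^{-1}\|=1/\phi_{\min}(\tilde R_{\theta_0})$, it suffices to lower-bound $\phi_{\min}(\tilde R_{\theta_0})$. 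For $t$ in the safe set the augmented configuration $\{t,\,v_1+\epsilon X_1,\ldots,v_n+\epsilon X_n\}$ has minimum pairwise separation at least $\tfrac12(\tfrac12-\epsilon)$, so proposition~\ref{prop: minorationValeursPropres} applies verbatim to $\tilde R_{\theta_0}$ and gives $\phi_{\min}(\tilde R_{\theta_0})\geq A>0$ uniformly in $n$, $x$ and $t$. This replaces your spectral argument entirely; the ``naive'' estimate you discard is the wrong one --- the right naive estimate, via the Schur complement/LOO identity, is precisely what works.
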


In proposition \ref{prop: infl_est_pred}, the condition that the Gaussian process $Y$ yields continuous trajectories is not restrictive in practice, and can be checked using e.g. \cite{Adler81}.
In proposition \ref{prop: infl_est_pred}, we show that the mean square prediction error,
over the observation domain, with a consistently estimated covariance parameter, is asymptotically equivalent to the corresponding error
when the true covariance parameter is known.
Furthermore, the mean value of this prediction error with the true covariance parameter does not vanish when
$n \to + \infty$. This is intuitive because the density of observation points in the domain is constant.

Hence, expansion-domain asymptotics yields a situation in which the estimation error goes to zero, but the prediction error
does not, because the prediction domain increases with the number of observation points. Thus, increasing-domain asymptotic context
enables us to address the prediction and estimation problems separately, and the conclusions on the estimation problem are fruitful, as we have seen in sections
\ref{section: consistency_asymptotic_normality} and \ref{section: numerical_study}. However, this context does not enable us to study theoretically all
the practical aspects of the joint problem of prediction with estimated covariance parameters. For instance, the impact of the estimation method
on the prediction error is asymptotically zero under this theoretical framework, and using a constant proportion of the observation points
for estimation rather than prediction cannot decrease the asymptotic prediction error with estimated covariance parameters.

The two aforementioned practical problems would benefit from an asymptotic framework that would fully reproduce them, by giving a stronger
impact to the estimation on the prediction. Possible candidates for this framework are the mixed increasing-domain asymptotic framework,
addressed for instance in \cite{Lahiri2003central} and \cite{Lahiri2004asymptotic}, and discussed in remark \ref{rem: Lahiri},
and fixed-domain asymptotics.
In both frameworks, the estimation error, with respect to the number of observation points, is larger and the prediction error is smaller,
thus giving hope for more impact of the estimation on the prediction. Nevertheless, even in fixed-domain asymptotics, notice that in \cite{Putter2001}
and referring to \cite{VanDerVaart96maximum},
it is shown that, for the particular case of the tensor product exponential covariance function in two dimensions,
the prediction error, under covariance parameters estimated by ML, is asymptotically equal to the prediction error
under the true covariance parameters. This is a particular case in which estimation has no impact on prediction, even under
fixed-domain asymptotics.

\subsection{Analysis of the impact of the spatial sampling on the Kriging prediction}  \label{subsection: impact_sampling_prediction}

In this subsection \ref{subsection: impact_sampling_prediction}, we study the prediction
mean square error $\EE \left( E_{\epsilon,\ell_0,\nu_0} \right)$ of proposition \ref{prop: infl_est_pred},
as a function of $\epsilon$, $\ell_0$ and $\nu_0$, for the one-dimensional Mat\'ern model, and for large $n$. This function is independent of the estimation,
as we have seen, so there is now no point in distinguishing between ML and CV.
In the following figures, the function $\EE \left( E_{\epsilon,\ell_0,\nu_0} \right)$ is approximated by the average of $iid$ realizations of its conditional mean value given $X=x$,
\[
\frac{1}{n} \int_0^n \left( 1 - r_{\ell_0,\nu_0}^t(t) R^{-1}_{\ell_0,\nu_0} r_{\ell_0,\nu_0}(t) \right) dt,
\]
where $ \left( r_{\ell_0,\nu_0}(t) \right)_i = K_{\ell_0,\nu_0} ( i + \epsilon x_i -t )$
and $ \left( R_{\ell_0,\nu_0} \right)_{i,j} = K_{\ell_0,\nu_0} ( i - j + \epsilon [x_i -x_j] )$.

On figure \ref{fig: prediction_2d}, we plot the ratio of the mean square prediction error $\EE \left( E_{\epsilon,\ell_0,\nu_0} \right)$,
between $\epsilon=0$ and $\epsilon=0.45$, as a function of $\ell_0$ and $\nu_0$, for $n=100$ (we observed the same results for $n=50$). We see that this ratio is always smaller than one,
meaning that strongly perturbing the regular grid always increases the prediction error. This result is in agreement with the common
practices of using regular, also called space filling, samplings for optimizing the Kriging predictions with known covariance parameters, as illustrated
in figure $3$ of \cite{SSDUIAF}. Similarly, the widely used prediction-oriented maximin and minimax designs (see e.g chapter 5 in \cite{TDACE})
yield evenly spaced observation points.

\begin{figure}[]
\centering
 \hspace*{-2cm}

\begin{tabular}{c}
\includegraphics[width=8cm,angle=0]{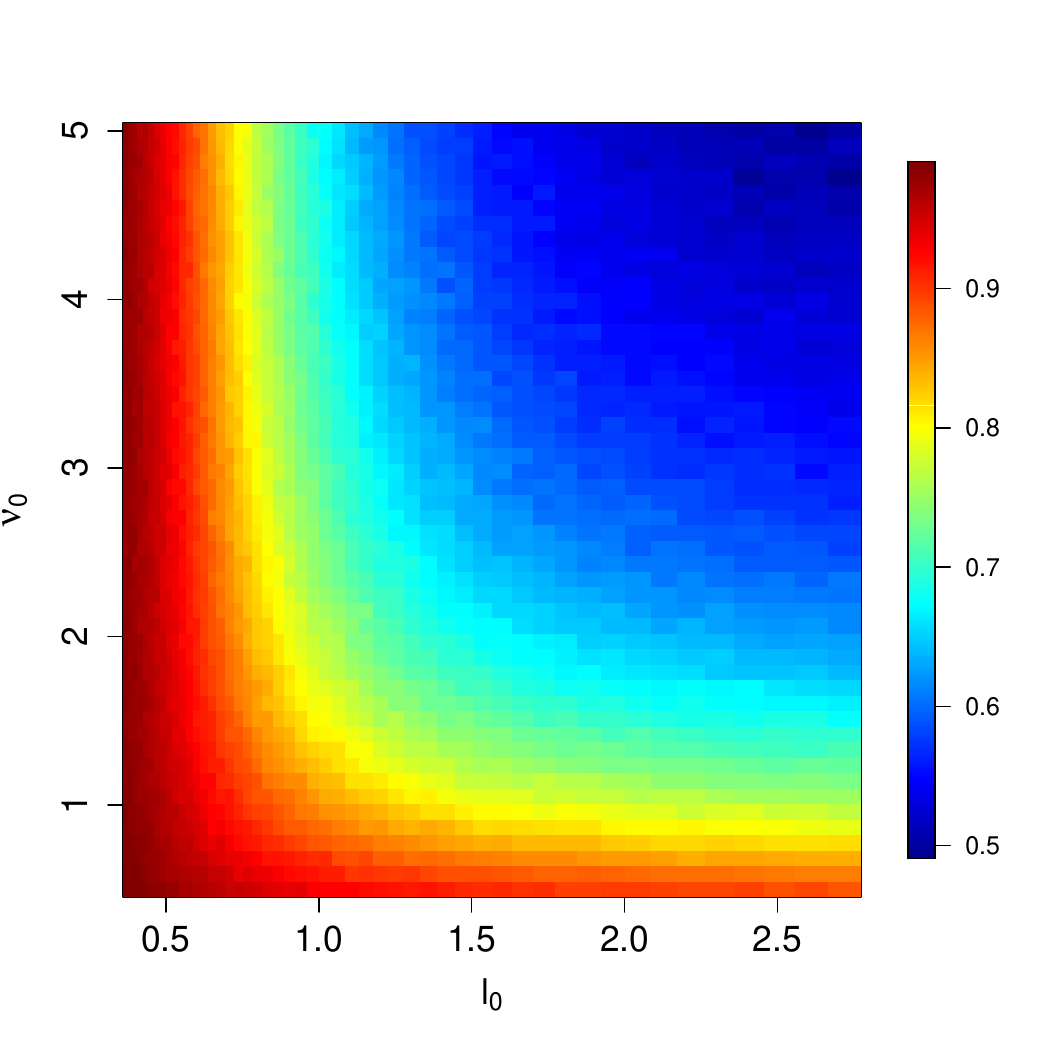} 
\end{tabular}
\caption{Ratio of the mean square prediction error $\EE \left( E_{\epsilon,\ell_0,\nu_0} \right)$ in proposition \ref{prop: infl_est_pred},
between $\epsilon=0$ and $\epsilon=0.45$, as a function of $\ell_0$ and $\nu_0$, for $n=100$.}
\label{fig: prediction_2d}
\end{figure}

In figure \ref{fig: prediction_1d}, we fix the true covariance parameters $\ell_0 = 0.5$, $\nu_0=2.5$, and we study the variations
with respect to $\epsilon$ of the asymptotic variance of the ML estimation of $\nu$, when $\ell_0$ is known (figure \ref{fig: varAss_hatNu_lc=0.5_nu=2.5}),
and of the prediction mean square error $\EE \left( E_{\epsilon,\ell_0,\nu_0} \right)$, for $n=50$
and $n=100$. The results are the same for $n=50$
and $n=100$.
We first observe that $\EE \left( E_{\epsilon,\ell_0,\nu_0} \right)$ is globally an increasing
function of $\epsilon$. In fact, we observe the same global increase of $\EE \left( E_{\epsilon,\ell_0,\nu_0} \right)$, for $n=50$ and $n=100$, with respect to $\epsilon$,
for all the values $(0.5,5)$, $(2.7,1)$, $(0.5,2.5)$, $(0.7,2.5)$, $(2.7,2.5)$, $(0.73,2.5)$ and $(1.7,5)$,
for $(\ell_0,\nu_0)$, that we have studied in section \ref{section: numerical_study}. This is again a confirmation that, in the increasing-domain asymptotic
framework treated here,
evenly spaced observations perform best for prediction. This conclusion corresponds to the common practice of using space filling samplings for Kriging prediction
with known covariance parameters.

The second conclusion than can be drawn for figure \ref{fig: prediction_1d} is that there is independence between estimation (ML in this case) and prediction. Indeed, the estimation error first increases and then decreases with respect to $\epsilon$, while the prediction error
globally decreases. Hence, in figure \ref{fig: prediction_1d}, the regular grid still gives better prediction, although it
leads to less asymptotic variance than mildly irregular samplings. 
Therefore, there is no simple antagonistic relationship between the
impact of the irregularity of the spatial sampling on estimation and on prediction.

\begin{figure}[]
\centering
 \hspace*{-2cm}

\begin{tabular}{c c}
\includegraphics[width=8cm,angle=0]{ML_varAss_hatNu_lc=0p5_nu=2p5.pdf}  &
\includegraphics[width=8cm,angle=0]{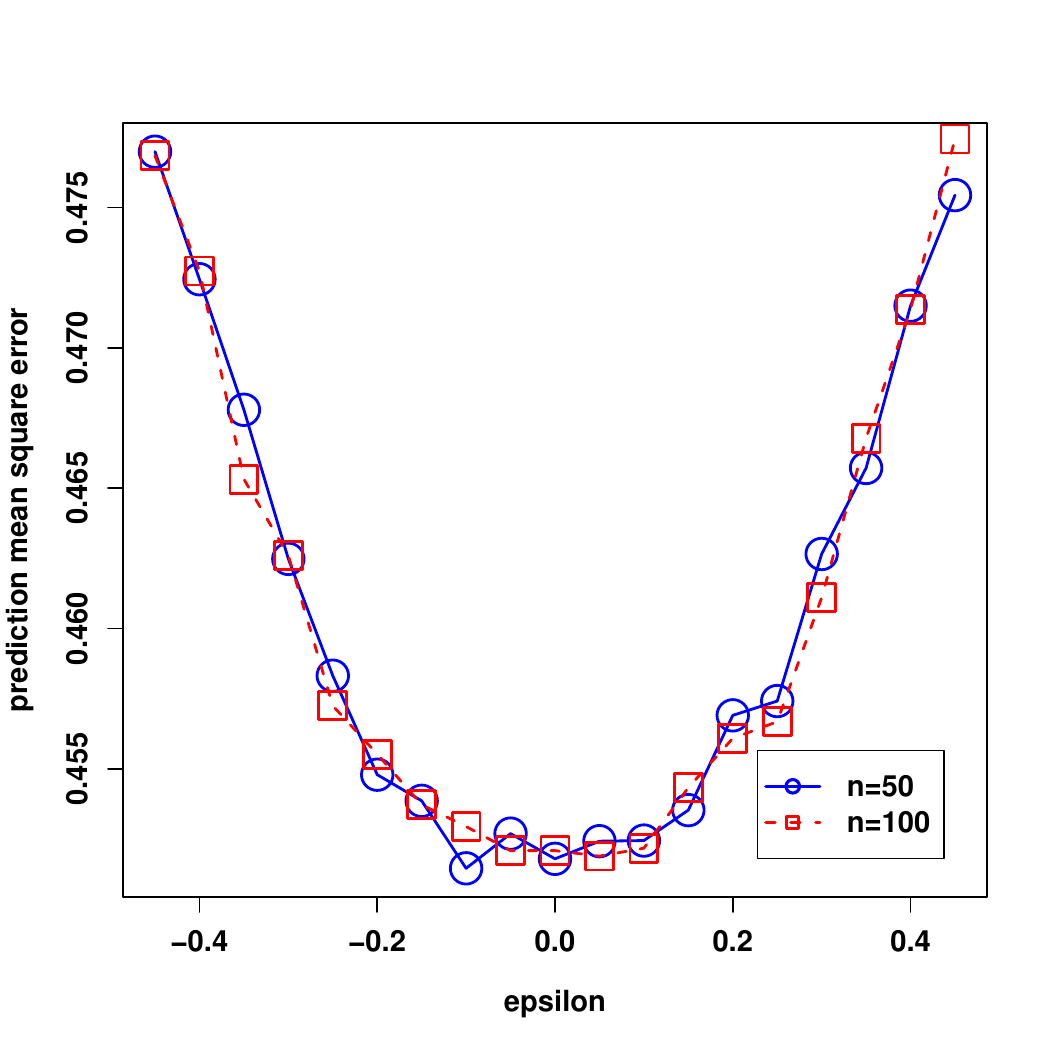}
\end{tabular}
\caption{$\ell_0 = 0.5$, $\nu_0=2.5$. Left: asymptotic variance for the ML estimation
of $\nu$, when $\ell$ is known, as a function of $\epsilon$ (same setting as in figure \ref{fig: varAss_hatNu_lc=0.5_nu=2.5}).
Right: prediction mean square error $\EE \left( E_{\epsilon,\ell_0,\nu_0} \right)$ in proposition \ref{prop: infl_est_pred} as a function of $\epsilon$.}
\label{fig: prediction_1d}
\end{figure}

\FloatBarrier

\section{Conclusion}

We have considered an increasing-domain asymptotic framework to address the influence of the irregularity of the spatial sampling on the
estimation of the covariance parameters. This framework is based on a random sequence of observation points, for which the
deviation from the regular grid is
controlled by a single scalar regularity parameter $\epsilon$.

We have proved consistency and asymptotic normality for the ML and CV estimators,
under rather minimal conditions. The asymptotic covariance matrices are deterministic functions of the regularity parameter only. Hence, they are the natural
tool to assess the influence of the irregularity of the spatial sampling on the ML and CV estimators.

This is carried out by means of an exhaustive study of the Mat\'ern model.
It is shown that mildly perturbing the regular grid can damage both ML and CV estimation, and that CV estimation can also be damaged
when strongly perturbing the regular grid.
However, we put into evidence that strongly perturbing the regular grid always
improves the ML estimation, which is a more efficient estimator than CV, in the well-specified case addressed here. Hence, we confirm the conclusion
of \cite{ISDSTK} and \cite{SSDUIAF} that using groups of observation points with small spacing, compared to the observation density
in the observation domain, improves the covariance function estimation. In geostatistics, such groups of points
are also added to regular samplings in practice \citep{Jeannee2008geostatistical}.

We have also studied the impact of the spatial sampling on the prediction error. Regular samplings were shown to be the most efficient
as regards to this criterion. This is in agreement with, e.g. \cite{SSDUIAF} and with \cite{pronzato12design}
where samplings for Kriging prediction with known covariance parameters are selected by optimizing a space filling criterion. 

The ultimate goal of a Kriging model is prediction with estimated covariance parameters. Hence, efficient samplings for
this criterion must address two criteria that have been shown to be antagonistic. In the literature, there seems to be
a commonly admitted practice for solving this issue \citep{SSDUIAF,pronzato12design}. Roughly speaking, for selecting
an efficient sampling for prediction with estimated covariance parameters, one may select a regular sampling for prediction
with known covariance parameters and augment it with a sampling for estimation (with closely spaced observation points).
The proportion of points for the two samplings is optimized in the two aforementioned references by optimizing a criterion
for prediction with estimated covariance parameters. This criterion is more expensive to compute, but is not optimized in a large
dimensional space. In \cite{SSDUIAF,pronzato12design}, the majority of the observation points belong to the regular sampling
for prediction with known covariance function. This is similar in the geostatistical community \citep{Jeannee2008geostatistical},
where regular samplings, augmented with few closely spaced observation points, making the inputs vary mildly, are used.
In view of our theoretical and practical results of sections \ref{section: numerical_study} and \ref{section: analysis_prediction},
we are in agreement with this method for building samplings for prediction with estimated covariance parameters.

An important limitation we see, though, in the expansion-domain asymptotic framework we address, is that prediction with estimated
covariance parameters corresponds asymptotically to prediction with known covariance function. Said differently, the proportion of observation points
addressing estimation, in the aforementioned trade-off, would go to zero. As we discuss after proposition \ref{prop: infl_est_pred},
mixed increasing-domain or fixed-domain asymptotics could give more importance to the estimation problem, compared to the problem of
predicting with known covariance function. 

Since our sampling is built by perturbing a regular grid, it may not enable us to address very high degree of irregularity,
such as the one obtained from $iid$ random observation points with non-uniform distribution \citep{Lahiri2003central,Lahiri2004asymptotic}.
Nevertheless, using directly $iid$ random observation points makes it difficult to parameterize the irregularity of the sampling,
which have enabled us to perform exhaustive numerical studies, in which comparing the irregularity of two samplings was
unambiguous. A parameterized sampling family we see as interesting would be samplings in which the observation points
are $iid$ realizations stemming from a given distribution, but conditionally to the fact that the minimum distance between two different
observation points is larger than a regularity parameter $\epsilon'$. In this framework, $\epsilon'=0$ would correspond to the most irregular sampling.
Increasing $\epsilon'$ would correspond to increase the regularity of the sampling. It seems to us that convergence in distribution
similar to propositions \ref{prop: normaliteML} and \ref{prop: normaliteCV} could be obtained in this setting, using similar methods, though technical
aspects may be addressed differently.

The CV criterion we have studied is a mean square error criterion.
This is a classical CV
criterion that is used, for instance, in \cite{TDACE} when the CV and ML estimations of the covariance parameters are compared.
We have shown that this CV estimator can have a considerably larger asymptotic variance than the ML estimator, on the one hand, and can be
sensitive to the irregularity of the spatial sampling, on the other hand. Although this estimator performs better than ML in cases of
model misspecification \citep{Bachoc2013cross},
further research may aim at studying alternative CV criteria that would have a better performance in the
well-specified case. 

Other CV criteria are proposed in the literature, for instance the LOO log-predictive probability in \cite{GPML} and the
Geisser's predictive mean square error in \cite{PACHGP}. It would be interesting to study, in the framework of this paper, the increasing-domain
asymptotics for these estimators and the influence of the irregularity of the spatial sampling. 

In section \ref{section: numerical_study}, we pointed out that, when the spatial sampling is irregular,
the mean square error CV criterion could be composed of LOO errors  with heterogeneous variances, which increases the CV estimation variance. Methods to
normalize the LOO errors would be an interesting research direction to explore.

\section*{Acknowledgments}
The author would like to thank his advisors Josselin Garnier (University Paris Diderot)
and Jean-Marc Martinez (French Alternative Energies and Atomic Energy
Commission - Nuclear Energy Division at CEA-Saclay, DEN, DM2S, STMF, LGLS) for their advices and suggestions.
The author acknowledges fruitful reports from the two anonymous reviewers, that helped to improve an earlier version
of the manuscript.
The author presented the contents of the manuscript at two workshops of the ReDICE consortium, where he
benefited from constructive comments and suggestions.

\appendix

\section{Proofs for section  \ref{section: consistency_asymptotic_normality}}  \label{section: appendix_proof_normality}

In the proofs, we distinguish three probability spaces.

$(\Omega_X,\mathcal{F}_X,P_X)$ is the probability space associated with the random perturbation of the regular grid.
$(X_i)_{i \in \NN^*}$ is a sequence of $iid$ $S_X$-valued
random variables defined on $(\Omega_X,\mathcal{F}_X,P_X)$, with distribution $\mathcal{L}_X$. We denote by $\omega_X$ an element of $\Omega_X$.

$(\Omega_Y,\mathcal{F}_Y,P_Y)$ is the probability space associated with the Gaussian process. $Y$ is a centered Gaussian process with
covariance function $K_{\theta_0}$ defined on $(\Omega_Y,\mathcal{F}_Y,P_Y)$. We denote by $\omega_Y$ an element of $\Omega_Y$.

$(\Omega, \mathcal{F},\mathbb{P})$ is the product space $(\Omega_X \times \Omega_Y , \mathcal{F}_X \otimes \mathcal{F}_Y ,P_X \times P_Y)$.
We denote by $\omega$ an element of $\Omega$.

All the random variables in the proofs can be defined relatively to the product space $(\Omega, \mathcal{F},\mathbb{P})$.
Hence, all the probabilistic statements in the proofs hold with respect to this product space, unless it is stated otherwise. 

In the proofs, when $(f_n)_{n \in \NN^*}$ is a sequence of real functions of $X=(X_i)_{i=1}^n$, $f_n$ is also a sequence of real random variables on
$(\Omega_X,\mathcal{F}_X,P_X)$. When we write that $f_n$ is bounded uniformly in $n$ and $x$, we mean that there exists a finite constant $K$ so that
 $\sup_{n} \sup_{x \in S_X^n} |f_n(x)| \leq K$.
We then have that $f_n$ is bounded $P_X$-a.s., i.e. $\sup_n f_n \leq K$ for a.e. $\omega_X \in \Omega_X$.
We may also write that $f_n$ is lower-bounded uniformly in $n$ and $x$ when there exists $a > 0$ so that
$\inf_{n} \inf_{x \in S_X^n} f_n(x) \geq a$.
When $f_n$ also depends on $\theta$, we say that
$f_n$ is bounded uniformly in $n$, $x$ and $\theta$ when $\sup_{\theta \in \Theta} f_n$ is bounded uniformly in $n$ and $x$.
We also say that $f_n$ is lower-bounded uniformly in $n$, $x$ and $\theta$ when $\inf_{\theta \in \Theta} f_n$ is
lower-bounded uniformly in $n$ and $x$.

When we write that $f_n$ converges to zero
uniformly in $x$, we mean that $\sup_{x \in S_X^n} |f_n(x)| \to_{n \to + \infty} 0$. One then have that $f_n$
converges to zero $P_X$-a.s. When $f_n$ also depends on $\theta$, we say that
$f_n$ converges to zero uniformly in $n$, $x$ and $\theta$ when $\sup_{\theta \in \Theta} f_n$ converges to zero uniformly in $n$ and $x$.

When $f_n$ is a sequence of real functions of $X$ and $Y$, $f_n$ is also a sequence of real random variables on
$(\Omega,\mathcal{F},\mathbb{P})$. When we say that $f_n$ is bounded in probability conditionally to $X=x$ and uniformly in $x$, we mean that, for every $\delta>0$,
there exist $M$, $N$ so that
$ \sup_{n \geq N} \sup_{x \in S_X^n} \mathbb{P}( |f_n| \geq  M | X = x) \leq \delta$.
One then have that $f_n$ is bounded in probability (defined on the product space).

\subsection{Proof of proposition \ref{prop: consistance_ML}}

\begin{proof} 
We show that there exist sequences of random variables, defined on $(\Omega_X,\mathcal{F}_X,P_X)$,
$D_{\theta,\theta_0}$ and $D_{2,\theta,\theta_0}$ (functions of $n$ and $X$), so that
$\sup_{\theta} \left| \left(L_{\theta} - L_{\theta_0}\right) - D_{\theta,\theta_0} \right| \to_p 0 $ (in probability of the product space)
and  $D_{\theta,\theta_0} \geq B D_{2,\theta,\theta_0}$ $P_X$-a.s. for
a constant $B > 0$.
We then show that there exists $D_{\infty,\theta,\theta_0}$, a deterministic non-negative function of $\theta,\theta_0$ only, so that 
$\sup_{\theta} \left| D_{2,\theta,\theta_0} - D_{\infty,\theta,\theta_0} \right| = o_p\left(1\right)$ and for any $\alpha>0$,
\begin{equation} \label{eq: condition_consistance}
\inf_{ |\theta - \theta_0| \geq \alpha } D_{\infty,\theta,\theta_0} > 0. 
\end{equation}
This implies consistency.

We have $L_{\theta} = \frac{1}{n}\log\left\{\det\left(R_{\theta}\right)\right\} + \frac{1}{n} y^t R_{\theta}^{-1}y$.
The eigenvalues of $R_{\theta}$ and $R_{\theta}^{-1}$ being bounded uniformly
in $n$ and $x$ (lemma \ref{lem: controle_valeurs_propres}),
$\var\left(L_{\theta} | X = x \right)$ converges to $0$ uniformly in $x$, and so $L_{\theta} - \EE\left(L_{\theta} | X\right)$
converges in probability $\mathbb{P}$ to zero.

Then, with $z = R_{\theta_0}^{-\frac{1}{2}} y$,
\begin{eqnarray*}
 \sup_{k \in \{1,...,p\},\theta \in \Theta} \left| \frac{\partial L_{\theta}}{\partial \theta_k} \right|
& = & \sup_{k \in \{1,...,p\},\theta \in \Theta} \frac{1}{n} \left\{ {\rm{Tr}}\left(R_{\theta}^{-1} \frac{\partial R_{\theta}}{\partial \theta_k} \right)
+ z^t R_{\theta_0}^{\frac{1}{2}} R_{\theta}^{-1} \frac{\partial R_{\theta}}{\partial \theta_k} R_{\theta}^{-1} R_{\theta_0}^{\frac{1}{2}} z \right\}  \\
& \leq & \sup_{k \in \{1,...,p\},\theta \in \Theta} \left\{ max\left( \left|\left|R_{\theta}^{-1}\right|\right| \left|\left| \frac{\partial R_{\theta}}{\partial \theta_k}\right|\right|
, ||R_{\theta_0}|| \left|\left|R_{\theta}^{-2}\right|\right| \left|\left| \frac{\partial R_{\theta}}{\partial \theta_k} \right|\right| \right) \right\}
\left( 1 + \frac{1}{n}  |z|^2 \right), \\
\end{eqnarray*}
and is hence bounded in probability conditionally to $X=x$, uniformly in $x$, because of lemma \ref{lem: controle_valeurs_propres}
and the fact that $z \sim \N\left(0,I_n\right)$ given $X=x$.

Because of the simple convergence and the boundedness of the derivatives, $ \sup_{\theta} | L_{\theta}  - \EE\left( L_{\theta} | X \right) | \to_p 0 $. 
We then denote $D_{\theta,\theta_0} := \EE\left(L_{\theta} | X\right) - \EE\left(L_{\theta_0} | X\right)$.
We then have $\sup_{\theta} | \left(L_{\theta} - L_{\theta_0}\right) - D_{\theta,\theta_0} | \to_p 0 $.

We have $\EE\left(L_{\theta} | X\right) = \frac{1}{n} \log\left\{\det\left(R_{\theta}\right)\right\} + \frac{1}{n} {\rm{Tr}}\left( R_{\theta}^{-1} R_{\theta_0} \right)$
and hence, $P_X$-a.s.

\begin{eqnarray*}
 D_{\theta,\theta_0} & = & \frac{1}{n} \log\left\{\det\left(R_{\theta}\right)\right\} + \frac{1}{n} {\rm{Tr}}\left( R_{\theta}^{-1} R_{\theta_0} \right)
- \frac{1}{n} \log\left\{\det\left(R_{\theta_0}\right)\right\} - 1  \\
& = & \frac{1}{n} \sum_{i=1}^n \left[ -\log\left\{ \phi_i\left( R_{\theta_0}^{\frac{1}{2}} R_{\theta}^{-1} R_{\theta_0}^{\frac{1}{2}} \right) \right\}
+ \phi_i\left(R_{\theta_0}^{\frac{1}{2}} R_{\theta}^{-1} R_{\theta_0}^{\frac{1}{2}} \right) -1 \right].
\end{eqnarray*}

Using proposition \ref{prop: minorationValeursPropres} and lemma \ref{lem: controle_valeurs_propres}, there exist $0 < a < b < +\infty$ so that for all $x$, $n$, $\theta$,
$ a < \phi_i\left( R_{\theta_0}^{\frac{1}{2}} R_{\theta}^{-1} R_{\theta_0}^{\frac{1}{2}} \right) < b $. We denote $f\left(t\right) = -\log\left(t\right) + t -1$.
As $f$ is minimal in $1$, $f'\left(1\right)= 0$ and $f''\left(1\right) = 1$, there exists $A>0$ so that, for  $t \in [a,b]$, $f\left(t\right)$ is larger than $A \left(t-1\right)^2$.
Then,
\begin{eqnarray*}
 D_{\theta,\theta_0} & \geq & A \frac{1}{n} \sum_{i=1}^n \left\{ 1 - \phi_i\left( R_{\theta_0}^{\frac{1}{2}} R_{\theta}^{-1} R_{\theta_0}^{\frac{1}{2}} \right) \right\}^2   \\
 & = & A \frac{1}{n} {\rm{Tr}}\left\{ \left( I - R_{\theta_0}^{\frac{1}{2}} R_{\theta}^{-1} R_{\theta_0}^{\frac{1}{2}} \right)^2 \right\} \\
& = & A  \left| R_{\theta}^{-\frac{1}{2}}  \left(R_{\theta} - R_{\theta_0}  \right) R_{\theta}^{-\frac{1}{2}} \right|^2.
\end{eqnarray*}
Then, as the eigenvalues of $R_{\theta}^{-\frac{1}{2}}$ are larger than $c>0$, uniformly in $n$, $x$ and $\theta$, and with
$|MN|^2 \geq \inf_{i} \phi_i^2\left(M\right) |N|^2$ for $M$ symmetric positive, we obtain, for some $B >0$, and uniformly in $n$, $x$ and $\theta$,
\[
 D_{\theta,\theta_0}  \geq B | R_{\theta} - R_{\theta_0} |^2 := B D_{2,\theta,\theta_0}. 
\]

For $\epsilon = 0$, $D_{2,\theta,\theta_0}$ is deterministic and we show that it converges uniformly in $\theta$ to
\begin{equation} \label{eq: proof_cons_ML_Dinfini}
 D_{\infty,\theta,\theta_0} :=  \sum_{v \in \ZZ^d} \left\{ K_{\theta}\left(v\right) - K_{\theta_0}\left(v\right) \right\}^2.
\end{equation}
Indeed,
\[
\left|  |R_{\theta} - R_{\theta_0}|^2 - D_{\infty,\theta,\theta_0} \right| \leq
\frac{1}{n} \sum_{i=1}^n
 \left|  \sum_{j=1}^n  \left\{ K_{\theta}(v_i - v_j) - K_{\theta_0} (v_i - v_j) \right\}^2 -
\sum_{v \in \ZZ^d} \left\{ K_{\theta}\left(v\right) - K_{\theta_0}\left(v\right) \right\}^2  \right|. 
\]
Now, using lemma \ref{lem: sommabilité} and \eqref{eq: controleCov},
$\sup_{\theta \in \Theta, 1 \leq i \leq n} \sum_{v \in (\NN^*)^d} \left\{ K_{\theta}\left(v_i - v\right) - K_{\theta_0}\left(v_i - v\right) \right\}^2$
is bounded by $C < + \infty$, independently of $n$. Let $\delta >0$. Using lemma \ref{lem: sommabilité3} and \eqref{eq: controleCov}, there exists $T < + \infty$ so that  
$\sup_{\theta \in \Theta,1 \leq i \leq n} \sum_{v \in (\NN^*)^d, |v - v_i|_{\infty} \geq T} \left\{ K_{\theta}\left(v_i - v\right) - K_{\theta_0}\left(v_i - v\right) \right\}^2$ is bounded by $\delta$ independently
of $n$. Finally, for any $n$, let $E_n$ be the set of points $v_i$, $1 \leq i \leq n$, so that
$\prod_{k=1}^d \left\{ (v_i)_k - T,...,(v_i)_k+T \right\} \not\subset \left\{ v_i, 1\leq i \leq n \right\}$. The cardinality of $E_n$ is denoted by $n_{1,n}$ and is so that $\frac{n_{1,n}}{n} \to_{n \to + \infty} 0$. 

then,
\begin{eqnarray*}
\left|  |R_{\theta} - R_{\theta_0}|^2 - D_{\infty,\theta,\theta_0} \right| &  \leq &  
\frac{1}{n}  \sum_{1 \leq i \leq n,  v_i \in E_n} 
 \left|  \sum_{j=1}^n  \left\{ K_{\theta}(v_i - v_j) - K_{\theta_0} (v_i - v_j) \right\}^2 -
\sum_{v \in \ZZ^d} \left\{ K_{\theta}\left(v\right) - K_{\theta_0}\left(v\right) \right\}^2  \right| \\
& & + \frac{1}{n}
 \sum_{1 \leq i \leq n, v_i \not\in E_n}
 \left|  \sum_{j=1}^n  \left\{ K_{\theta}(v_i - v_j) - K_{\theta_0} (v_i - v_j) \right\}^2 -
\sum_{v \in \ZZ^d} \left\{ K_{\theta}\left(v\right) - K_{\theta_0}\left(v\right) \right\}^2 
\right| \\
& \leq & \frac{n_{1,n}}{n} (2 C) 
+ \frac{1}{n} \sum_{1 \leq i \leq n, v_i \not\in E_n}
\sum_{v \in (\NN^*)^d, |v - v_i|_{\infty} \geq T} \left\{ K_{\theta}\left(v_i - v\right) - K_{\theta_0}\left(v_i - v\right) \right\}^2 \\
& \leq & \frac{n_1}{n} (2 C) + \delta.
\end{eqnarray*}
Thus, $\left|  |R_{\theta} - R_{\theta_0}|^2 - D_{\infty,\theta,\theta_0} \right|$ is smaller than $2 \delta$ for $n$ large enough.

For $\epsilon = 0$, $D_{\infty,\theta,\theta_0}$ is continuous in $\theta$ because the series of term $\sup_{\theta} | K_{\theta}\left(v\right) |^2$, $v \in \ZZ^d$
is summable using \eqref{eq: controleCov} and lemma \ref{lem: sommabilité}.
Hence, if there exists $\alpha>0$ so that
$ \inf_{ |\theta - \theta_0| \geq \alpha } D_{\infty,\theta,\theta_0} = 0$, we can, using a compacity and continuity argument, have $\theta_{\infty} \neq \theta_0$
so that \eqref{eq: proof_cons_ML_Dinfini} is null. Hence we showed \eqref{eq: condition_consistance}
by contradiction, which shows the proposition for $\epsilon = 0$.

For $\epsilon \neq 0$, $D_{2,\theta,\theta_0} = \frac{1}{n} {\rm{Tr}}\left\{ \left( R_{\theta} - R_{\theta_0} \right)^2 \right\}$.
With fixed $\theta$, using proposition \ref{prop: convergenceTrace}, $D_{2,\theta,\theta_0}$ converges in $P_X$-probability to
$D_{\infty,\theta,\theta_0} := \lim_{n \to \infty} E_X \left( D_{2,\theta,\theta_0} \right)$. The eigenvalues of the
$\frac{\partial R_{\theta}}{\partial \theta_i}$, $1 \leq i \leq n$,
being bounded uniformly in $n$, $\theta$, $x$, the partial derivatives with respect to $\theta$ of $D_{2,\theta,\theta_0}$ are uniformly bounded
in $n$, $\theta$ and $x$.
Hence $\sup_{\theta} | D_{2,\theta,\theta_0} - D_{\infty,\theta,\theta_0} | = o_p\left(1\right)$. Then
\[
 D_{\infty,\theta,\theta_0} = \underset{n \to + \infty}{\lim} \frac{1}{n} \sum_{1 \leq i,j \leq n , i \neq j}
\left[ \int_{ \epsilon C_{S_X} } \left\{K_{\theta}\left(v_i-v_j+t\right) - K_{\theta_0}\left(v_i-v_j+t\right)\right\}^2 f_T\left(t\right) dt \right] +  \left\{K_{\theta}\left(0\right) - K_{\theta_0}\left(0\right)\right\}^2,
\]
with $f_T\left(t\right)$ the probability density function of $\epsilon\left(X_i - X_j\right)$, $i \neq j$. We then show, 
\begin{eqnarray} \label{eq: proof_cons_ML_Dinfini_eps_nonzero}
  D_{\infty,\theta,\theta_0} & = & \sum_{v \in \ZZ^d \backslash 0}
\left[ \int_{ \epsilon C_{S_X} } \left\{K_{\theta}\left(v + t\right) - K_{\theta_0}\left( v+t \right)\right\}^2 f_T\left(t\right) dt \right]
+  \left\{K_{\theta}\left(0\right) - K_{\theta_0}\left(0\right)\right\}^2  \nonumber \\ 
& = & \int_{D_{\epsilon}} \left\{ K_{\theta}\left(t\right) - K_{\theta_0}\left(t\right) \right\}^2 \tilde{f}_T\left(t\right) dt + \left\{K_{\theta}\left(0\right) - K_{\theta_0}\left(0\right)\right\}^2,
\end{eqnarray}
where $\tilde{f}_T$ is a positive-valued function, almost surely with respect to the Lebesgue measure on $D_{\epsilon}$.
As $\sup_{\theta} | K_{\theta}\left(t\right) |^2$ is summable
on $D_{\epsilon}$, using \eqref{eq: controleCov}, $D_{\infty,\theta,\theta_0}$ is continuous.
Hence, if there exists $\alpha>0$ so that
$ \inf_{ |\theta - \theta_0| \geq \alpha } D_{\infty,\theta,\theta_0} = 0$, we can, using a compacity and continuity argument, show that there exists
$\theta_{\infty} \neq \theta_0$
so that \eqref{eq: proof_cons_ML_Dinfini_eps_nonzero} is null. Hence we proved \eqref{eq: condition_consistance}
by contradiction which proves the proposition for $\epsilon  \neq 0$.

\end{proof}

\subsection{proof of Proposition \ref{prop: normaliteML}}
\begin{proof}
For $1  \leq i , j \leq p$, we use proposition \ref{prop: convergenceTrace} to show that
$\frac{1}{n} {\rm{Tr}}\left( R^{-1} \frac{\partial R}{\partial \theta_i} R^{-1} \frac{\partial R}{\partial \theta_j} \right)$ has a $P_X$-almost sure limit as $n \to + \infty$.

We calculate $\frac{ \partial }{ \partial \theta_i } L_{\theta} = \frac{1}{n} \left\{ {\rm{Tr}}\left( R_{\theta}^{-1} \frac{ \partial R_{\theta} }{ \partial \theta_i } \right)
- y^t R_{\theta}^{-1} \frac{ \partial R_{\theta} }{ \partial \theta_i} R_{\theta}^{-1} y \right\} $. We use proposition \ref{prop: lindeberg_presque_sur}
with $M_i = R_{\theta}^{-1} \frac{ \partial R_{\theta} }{ \partial \theta_i }$ and
$N_i = - R_{\theta}^{-1} \frac{ \partial R_{\theta} }{ \partial \theta_i} R_{\theta}^{-1}$, together with proposition \ref{prop: convergenceTrace},
to show that
\[
 \sqrt{n} \frac{ \partial }{ \partial \theta } L_{\theta_0} \to_{\L} \N\left( 0 , 4 \Sigma_{ML} \right).
\]
We calculate
\begin{eqnarray*}
\frac{ \partial^2 }{ \partial \theta_i \partial \theta_j} L_{\theta_0} & = &
\frac{1}{n} {\rm{Tr}}\left( - R^{-1} \frac{\partial R}{\partial \theta_i} R^{-1} \frac{\partial R}{\partial \theta_j}
+ R^{-1} \frac{\partial^2 R}{\partial \theta_i \partial \theta_j} \right)  \\
& + &  \frac{1}{n} y^t \left( 2 R^{-1} \frac{\partial R}{\partial \theta_i} R^{-1} \frac{\partial R}{\partial \theta_j} R^{-1}
- R^{-1} \frac{\partial^2 R}{\partial \theta_i \partial \theta_j} R^{-1} \right) y.
\end{eqnarray*}

Hence, using proposition \ref{prop: convergence_forme_quadratique}, $\frac{ \partial^2 }{ \partial \theta^2 } L_{\theta_0} $
converges to $2 \Sigma_{ML}$ in the mean square sense (on the product space).

Finally, $\frac{ \partial^3 }{ \partial \theta_i \partial \theta_j \partial \theta_k } L_{\tilde{ \theta }} $
can be written as $\frac{1}{n} \left\{ {\rm{Tr}}\left( M_{\tilde{\theta}}\right) + z^t N_{\tilde{\theta}} z \right\}$, where
$M_{\tilde{\theta}}$ and $N_{\tilde{\theta}}$ are sums of matrices
of $\mathcal{M}_{\tilde{\theta}}$ (proposition \ref{prop: convergenceTrace}) and
where $z$ depends on $X$ and $Y$ and
$\L\left(z | X\right) = \N\left(0,I_n\right)$. Hence, the singular values of
$M_{\tilde{\theta}}$ and $N_{\tilde{\theta}}$ are bounded uniformly in
$\tilde{\theta}$, $n$ and $x$, and so
$ \sup_{i,j,k\tilde{\theta}} \frac{ \partial^3 }{ \partial \theta_i \partial \theta_j \partial \theta_k } L_{\tilde{ \theta }} $
is bounded by $a + b \frac{1}{n} |z|^2$, with constant $a,b < + \infty$ and
is hence bounded in probability. Hence we apply proposition \ref{prop: normaliteEstimateur} to conclude.

\end{proof}

\subsection{Proof of proposition \ref{prop: info>0_ML}}
\begin{proof}
We firstly prove the proposition in the case $p=1$, when $\Sigma_{ML}$ is a scalar. We then show how to generalize the proposition to the case $p >1$.

For $p=1$ we have seen that
$\frac{1}{n} {\rm{Tr}}\left( R_{\theta_0}^{-1} \frac{\partial R_{\theta_0}}{\partial \theta}  R_{\theta_0}^{-1}
\frac{\partial R_{\theta_0}}{\partial \theta}  \right) \to_{P_X} 2 \Sigma_{ML}$. Then
\begin{eqnarray*}
\frac{1}{n} {\rm{Tr}}\left( R_{\theta_0}^{-1} \frac{\partial R_{\theta_0}}{\partial \theta}  R_{\theta_0}^{-1} \frac{\partial R_{\theta_0}}{\partial \theta}  \right) = 
\frac{1}{n} {\rm{Tr}}\left(R_{\theta_0}^{-\frac{1}{2}} \frac{\partial R_{\theta_0}}{\partial \theta}  R_{\theta_0}^{-\frac{1}{2}} R_{\theta_0}^{-\frac{1}{2}} \frac{\partial R_{\theta_0}}{\partial \theta} R_{\theta_0}^{-\frac{1}{2}}  \right)
& = & \left| R_{\theta_0}^{-\frac{1}{2}} \frac{\partial R_{\theta_0}}{\partial \theta}  R_{\theta_0}^{-\frac{1}{2}} \right|^2  \\
& \geq &  \inf_{i,n,x} \phi_i\left(R_{\theta_0}^{-\frac{1}{2}}\right)^4 \left| \frac{\partial R_{\theta_0}}{\partial \theta} \right|^2.
\end{eqnarray*}
By lemma \ref{lem: controle_valeurs_propres},
there exists $a>0$ so that $\inf_{i,n,x} \phi_i\left(R_{\theta_0}^{-\frac{1}{2}}\right)^4 \geq a$. We then show, similarly to the proof of proposition
\ref{prop: consistance_ML}, that the limit of $\left| \frac{\partial R_{\theta_0}}{\partial \theta} \right|^2$ is positive.

We now address the case $p >1$. Let $v_{\lambda} = (\lambda_1,...,\lambda_p)^t \in \RR^p$, $v_{\lambda}$ different from zero. We define the model
$\left\{ K_{\delta} , \delta \in [\delta_{inf},\delta_{sup}] \right\}$, with $\delta_{inf} < 0 < \delta_{sup}$ by
$ K_{\delta} = K_{\left(\theta_0\right)_1 + \delta \lambda_1,...,\left(\theta_0\right)_p + \delta \lambda_p} $. Then $K_{\delta=0} = K_{\theta_0}$.
We have $ \frac{\partial}{\partial \delta} K_{\delta=0}\left(t\right) = \sum_{k=1}^p \lambda_k \frac{\partial}{ \partial \theta_k} K_{\theta_0}\left(t\right) $,
so the model $\left\{ K_{\delta}, \delta \in [\delta_{inf},\delta_{sup}] \right\}$ verifies the hypotheses of the proposition for $p=1$. Hence,
the $\mathbb{P}$-mean square limit of $ \frac{\partial^2}{\partial \delta^2} L_{\delta=0}$ is positive. 
We conclude with
$ \frac{\partial^2}{\partial \delta^2} L_{\delta=0} = v_{\lambda}^t \left( \frac{\partial^2}{\partial \theta^2} L_{\theta_0} \right) v_{\lambda}$.

\end{proof}

\subsection{Proof of proposition \ref{prop: consistance_CV}}
\begin{proof}

 We will show that there exists
a sequence of random variables defined on $(\Omega_X,\mathcal{F}_X,P_X)$ $D_{\theta,\theta_0}$ so that
$\sup_{\theta} | \left(CV_{\theta} - CV_{\theta_0}\right) - D_{\theta,\theta_0} | \to_p 0 $ and $C >0$ so that $P_X$-a.s.
\begin{equation} \label{eq: condition_consistance_CV}
 D_{\theta,\theta_0} \geq C |  R_{\theta} - R_{\theta_0} |^2.
\end{equation}
The proof of the proposition is then carried out similarly to the proof of proposition \ref{prop: consistance_ML}.

Similarly to the proof of proposition \ref{prop: consistance_ML},
we firstly show that
$\sup_{\theta} | CV_{\theta} - \EE\left(CV_{\theta}|X\right)| \to_p 0 $. We then denote  $D_{\theta,\theta_0} =  \EE\left(CV_{\theta}|X\right) -  \EE\left(CV_{\theta_0}|X\right)$.
We decompose, for all $i \in \{1,...,n\}$, with $P_i$ the matrix that exchanges lines $1$ and $i$ of a matrix,
\[
P_i R_{\theta} P_i^t = 
\begin{pmatrix}
 1 & r_{i,\theta}^t \\
r_{i,\theta} & R_{-i,\theta} \\
\end{pmatrix}.
\]
The conditional distributions being independent on the numbering of the observations, we have, using the Kriging equations
\begin{eqnarray*}
 D_{\theta,\theta_0} & = & \frac{1}{n} \sum_{i=1}^n \EE\left\{ \left(   r_{i,\theta}^t  R_{-i,\theta}^{-1} y_{-i} - r_{i,\theta_0}^t  R_{-i,\theta_0}^{-1} y_{-i}  \right)^2 | X \right\}  \\
& = & \frac{1}{n} \sum_{i=1}^n \left(   r_{i,\theta}^t  R_{-i,\theta}^{-1} - r_{i,\theta_0}^t  R_{-i,\theta_0}^{-1}   \right) R_{-i,\theta_0}
  \left( R_{-i,\theta}^{-1} r_{i,\theta} - R_{-i,\theta_0}^{-1} r_{i,\theta_0}   \right).   \\
\end{eqnarray*}
Similarly to lemma \ref{lem: controle_valeurs_propres}, it can be shown that the eigenvalues of $R_{-i,\theta_0}$
are larger than a constant $A>0$, uniformly in $n$ and $x$. Then
\begin{eqnarray*}
 D_{\theta,\theta_0} & \geq & A \frac{1}{n} \sum_{i=1}^n \left|\left| \left(   r_{i,\theta}^t  R_{-i,\theta}^{-1} - r_{i,\theta_0}^t  R_{-i,\theta_0}^{-1}   \right) \right|\right|^2.  \\
\end{eqnarray*}
Using the virtual Cross Validation equations \citep[ch.5.2]{SS}, the vector $R_{-i,\theta}^{-1} r_{i,\theta}$
is the vector of the $ \frac{\left(R_{\theta}^{-1}\right)_{i,j}}{\left(R_{\theta}^{-1}\right)_{i,i}} $ for $1 \leq j \leq n$, $j\neq i$. Hence $P_X$-a.s.
\begin{eqnarray*}
 D_{\theta,\theta_0}  & \geq & A \frac{1}{n} \sum_{i=1}^n \sum_{j \neq i}
\left\{ \frac{ \left(R_{\theta}^{-1}\right)_{i,j} }{ \left(R_{\theta}^{-1}\right)_{i,i} } - \frac{ \left(R_{\theta_0}^{-1}\right)_{i,j} }{ \left(R_{\theta_0}^{-1}\right)_{i,i} } \right\}^2  \\
& = & A \frac{1}{n}  \left| \diag\left( R_{\theta}^{-1} \right)^{-1}  R_{\theta}^{-1} - \diag\left( R_{\theta_0}^{-1} \right)^{-1}  R_{\theta_0}^{-1} \right|^2  \\
& \geq & A B  \frac{1}{n}  \left| \diag\left( R_{\theta_0}^{-1} \right) \diag\left( R_{\theta}^{-1} \right)^{-1}   R_{\theta}^{-1} -  R_{\theta_0}^{-1} \right|^2,
~  ~ \mbox{with $B = \inf_{i,n,x} \phi_i^2 \left\{ \diag\left(R_{\theta_0}^{-1}\right)^{-1} \right\}$, $B >0$. }
\end{eqnarray*}
The eigenvalues of $\diag\left( R_{\theta_0}^{-1} \right) \diag\left( R_{\theta}^{-1} \right)^{-1}$ are bounded between $a>0$ and $b < \infty$ uniformly in $n$ and $x$.
Hence we have, with $D_{\lambda}$ the diagonal matrix with values $\lambda_1,...,\lambda_n$,

\begin{eqnarray*}
D_{\theta,\theta_0}  & \geq & A B \inf_{a \leq \lambda_1,...,\lambda_n \leq b} \left| D_{\lambda} R_{\theta}^{-1} -  R_{\theta_0}^{-1} \right|^2 \\
& \geq & A B C  \inf_{a \leq \lambda_1,...,\lambda_n \leq b} | D_{\frac{1}{\lambda}} R_{\theta} -  R_{\theta_0} |^2,  ~ ~ ~ \mbox{using  \cite{TCMR} theorem 1,}  \\
 & \geq & A B C  \inf_{\lambda_1,...,\lambda_n} | D_{\lambda} R_{\theta} -  R_{\theta_0} |^2,
\end{eqnarray*}
with $C=  \frac{1}{b} \inf_{n,x,\theta} \frac{1}{||R_{\theta}||^2} \frac{1}{||R_{\theta_0}||^2} $, $C >0$.  Then
\begin{eqnarray*}
D_{\theta,\theta_0}  & \geq & ABC \frac{1}{n} \inf_{\lambda_1,...,\lambda_n}  \sum_{i,j=1}^n \left( \lambda_i R_{\theta,i,j} - R_{\theta_0,i,j} \right)^2  \\
& = & ABC \frac{1}{n} \sum_{i=1}^n \inf_{\lambda} \sum_{j=1}^n \left( \lambda R_{\theta,i,j} - R_{\theta_0,i,j} \right)^2  \\
& = & ABC \frac{1}{n} \sum_{i=1}^n \inf_{\lambda} \left\{ \left(\lambda-1\right)^2 +  \sum_{j \neq i} \left( \lambda R_{\theta,i,j} - R_{\theta_0,i,j} \right)^2 \right\}.  \\
\end{eqnarray*}

\begin{lem} \label{lem: lambda_compensateur}
 For any $a_1,...,a_n$ and $b_1,...,b_n \in \RR$,
\[
\inf_{\lambda} \left\{ \left(\lambda-1\right)^2  + \sum_{i=1}^n \left(a_i - \lambda b_i\right)^2 \right\} \geq \frac{ \sum_{i=1}^n \left(a_i - b_i\right)^2 }{1 + \sum_{i=1}^n b_i^2}.  
\]
\end{lem}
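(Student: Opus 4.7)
The plan is to minimize the quadratic in $\lambda$ explicitly and then massage the resulting expression until the right-hand side appears, with a non-negative remainder coming from Cauchy--Schwarz.

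Write $A = \sum_{i=1}^n a_i^2$, $B = \sum_{i=1}^n b_i^2$, and $C = \sum_{i=1}^n a_i b_i$. Expanding gives
\[
f(\lambda) := (\lambda-1)^2 + \sum_{i=1}^n (a_i - \lambda b_i)^2 = (1+B)\lambda^2 - 2(1+C)\lambda + (1+A).
\]
This is a convex quadratic in $\lambda$ whose unique minimizer is $\lambda^\star = (1+C)/(1+B)$, and
\[
\inf_\lambda f(\lambda) = (1+A) - \frac{(1+C)^2}{1+B} = \frac{(1+A)(1+B) - (1+C)^2}{1+B}.
\]

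Expanding the numerator yields
\[
(1+A)(1+B) - (1+C)^2 = (A + B - 2C) + (AB - C^2) = \sum_{i=1}^n(a_i - b_i)^2 + (AB - C^2).
\]
By the Cauchy--Schwarz inequality, $C^2 = \left(\sum_i a_i b_i\right)^2 \leq AB$, so $AB - C^2 \geq 0$. Dropping this non-negative term in the numerator gives the desired bound
\[
\inf_\lambda f(\lambda) \geq \frac{\sum_{i=1}^n (a_i - b_i)^2}{1+B} = \frac{\sum_{i=1}^n (a_i - b_i)^2}{1 + \sum_{i=1}^n b_i^2}.
\]

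There is no real obstacle here: it is a one-variable calculus computation followed by an algebraic identity and Cauchy--Schwarz. The only point requiring a small amount of care is checking that the cross term $A + B - 2C$ that appears after expansion is exactly $\sum (a_i - b_i)^2$, and that the leftover $AB - C^2$ is non-negative so that it can be discarded without violating the inequality.
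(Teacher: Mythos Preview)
Your proof is correct and follows essentially the same route as the paper: expand the quadratic in $\lambda$, compute its minimum explicitly, rewrite the numerator as $\sum_i (a_i-b_i)^2 + (AB - C^2)$, and discard the non-negative Cauchy--Schwarz remainder. The only cosmetic difference is your use of the shorthand $A$, $B$, $C$, which makes the algebra slightly cleaner.
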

\begin{proof}
\[
\left(\lambda-1\right)^2  + \sum_{i=1}^n \left(a_i - \lambda b_i\right)^2 = \lambda^2 \left( 1 + \sum_{i=1}^n b_i^2 \right) - 2 \lambda \left(1 + \sum_{i=1}^n a_i b_i\right) + \left( 1 + \sum_{i=1}^n a_i^2 \right).
\]
The minimum in $x$ of $ax^2 -2bx +c$, is $- \frac{b^2}{a} +c$, hence
\begin{eqnarray*}
 \left(\lambda-1\right)^2  + \sum_{i=1}^n \left(a_i - \lambda b_i\right)^2 & \geq & \left( 1 + \sum_{i=1}^n a_i^2 \right) - \frac{ \left(1 + \sum_{i=1}^n a_i b_i\right)^2 }{ \left( 1 + \sum_{i=1}^n b_i^2 \right) }  \\
& = & \frac{ \sum_{i=1}^n \left(a_i - b_i \right)^2  - \left(\sum_{i=1}^n a_i b_i\right)^2 + \left(\sum_{i=1}^n a_i^2\right) \left(\sum_{i=1}^n b_i^2\right) }{ 1 + \sum_{i=1}^n b_i^2 }  \\
& \geq & \frac{ \sum_{i=1}^n \left(a_i - b_i \right)^2  }{ 1 + \sum_{i=1}^n b_i^2 }, ~ ~ \mbox{using Cauchy-Schwartz inequality.}
\end{eqnarray*}

\end{proof}
Using lemma \ref{lem: lambda_compensateur}, together with \eqref{eq: controleCov} and lemma \ref{lem: sommabilité} which ensures that
$\sum_{j \neq i} \left( R_{\theta,i,j} \right)^2 \leq c < + \infty$ uniformly in $i$, $\theta$ and $x$, we obtain
\begin{eqnarray*}
 D_{\theta,\theta_0}  & \geq  &  ABC \frac{1}{1+c} \frac{1}{n} \sum_{i=1}^n   \sum_{j \neq i} \left( R_{\theta,i,j} - R_{\theta_0,i,j} \right)^2   \\
& = & ABC \frac{1}{1+c} |  R_{\theta} - R_{\theta_0} |^2, ~ ~ \mbox{because $R_{\theta,i,i} = 1 = R_{\theta_0,i,i}$,}
\end{eqnarray*}
which proves  \eqref{eq: condition_consistance_CV} and ends the proof.

\end{proof}

\subsection{Proof of proposition \ref{prop: gradients_CV}}
\begin{proof}
 
It is shown in \cite{Bachoc2013cross} that
$\frac{\partial}{\partial \theta_i} CV_{\theta} = \frac{2}{n} y^t M^i_{\theta} y = \frac{1}{n} y^t\left\{M^i_{\theta} + \left(M^i_{\theta}\right)^t\right\}y$.
We then show that $\cov\left( y^t A y , y^t B y |X \right) = 2 {\rm{Tr}}\left( A R_{\theta_0} B R_{\theta_0} \right)$ for symmetric matrices $A$ and $B$, which shows
\eqref{eq: cov_d1_CV}.

A straightforward but relatively long calculation then shows
\begin{eqnarray*} 
 \frac{\partial^2}{\partial \theta_i \partial \theta_j} CV_{\theta} & = &
- 4 \frac{1}{n} y^t R_{\theta}^{-1} \frac{\partial R_{\theta}}{\partial \theta_j} R_{\theta}^{-1}
\diag\left( R_{\theta}^{-1} \right)^{-3} \diag\left( R_{\theta}^{-1} \frac{\partial R_{\theta}}{\partial \theta_i} R_{\theta}^{-1} \right) R_{\theta}^{-1} y \\
& & - 4 \frac{1}{n} y^t R_{\theta}^{-1} \frac{\partial R_{\theta}}{\partial \theta_i} R_{\theta}^{-1}
\diag\left( R_{\theta}^{-1} \right)^{-3} \diag\left( R_{\theta}^{-1} \frac{\partial R_{\theta}}{\partial \theta_j} R_{\theta}^{-1} \right) R_{\theta}^{-1} y \\
&  & + 2 \frac{1}{n} y^t  R_{\theta}^{-1} \frac{\partial R_{\theta}}{\partial \theta_j} R_{\theta}^{-1} \diag\left( R_{\theta}^{-1} \right)^{-2} \nonumber
R_{\theta}^{-1} \frac{\partial R_{\theta}}{\partial \theta_i} R_{\theta}^{-1} y \\
& & + 6 \frac{1}{n} y^t R_{\theta}^{-1} \diag\left( R_{\theta}^{-1} \right)^{-4}
\diag\left( R_{\theta}^{-1} \frac{\partial R_{\theta}}{\partial \theta_j} R_{\theta}^{-1} \right)
\diag\left( R_{\theta}^{-1} \frac{\partial R_{\theta}}{\partial \theta_i} R_{\theta}^{-1} \right) \nonumber
R_{\theta}^{-1} y \\
& & - 4 \frac{1}{n} y^t  R_{\theta}^{-1} \diag\left( R_{\theta}^{-1} \right)^{-3} \nonumber
\diag\left( R_{\theta}^{-1} \frac{\partial R_{\theta}}{\partial \theta_i} R_{\theta}^{-1} \frac{\partial R_{\theta}}{\partial \theta_j} R_{\theta}^{-1} \right)
R_{\theta}^{-1} y \\
& & + 2 \frac{1}{n} y^t  R_{\theta}^{-1} \diag\left( R_{\theta}^{-1} \right)^{-3} \diag\left( R_{\theta}^{-1}
\frac{\partial^2 R_{\theta}}{\partial \theta_i \partial \theta_j}  R_{\theta}^{-1} \right) \nonumber
R_{\theta}^{-1} y \\
& & + 2 \frac{1}{n} y^t  R_{\theta}^{-1} \diag\left( R_{\theta}^{-1} \right)^{-2} \nonumber
R_{\theta}^{-1} \frac{\partial R_{\theta}}{\partial \theta_j} R_{\theta}^{-1} \frac{\partial R_{\theta}}{\partial \theta_i} R_{\theta}^{-1} y \\
& & + 2 \frac{1}{n} y^t  R_{\theta}^{-1} \diag\left( R_{\theta}^{-1} \right)^{-2} \nonumber
R_{\theta}^{-1} \frac{\partial R_{\theta}}{\partial \theta_i} R_{\theta}^{-1} \frac{\partial R_{\theta}}{\partial \theta_j} R_{\theta}^{-1} y \\
& & -2 \frac{1}{n} y^t R_{\theta}^{-1} \diag\left( R_{\theta}^{-1} \right)^{-2} R_{\theta}^{-1}
\frac{\partial^2 R_{\theta}}{\partial \theta_i \partial \theta_j}  R_{\theta}^{-1}  y. \nonumber
\end{eqnarray*}

We then have, using $\EE\left(y^tAy |X\right) = {\rm{Tr}}\left(A R_{\theta_0}\right)$ and for matrices $D$, $M_1$ and $M_2$, with $D$ diagonal,
${\rm{Tr}}\left\{M_1 D \diag\left(M_2\right)\right\} = {\rm{Tr}}\left\{ M_2 D \diag\left(M_1\right) \right\}$
and ${\rm{Tr}}\left(DM_1\right) = {\rm{Tr}}\left(DM_1^t\right)$,

\begin{eqnarray} \label{eq: expression_esperance_derivee_seconde_CV}
 \EE \left( \frac{\partial^2}{\partial \theta_i \partial \theta_j} CV_{\theta_0} | X \right) & = &
- 8 \frac{1}{n} {\rm{Tr}} \left\{ \frac{\partial R_{\theta_0}}{\partial \theta_j} R_{\theta_0}^{-1}
\diag\left( R_{\theta_0}^{-1} \right)^{-3} \diag\left( R_{\theta_0}^{-1} \frac{\partial R_{\theta_0}}{\partial \theta_i} R_{\theta_0}^{-1} \right) R_{\theta_0}^{-1} \right\} \\
&  & + 2 \frac{1}{n} {\rm{Tr}} \left\{ \frac{\partial R_{\theta_0}}{\partial \theta_j} R_{\theta_0}^{-1} \diag\left( R_{\theta_0}^{-1} \right)^{-2} \nonumber
R_{\theta_0}^{-1} \frac{\partial R_{\theta_0}}{\partial \theta_i} R_{\theta_0}^{-1} \right\} \\
& & + 6 \frac{1}{n} {\rm{Tr}} \left\{ \diag\left( R_{\theta_0}^{-1} \right)^{-4}
\diag\left( R_{\theta_0}^{-1} \frac{\partial R_{\theta_0}}{\partial \theta_j} R_{\theta_0}^{-1} \right)
\diag \left( R_{\theta_0}^{-1} \frac{\partial R_{\theta_0}}{\partial \theta_i} R_{\theta_0}^{-1} \right) \nonumber
R_{\theta_0}^{-1} \right\} \\
& & - 4 \frac{1}{n} {\rm{Tr}} \left\{ \diag\left( R_{\theta_0}^{-1} \right)^{-3} \nonumber
\diag\left( R_{\theta_0}^{-1} \frac{\partial R_{\theta_0}}{\partial \theta_i} R_{\theta_0}^{-1} \frac{\partial R_{\theta_0}}{\partial \theta_j} R_{\theta_0}^{-1} \right)
R_{\theta_0}^{-1} \right\} \\
& & + 2 \frac{1}{n} {\rm{Tr}} \left\{ \diag\left( R_{\theta_0}^{-1} \right)^{-3} \diag\left( R_{\theta_0}^{-1}
\frac{\partial^2 R_{\theta_0}}{\partial \theta_i \partial \theta_j}  R_{\theta_0}^{-1} \right) \nonumber
R_{\theta_0}^{-1} \right\} \\
& & + 4 \frac{1}{n} {\rm{Tr}} \left\{ \diag\left( R_{\theta_0}^{-1} \right)^{-2} \nonumber
R_{\theta_0}^{-1} \frac{\partial R_{\theta_0}}{\partial \theta_i} R_{\theta_0}^{-1} \frac{\partial R_{\theta_0}}{\partial \theta_j} R_{\theta_0}^{-1} \right\} \\
& & -2 \frac{1}{n} {\rm{Tr}} \left\{ \diag\left( R_{\theta_0}^{-1} \right)^{-2} R_{\theta_0}^{-1}
\frac{\partial^2 R_{\theta_0}}{\partial \theta_i \partial \theta_j}  R_{\theta_0}^{-1}  \right\}. \nonumber
\end{eqnarray}

The fourth and sixth terms of \eqref{eq: expression_esperance_derivee_seconde_CV}
are opposite and hence cancel each other. Indeed,

\begin{eqnarray*}
& & {\rm{Tr}}\left\{  \diag\left( R_{\theta_0}^{-1} \right)^{-3}
\diag\left( R_{\theta_0}^{-1} \frac{\partial R_{\theta_0}}{\partial \theta_i}  R_{\theta_0}^{-1} \frac{\partial R_{\theta_0}}{\partial \theta_j}  R_{\theta_0}^{-1} \right)
R_{\theta_0}^{-1} \right\} \\
& = & \sum_{i=1}^n \left( R_{\theta_0}^{-1} \right)_{i,i}^{-3} 
\left( R_{\theta_0}^{-1} \frac{\partial R_{\theta_0}}{\partial \theta_i}  R_{\theta_0}^{-1} \frac{\partial R_{\theta_0}}{\partial \theta_j}  R_{\theta_0}^{-1} \right)_{i,i}
\left( R_{\theta_0}^{-1} \right)_{i,i}  \\
& = & \sum_{i=1}^n \left( R_{\theta_0}^{-1} \right)_{i,i}^{-2} 
\left( R_{\theta_0}^{-1} \frac{\partial R_{\theta_0}}{\partial \theta_i}  R_{\theta_0}^{-1} \frac{\partial R_{\theta_0}}{\partial \theta_j}  R_{\theta_0}^{-1} \right)_{i,i}\\
& = & {\rm{Tr}}\left\{  \diag\left( R_{\theta_0}^{-1} \right)^{-2}
R_{\theta_0}^{-1} \frac{\partial R_{\theta_0}}{\partial \theta_i}  R_{\theta_0}^{-1} \frac{\partial R_{\theta_0}}{\partial \theta_j}  R_{\theta_0}^{-1} \right\}.
\end{eqnarray*}
Similarly the fifth and seventh terms of \eqref{eq: expression_esperance_derivee_seconde_CV} cancel each other.

Hence, we show the expression of $\EE\left( \frac{\partial^2}{ \partial \theta_i \partial \theta_j} CV_{\theta_0} | X \right)$ of the proposition.

We use proposition \ref{prop: convergenceTrace} to show the existence of $\Sigma_{CV,1}$ and $\Sigma_{CV,2}$.
\end{proof}

\subsection{Proof of proposition \ref{prop: normaliteCV}}

\begin{proof}

We use proposition \ref{prop: lindeberg_presque_sur}, with $M^i_{\theta_0}$ the notation of proposition \ref{prop: gradients_CV} and
\[
N_i = - \left\{ M^i_{\theta_0} + \left(M^i_{\theta_0}\right)^t \right\} ,  
\]
together with propositions \ref{prop: convergenceTrace} and \ref{prop: gradients_CV}
to show that
\[
 \sqrt{n} \frac{ \partial }{ \partial \theta } CV_{\theta_0} \to_{\L} \N\left( 0 , \Sigma_{CV,1} \right).
\]
We have seen in the proof of proposition \ref{prop: gradients_CV} that
there exist matrices $P_{i,j}$ in $\mathcal{M}_{\theta_0}$ (proposition \ref{prop: convergenceTrace}), so that
$\frac{ \partial^2 }{ \partial \theta_i \partial \theta_j} CV_{\theta_0} = \frac{1}{n} y^t P_{i,j} y$, with
$ \frac{1}{n} {\rm{Tr}}\left(P_{i,j}R\right) \to \left(\Sigma_{CV,2}\right)_{i,j}$ $P_X$-almost surely.
Hence, using proposition \ref{prop: convergence_forme_quadratique}, $\frac{ \partial^2 }{ \partial \theta^2 } L_{\theta_0} $
converges to $\Sigma_{CV,2}$ in the mean square sense (on the product space).

Finally, $\frac{ \partial^3 }{ \partial \theta_i \partial \theta_j \partial \theta_k} CV_{\tilde{ \theta }} $
can be written as $\frac{1}{n} \left( z^t N^{i,j,k}_{\tilde{\theta}} z \right)$, where
$N^{i,j,k}_{\tilde{\theta}}$ are sums of matrices
of $\mathcal{M}_{\tilde{\theta}}$ (proposition \ref{prop: convergenceTrace}) and
$z$ depending on $X$ and $Y$ with $\L\left(z | X\right) = \N\left(0,I_n\right)$. The singular values of
$N^{i,j,k}_{\tilde{\theta}}$ are bounded uniformly in
$\tilde{\theta}$, $n$ and $x$ and so $ \sup_{i,j,k,\tilde{\theta}} \left( \frac{ \partial^3 }{ \partial \theta_i \partial \theta_j \partial \theta_k}
CV_{\tilde{ \theta }} \right)$ is bounded by $b \frac{1}{n} z^t z$, $b < + \infty$, and
is hence bounded in probability. We apply proposition \ref{prop: normaliteEstimateur} to conclude.

\end{proof}

\subsection{Proof of proposition \ref{prop: info>0_CV}}

\begin{proof}
 
We show the proposition in the case $p=1$, the generalization to the case $p >1$ being the same as in proposition \ref{prop: info>0_ML}.

Similarly to the proof of proposition \ref{prop: consistance_ML}, we show
that $ \left| \frac{\partial^2}{\partial \theta^2} CV_{\theta_0} - \EE\left( \frac{\partial^2}{\partial \theta^2} CV_{\theta_0} | X \right)\right| \to_p 0 $.
We will then show that there exists $C>0$ so that $P_X$-a.s.,
\begin{equation} \label{eq: condition_info>0_CV}
 \EE\left( \frac{\partial^2}{\partial \theta^2} CV_{\theta_0} | X \right)  \geq C \left| \frac{\partial R_{\theta}}{\partial \theta} \right|^2.
\end{equation}
The proof of the proposition will hence be carried out similarly as in the proof of proposition \ref{prop: consistance_ML}.

$ \frac{\partial^2}{\partial \theta^2} CV_{\theta_0} $ can be written as $z^t M z$ with
$z$ depending on $X$ and $Y$ and $\L\left(z | X \right) = \N\left(0,I_n\right)$, and $M$ a sum of matrices of
$\mathcal{M}_{\theta_0}$ (proposition \ref{prop: convergenceTrace}). Hence, using proposition \ref{prop: convergenceTrace}, uniformly in $n$,
$\sup_{\theta} \left| \frac{\partial^2}{\partial \theta^2} CV_{\theta} \right| \leq a \frac{1}{n} z^t z$
with $a < + \infty$. Hence, for fixed $n$, we can exchange derivatives and means conditionally to $X$ and so 
\begin{eqnarray*}
 \EE\left( \frac{\partial^2}{\partial \theta^2} CV_{\theta_0} | X \right) & = &  \frac{\partial^2}{\partial \theta^2} \EE\left( CV_{\theta_0} | X \right).
\end{eqnarray*}
Then, with $r_{i,\theta}$, $R_{-i,\theta} $ and $y_{-i}$ the notation of the proof of proposition  \ref{prop: consistance_CV},
\begin{eqnarray*}
 \EE\left( CV_{\theta} | X \right) & = & \frac{1}{n} \sum_{i=1}^n \left[  1 - r_{i,\theta_0}^t R_{-i,\theta_0}^{-1} r_{i,\theta_0} +
\EE\left\{ \left( r_{i,\theta_0}^t R_{-i,\theta_0}^{-1} y_{-i} - r_{i,\theta}^t R_{-i,\theta}^{-1} y_{-i} \right)^2 | X \right\}  \right]  \\
& = & \frac{1}{n} \sum_{i=1}^n \left(  1 - r_{i,\theta_0}^t R_{-i,\theta_0}^{-1} r_{i,\theta_0} \right)
+ \frac{1}{n} \sum_{i=1}^n \left( r_{i,\theta}^t R_{-i,\theta}^{-1} - r_{i,\theta_0}^t R_{-i,\theta_0}^{-1}\right) R_{-i,\theta_0} \left(  R_{-i,\theta}^{-1} r_{i,\theta} - R_{-i,\theta_0}^{-1} r_{i,\theta_0}\right).  
\end{eqnarray*}
By differentiating twice with respect to $\theta$ and taking the value at $\theta_0$ we obtain
\begin{eqnarray*}
\EE\left( \frac{\partial^2}{\partial \theta^2} CV_{\theta_0} | X \right) & = & \frac{1}{n} \sum_{i=1}^n
\left\{ \frac{\partial}{\partial \theta} \left( R_{-i,\theta_0}^{-1} r_{i,\theta_0}^t \right) \right\}^t
R_{-i,\theta_0} \left\{\frac{\partial}{\partial \theta} \left( R_{-i,\theta_0}^{-1} r_{i,\theta_0}^t \right) \right\} \\
& \geq & A \frac{1}{n} \sum_{i=1}^n \left|\left| \left\{\frac{\partial}{\partial \theta} \left( R_{-i,\theta_0}^{-1} r_{i,\theta_0}^t \right) \right\} \right|\right|^2
~ ~ \mbox{with $A = \inf_{n,i,x} \phi^2_i\left(R_{-i,\theta_0}\right)$, $A > 0$}, \\
\end{eqnarray*}

then, using the virtual CV formulas \citep{SS,CVKUN},

\begin{eqnarray*}
\EE\left( \frac{\partial^2}{\partial \theta^2} CV_{\theta_0} | X \right)
& \geq & A \frac{1}{n} \sum_{i=1}^n \sum_{j \neq i} \left[ \frac{\partial}{\partial \theta} \left\{  \frac{\left(R_{\theta_0}^{-1}\right)_{i,j}}{\left(R_{\theta_0}^{-1}\right)_{i,i}}  \right\}  \right]^2 \\
& = & A \left| \frac{\partial}{\partial \theta} \left\{ \diag\left( R_{\theta_0}^{-1} \right)^{-1} R_{\theta_0}^{-1} \right\} \right|^2 \\
& = & A \left|  \diag\left( R_{\theta_0}^{-1} \right)^{-1} \diag\left( R_{\theta_0}^{-1} \frac{\partial R_{\theta_0}}{\partial \theta} R_{\theta_0}^{-1}  \right)
\diag\left( R_{\theta_0}^{-1} \right)^{-1} R_{\theta_0}^{-1} - \diag\left( R_{\theta_0}^{-1} \right)^{-1} \left( R_{\theta_0}^{-1} \frac{\partial R_{\theta_0}}{\partial \theta} R_{\theta_0}^{-1} \right)  \right|^2  \\
& \geq & A^2 B \left|   \diag\left( R_{\theta_0}^{-1} \frac{\partial R_{\theta_0}}{\partial \theta} R_{\theta_0}^{-1}  \right)
\diag\left( R_{\theta_0}^{-1} \right)^{-1}  -   R_{\theta_0}^{-1} \frac{\partial R_{\theta_0}}{\partial \theta}    \right|^2
~ ~ \mbox{with $B = \inf_{i,n,x} \phi_i\left(R_{\theta_0}^{-1}\right)$, $B > 0$}  \\
& \geq & A^2 B \inf_{\lambda_1,...,\lambda_n} \left|   D_{\lambda} 
-   R_{\theta_0}^{-1} \frac{\partial R_{\theta_0}}{\partial \theta}    \right|^2  \\
& \geq & A^2 B^2 \inf_{\lambda_1,...,\lambda_n} \left| R_{\theta_0}  D_{\lambda} 
-    \frac{\partial R_{\theta_0}}{\partial \theta}    \right|^2.  \\
\end{eqnarray*}
Then, as $K_{\theta}\left(0\right)=1$ for all $\theta$, and hence $\frac{\partial}{\partial \theta} K_{\theta_0}\left(0\right) = 0$,
\begin{eqnarray*}
\EE\left( \frac{\partial^2}{\partial \theta^2} CV_{\theta_0} | X \right)
& \geq & A^2 B^2 \inf_{\lambda_1,...,\lambda_n} \frac{1}{n} \sum_{i=1}^n \left[ \lambda_i^2 + \sum_{j \neq i}
\left\{ \lambda_i \left( R_{\theta_0} \right)_{i,j} -  \left( \frac{\partial R_{\theta_0}}{\partial \theta} \right)_{i,j} \right\}^2 \right]\\
& = & A^2 B^2  \frac{1}{n} \sum_{i=1}^n \inf_{\lambda} \left[ \lambda^2 + \sum_{j \neq i}
\left\{ \lambda \left( R_{\theta_0} \right)_{i,j} -  \left( \frac{\partial R_{\theta_0}}{\partial \theta} \right)_{i,j} \right\}^2 \right].
\end{eqnarray*}

We then show, similarly to lemma \ref{lem: lambda_compensateur}, that
\begin{equation}  \label{eq: lambda_compensateur}
 \lambda^2 + \sum_{i=1}^n \left(a_i - \lambda b_i\right)^2 \geq \frac{ \sum_{i=1}^n a_i^2 }{ 1 + \sum_{i=1}^n b_i^2}.
\end{equation}
Hence, with $C \in [1, + \infty) $, by using \eqref{eq: controleCov} and lemma \ref{lem: sommabilité},
\begin{eqnarray*}
 \EE\left( \frac{\partial^2}{\partial \theta^2} CV_{\theta_0} | X \right)  & \geq & \frac{A^2 B^2}{C}  \frac{1}{n} \sum_{i=1}^n \sum_{j \neq i}
\left\{  \left( \frac{\partial R_{\theta_0}}{\partial \theta} \right)_{i,j} \right\}^2  \\
& = & \frac{A^2 B^2}{C} \left| \frac{\partial R_{\theta_0}}{\partial \theta} \right|^2 ~ ~ \mbox{because $\frac{\partial}{\partial \theta} K_{\theta_0(0)} = 0$}.
\end{eqnarray*}
We then showed \eqref{eq: condition_info>0_CV}, which concludes the proof in the case $p=1$.

\end{proof}

\section{Proofs for section \ref{section: numerical_study} }  \label{section: appendix_proof_echange_limite_derivee}

\subsection{Proof of proposition \ref{prop: deriveesSigma}}

\begin{proof}
It is enough to show the proposition for $\epsilon \in [0,\alpha]$ for all $\alpha < \frac{1}{2}$.
We use the following lemma.
\begin{lem} \label{lem: convergenceUnif}
 Let $f_n$ be a sequence of $C^2$ functions on a segment of $\RR$. We assume $f_n \to_{unif} f$, $f_n' \to_{unif} g$, $f_n'' \to_{unif} h$.
Then, $f$ is $C^2$, $f' = g$, and $f'' = h$.
\end{lem}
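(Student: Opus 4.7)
The plan is to invoke the classical term-by-term differentiation theorem for sequences of $C^1$ functions, and apply it twice. I recall the statement: if $(g_n)$ is a sequence of $C^1$ functions on a segment $[a,b]$ with $g_n \to G$ uniformly and $g_n' \to H$ uniformly, then $G$ is $C^1$ with $G'=H$. This is a standard one-dimensional result, so I would not reprove it in detail, merely quote it.

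For completeness, the one-line reminder of why that auxiliary theorem holds is as follows. Fix $a$ in the segment. By the fundamental theorem of calculus, $g_n(x) = g_n(a) + \int_a^x g_n'(t)\,dt$ for every $n$ and every $x$. Since $g_n \to G$ pointwise and $g_n' \to H$ uniformly on the compact segment, one may pass to the limit in both sides to obtain $G(x) = G(a) + \int_a^x H(t)\,dt$. The function $H$ is continuous as a uniform limit of continuous functions, so the fundamental theorem of calculus applied to the integral representation of $G$ yields that $G$ is $C^1$ with $G'=H$.

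Having this tool, the proof of the lemma is a two-step application. First, I apply it to $(f_n)$ with derivatives $(f_n')$: the hypotheses $f_n \to f$ uniformly and $f_n' \to g$ uniformly give that $f$ is $C^1$ with $f'=g$. Second, I apply it to $(f_n')$ with derivatives $(f_n'')$: the hypotheses $f_n' \to g$ uniformly and $f_n'' \to h$ uniformly give that $g$ is $C^1$ with $g'=h$. Combining the two conclusions, $f'' = (f')' = g' = h$, so $f$ is $C^2$ with $f''=h$, as claimed.

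There is no real obstacle here; the lemma is a standard calculus fact and the only point deserving any care is noting that uniform limits of continuous functions are continuous, so that the fundamental theorem of calculus can be invoked on the limiting integral representation.
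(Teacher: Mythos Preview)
Your proof is correct and follows the standard route via the fundamental theorem of calculus applied twice. The paper does not actually prove this lemma: it is stated as an auxiliary fact and used without proof, so there is nothing to compare beyond noting that your argument is the classical one the authors are implicitly invoking.
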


We denote $f_n\left(\epsilon\right) = \frac{1}{n} \EE \left\{ {\rm{Tr}}\left(M^{i,j}\right) \right\}$
where $(M^{i,j})_{n \in \NN^*}$ is a random matrix sequence defined on $(\Omega_X,\mathcal{F}_X,P_X)$
which belongs to $\mathcal{M}_{\theta}$ (proposition \ref{prop: convergenceTrace}).
We showed in proposition \ref{prop: convergenceTrace} that $f_n$ converges simply to $\Sigma_{i,j}$ on $[0,\alpha]$. 
We firstly use the dominated convergence theorem to show that $f_n$ is $C^2$ and that $f_n'$ and $f_n''$ are of the form
\begin{equation} \label{eq: formeTraceDerivees}
 \EE \left\{ \frac{1}{n} {\rm{Tr}}\left( N^{i,j} \right)  \right\},
\end{equation}
with $N^{i,j}$ a sum of random matrix sequences of $\tilde{\mathcal{M}}_{\theta_0}$. $\tilde{\mathcal{M}}_{\theta_0}$ is similar to $\mathcal{M}_{\theta_0}$
(proposition \ref{prop: convergenceTrace}), with the addition of the derivative matrices with respect to $\epsilon$.
We can then, using \eqref{eq: controleDeriveesK}, adapt proposition \ref{prop: convergenceTrace} to show that
$f_n'$ and $f_n''$ converge simply to some functions $g$ and $h$ on $[0,\alpha] $.

Finally, still adapting proposition \ref{prop: convergenceTrace}, the singular values of $N^{i,j}$ are bounded uniformly in $x$ and $n$.
Hence, using ${\rm{Tr}}\left(A\right) \leq n ||A||$, for a symmetric matrix $A$, the derivatives of $f_n$, $f_n'$ and $f_n''$ are bounded
uniformly in $n$, so that
the simple convergence implies the uniform convergence. The conditions of lemma \ref{lem: convergenceUnif} are hence fulfilled.
 
\end{proof}

\section{Proofs for section \ref{section: analysis_prediction} }  \label{section: proof_prediction_independente_estimation}

\subsection{Proof of proposition \ref{prop: infl_est_pred}}

\begin{proof}
Let us first show \eqref{eq: no_inf_est_on _pred}.
Consider a consistent estimator $\hat{\theta}$ of $\theta_0$.
Since $|\hat{\theta} - \theta_0| = o_p(1)$, it is sufficient to show that $\sup_{1 \leq i \leq p, \theta \in \Theta} | \frac{\partial}{\partial \theta_i} E_{\epsilon,\theta} | = O_p(1)$.

Consider a fixed $n$. Because the trajectory $Y(t)$
is almost surely continuous on $[0,N_{1,n}]^d$, because for every $\theta \in \Theta$, $1\leq i \leq p$,
$\frac{\partial}{\partial \theta_i} K_{\theta}(t)$
is continuous with respect to $t$ and because, from
\eqref{eq: controleCov},
$ \sup_{\theta \in \Theta,1\leq i \leq p} \left| \frac{\partial}{\partial \theta_i} K_{\theta}(t)\right|$
is bounded, we can almost surely exchange integration and derivation w.r.t. $\theta_i$ in the expression of $E_{\epsilon,\theta}$. Thus, we have almost surely
\begin{eqnarray*}
\frac{\partial}{\partial \theta_i}
E_{\epsilon,\theta} & = &
\frac{1}{N_{1,n}^d} \int_{[0,N_{1,n}]^d}
\frac{\partial}{\partial \theta_i} \left(  \left( Y(t) - \hat{Y}_{\theta}(t) \right)^2\right) dt  \\
& = & \frac{2}{N_{1,n}^d} \int_{[0,N_{1,n}]^d}
\left( Y(t) - \hat{Y}_{\theta}(t) \right)
\left( - \frac{\partial r_{\theta}^t(t) }{\partial \theta_i}  R_{\theta}^{-1} + r_{\theta}^t(t) R_{\theta}^{-1} \frac{\partial R_{\theta}}{\partial \theta_i} R_{\theta}^{-1} \right)y dt,
\end{eqnarray*}
with $\left(r_{\theta}(t)\right)_j = K_{\theta}(v_j + \epsilon x_j - t)$.
Then
\begin{flalign} \label{eq: in_proof_bounded_derivative_prediction_error}
& \EE \left( \sup_{\theta \in \Theta} \left| \frac{\partial}{\partial \theta_i} E_{\epsilon,\theta} \right| \right)
 &  \nonumber \\
& \leq  \frac{2}{N_{1,n}^d} \int_{[0,N_{1,n}]^d} 
\EE \left( \sup_{\theta \in \Theta} \left\{ \left| Y(t) - \hat{Y}_{\theta}(t) \right|
\left| \left( \frac{\partial r_{\theta}^t(t) }{\partial \theta_i}  R_{\theta}^{-1} - r_{\theta}^t(t) R_{\theta}^{-1} \frac{\partial R_{\theta}}{\partial \theta_i} R_{\theta}^{-1} \right) y \right|
\right\} \right) dt \nonumber & \\
& \leq 
\frac{2}{N_{1,n}^d} \sqrt{  \int_{[0,N_{1,n}]^d}
\EE \left( \sup_{\theta \in \Theta} \left\{ \left( Y(t) - \hat{Y}_{\theta}(t) \right)^2 \right\} \right) } & \nonumber \\
& ~ ~ \times 
\sqrt{ \int_{[0,N_{1,n}]^d} \EE \left(
\sup_{\theta \in \Theta} \left\{ \left( \left( \frac{\partial r_{\theta}^t(t) }{\partial \theta_i}  R_{\theta}^{-1} - r_{\theta}^t(t) R_{\theta}^{-1} \frac{\partial R_{\theta}}{\partial \theta_i} R_{\theta}^{-1} \right)y \right)^2 \right\} \right) dt } & \nonumber \\
& \leq    
\frac{2}{N_{1,n}^d}
\sqrt{ 2 \int_{[0,N_{1,n}]^d}
\EE \left(  \left\{ Y(t)^2 \right\} \right) +
2 \int_{[0,N_{1,n}]^d}
\EE \left( \sup_{\theta \in \Theta} \left\{ \left( \hat{Y}_{\theta}(t) \right)^2 \right\} \right)
} & \nonumber \\
& ~ ~ \times 
\sqrt{ \int_{[0,N_{1,n}]^d} \EE \left(
\sup_{\theta \in \Theta} \left\{ \left( \left( \frac{\partial r_{\theta}^t(t) }{\partial \theta_i}  R_{\theta}^{-1} - r_{\theta}^t(t) R_{\theta}^{-1} \frac{\partial R_{\theta}}{\partial \theta_i} R_{\theta}^{-1} \right)y \right)^2 \right\} \right) dt } & \nonumber  \\
& =    
\frac{2}{N_{1,n}^d}
\sqrt{ 2 K_{\theta_0}(0) N_{1,n}^d +
2 \int_{[0,N_{1,n}]^d}
\EE \left( \sup_{\theta \in \Theta} \left\{ \left( \hat{Y}_{\theta}(t) \right)^2 \right\} \right)
} & \nonumber \\
& ~ ~ \times 
\sqrt{ \int_{[0,N_{1,n}]^d} \EE \left(
\sup_{\theta \in \Theta} \left\{ \left( \left( \frac{\partial r_{\theta}^t(t) }{\partial \theta_i}  R_{\theta}^{-1} - r_{\theta}^t(t) R_{\theta}^{-1} \frac{\partial R_{\theta}}{\partial \theta_i} R_{\theta}^{-1} \right)y \right)^2 \right\} \right) dt }. &
\end{flalign}

In \eqref{eq: in_proof_bounded_derivative_prediction_error},
the two supremums can be written
\[
\EE \left( \sup_{\theta \in \Theta} \left\{   \left( w_{\theta}(t)^t y  \right)^2 \right\} \right), 
\]
with $w_{\theta}(t)$ a column vector of size $n$,
not depending on $y$.

Fix $t  \in [0,N_{1,n}]^d$. We now use Sobolev embedding theorem
on the space $\Theta$, equipped with the Lebesgue measure.
This theorem implies that for $f: \Theta \to \RR$,
$\sup_{\theta \in \Theta} | f(\theta) | \leq C_p
 \int_{\Theta}  \left( |f(\theta)|^p + \sum_{j=1}^p \left| \frac{\partial}{\partial \theta_j} f(\theta) \right|^p \right) d \theta$, with $C_p$ a finite constant depending only on $p$ and $\Theta$. By applying this inequality to the $C^1$ function of $\theta$,
$\left( w_{\theta}(t)^t y \right)^2$, we obtain
\begin{eqnarray*} 
\EE \left( \sup_{\theta \in \Theta} \left\{   \left( w_{\theta}(t)^t y  \right)^2 \right\}\right)
& \leq &
  \EE \left( C_p \int_{\Theta} \sum_{i=1}^p \left| \frac{\partial}{\partial \theta_i}  \left(   \left( w_{\theta}(t)^t y \right) \right)^2 \right|^p d \theta \right) 
+  \EE \left( C_p \int_{\Theta}  \left|   \left(   \left( w_{\theta}(t)^t y \right) \right)^2 \right|^p d \theta \right)  \nonumber \\
& = &
2 C_p \sum_{i=1}^p \int_{\Theta} \EE \left( \left|   \left( w_{\theta}(t)^t y \right)
\left(   \frac{\partial}{\partial \theta_i} \left( w_{\theta}(t)^t  \right) y \right)
 \right|^p \right) d \theta \nonumber 
 + C_p \int_{\Theta} \EE \left( \left\{    w_{\theta}(t)^t y 
 \right\}^{2p} \right) d \theta\\
& \leq  &
2 C_p \sum_{i=1}^p \sqrt{ \int_{\Theta} \EE \left( \left\{   \left( w_{\theta}(t)^t y \right)
 \right\}^{2p} \right) d \theta   }
\sqrt{ \int_{\Theta} \EE \left( \left\{   
\left(   \frac{\partial}{\partial \theta_i} \left( w_{\theta}(t)^t  \right) y \right)
 \right\}^{2p} \right) d \theta } \\
 & & + C_p \int_{\Theta} \EE \left( \left\{    w_{\theta}(t)^t y 
 \right\}^{2p} \right) d \theta.
\end{eqnarray*}
There exists a constant $C'_p$, depending only on $p$
so that, for $Z$ a centered Gaussian variable,
$\EE( Z^{2p} ) = C'_p \var(Z)^p$. Thus, we obtain
\begin{flalign}  \label{eq: after_sobolev_embedding}
& \EE \left( \sup_{\theta \in \Theta} \left\{   \left( w_{\theta}(t)^t y  \right)^2 \right\}\right) & \nonumber \\
& \leq 2 C_p C'_p \sum_{i=1}^p \sqrt{ \int_{\Theta} \left[ \EE \left( \left\{  w_{\theta}(t)^t y 
 \right\}^{2} \right) \right]^p  d \theta   }
\sqrt{ \int_{\Theta} \left[ \EE \left( \left\{   
   \frac{\partial}{\partial \theta_i} \left( w_{\theta}(t)^t  \right) y 
 \right\}^{2} \right) \right]^p d \theta } \nonumber \\
&  ~ ~ + C_p C_p' \int_{\Theta} \left[ \EE \left( \left\{    w_{\theta}(t)^t y 
 \right\}^{2} \right) \right]^p d \theta. &
\end{flalign}

Fix $1 \leq i \leq p $ in \eqref{eq: after_sobolev_embedding}.
By using \eqref{eq: controleCov}, a slight modification of lemma \ref{lem: sommabilité}
and lemma \ref{lem: controle_valeurs_propres}, $\sup_{\theta \in \Theta, t \in [0,N_{1,n}]^d} |w_{\theta}(t)|^2 \leq A$ and
$\sup_{\theta \in \Theta, t \in [0,N_{1,n}]^d} |\frac{\partial}{\partial \theta_i} w_{\theta}(t)|^2 \leq A$, independently of $n$ and $x$ and for a constant $A < + \infty$. Thus, in \eqref{eq: after_sobolev_embedding},
$\EE \left( \left\{  w_{\theta}(t)^t y 
 \right\}^{2} \right) = \EE_{X} \left( w_{\theta}(t)^t R_{\theta_0} w_{\theta}(t)^t \right) \leq AB$, with $B = \sup_{n,x} ||R_{\theta_0}||$. We show in the same way
$\EE \left( \left\{   
   \frac{\partial}{\partial \theta_i} \left( w_{\theta}(t)^t  \right) y 
 \right\}^{2} \right) \leq A B$.
Hence, from \eqref{eq: in_proof_bounded_derivative_prediction_error} and \eqref{eq: after_sobolev_embedding}, we have shown
that, for $1\leq i \leq p$, 
\[
 \EE \left( \sup_{\theta \in \Theta} \left| \frac{\partial}{\partial \theta_i}E_{\epsilon,\theta} \right| \right) 
\]
is bounded inpendently of $n$.
Hence $\sup_{\theta \in \Theta,1 \leq i \leq p} | \frac{\partial}{\partial \theta_i} E_{\epsilon,\theta} | = O_p(1)$,
which proves \eqref{eq: no_inf_est_on _pred}.

Let us now prove \eqref{eq: no_vanishing_pred_error}.

\begin{eqnarray*}
\EE \left( E_{\epsilon,\theta_0} \right) & = &
\EE \left( \frac{1}{(N_{1,n})^d}  \int_{[0,N_{1,n}]^d}
\left( Y(t) - \hat{Y}_{\theta_0}(t)   \right)^2 dt \right) \\
& = &  \frac{1}{(N_{1,n})^d}  \int_{[0,N_{1,n}]^d}
\EE_{X} \left( 1 - r_{\theta_0}^t(t) R^{-1}_{\theta_0}
r_{\theta_0}(t)  \right) dt.
\end{eqnarray*}
Now, let $\tilde{R}_{\theta_0}(t)$
be the covariance matrix of $(Y(t),y_1,...,y_n)^t$,
under covariance function $K_{\theta_0}$. Then, because of the virtual Leave-One-Out formulas \citep[ch.5.2]{SS},
\begin{eqnarray*}
\EE \left( E_{\epsilon,\theta_0} \right) & = &
\frac{1}{(N_{1,n})^d}  \int_{[0,N_{1,n}]^d}
\EE_{X} \left( \frac{1}{ ( \tilde{R}^{-1}_{\theta_0} )_{1,1} }  \right) dt \\
& \geq & \frac{1}{N_{1,n}^d} \sum_{i=1}^n
\int_{ \prod_{k=1}^d [ (v_i)_k + \epsilon + \frac{1}{2}(\frac{1}{2} - \epsilon) , (v_i)_k +1 - \epsilon - \frac{1}{2}(\frac{1}{2} - \epsilon) ] }
\EE_{X} \left( \frac{1}{ ( \tilde{R}^{-1}_{\theta_0} )_{1,1} }  \right) dt \\
\end{eqnarray*}
Now, for $t \in \prod_{k=1}^d [ (v_i)_k + \epsilon + \frac{1}{2}(\frac{1}{2} - \epsilon) , (v_i)_k +1 - \epsilon - \frac{1}{2}(\frac{1}{2} - \epsilon) ]$,
$ \inf_{n, 1 \leq j \leq n, x \in S_X^n} |t - v_j - \epsilon x_j|_{\infty} \geq  \frac{1}{2} \left( \frac{1}{2} - \epsilon \right)$. Thus, we can adapt proposition \ref{prop: minorationValeursPropres} to show that
the eigenvalues of $\tilde{R}_{\theta_0}(t)$
are larger than $A > 0$, independently of $n$, $x$
and $t \in \cup_{1\leq i \leq n} \prod_{k=1}^d [ (v_i)_k + \epsilon + \frac{1}{2}(\frac{1}{2} - \epsilon) , (v_i)_k +1 - \epsilon - \frac{1}{2}(\frac{1}{2} - \epsilon) ]$. This yields
\[
\EE \left( E_{\epsilon,\theta_0} \right)
\geq \frac{A}{ N_{1,n}^d } N_{1,n}^d \left( \frac{1}{2} - \epsilon \right)^d,
\] 
which concludes the proof.

\end{proof}

\section{Technical results}  \label{section: appendix_technical_results}

In subsection \ref{subsection: technical_results}
we state several technical results that are used in the proofs of the results of sections \ref{section: consistency_asymptotic_normality},
\ref{section: numerical_study}
and \ref{section: analysis_prediction}. Proofs are given in subsection \ref{subsection: proof_technical_results}.

\subsection{Statement of the technical results} \label{subsection: technical_results}

\begin{lem} \label{lem: sommabilité}
 Let $f: \RR^d \to \RR^+$, so that $f\left(t\right) \leq \frac{1}{1+|t|_{\infty}^{d+1}}$. Then, for all $i \in \NN^*$, $\epsilon \in (-\frac{1}{2},\frac{1}{2})$ and
$\left(x_i\right)_{i \in \NN^*} \in S_X^{\NN^*}$, 
\[
 \sum_{j \in \NN^*, j \neq i} f\left\{ v_i - v_j + \epsilon\left(x_i-x_j\right) \right\} \leq 2^d d \sum_{j \in \NN} \frac{\left(j+\frac{3}{2}\right)^{d-1}}{1+j^{d+1}}.
\]
\end{lem}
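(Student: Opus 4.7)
The plan is to reduce the sum over a lattice-perturbed configuration to a sum over plain lattice shells, using the $\ell^\infty$ triangle inequality and a straightforward shell-counting argument.

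First, observe that since $x_i, x_j \in S_X \subset [-1,1]^d$ and $|\epsilon| < 1/2$, one has $|\epsilon(x_i - x_j)|_\infty \leq 2|\epsilon| < 1$. Hence, by the triangle inequality in the $\ell^\infty$ norm,
\[
|v_i - v_j + \epsilon(x_i - x_j)|_\infty \;\geq\; |v_i - v_j|_\infty - 1.
\]
Since $v_i, v_j \in (\NN^*)^d$ and $j \neq i$, the integer vector $w := v_i - v_j$ is nonzero and $|w|_\infty \geq 1$, so $|w|_\infty - 1 \geq 0$. Using the hypothesis on $f$, this gives
\[
f\bigl(v_i - v_j + \epsilon(x_i - x_j)\bigr) \;\leq\; \frac{1}{1 + (|v_i - v_j|_\infty - 1)^{d+1}}.
\]

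Next, I would reindex the sum by $w = v_i - v_j$. As $j$ ranges over $\NN^* \setminus \{i\}$ and $v$ enumerates $(\NN^*)^d$ bijectively, the $w$'s are distinct elements of $\ZZ^d \setminus \{0\}$, so the sum in question is dominated by
\[
\sum_{w \in \ZZ^d \setminus \{0\}} \frac{1}{1 + (|w|_\infty - 1)^{d+1}}.
\]
I would then group by shells $\{w \in \ZZ^d : |w|_\infty = m\}$ for $m \geq 1$. The cardinality of the $m$-th shell is $(2m+1)^d - (2m-1)^d$, which by the mean value theorem applied to $x \mapsto x^d$ is at most $2d(2m+1)^{d-1}$.

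Finally, after substituting $k = m - 1 \geq 0$ and writing $2m+1 = 2(k + 3/2)$, the sum becomes
\[
\sum_{k \geq 0} 2d \cdot 2^{d-1}(k + 3/2)^{d-1} \cdot \frac{1}{1 + k^{d+1}} \;=\; 2^d d \sum_{k \geq 0} \frac{(k + 3/2)^{d-1}}{1 + k^{d+1}},
\]
which is exactly the claimed bound. There is no real obstacle; the only subtlety worth highlighting is the need for the strict bound $|\epsilon| < 1/2$ to absorb the perturbation within a unit $\ell^\infty$ ball, which ensures the shifted norm remains non-negative after subtracting $1$.
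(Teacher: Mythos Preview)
Your proof is correct and follows essentially the same approach as the paper: bound the perturbed distance from below via $|v_i-v_j+\epsilon(x_i-x_j)|_\infty \geq |v_i-v_j|_\infty - 1$ (the paper phrases this as taking $\sup_{\delta_v\in[-1,1]^d} f(v+\delta_v)$), then group by $\ell^\infty$-shells and bound the shell cardinality $(2m+1)^d-(2m-1)^d\leq 2d(2m+1)^{d-1}$ via an integral/mean-value estimate, and reindex by $k=m-1$. The only differences are notational.
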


\begin{lem} \label{lem: sommabilité2}
 Let $f: \RR^d \to \RR^+$, so that $f\left(t\right) \leq \frac{1}{1+|t|_{\infty}^{d+1}}$.
We consider $\delta < \frac{1}{2}$. Then, for all $i \in \NN^*$, $a >0$, $\epsilon \in [-\delta,\delta]$ and $\left(x_i\right)_{i \in \NN^*} \in S_X^{\NN^*}$,
\[
 \sum_{j \in \NN^*, j \neq i} f\left[ a\left\{ v_i - v_j + \epsilon\left(x_i-x_j\right) \right\} \right] \leq 2^d d \sum_{j \in \NN} \frac{\left(j+\frac{3}{2}\right)^{d-1}}{1+a^{d+1} \left(j+1 - 2 \delta\right)^{d+1}}.
\]
\end{lem}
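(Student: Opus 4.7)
The plan is to adapt the proof of Lemma \ref{lem: sommabilité} while explicitly tracking the scale factor $a$ and the shift $2\delta$. The key observation is that, since $S_X \subset [-1,1]^d$ and $|\epsilon| \leq \delta < \tfrac{1}{2}$, the perturbation $\epsilon(x_i - x_j)$ has $\ell_\infty$-norm at most $2\delta < 1$, strictly less than the minimum spacing between distinct points of $(\NN^*)^d$.

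First I would establish a pointwise upper bound on each summand. By the reverse triangle inequality,
\[
|v_i - v_j + \epsilon(x_i - x_j)|_\infty \geq |v_i - v_j|_\infty - 2\delta,
\]
and the right-hand side is strictly positive because $v_j \neq v_i$ forces $|v_i - v_j|_\infty \geq 1$. Combined with the hypothesis $f(t) \leq (1 + |t|_\infty^{d+1})^{-1}$, this yields
\[
f\bigl[a\{v_i - v_j + \epsilon(x_i - x_j)\}\bigr] \leq \frac{1}{1 + a^{d+1}(|v_i - v_j|_\infty - 2\delta)^{d+1}}.
\]

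Then I would reorganize the sum by shells $k := |v_i - v_j|_\infty \in \{1, 2, \dots\}$. Since $\{v_j : j \in \NN^*, j \neq i\} \subset \ZZ^d \setminus \{v_i\}$, the number of terms in shell $k$ is at most $(2k+1)^d - (2k-1)^d$, which the mean value theorem applied to $x \mapsto x^d$ on $[2k-1, 2k+1]$ bounds by $2d(2k+1)^{d-1} = 2^d d\,(k + \tfrac{1}{2})^{d-1}$. Reindexing with $j = k - 1 \in \NN$ then produces exactly the right-hand side announced in the statement.

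No genuine obstacle arises: the argument is a quantitative variant of Lemma \ref{lem: sommabilité}, and every estimate is uniform in $a > 0$, in $\epsilon \in [-\delta, \delta]$ and in the reference index $i$. The only point requiring a little care is to ensure that $|v_i - v_j|_\infty - 2\delta > 0$, which is precisely what the hypothesis $\delta < \tfrac{1}{2}$ guarantees.
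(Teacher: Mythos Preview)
Your proposal is correct and follows essentially the same approach as the paper, which simply states that the proof is similar to that of Lemma~\ref{lem: sommabilité}. Your shell decomposition by $k=|v_i-v_j|_\infty$ and the reindexing $j=k-1$ reproduce exactly the paper's shells $\{-j-1,\dots,j+1\}^d\setminus\{-j,\dots,j\}^d$, with the only novelty being the explicit tracking of the factor $a$ and the refined perturbation bound $|\epsilon(x_i-x_j)|_\infty\le 2\delta$ in place of $1$, which yields the term $(j+1-2\delta)$ instead of $j$ in the denominator.
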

\begin{proof}
Similar to the proof of lemma \ref{lem: sommabilité}.
\end{proof}

\begin{lem} \label{lem: sommabilité3}
 Let $f: \RR^d \to \RR^+$, so that $f\left(t\right) \leq \frac{1}{1+|t|_{\infty}^{d+1}}$. Then, for all $i \in \NN^*$, $N \in \NN^*$ and $\left(x_i\right)_{i \in \NN^*} \in S_X^{\NN^*}$,
\[
 \sum_{j \in \NN^*, |v_i-v_j|_{\infty} \geq N} f\left\{ v_i - v_j + \epsilon\left(x_i-x_j\right) \right\} \leq 2^d d \sum_{j \in \NN, j \geq N-1} \frac{\left(j+\frac{3}{2}\right)^{d-1}}{1+j^{d+1}}.
\]
\end{lem}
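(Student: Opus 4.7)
The plan is to adapt the shell-decomposition argument underlying Lemma \ref{lem: sommabilité} essentially verbatim, merely truncating the outer sum at shell index $N$ instead of $1$. First I would group the indices $j \neq i$ according to the integer value $k := |v_i - v_j|_\infty \in \NN^*$. The number of $v_j \in (\NN^*)^d$ with $|v_i - v_j|_\infty = k$ is bounded above by the number of $\ZZ^d$-lattice points on the $\ell^\infty$-sphere of radius $k$ around $v_i$, namely $(2k+1)^d - (2k-1)^d$. Applying the mean value theorem to $u \mapsto u^d$ on the interval $[2k-1, 2k+1]$ yields
\[
(2k+1)^d - (2k-1)^d \;\leq\; 2d(2k+1)^{d-1} \;=\; 2^d d \left(k + \tfrac{1}{2}\right)^{d-1}.
\]

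Next I would lower-bound the argument of $f$ on each shell. Since $\epsilon \in [0, \tfrac{1}{2})$ (the global assumption of the paper) and $x_i, x_j \in S_X \subset [-1,1]^d$, we have $|\epsilon(x_i - x_j)|_\infty \leq 2\epsilon < 1$, so by the reverse triangle inequality
\[
\bigl| v_i - v_j + \epsilon(x_i - x_j) \bigr|_\infty \;\geq\; k - 1.
\]
The hypothesis $f(t) \leq 1/(1 + |t|_\infty^{d+1})$ then bounds each summand in shell $k$ by $1/\bigl(1 + (k-1)^{d+1}\bigr)$.

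Combining these two steps and restricting to $k \geq N$ gives
\[
\sum_{\substack{j \in \NN^* \\ |v_i - v_j|_\infty \geq N}} f\bigl( v_i - v_j + \epsilon(x_i - x_j)\bigr)
\;\leq\; \sum_{k \geq N} 2^d d \left(k + \tfrac{1}{2}\right)^{d-1} \cdot \frac{1}{1 + (k-1)^{d+1}},
\]
and the change of variable $j := k-1 \geq N-1$ delivers precisely the right-hand side of the claim. No step presents a genuine obstacle: the entire argument is a one-line modification of the proof of Lemma \ref{lem: sommabilité}, with the lower limit of the outer sum shifted from $j \geq 0$ to $j \geq N-1$; the shell-counting bound and the distance-shift estimate are identical in both settings.
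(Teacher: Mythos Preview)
Your proposal is correct and is precisely the argument the paper has in mind: the paper's own proof is just ``Similar to the proof of lemma \ref{lem: sommabilité}'', and your shell decomposition by $k=|v_i-v_j|_\infty$ is the same decomposition used there (after the change of index $j=k-1$), with the only modification being the truncation of the outer sum at $k\geq N$.
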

\begin{proof}
Similar to the proof of lemma \ref{lem: sommabilité}.
\end{proof}

\begin{prop} \label{prop: minorationValeursPropres}

Assume that condition \ref{cond: Ktheta} is satisfied.

For all $0 \leq \delta < \frac{1}{2}$, there exists
$C_{\delta} >0$ so that for all $|\epsilon| \leq \delta $, for all $\theta \in \Theta$, for all $n \in \NN^*$ and for all $x \in \left(S_X\right)^n$,
the eigenvalues of $R_{\theta}$ are larger than $C_{\delta}$.
\end{prop}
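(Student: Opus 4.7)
Set $u_i := v_i + \epsilon x_i$. Because $|v_i - v_j|_\infty \geq 1$ while $|\epsilon(x_i - x_j)|_\infty \leq 2\delta < 1$ whenever $i \neq j$, the observation points enjoy a uniform minimum separation $|u_i - u_j|_\infty \geq 1 - 2\delta > 0$. My plan is to lower-bound $\alpha^t R_\theta \alpha$ for unit vectors $\alpha \in \RR^n$ via the Fourier representation afforded by Condition \ref{cond: Ktheta}:
\[
\alpha^t R_\theta \alpha = \int_{\RR^d} \hat{K}_\theta(f) \left| \sum_{i=1}^n \alpha_i e^{\mathrm{i} f \cdot u_i} \right|^2 df.
\]
For any fixed $R > 0$, continuity and positivity of $\hat{K}_\theta$ on the compact set $\Theta \times [-R,R]^d$ yield a constant $c_R > 0$ with $\hat{K}_\theta(f) \geq c_R$ there, so it suffices to establish the non-uniform sampling inequality
\[
\int_{[-R,R]^d}\left|\sum_i \alpha_i e^{\mathrm{i}f \cdot u_i}\right|^2 df \geq c\,\|\alpha\|^2
\]
for some $R, c>0$ depending only on $d$ and $\delta$.

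To prove this, introduce $\hat\psi \in C^\infty_c(\RR^d)$ with $\hat\psi \geq 0$, $\hat\psi \not\equiv 0$, and $\mathrm{supp}\,\hat\psi \subset [-1,1]^d$; then $\psi$ is Schwartz, so $|\psi(t)| \leq C_\psi/(1+|t|_\infty)^{d+1}$ for some $C_\psi < + \infty$. Set $\tilde\psi(t) := R^d \psi(Rt)$, so that $\widehat{\tilde\psi}(f) = \hat\psi(f/R)$ is supported in $[-R,R]^d$, $\tilde\psi(0) = R^d \psi(0) > 0$, and $\sup_f \widehat{\tilde\psi}(f) = \sup_f \hat\psi(f)$. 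Plancherel's identity gives
\[
\int \widehat{\tilde\psi}(f)\,\left|\sum_i \alpha_i e^{\mathrm{i}f\cdot u_i}\right|^2 df = \sum_{i,j} \alpha_i\alpha_j\,\tilde\psi(u_i - u_j) = \tilde\psi(0)\|\alpha\|^2 + \sum_{i\neq j}\alpha_i \alpha_j\, \tilde\psi(u_i - u_j),
\]
and a Schur-type bound controls the off-diagonal part by $\|\alpha\|^2 \cdot \sup_i \sum_{j \neq i}|\tilde\psi(u_i - u_j)| = R^d\|\alpha\|^2 \cdot \sup_i \sum_{j \neq i}|\psi(R(u_i - u_j))|$. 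Lemma \ref{lem: sommabilité2} applied with $a = R$ to the majorant of $|\psi|$ bounds this supremum by a constant depending only on $d, \delta, C_\psi$ times $R^{-(d+1)}$, whereas $\tilde\psi(0) = R^d \psi(0)$. Choosing $R$ large enough (depending only on $d, \delta, \psi$) therefore makes the off-diagonal term below $\tilde\psi(0)\|\alpha\|^2/2$. Since $\widehat{\tilde\psi} \leq \sup_f \hat\psi$ and is supported in $[-R,R]^d$, one obtains the desired inequality with $c = R^d \psi(0)/(2\sup_f \hat\psi)$, and setting $C_\delta := c_R\, c$ concludes.

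\textbf{Main obstacle.} The subtle point is the uncertainty-principle balance in choosing $\psi$: $\hat\psi$ must be compactly supported (so that the pointwise lower bound $\hat K_\theta \geq c_R$ on a compact frequency set suffices) while $\psi$ itself must decay rapidly in space so that Lemma \ref{lem: sommabilité2} controls the off-diagonal sums uniformly over the random configuration of points. The dilation $R^d \psi(R\cdot)$ resolves this tension precisely because the hypothesis $\delta < 1/2$ guarantees the strictly positive minimum separation $1-2\delta$ of the $u_i$, which is what drives the $R^{-(d+1)}$ decay in Lemma \ref{lem: sommabilité2} and lets the off-diagonal correction be absorbed into the main $R^d\psi(0)$ term for large $R$.
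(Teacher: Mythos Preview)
Your proof is correct and follows essentially the same approach as the paper's. Both arguments introduce a smooth, compactly supported, nonnegative bump $\hat\psi$ in frequency space (the paper uses the explicit product $\prod_i \mathbf{1}_{|f_i|\le 1}\exp\{-1/(1-f_i^2)\}$), dilate it by a parameter ($R$ for you, $a$ in the paper), invoke Lemma~\ref{lem: sommabilité2} and a Gershgorin/Schur bound to show the rescaled kernel matrix $\big(\tilde\psi(u_i-u_j)\big)_{i,j}$ has eigenvalues bounded below by $\tilde\psi(0)/2$ once the dilation parameter is large enough, and then compare $\hat K_\theta$ to the bump on a compact frequency box using the continuity and positivity of $\hat K_\theta$ on $\Theta\times[-R,R]^d$; the only differences are presentational (you isolate an explicit ``non-uniform sampling inequality'' as an intermediate step and use a generic $C^\infty_c$ bump rather than the specific tensor-product one).
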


\begin{lem}  \label{lem: controle_valeurs_propres}

Assume that condition \ref{cond: Ktheta} is satisfied.

For all $ |\epsilon| < \frac{1}{2}$ and for all $K \in \NN$, there exists $C_{\epsilon,K}$ so that the eigenvalues of $R_{\theta}^{-1}$ and of
$\frac{\partial^q R_{\theta}}{\partial \theta_{i_1},...,\partial \theta_{i_q}}$, $0 \leq q \leq K$, $1 \leq i_1,...,i_q \leq p$,
are bounded by $C_{\epsilon,K}$, uniformly in $n \in \NN$, $x \in \left(S_X\right)^n$ and $\theta \in \Theta$.
\end{lem}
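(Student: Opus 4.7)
The plan is to treat the two assertions separately since they point in opposite directions: $R_\theta^{-1}$ needs a uniform upper bound on its eigenvalues, hence a uniform lower bound on the eigenvalues of $R_\theta$, while the derivative matrices need a uniform upper bound on their own eigenvalues. The first part is essentially a consequence of a previously stated result, and the second part is a routine Gershgorin-type estimate combined with the summability lemma \ref{lem: sommabilité}.

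First I would dispose of $R_\theta^{-1}$. Fix $|\epsilon|<1/2$ and set $\delta=|\epsilon|$. By proposition \ref{prop: minorationValeursPropres} there exists $C_\delta>0$ such that, uniformly in $n$, $x\in(S_X)^n$ and $\theta\in\Theta$, all eigenvalues of $R_\theta$ are at least $C_\delta$. Since $R_\theta$ is symmetric positive definite, the eigenvalues of $R_\theta^{-1}$ are the reciprocals, hence bounded above by $1/C_\delta$.

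Next I would control the derivative matrices. Fix $0\le q\le K$ and indices $i_1,\dots,i_q\in\{1,\dots,p\}$, and let $M$ be the $n\times n$ matrix with entries
\[
M_{ij}=\frac{\partial^q K_\theta}{\partial\theta_{i_1}\cdots\partial\theta_{i_q}}\bigl(v_i-v_j+\epsilon(X_i-X_j)\bigr).
\]
Because $M$ is symmetric, its operator norm is bounded by the induced $\infty$-norm $\max_i\sum_j |M_{ij}|$. By condition \ref{cond: Ktheta},
\[
|M_{ij}|\le\frac{C_{i_1,\dots,i_q}}{1+|v_i-v_j+\epsilon(X_i-X_j)|^{d+1}}\le\frac{C_{i_1,\dots,i_q}}{1+|v_i-v_j+\epsilon(X_i-X_j)|_\infty^{d+1}},
\]
using $|t|\ge|t|_\infty$. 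Applying lemma \ref{lem: sommabilité} to $f(t)=C_{i_1,\dots,i_q}/(1+|t|_\infty^{d+1})$ and adding the diagonal contribution $|M_{ii}|\le C_{i_1,\dots,i_q}$ gives a bound on $\sum_j|M_{ij}|$ that depends only on $d$ and $C_{i_1,\dots,i_q}$, and in particular is uniform in $n$, $x$ and $\theta$. Taking the maximum of these bounds over the finitely many choices of $q\in\{0,\dots,K\}$ and $(i_1,\dots,i_q)\in\{1,\dots,p\}^q$ yields a single constant $C_{\epsilon,K}$ that works for all the derivative matrices simultaneously.

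There is no real obstacle; the only minor point to be careful about is that the summability lemma is stated for the sup norm while condition \ref{cond: Ktheta} uses the Euclidean norm, which is resolved by the trivial inequality $|t|\ge|t|_\infty$. Combining the two bounds obtained above furnishes the constant $C_{\epsilon,K}$ required by the lemma.
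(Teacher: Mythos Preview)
Your proof is correct and matches the paper's approach essentially line for line: Proposition~\ref{prop: minorationValeursPropres} for the lower bound on the spectrum of $R_\theta$, and the decay bound \eqref{eq: controleCov} together with Lemma~\ref{lem: sommabilité} for the row-sum control on the derivative matrices. The paper phrases the second step via the Gershgorin circle theorem rather than the induced $\ell^\infty$ operator-norm bound, but for symmetric matrices these give the same estimate, so the arguments are equivalent.
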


\begin{proof}
 Using, proposition \ref{prop: minorationValeursPropres}, we control the eigenvalues of
$R_{\theta}^{-1}$ uniformly in $x$ and $\theta$.

With \eqref{eq: controleCov} and lemma \ref{lem: sommabilité}, and using Gershgorin circle theorem,
we control the eigenvalues of $\frac{\partial^q R_{\theta}}{\partial \theta_{i_1},...,\partial \theta_{i_q}}$.

\end{proof}

\begin{lem} \label{lem: equivalenceDiag}
For $M$ symmetric real non-negative matrix, $\inf_{i} \phi_i(\diag(M)) \geq \inf_i \phi_i(M)$
and $\sup_{i} \phi_i(\diag(M)) \leq \sup_i \phi_i(M)$.
Furthermore, if for two sequences of symmetric matrices $M_n$ and $N_n$, $M_n \sim N_n$, then $\diag\left(M_n\right) \sim \diag\left(N_n\right) $.
\end{lem}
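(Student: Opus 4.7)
The plan is to prove the two parts separately, each as a short argument based on Rayleigh quotients and elementary matrix norm inequalities.

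For the first assertion, I would use the Rayleigh quotient characterization of the extremal eigenvalues of a symmetric matrix. Since $M$ is symmetric,
\[
\phi_{\min}(M) = \min_{\|v\|=1} v^t M v \quad \text{and} \quad \phi_{\max}(M) = \max_{\|v\|=1} v^t M v.
\]
Taking $v = e_i$, the $i$-th canonical basis vector, gives $e_i^t M e_i = M_{i,i}$, hence $\phi_{\min}(M) \leq M_{i,i} \leq \phi_{\max}(M)$ for every $i$. Since $\diag(M)$ is diagonal, its eigenvalues are exactly the $M_{i,i}$, so $\inf_i \phi_i(\diag(M)) = \min_i M_{i,i}$ and $\sup_i \phi_i(\diag(M)) = \max_i M_{i,i}$, which yields both inequalities. (Positive semi-definiteness is not even needed for these bounds; the statement only requires symmetry and Rayleigh quotient reasoning.)

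For the second assertion, I need to check both components of the equivalence $\sim$, namely $|\diag(M_n) - \diag(N_n)| \to 0$ and uniform boundedness of $\|\diag(M_n)\|$ and $\|\diag(N_n)\|$. For the first,
\[
|\diag(M_n) - \diag(N_n)|^2 = \frac{1}{n} \sum_{i=1}^{n} (M_{i,i} - N_{i,i})^2 \leq \frac{1}{n} \sum_{i,j=1}^{n} (M_{i,j} - N_{i,j})^2 = |M_n - N_n|^2 \to 0,
\]
directly from the definition of $|\cdot|$. For the second, since $M_n$ is symmetric, the Rayleigh quotient argument above gives $|M_{i,i}| \leq \max_i |\phi_i(M_n)| = \|M_n\|$, so $\|\diag(M_n)\| = \max_i |M_{i,i}| \leq \|M_n\|$, and similarly for $N_n$. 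Both are bounded by hypothesis.

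I do not anticipate any serious obstacle: both parts reduce to the Rayleigh quotient bound $|e_i^t M e_i| \leq \|M\|$ together with the trivial inequality between the diagonal and full Frobenius-type seminorms.
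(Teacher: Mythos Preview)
Your proof is correct and follows essentially the same approach as the paper: the paper's proof uses $M_{i,i} = e_i^t M e_i$ to bound the diagonal entries between the extremal eigenvalues, and the inequality $|\diag(M)| \leq |M|$ for the second part. Your version is in fact more complete, since you explicitly verify the operator-norm boundedness of $\diag(M_n)$ and $\diag(N_n)$ required by the definition of $\sim$, whereas the paper leaves this implicit.
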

\begin{proof}
We use $ M_{i,i}  = e_i^t M e_i$, where $(e_i)_{i=1...n}$ is the standard basis of $\RR^n$. Hence $\inf_i \phi_i(M) \leq M_{i,i} \leq \sup_i \phi_i(M)$ for a symmetric real non-negative matrix $M$.
We also use $\left|\diag\left(M\right)\right| \leq |M| $.
\end{proof}

The next proposition gives a law of large numbers for the matrices that can be written using only matrix multiplications, the matrix
$R_{\theta}^{-1}$, the matrices $\frac{\partial^k}{\partial \theta_{i_1} ,..., \partial \theta_{i_k}} R_{\theta}$, for $i_1,...,i_k \in \{1,...,p \}$,
the $\diag$ operator applied to the symmetric products of matrices $R_{\theta}$,
$R_{\theta}^{-1}$ and $\frac{\partial^k}{\partial \theta_{i_1} ,..., \partial \theta_{i_k}} R_{\theta}$, and the matrix $\diag\left(R_{\theta}^{-1}\right)^{-1}$.
Examples of sums of these matrices are the matrices $M_{ML}^{i,j}$, $M_{CV,1}^{i,j}$ and $M_{CV,2}^{i,j}$ of propositions \ref{prop: normaliteML}
and \ref{prop: gradients_CV}.

\begin{prop} \label{prop: convergenceTrace}

Assume that condition \ref{cond: Ktheta} is satisfied.

Let $\theta \in \Theta$.
We denote the set of multi-indexes $S_p := \cup_{k \in \{0,1,2,3 \}} \left\{ 1,...,p \right\}^k$.
For $ I = \left(i_1,...,i_k\right) \in  S_p$, we denote $m\left(I\right) = k$.
Then, we denote for $I \in  S_p \cup \left\{ -1 \right\}$, 
\[
R_{\theta}^I := 
 \begin{cases} \frac{ \partial^{m\left(I\right)}  }{ \partial \theta_{i_1},...,\partial \theta_{i_{m\left(I\right)}} } R_{\theta} &\mbox{if} ~ ~ I \in S_p \\
R_{\theta}^{-1} &\mbox{if} ~ ~ I=-1 \end{cases}.
\]

We then denote
\begin{itemize}
 \item $M_{nd}^I = R_{\theta}^I$ ~ ~ \mbox{for $I \in S_{nd} := \left( S_p \cup \{-1\} \right)$}
 \item $M_{sd}^1 = \diag\left(R_{\theta}^{-1}\right)^{-1}$ 
 \item $M_{bd}^I = \diag\left( R_{\theta}^{I_1}...R_{\theta}^{I_{m\left(I\right)}} \right) $ ~ ~ \mbox{for $I \in S_{bd} := \cup_{k \in \NN^*} S_{nd}^k$}
\end{itemize}

We then define $\mathcal{M}_{\theta}$ as the set of sequences of random matrices
(defined on $(\Omega_X,\mathcal{F}_X,P_X)$), indexed by $n \in \NN^*$, dependent on $X$, which can be written
$M_{d_1}^{I_1}...M_{d_K}^{I_K}$
with $\{d_1,I_1\},...,\{d_K,I_K\} \in \left(\{nd\} \times S_{nd}\right) \cup \left(\{sd \} \times \{1\}\right) \cup \left(\{ bd \} \times S_{bd}\right) $,
and so that, for the matrices $M_{d_j}^{I_j}$, so that $d_j = bd$, the matrix $R_{\theta}^{\left(I_j\right)_1}...R_{\theta}^{\left(I_j\right)_{m\left(I_j\right)}}$
is symmetric.

Then, for every matrix $M_{d_1}^{I_1}...M_{d_K}^{I_K}$ of $\mathcal{M}_{\theta}$, the singular values of
$M_{d_1}^{I_1}...M_{d_K}^{I_K}$ are bounded uniformly in $\theta$, $n$ and $x \in \left(S_X\right)^n$.
Then, denoting $S_n := \frac{1}{n} {\rm{Tr}}\left( M_{d_1}^{I_1}...M_{d_K}^{I_K} \right)$,
there exists a deterministic limit $S$, which only depends on $\epsilon$, $\theta$ and $\left(d_1,I_1\right),...,\left(d_K,I_K\right)$,
so that $S_n \to S$ $P_X$-almost surely. Hence $S_n \to S$ in quadratic mean and $\var\left(S_n\right) \to 0$ as $n \to + \infty$.
\end{prop}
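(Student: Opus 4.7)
The plan is to prove the three conclusions (uniform singular value bound, $P_X$-a.s. convergence of $S_n$, and convergence in quadratic mean) in that order, using a truncation argument that exploits the off-diagonal decay of $R_\theta^{-1}$.

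First I would establish the uniform bound on singular values. By Lemma \ref{lem: controle_valeurs_propres}, the operator norms of $R_\theta^{-1}$ and of every derivative $\partial^q R_\theta/\partial\theta_{i_1}\cdots\partial\theta_{i_q}$ are bounded uniformly in $\theta$, $n$ and $x$. For the factor $M_{sd}^1 = \diag(R_\theta^{-1})^{-1}$, Lemma \ref{lem: equivalenceDiag} applied to the symmetric positive matrix $R_\theta^{-1}$ gives $\min_i (R_\theta^{-1})_{ii} \geq \inf_i \phi_i(R_\theta^{-1})$, and this lower bound is itself uniform by Proposition \ref{prop: minorationValeursPropres}. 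For a factor $M_{bd}^I = \diag(R_\theta^{I_1}\cdots R_\theta^{I_{m(I)}})$ with symmetric (but not necessarily non-negative) product $P$, one has $|P_{ii}| = |e_i^t P e_i| \leq \|P\|$, so $\|M_{bd}^I\| \leq \|R_\theta^{I_1}\|\cdots\|R_\theta^{I_{m(I)}}\|$ is again uniformly bounded. Submultiplicativity of the operator norm then yields the uniform singular value bound for $M := M_{d_1}^{I_1}\cdots M_{d_K}^{I_K}$.

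Second, I would set up a truncation that localizes the trace. Expanding,
\[
\mathrm{Tr}(M) = \sum_{i_0=1}^n \sum_{i_1,\ldots,i_{K-1}=1}^n (M_{d_1}^{I_1})_{i_0 i_1}\cdots(M_{d_K}^{I_K})_{i_{K-1}i_0},
\]
the entries of $R_\theta$ and its derivatives depend only on the two indices involved (through \eqref{eq: controleCov} and Lemma \ref{lem: sommabilité}), but those of $R_\theta^{-1}$ and of $\diag(R_\theta^{-1})^{-1}$ depend on the whole vector $X$. The key analytical input is a quantitative off-diagonal decay estimate: there exist constants $C<\infty$ and $\alpha>0$, independent of $n$, $x$ and $\theta$, such that
\[
|(R_\theta^{-1})_{ij}| \leq \frac{C}{1+|v_i-v_j|^{d+\alpha}}.
\]
This is a Jaffard-type inverse-closure statement for matrices with polynomially decaying entries; it follows from the uniform spectral lower bound of Proposition \ref{prop: minorationValeursPropres} combined with the decay \eqref{eq: controleCov} of $K_\theta$, via, for instance, a Neumann series expansion applied to $R_\theta/\|R_\theta\| - I$ or a direct application of Jaffard's theorem in the polynomial off-diagonal decay algebra. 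Using this, for each $T>0$ one can replace $R_\theta^{-1}$ by its restriction to pairs with $|v_i-v_j|_\infty \leq T$, introducing an error in $\frac{1}{n}\mathrm{Tr}(M)$ that is bounded (uniformly in $n$, $x$, $\theta$) by a quantity going to $0$ as $T\to\infty$, by the summability estimate of Lemma \ref{lem: sommabilité}.

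Third, I would combine truncation with a short-range dependence argument. After truncation at level $T$, each diagonal entry of the truncated product depends only on the $X_j$'s with $|v_j-v_{i_0}|_\infty \leq KT$, so $\frac{1}{n}\mathrm{Tr}(M^{(T)}) = \frac{1}{n}\sum_{i=1}^n Z_i^{(T)}$ with $Z_i^{(T)}$ a bounded function of finitely many i.i.d. $X_j$'s, and $Z_i^{(T)}$ and $Z_j^{(T)}$ are independent as soon as $|v_i-v_j|_\infty > 2KT$. Translation invariance of the deterministic grid together with the i.i.d.\ nature of $X$ gives $\mathbb{E}(Z_i^{(T)})=\mu^{(T)}$ for all $i$ away from the boundary, and the boundary contribution is $O(n^{(d-1)/d}/n)=o(1)$, so $\mathbb{E}(\frac{1}{n}\mathrm{Tr}(M^{(T)}))\to\mu^{(T)}$. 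The short-range dependence yields $\mathrm{Var}(\frac{1}{n}\sum_i Z_i^{(T)})=O(T^d/n)$. Letting $T\to\infty$ after $n\to\infty$ using the uniform truncation error, $\mu^{(T)}$ is Cauchy and converges to the desired deterministic limit $S$, giving convergence of $\mathbb{E}(S_n)$ and $\mathrm{Var}(S_n)\to 0$, i.e.\ convergence in quadratic mean. Almost sure convergence then follows by Borel--Cantelli along $n_k=k^2$ combined with a monotonicity/boundedness interpolation between consecutive $n_k$, using the uniform bound on $\|M\|$ from Step 1 to control the increments $|S_{n+1}-S_n|$.

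The main obstacle is the quantitative off-diagonal decay of $R_\theta^{-1}$ uniform in $n$, $x$ and $\theta$; once that is available, the remaining steps are a routine truncation, a variance computation for a field with finite-range dependence, and a standard Borel--Cantelli passage to almost-sure convergence.
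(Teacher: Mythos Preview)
Your singular-value bound (Step 1) is fine and matches the paper. The overall architecture you propose --- truncate, get finite-range dependence, apply a law of large numbers --- is also in the same spirit as the paper. But Step 3 contains a genuine gap that breaks the argument as written.

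\medskip
\noindent\textbf{The gap.} In Step 2 you truncate $R_\theta^{-1}$ by setting to zero its entries with $|v_i-v_j|_\infty>T$. The Jaffard-type decay you invoke does make this an $o(1)$ perturbation of the trace, uniformly in $n,x,\theta$. But in Step 3 you then assert that each diagonal entry $Z_{i_0}^{(T)}$ of the truncated product ``depends only on the $X_j$'s with $|v_j-v_{i_0}|_\infty\leq KT$'', and hence that $Z_i^{(T)}$ and $Z_j^{(T)}$ are independent once $|v_i-v_j|_\infty>2KT$. This is false: the surviving entries $(R_\theta^{-1})_{ij}$ with $|v_i-v_j|_\infty\leq T$ are still functions of the \emph{entire} matrix $R_\theta$, hence of all the $X_k$'s. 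Band-truncating the inverse controls the \emph{size} of far entries but does nothing to localize the \emph{dependence} of the near entries on $X$. Your variance bound $O(T^d/n)$ and your independence structure therefore have no justification, and the argument collapses at this point. (The same issue recurs in your interpolation step for almost-sure convergence: bounding $|\mathrm{Tr}(M_{n+1})-\mathrm{Tr}(M_n)|$ by $O(1)$ again needs a localization of $R_\theta^{-1}$ that you have not established.)

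\medskip
\noindent\textbf{How the paper handles this.} The paper truncates $R_\theta$ (not $R_\theta^{-1}$), and to a \emph{block-diagonal} structure rather than a band: it partitions $\{1,\dots,n\}$ into cubes of side $N_1$, zeroes out off-block entries of $R_\theta$ to obtain $\tilde R_\theta$, and shows $|\tilde R_\theta-R_\theta|\to 0$ uniformly in $x$. The crucial point is that block-diagonality is preserved by inversion, so $\tilde R_\theta^{-1}$ is block-diagonal and each block depends only on the $X_j$'s inside it. The resulting $\tilde M$ is then genuinely a sum over i.i.d.\ blocks, and the strong law of large numbers applies directly to $S_{N_1,n_2}=\frac{1}{n}\mathrm{Tr}(\tilde M)$. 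A Cauchy-sequence argument on the block means $\bar S_{N_1^d}$ then identifies the limit $S$.

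\medskip
\noindent\textbf{How you could repair your approach.} Either switch to the paper's block-diagonal truncation of $R_\theta$, or prove a genuine localization lemma: that $(R_\theta^{-1})_{ij}$ is uniformly close to the $(i,j)$-entry of the inverse of the submatrix of $R_\theta$ indexed by $\{k:|v_k-v_i|_\infty\leq T\}$. Such results exist (they are stronger than Jaffard's entry-decay and require additional work), and with one in hand your finite-range-dependence scheme would go through.
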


\begin{prop} \label{prop: convergence_forme_quadratique}

Assume that condition \ref{cond: Ktheta} is satisfied.

 Let $M \in \mathcal{M}_{\theta}$ (proposition \ref{prop: convergenceTrace}). Then, $\frac{1}{n} y^t M y$ converges to
$\Sigma := \lim_{n \to +\infty} \frac{1}{n} {\rm{Tr}}\left(M R_{\theta_0}\right)$, in the mean square sense (on the product space).
\end{prop}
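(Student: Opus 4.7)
The plan is to decompose $\frac{1}{n} y^t M y - \Sigma$ into a stochastic fluctuation and a trace remainder, and bound each in $L^2(\Omega,\mathcal{F},\mathbb{P})$. Specifically, write
\[
\frac{1}{n} y^t M y - \Sigma = A_n + B_n, \qquad A_n := \frac{1}{n}\left\{ y^t M y - \EE(y^t M y \mid X) \right\}, \qquad B_n := \frac{1}{n} {\rm{Tr}}(M R_{\theta_0}) - \Sigma,
\]
using that, conditionally on $X$, $y \sim \N(0,R_{\theta_0})$ so that $\EE(y^t M y \mid X) = {\rm{Tr}}(M R_{\theta_0})$. By the triangle inequality in $L^2$ it suffices to prove that $\EE(A_n^2)$ and $\EE(B_n^2)$ each tend to zero.

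For $A_n$, I would observe that $y^t M y = y^t \tilde M y$ with $\tilde M = (M+M^t)/2$ symmetric, and apply the standard formula for the variance of a quadratic form in a Gaussian vector conditionally on $X=x$:
\[
\var\left( y^t M y \mid X = x \right) = 2\,{\rm{Tr}}\left( \tilde M R_{\theta_0} \tilde M R_{\theta_0} \right).
\]
By Proposition~\ref{prop: convergenceTrace} and Lemma~\ref{lem: controle_valeurs_propres}, the singular values of $M$, $M^t$ and $R_{\theta_0}$ are bounded uniformly in $n$ and $x\in S_X^n$, so this conditional variance is $O(n)$ uniformly in $x$. Hence $\var\left( \tfrac{1}{n} y^t M y \mid X \right) \leq C/n$ $P_X$-a.s., and therefore $\EE(A_n^2) = \EE\left[ \var(\tfrac{1}{n} y^t M y \mid X) \right] \to 0$.

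For $B_n$, the quantity $\tfrac{1}{n}{\rm{Tr}}(M R_{\theta_0})$ is $\sigma(X)$-measurable, and by Proposition~\ref{prop: convergenceTrace} applied to the enlarged product $M R_{\theta_0}$ (appending $R_{\theta_0}$ corresponds to adjoining one more factor of the allowed form $R_{\theta_0}^{I}$ with $I$ the empty multi-index, preserving membership in the class on which the law of large numbers is proved), we have $\tfrac{1}{n}{\rm{Tr}}(M R_{\theta_0}) \to \Sigma$ in $P_X$-quadratic mean. Thus $\EE(B_n^2)\to 0$, and combining with the bound on $\EE(A_n^2)$ yields the claim.

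The main point requiring care is the applicability of Proposition~\ref{prop: convergenceTrace} when the factor $R_{\theta_0}$ is appended to $M \in \mathcal{M}_\theta$ with $\theta \neq \theta_0$: strictly speaking this product is a mixed-parameter expression, but the proof of Proposition~\ref{prop: convergenceTrace} rests only on uniform singular-value control (from Lemma~\ref{lem: controle_valeurs_propres}) together with the stationarity/ergodic structure of the randomly perturbed regular grid, both of which extend verbatim to products indexed by finitely many values of $\theta$. All other steps are routine.
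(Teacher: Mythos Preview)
Your proof is correct and is essentially the same as the paper's. The paper shows $\EE\bigl(\tfrac{1}{n}y^t M y\bigr)\to\Sigma$ and then decomposes $\var\bigl(\tfrac{1}{n}y^t M y\bigr)=\EE\bigl[\var(\cdot\mid X)\bigr]+\var\bigl[\EE(\cdot\mid X)\bigr]$, bounding each piece exactly as you do; since your $A_n$ and $B_n$ are orthogonal in $L^2$ (because $B_n$ is $\sigma(X)$-measurable and $\EE[A_n\mid X]=0$), your decomposition and the paper's are in fact identical term by term. Your remark about the mixed-parameter product $M R_{\theta_0}$ is valid but moot in the paper's applications, where $M\in\mathcal{M}_{\theta_0}$ throughout.
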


\begin{prop} \label{prop: lindeberg_presque_sur}

Assume that condition \ref{cond: Ktheta} is satisfied.

We recall $X \sim \mathcal{L}_X^{\otimes n}$ and $y_i = Y\left(i + \epsilon X_i\right)$, $1 \leq i \leq n$.
We consider symmetric matrix sequences $M_1,...,M_p$ and $N_1,...,N_p$
(defined on $(\Omega_X,\mathcal{F}_X,P_X)$), functions of $X$, so that the eigenvalues of $N_1,...,N_p$
are bounded uniformly in $n$ and $x \in \left(S_X\right)^n$,
${\rm{Tr}}\left( M_i + N_iR \right) = 0$ for $1 \leq i \leq p$ and there exists a $p \times p$ matrix $\Sigma$ so that
$ \frac{1}{n}{\rm{Tr}}\left(N_iRN_jR\right) \to \left(\Sigma\right)_{i,j}$
$P_X$-almost surely. 
Then the sequence of $p$-dimensional random vectors (defined on the product space)
$\left( \frac{1}{\sqrt{n}} \left\{ {\rm{Tr}}\left(M_i\right) + y^t N_i y\right\} \right)_{i=1...p}$
converges in distribution to a Gaussian random vector with mean zero and covariance matrix $2 \Sigma$.
\end{prop}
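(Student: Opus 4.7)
The strategy is a Cramér--Wold reduction combined with a CLT for quadratic forms in Gaussian variables, carried out conditionally on $X$ and then lifted by bounded convergence of characteristic functions.

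First, I would rewrite everything as a centred Gaussian quadratic form. Conditionally on $X$, $y \sim \N(0,R)$ with $R = R_{\theta_0}$, so writing $y = R^{1/2} z$ with $z \mid X \sim \N(0,I_n)$ and setting $A_i := R^{1/2} N_i R^{1/2}$, the centering hypothesis ${\rm{Tr}}(M_i + N_i R)=0$ gives
\[
\frac{1}{\sqrt{n}}\bigl\{{\rm{Tr}}(M_i) + y^t N_i y\bigr\}
= \frac{1}{\sqrt{n}}\bigl\{ z^t A_i z - {\rm{Tr}}(A_i) \bigr\}.
\]
Each $A_i$ is symmetric, and its eigenvalues are bounded uniformly in $n$ and $x$, because this holds for $R$ (by proposition \ref{prop: minorationValeursPropres} and lemma \ref{lem: controle_valeurs_propres}) and by hypothesis for $N_i$.

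Next I would apply the Cramér--Wold device: fix $\lambda \in \RR^p$, set $B_n := \sum_{i=1}^p \lambda_i A_i$, and reduce the claim to
\[
W_n := \frac{1}{\sqrt{n}}\bigl\{ z^t B_n z - {\rm{Tr}}(B_n) \bigr\}
\;\to_{\L}\; \N(0,\, 2\lambda^t \Sigma \lambda).
\]
Given $X$, diagonalise $B_n = P_n D_n P_n^t$ with $D_n = \diag(\mu_{1,n},\dots,\mu_{n,n})$; since $P_n^t z \mid X \sim \N(0,I_n)$, we get $W_n = \frac{1}{\sqrt{n}} \sum_{k=1}^n \mu_{k,n}(z_k^2 - 1)$, a sum of independent centred random variables with conditional variances $2\mu_{k,n}^2/n$. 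Two facts now suffice: (i)
$\tfrac{1}{n}\sum_k \mu_{k,n}^2 = \tfrac{1}{n}{\rm{Tr}}(B_n^2) = \sum_{i,j} \lambda_i \lambda_j \tfrac{1}{n}{\rm{Tr}}(N_i R N_j R)$
converges to $\lambda^t \Sigma \lambda$ $P_X$-a.s.\ by assumption; (ii) $\max_k |\mu_{k,n}|$ is bounded uniformly in $n$ and $x$, so $\max_k \mu_{k,n}^2 / n \to 0$. Together these give Lyapunov's condition (the terms $\mu_{k,n}(z_k^2-1)$ have uniformly bounded fourth moment, so the sum of their fourth moments is $O(1/n)$ relative to the variance squared), and therefore the conditional CLT $W_n \mid X \to_{\L} \N(0, 2\lambda^t \Sigma \lambda)$ holds for $P_X$-almost every $\omega_X$.

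Finally I would pass to the unconditional distribution via characteristic functions. For each fixed $t \in \RR$,
$\EE(e^{itW_n} \mid X) \to e^{-t^2 \lambda^t \Sigma \lambda}$
$P_X$-a.s., and since $|\EE(e^{itW_n}\mid X)| \leq 1$, bounded convergence yields
$\EE(e^{itW_n}) \to e^{-t^2 \lambda^t \Sigma \lambda}$;
i.e.\ $W_n \to_{\L} \N(0, 2\lambda^t \Sigma \lambda)$ on the product space, and Cramér--Wold closes the proof. The main obstacle is the coupling between the randomness of $X$ and the CLT: one needs the limiting variance to be deterministic and the Lyapunov bound uniform in $\omega_X$. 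Both are provided by the spectral bounds in proposition \ref{prop: minorationValeursPropres} and lemma \ref{lem: controle_valeurs_propres}, together with the a.s.\ convergence hypothesis on $\tfrac{1}{n}{\rm{Tr}}(N_i R N_j R)$; without these uniform bounds one would have to treat a genuinely random Lindeberg condition, which is far less routine.
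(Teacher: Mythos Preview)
Your proposal is correct and follows essentially the same route as the paper's proof: Cram\'er--Wold reduction, conditioning on $X$, diagonalisation of $\sum_k \lambda_k R^{1/2} N_k R^{1/2}$ to express the quantity as a weighted sum of independent $\chi^2_1-1$ variables, a triangular-array CLT (you invoke Lyapunov with fourth moments, the paper names Lindeberg--Feller; the uniform eigenvalue bound makes these interchangeable here), and finally dominated convergence of conditional characteristic functions to pass to the product space. There is no substantive difference between the two arguments.
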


\begin{prop} \label{prop: normaliteEstimateur}
We recall $X \sim \mathcal{L}_X^{\otimes n}$ and $y_i = Y\left(i + \epsilon X_i\right)$, $1 \leq i \leq n$.
We consider a consistent estimator $\hat{\theta} \in \RR^p$ so that $ \mathbb{P} \left(c\left( \hat{\theta} \right) = 0\right) \to 1$,
for a function $c: \Theta \to \RR^p$, dependent on $X$ and $Y$, and twice differentiable in $\theta$.
We assume that $\sqrt{n} c\left(\theta_0\right) \to_{\L}
\N\left(0,\Sigma_1\right)$, for a $p \times p$ matrix $\Sigma_1$ and that the matrix $\frac{ \partial c\left(\theta_0\right) }{ \partial \theta} $
converges in probability to a $p \times p$ positive matrix $\Sigma_2$
(convergences are defined on the product space).
Finally we assume
that $\sup_{\tilde{\theta},i,j,k} \left| \frac{\partial^2}{\partial \theta_i \partial \theta_j} c_k\left(\tilde{\theta}\right) \right|$ is bounded in probability.

Then
\[
 \sqrt{n} \left( \hat{\theta} - \theta_0 \right) \to_{\L} \N\left(0, \Sigma_2^{-1} \Sigma_1 \Sigma_2^{-1} \right).
\]
 
\end{prop}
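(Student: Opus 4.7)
The plan is to apply the standard $M$-estimator linearization argument. Work on the event $E_n := \{ c(\hat{\theta}) = 0 \}$, whose probability tends to one by assumption. On $E_n$, I would apply a coordinatewise integral form of Taylor's theorem to $c$ between $\theta_0$ and $\hat{\theta}$ to write
\[
 0 \;=\; c(\hat{\theta}) \;=\; c(\theta_0) \;+\; H_n\,(\hat{\theta} - \theta_0),
\qquad H_n \;:=\; \int_0^1 \frac{\partial c}{\partial \theta}\bigl(\theta_0 + t(\hat{\theta} - \theta_0)\bigr)\,dt.
\]

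The first key step is to show that $H_n \to_p \Sigma_2$. By the mean value theorem applied to each entry of $\partial c/\partial \theta$,
\[
 \Bigl\| H_n - \frac{\partial c(\theta_0)}{\partial \theta} \Bigr\|
\;\leq\; p^{3/2} \sup_{\tilde{\theta},i,j,k} \Bigl| \frac{\partial^2 c_k(\tilde{\theta})}{\partial \theta_i \partial \theta_j} \Bigr| \cdot |\hat{\theta} - \theta_0|
\;=\; O_p(1)\cdot o_p(1) \;=\; o_p(1),
\]
using the assumed boundedness in probability of the second derivatives together with the consistency of $\hat{\theta}$. Combined with $\partial c(\theta_0)/\partial \theta \to_p \Sigma_2$, this gives $H_n \to_p \Sigma_2$. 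Since $\Sigma_2$ is positive (hence invertible), by the continuous mapping theorem $H_n$ is invertible with probability tending to one and $H_n^{-1} \to_p \Sigma_2^{-1}$.

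The second step is to conclude by Slutsky. On $E_n$ intersected with the event that $H_n$ is invertible (still of probability tending to one), we have
\[
 \sqrt{n}\,(\hat{\theta} - \theta_0) \;=\; - H_n^{-1}\, \sqrt{n}\, c(\theta_0).
\]
Combining $H_n^{-1} \to_p \Sigma_2^{-1}$ with $\sqrt{n}\,c(\theta_0) \to_{\mathcal{L}} \mathcal{N}(0,\Sigma_1)$ via Slutsky's lemma yields the announced limit $\mathcal{N}(0,\Sigma_2^{-1} \Sigma_1 \Sigma_2^{-1})$.

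There is no real obstacle; the only subtlety is bookkeeping, namely that every equality above only holds on an event of probability tending to one, which is harmless for convergence in distribution. The argument uses only consistency of $\hat{\theta}$, the tightness of the second derivatives (to control the remainder in the Taylor expansion), the convergence in probability of the Jacobian, and the central limit theorem assumed for $\sqrt{n}\,c(\theta_0)$.
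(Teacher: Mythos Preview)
Your proposal is correct and follows essentially the same route as the paper: a Taylor linearization of $c$ at $\theta_0$, with the remainder controlled by the uniform-in-$\theta$ bound on the second derivatives together with the consistency $\hat\theta-\theta_0=o_p(1)$, and then Slutsky's lemma. The only cosmetic difference is that you use the integral-form first-order expansion and show $H_n\to_p\Sigma_2$, while the paper writes a second-order expansion with remainder $r=O_p(|\hat\theta-\theta_0|^2)$ and rewrites the Jacobian as $\partial c(\theta_0)/\partial\theta + o_p(1)$; both lead to the same inversion and the same conclusion.
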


Proposition \ref{prop: normaliteEstimateur} can be proved using standard $M$-estimator techniques. In subsection \ref{subsection: proof_technical_results}
we give a short proof for consistency.

\subsection{Proof of the technical results} \label{subsection: proof_technical_results}

\paragraph{Proof of lemma \ref{lem: sommabilité}}

\begin{proof}
 \begin{eqnarray*}
\sum_{j \in \NN, j \neq i} f\left\{ v_i - v_j + \epsilon\left(x_i-x_j\right) \right\} & \leq & \sum_{v \in \ZZ^d, v \neq 0} \sup_{\delta_v \in [-1,1]^d} f\left( v+\delta_v \right) \\
& = & \sum_{j \in \NN} \sum_{ v \in \{-j-1,...,j+1\}^d \backslash \{-j,...,j\}^d } \sup_{\delta_v \in [-1,1]^d}  f\left(v + \delta_v\right).
\end{eqnarray*}
For $v \in \{-j-1,...,j+1\}^d \backslash \{-j,...,j\}^d$, $|v + \delta_v|_{\infty} \geq j$. The cardinality of the set
$\{-j-1,...,j+1\}^d \backslash \{-j,...,j\}^d$ is
\[
 \left(2j+3\right)^d - \left(2j+1\right)^d  = \int_{2j+1}^{2j+3} d.t^{d-1} dt \leq 2d \left(2j+3\right)^{d-1} = 2^d d \left(j + \frac{3}{2}\right)^{d-1}.
\]
Hence
\[
 \sum_{j \in \NN} f\left\{ v_i - v_j + \epsilon\left(x_i-x_j\right) \right\} \leq  \sum_{j \in \NN} 2^d d \left(j+\frac{3}{2}\right)^{d-1} \frac{1}{1+j^{d+1}}.
\]
\end{proof}

\paragraph{Proof of proposition \ref{prop: minorationValeursPropres}}
\begin{proof}

Let $h: \RR^d \to \RR$ so that $\hat{h}(f) = \prod_{i=1}^d \hat{h}_i(f_i)$, with
$\hat{h}_i(f_i) = 
\mathbf{1}_{f_i^2 \in [0,1]} \exp{\left( - \frac{1}{ 1 - f_i^2   } \right)}$.
For $1 \leq i \leq n$, $\hat{h}_i: \RR \to \RR$ is $C^{\infty}$, with compact support, so there exists $C>0$ so that $|h_i(t_i)| \leq \frac{C^{\frac{1}{d}}}{1+ |t_i|^{d+1}}$. Now, since the inverse Fourier transform of $\prod_{i=1}^d \hat{h}_i(f_i)$ is
$\prod_{i=1}^d h_i(t_i)$, we have
\[
|h(t)| \leq  C \prod_{i=1}^d \frac{1}{1+ |t_i|^{d+1}}
\leq   \frac{C}{1+ |t|_{\infty}^{d+1}}.
\]

Hence, from lemma \ref{lem: sommabilité2}, for all $i \in \NN$ and $a >0$,
\begin{equation} \label{eq: in_proof_minoration_eigen_values_1}
 \sum_{j \in \NN, j \neq i} \left| h\left[ a\left\{ v_i-v_j + \epsilon \left(x_i - x_j\right) \right\} \right] \right| \leq C 2^d d \sum_{j \in \NN} \frac{\left(j+\frac{3}{2} \right)^{d-1}}{1+a^{d+1} \left(j+1 - 2 \delta\right)^{d+1}}.
\end{equation}
The right-hand term in \eqref{eq: in_proof_minoration_eigen_values_1} goes to zero
when $a \to + \infty$. Also, $h(0)$ is positive,
because $\hat{h}$ is non-negative and is not almost
surely zero on $\RR^d$ with respect to the Lebesgue measure. Thus, there exists $ 0 < a < \infty$ so that for all $i \in \NN$,
\begin{equation} \label{eq: in_proof_minoration_eigen_values_2} 
 \sum_{j \in \NN, j \neq i} \left| h\left[ a\left\{ v_i-v_j + \epsilon \left(x_i - x_j\right) \right\} \right] \right| \leq  \frac{1}{2} h\left(0\right).
\end{equation}

Using Gershgorin circle theorem, for any $n \in \NN^*$, $x_1,...,x_n \in S_X$, the eigenvalues of
the symmetric matrices $\left( h\left[ a\left\{ v_i-v_j + \epsilon \left(x_i - x_j\right) \right\} \right] \right)_{1 \leq i,j \leq n}$
belong to the balls with center $h(0)$
and radius 
$\sum_{1 \leq j \leq n, j \neq i} \left| h\left[ a\left\{ v_i-v_j + \epsilon \left(x_i - x_j\right) \right\} \right] \right|$. Thus, because of \eqref{eq: in_proof_minoration_eigen_values_2}, these eigenvalues belong to the segment $[ h(0) - \frac{1}{2} h(0) , h(0) + \frac{1}{2} h(0)]$ and are larger than $\frac{1}{2} h(0)$. 

Hence, for all $n$, $t_1,...,t_n \in \RR$, $x_1,...,x_n \in S_X$,
\begin{eqnarray*}
 \frac{1}{2} h\left(0\right) \sum_{i=1}^n t_i^2 & \leq & \sum_{i,j=1}^n t_i  t_j h\left[ a  \left\{ v_i -v_j + \epsilon\left( x_i - x_j\right) \right\} \right]  \\
& = & \sum_{i,j=1}^n t_i  t_j \frac{1}{a^d} \int_{\RR^d} \hat{h}\left(\frac{f}{a}\right) e^{ \mathrm{i} f \cdot \left\{ v_i -v_j + \epsilon \left( x_i - x_j \right) \right\} } df  \\
& = &  \frac{1}{a^d} \int_{\RR^d} \hat{h}\left(\frac{f}{a}\right) \left| \sum_{i=1}^n t_i e^{ \mathrm{i} f \cdot \left(v_i + \epsilon x_i\right) } \right|^2 df.
\end{eqnarray*}
Hence, as $\hat{K}_{\theta}\left(f\right): (\Theta \times \RR^d) \to \RR$ is continuous and positive, using a compacity argument, there exists $C_2 > 0$ so that for all $\theta \in \Theta$, $f \in [-a,a]^d$,
$ \hat{K}_{\theta}\left(f\right) \geq C_2 \hat{h}\left(\frac{f}{a}\right)  $. Hence,
\begin{eqnarray*}
 \frac{1}{2} h\left(0\right) \sum_{i=1}^n t_i^2 & \leq & \frac{1}{a^d C_2}  \int_{\RR^d} \hat{K}_{\theta} \left(f\right) \left| \sum_{i=1}^n t_i e^{ \mathrm{i} f \cdot \left( v_i + \epsilon x_i\right) } \right|^2 df, \\
& = & \frac{1}{a^d C_2} \sum_{i,j=1}^n t_i  t_j K_{\theta}\left\{ v_i-v_j + \epsilon \left(x_i - x_j\right) \right\}.
\end{eqnarray*}

\end{proof}

\paragraph{Proof of proposition \ref{prop: convergenceTrace}}
\begin{proof}
Let $M_{d_1}^{I_1}...M_{d_K}^{I_K} \in \mathcal{M}_{\theta}$ be fixed in the proof.
 
The eigenvalues of $R_{\theta}^I$, $I \in S_{nd}$, are bounded uniformly with respect to $n$, $\theta$ and $x$ (lemma \ref{lem: controle_valeurs_propres}).
Then, using lemma Appendix D.5 on $R_{\theta}$
and $R_{\theta}^{-1}$ and using lemma \ref{lem: equivalenceDiag}, we show that the eigenvalues of $\diag\left(R_{\theta}^{-1}\right)^{-1}$
are bounded uniformly in $x$, $n$ and $\theta$.
Then, for $M_{bd}^I = \diag\left( R_{\theta}^{I_1}...R_{\theta}^{I_{m\left(I\right)}} \right)$, the eigenvalues of
$ R_{\theta}^{I_1}...R_{\theta}^{I_{m\left(I\right)}} $ are bounded by the product of the maximum eigenvalues of  $R_{\theta}^{I_1},...,R_{\theta}^{I_{m\left(I\right)}} $.
As the number of term in this product is fixed and independent of $n$, the resulting bound is finite
and independent of $n$.
Hence we use lemma \ref{lem: equivalenceDiag} to show that the eigenvalues of $M_{bd}^I$ are bounded uniformly in $n$, $\theta$ and $x$.
Finally we use $||A_1...A_K|| \leq ||A_1||...||A_K||$ to show that $||M_{d_1}^{I_1}...M_{d_K}^{I_K}||$ is bounded uniformly in
$n$, $\theta$ and $x$.

We decompose $n$ into $n = N_1^d n_2 + r$ with $N_1,n_2,r \in \NN$ and $r < N_1^d$. We define $C\left(v_i\right)$ as the unique $u \in \NN^d$ so that
$v_i \in E_u :=  \prod_{k=1}^d\{ N_1 u_k +1 ,...,N_1 \left(u_k+1\right) \}$. 

We then define the sequence of matrices
$\tilde{R_{\theta}}$ by
$\left(\tilde{R_{\theta}}\right)_{i,j} = \left(R_{\theta}\right)_{i,j} \mathbf{1}_{ C\left(v_i\right) = C\left(v_j\right) }$.
We denote $\tilde{M}_{d_1}^{I_1}...\tilde{M}_{d_K}^{I_K}$ the matrix built by replacing $R_{\theta}$ by $\tilde{R}_{\theta}$
in the expression of $M_{d_1}^{I_1}...M_{d_K}^{I_K}$ (we also make the substitution for the inverse and the partial derivatives). 

\begin{lem}  \label{lem: Mtilde}
 $\left| \tilde{M}_{d_1}^{I_1}...\tilde{M}_{d_K}^{I_K} - M_{d_1}^{I_1}...M_{d_K}^{I_K} \right|^2 \to 0$, uniformly in $x \in \left(S_X\right)^n$, when $N_1 , n_2 \to \infty$.
\end{lem}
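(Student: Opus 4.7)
The plan is to first establish the approximation $|R_{\theta}^{I} - \tilde{R}_{\theta}^{I}|^2 \to 0$ (uniformly in $x$) for each elementary matrix $R_{\theta}^{I}$, $I \in S_{nd}$, then transfer this to $\diag(R_{\theta}^{-1})^{-1}$ and to the $M_{bd}^{I}$, and finally assemble the full claim by a telescoping product bound. Throughout, the key point is that every matrix appearing in the expression of $M_{d_1}^{I_1}\cdots M_{d_K}^{I_K}$ and of $\tilde{M}_{d_1}^{I_1}\cdots \tilde{M}_{d_K}^{I_K}$ has operator norm bounded uniformly in $n$, $x$, $\theta$ (for the untilded matrices this is exactly what was shown at the beginning of the proof of proposition \ref{prop: convergenceTrace}; for the tilded ones it follows from Cauchy interlacing applied to the principal blocks of $R_{\theta}$, together with proposition \ref{prop: minorationValeursPropres} and lemma \ref{lem: controle_valeurs_propres}).

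The main step is showing $|R_{\theta} - \tilde{R}_{\theta}|^2 \to 0$ uniformly in $x$ (and analogously for each partial derivative $\partial^{q} R_{\theta}/\partial\theta_{i_1}\cdots\partial\theta_{i_q}$, which satisfies the same polynomial decay by condition \ref{cond: Ktheta}). Fix $T \in \NN^*$ with $T \leq N_1/2$, and for each block $E_u$ let $\partial_T E_u$ denote the set of $v_i \in E_u$ within $\ell^{\infty}$-distance $T$ of $E_u^{c}$; there are at most $O(N_1^{d-1} T)$ such points per block. By definition of $\tilde{R}_{\theta}$,
\[
|R_{\theta} - \tilde{R}_{\theta}|^2
= \frac{1}{n} \sum_{u} \sum_{v_i \in E_u} \sum_{v_j \notin E_u} K_{\theta}\bigl(v_i - v_j + \epsilon(x_i - x_j)\bigr)^2.
\]
For $v_i \in E_u \setminus \partial_T E_u$ every $v_j \notin E_u$ satisfies $|v_i - v_j|_{\infty} \geq T+1-2\epsilon$, so lemma \ref{lem: sommabilité3} applied to $K_{\theta}^2$ (which decays at rate $1/(1+|t|^{d+1})^2$ by \eqref{eq: controleCov}) bounds the inner sum by $C/T^{d+2}$, uniformly in $n$, $x$, $\theta$. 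For $v_i \in \partial_T E_u$ we use lemma \ref{lem: sommabilité} to bound the inner sum by a finite constant $C'$. Counting gives $|R_{\theta} - \tilde{R}_{\theta}|^2 \leq C/T^{d+2} + C' \cdot O(T/N_1)$, which tends to zero as $N_1 \to \infty$ by taking, say, $T = \lfloor N_1^{1/2} \rfloor$. The same argument, with identical decay estimates from \eqref{eq: controleCov}, yields the corresponding conclusion for each partial derivative.

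The second step passes from $R_{\theta}$ to $R_{\theta}^{-1}$. Using the identity $R_{\theta}^{-1} - \tilde{R}_{\theta}^{-1} = R_{\theta}^{-1}(\tilde{R}_{\theta} - R_{\theta})\tilde{R}_{\theta}^{-1}$ and the submultiplicativity $|AB|^2 \leq \|A\|^2 |B|^2$ together with the uniform bound $\|R_{\theta}^{-1}\|, \|\tilde{R}_{\theta}^{-1}\| \leq C_{\epsilon}$, one gets $|R_{\theta}^{-1} - \tilde{R}_{\theta}^{-1}|^2 \to 0$. For $\diag(R_{\theta}^{-1})^{-1}$, one writes
\[
\diag(R_{\theta}^{-1})^{-1} - \diag(\tilde{R}_{\theta}^{-1})^{-1} = \diag(R_{\theta}^{-1})^{-1}\bigl[\diag(\tilde{R}_{\theta}^{-1}) - \diag(R_{\theta}^{-1})\bigr]\diag(\tilde{R}_{\theta}^{-1})^{-1}
\]
and uses lemma \ref{lem: equivalenceDiag} (to dominate the diagonal terms by the full matrices) together with the previous step. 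An entirely parallel argument handles each $M_{bd}^{I}$, since $\diag$ contracts $|\cdot|^2$ and products of a bounded number of uniformly operator-bounded matrices combine cleanly.

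Finally, the telescoping expansion
\[
M_{d_1}^{I_1}\cdots M_{d_K}^{I_K} - \tilde{M}_{d_1}^{I_1}\cdots \tilde{M}_{d_K}^{I_K}
= \sum_{\ell=1}^{K} M_{d_1}^{I_1}\cdots M_{d_{\ell-1}}^{I_{\ell-1}} \bigl(M_{d_\ell}^{I_\ell} - \tilde{M}_{d_\ell}^{I_\ell}\bigr) \tilde{M}_{d_{\ell+1}}^{I_{\ell+1}}\cdots \tilde{M}_{d_K}^{I_K},
\]
combined with $|ABC|^2 \leq \|A\|^2 \|C\|^2 |B|^2$ and the uniform operator-norm bounds on all factors, reduces the claim to $|M_{d_\ell}^{I_\ell} - \tilde{M}_{d_\ell}^{I_\ell}|^2 \to 0$ for each $\ell$, which is exactly what the previous two paragraphs establish. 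The hardest step is the first one: the bookkeeping that separates interior contributions (controlled by polynomial decay) from boundary contributions (controlled by counting), which is where the scaling $N_1, n_2 \to \infty$ is used through the ratio $T/N_1 \to 0$.
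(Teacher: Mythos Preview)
Your proposal is correct and follows essentially the same approach as the paper: both establish $|R_{\theta}-\tilde{R}_{\theta}|^2\to 0$ by an interior/boundary-layer decomposition (the paper fixes a tail threshold $N$ and counts boundary points exactly as you do), then transfer the conclusion to inverses, diagonals, and products. The only packaging difference is that the paper cites \cite{TCMR} theorem~1 for the passage to inverses and products, whereas you write out the resolvent identity and the telescoping expansion explicitly; your use of Cauchy interlacing on the principal blocks to get uniform bounds on $\|\tilde{R}_{\theta}^{-1}\|$ is a clean way to justify what the cited theorem also needs.
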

\begin{proof}

Let $\delta >0$ and $N$ so that $T_N := C_0^2 2^{2d} d^2 \sum_{j \in \NN , j \geq N-1} \frac{ \left(j+\frac{3}{2}\right)^{2\left(d-1\right)}}{\left(1+j^{d+1}\right)^2} \leq \delta$. Then:
\begin{eqnarray*}
 \left| \tilde{R_{\theta}} - R_{\theta} \right|^2 & = & \frac{1}{n} \sum_{i,j=1}^n \left\{ \left(R_{\theta}\right)_{i,j} - \left(\tilde{R_{\theta}}\right)_{i,j} \right\}^2, \\
& = & \frac{1}{n} \sum_{ 1 \leq i,j \leq n, C\left(v_i\right) \neq C\left(v_j\right) } K_{\theta}^2\left\{ v_i-v_j + \epsilon\left(x_i-x_j\right) \right\},  \\
& \leq & \frac{1}{n} \sum_{i=1}^n \sum_{ j \in \NN^*,  C\left(v_i\right) \neq C\left(v_j\right) } K_{\theta}^2\left\{ v_i-v_j + \epsilon\left(x_i-x_j\right) \right\}. \\
\end{eqnarray*}
There exists a unique $a$ so that $\left(aN_1\right)^d \leq n < \left\{\left(a+1\right)N_1\right\}^d$. Among the $n$ observation points, $\left(a N_1\right)^d$ are in the $E_u$, for $u \in \{1,...,a\}^d$.
The number of remaining points is less than $d N_1 \left\{ \left(a+1\right)N_1 \right\}^{d-1}$. Therefore, using \eqref{eq: controleCov},

\begin{eqnarray*}
 \left| \tilde{R_{\theta}} - R_{\theta} \right|^2 & \leq &
\frac{1}{n} \sum_{u \in \{1,...,a\}^d} \sum_{1\leq i \leq n,  v_i \in E_u} \sum_{j \in \NN^*, C\left(v_j\right) \neq C\left(v_i\right)} K_{\theta}^2\left\{ v_i-v_j + \epsilon\left(x_i-x_j\right) \right\}
+ \frac{1}{n} d N_1 \left\{ \left(a+1\right)N_1 \right\}^{d-1} T_0, \\
& = & \frac{1}{n} \sum_{u \in \{1,...,a\}^d} \sum_{1\leq i \leq n, v_i \in E_u} \sum_{j \in \NN^*, C\left(v_j\right) \neq C\left(v_i\right)} K_{\theta}^2\left\{ v_i-v_j + \epsilon\left(x_i-x_j\right) \right\} + o\left(1\right).
\end{eqnarray*}
Then, for fixed $u$, the
cardinality of the set of the integers $i \in \{1,...,n\}$, so that $v_i \in E_u$ and there exists $j \in \NN^*$ so that $C(v_i) \neq C(v_j)$ and $|v_i - v_j|_{\infty} \leq N$
is $ N_1^d - \left(N_1 - 2N\right)^d $ and is less than $2 N d N_1^{d-1}$.
Hence, using \eqref{eq: controleCov}, lemmas \ref{lem: sommabilité} and \ref{lem: sommabilité3}, 
\begin{eqnarray*}
 \left| \tilde{R_{\theta}} - R_{\theta} \right|^2 & \leq & 
\frac{1}{n} \sum_{u \in \{1,...,a\}^d}  \left( 2 N d N_1^{d-1} T_0 + N_1^d T_N\right)   + o\left(1\right) \\
& \leq & \frac{1}{ a^d N_1^d } a^d \left\{ \left( 2 N d N_1^{d-1} T_0 + N_1^d T_N\right) \right\} + o\left(1\right).
\end{eqnarray*}

This last term is smaller than $2 \delta$ for $N_1$ and $n_2$ large enough. Hence we showed
$\left| \tilde{R_{\theta}} - R_{\theta} \right| \to 0$ uniformly in $x$, when $N_1 , n_2 \to \infty$. We can show the same result for
$\frac{\partial^k R_{\theta}}{\partial \theta_{i_1} ,..., \partial \theta_{i_k}} $ and
$\frac{\partial^k \tilde{R}_{\theta}}{\partial \theta_{i_1} ,..., \partial \theta_{i_k}} $.
Finally we use \cite{TCMR} theorem 1 to show that
$\left| \tilde{R}_{\theta}^{-1} - R_{\theta}^{-1} \right| \to 0$ uniformly in $x$, when $N_1 , n_2 \to \infty$.
Hence, using \cite{TCMR}, theorem 1 and lemma \ref{lem: equivalenceDiag},
$|\tilde{M}_{d}^I - M_{d}^I|$ converges to $0$ uniformly in $x$ when $N_1 , n_2 \to \infty$, for $d \in \{nd,sd,bd \}$
and $I \in S_{nd} \cup \{ 1 \} \cup S_{bd}$.
We conclude using \cite{TCMR}, theorem 1.

\end{proof}

We denote, for every $N_1, n_2$ and $r$, with $0 \leq r < N_1^d$, $n = N_1^d n_2 + r$ and
$S_{N_1,n_2} := \frac{1}{n} {\rm{Tr}}\left( \tilde{M}_{d_1}^{I_1}...\tilde{M}_{d_K}^{I_K} \right)$,
which is a sequence of real random variables defined on $(\Omega_X,\mathcal{F}_X,P_X)$ and indexed by $N_1$, $n_2$ and $r$. 
Using \cite{TCMR}, corollary 1 and lemma \ref{lem: Mtilde}, $|S_n - S_{N_1,n_2}| \to 0$ uniformly in $x$ when $N_1,n_2 \to \infty$
(uniformly in $r$). 
As the matrices in the expression of $S_{N_1,n_2}$ are block diagonal,
we can write $S_{N_1,n_2} = \frac{1}{n_2} \sum_{l=1}^{n_2} S_{N_1^d}^l + o\left( \frac{1}{n_2} \right)$, where the $S_{N_1^d}^l$ are
$iid$ random variables defined on $(\Omega_X,\mathcal{F}_X,P_X)$ with the distribution of $S_{N_1^d}$. 
We denote $\bar{S}_{N_1^d} := E_X \left( S_{N_1^d} \right)$.
Then, using the strong law of large numbers, for fixed $N_1$, $S_{N_1,n_2} \to \bar{S}_{N_1^d}$ $P_X$-almost surely when
$n_2 \to \infty$ (uniformly in $r$).

For every $N_1, p_{N_1},n_2 \in \NN^*$, there exist two unique $n_2',r \in \NN$ so that
$0 \leq r < N_1^d$ and
$\left(N_1 + p_{N_1}\right)^d n_2 = N_1^d n_2' + r$. 
Then we have
\begin{eqnarray} \label{eq: ABCDE}
 | \bar{S}_{\left(N_1\right)^d} - \bar{S}_{\left(N_1 + p_{N_1}\right)^d} | & \leq & | \bar{S}_{\left(N_1\right)^d} - S_{N_1,n_2'} | +  | S_{N_1,n_2'} - S_{N_1^d n_2'+r} | \\
&  & + | S_{N_1^d n_2'+r} - S_{ \left(N_1 + p_{N_1} \right)^d n_2  } | + | S_{ \left(N_1 + p_{N_1} \right)^d n_2  } - S_{ N_1 + p_{N_1} , n_2  } | 
 +  | S_{ N_1 + p_{N_1} , n_2  } - \bar{S}_{\left(N_1 + p_{N_1}\right)^d} |  \nonumber \\ 
& = & A + B + C + D + E. \nonumber
\end{eqnarray}
Because $n_2'$ and $r$ depend on $N_1$, $p_{N_1}$ and $n_2$, $A$, $B$, $C$, $D$ and $E$ are sequences of random variables
defined on $(\Omega_X,\mathcal{F}_X,P_X)$ and indexed by $N_1$, $p_{N_1}$ and $n_2$.
We have seen that there exists $\tilde{\Omega}_X \subset \Omega_X$, with $P_X(\tilde{\Omega}_X) = 1$ so that for $\omega_X \in \tilde{\Omega}_X$,
when $N_1,n_2 \to + \infty$, we also have $N_1+p_{N_1},n_2' \to + \infty$, and so $B$ and $D$ converge to zero.

Now, for every $N_1 \in \NN^*$, let $\Omega_{X,N_1}$ be so that $P_X( \Omega_{X,N_1} )=1$ and for all
$\omega_X \in \Omega_{X,N_1}$, $S_{N_1,n_2} \to_{n_2 \to +\infty} \bar{S}_{N_1^d}$.
Let $\tilde{\tilde{\Omega}}_X = \cap_{N_1 \in \NN^*} \Omega_{X,N_1}$. Then $P_X(\tilde{\tilde{\Omega}}_X) = 1$ and for all
$\omega_X \in \tilde{\tilde{\Omega}}_X$, for all $N_1 \in \NN^*$, $S_{N_1,n_2} \to_{n_2 \to + \infty} \bar{S}_{N_1^d}$.

We will now show that the $N_1$-indexed sequence $\bar{S}_{N_1^d}$ is a Cauchy sequence.
Let $\delta >0$. 
$P_X( \tilde{\tilde{\Omega}} \cap \tilde{\Omega} ) = 1$ so this set is non-empty. Let us fix
$\omega_X \in \tilde{\tilde{\Omega}} \cap \tilde{\Omega}$.
In \eqref{eq: ABCDE}, $C$ is null.
There exist $\bar{N}_1$ and $\bar{N_2}$ so that for every $N_1 \geq \bar{N}_1$, $n_2 \geq \bar{n}_2$, $p_{N_1} > 0$, $B$ and $D$ are smaller than $\delta$.
Let us now fix any $N_1 \geq \bar{N}_1$. Then, for every $p_{N_1} > 0$, with $n_2 \geq \bar{n}_2$ large enough,
$A$ and $E$ are smaller than $\delta$.

Hence, we showed, for the $\omega_X \in \tilde{\tilde{\Omega}} \times \tilde{\Omega}$ we were considering, that the $N_1$-indexed sequence $\bar{S}_{N_1^d}$ is a Cauchy sequence and we denote its limit by $S$.
Since this sequence is deterministic, $S$ is deterministic and $\bar{S}_{\left(N_1\right)^d} \to_{N_1 \to + \infty} S$.

Finally, let $n = N_1^d n_2 + r$ with $N_1,n_2 \to \infty$. Then
\[
 | S_n - S | \leq | S_n - S_{N_1,n_2} | + | S_{N_1,n_2} - \bar{S}_{N_1^d} | + | \bar{S}_{N_1^d} - S |.
\]
Using the same arguments as before, we show that, $P_X$-a.s., $|S_n-S| \to 0$ as $n \to + \infty$.

\end{proof}

\paragraph{Proof of proposition \ref{prop: convergence_forme_quadratique}}
\begin{proof}%

 $\EE\left( \frac{1}{n} y^t M y \right)= \EE\left\{ \EE\left( \frac{1}{n} y^t M y | X\right) \right\}
= \EE\left\{ \frac{1}{n} {\rm{Tr}}\left(M R_0\right) \right\} \to \Sigma$. 
Furthermore $\var\left( \frac{1}{n} y^t M y \right) = \EE\left\{ \var\left( \frac{1}{n} y^t M y | X \right) \right\}
+ \var\left\{ \EE\left(\frac{1}{n} y^t M y | X\right) \right\}$.
$\var\left( \frac{1}{n} y^t M y | X = x \right)$ is
a $O\left(\frac{1}{n}\right)$, uniformly in $x$, using proposition \ref{prop: convergenceTrace} and $||A+B|| \leq ||A||+||B||$.
Therefore $\var\left( \frac{1}{n} y^t M y | X \right)$ is bounded by $O(\frac{1}{n})$ $P_X$-a.s.
$\var\left\{ \EE\left( \frac{1}{n} y^t M y | X\right) \right\} =
\var \left\{ \frac{1}{n} {\rm{Tr}}\left( M R_{\theta_0} \right) \right\} \to 0$, using proposition \ref{prop: convergenceTrace}.
Hence $ \frac{1}{n} y^t M y $ converges to $\Sigma$ in the mean square sense.
 
\end{proof}

\paragraph{Proof of proposition \ref{prop: lindeberg_presque_sur}}
\begin{proof}
 Let $v_{\lambda} = \left(\lambda_1,...,\lambda_p\right)^t \in \RR^p$.
\begin{eqnarray*}
 \EE \left(  \exp{ \left[ \mathrm{i} \sum_{k=1}^p \lambda_k \frac{1}{\sqrt{n}} \left\{ {\rm{Tr}}\left(M_k\right) + y^t N_k y \right\} \right] } \right)
& = & \EE \left\{  \EE \left(  \exp{ \left[ \mathrm{i} \sum_{k=1}^p \lambda_k \frac{1}{\sqrt{n}} \left\{ {\rm{Tr}}\left(M_k\right) + y^t N_k y \right\} \right] } \big| X \right) \right\}. \\ 
\end{eqnarray*}

For fixed $x = \left(x_1,...,x_n\right)^t \in \left(S_X\right)^{n}$, denoting
$\sum_{k=1}^p \lambda_k R^{\frac{1}{2}} N_k R^{\frac{1}{2}} = P^t D P$, with $P^t P = I_n$ and $D$ diagonal,
$z_x = P R^{-\frac{1}{2}}y$ (which is a vector of $iid$ standard Gaussian variables,
conditionally to $X=x$), we have
\begin{eqnarray*}
\sum_{k=1}^p \lambda_k \frac{1}{\sqrt{n}} \left\{ {\rm{Tr}}\left( M_k \right) + y^t N_k y \right\}
& = & \frac{1}{\sqrt{n}} \left[ {\rm{Tr}}\left( \sum_{k=1}^p \lambda_k M_k \right) +
\sum_{i=1}^n \phi_i \left( \sum_{k=1}^p \lambda_k R^{\frac{1}{2}} N_k R^{\frac{1}{2}} \right) (z_x)_i^2 \right] \\
& = & \frac{1}{\sqrt{n}} \left[ \sum_{i=1}^n  \phi_i
\left( \sum_{k=1}^p \lambda_k R^{\frac{1}{2}} N_k R^{\frac{1}{2}} \right) \left\{(z_x)_i^2 -1\right\} \right]. \\
\end{eqnarray*}
Hence
\begin{eqnarray*}
\var \left[  \sum_{k=1}^p \lambda_k \frac{1}{\sqrt{n}} \left\{ {\rm{Tr}}\left( M_k \right) + y^t N_k y \right\}   \big| X \right] & = &
\frac{2}{n} \sum_{i=1}^n \phi_i^2\left( \sum_{k=1}^p \lambda_k R^{\frac{1}{2}} N_k R^{\frac{1}{2}}  \right)  \\
& = & \frac{2}{n} \sum_{k=1}^p \sum_{l=1}^p \lambda_k \lambda_l {\rm{Tr}}\left( R N_k R N_l \right)  \\
& \underset{n \to + \infty}{\to} & v_{\lambda}^t \left(2 \Sigma\right) v_{\lambda} ~ ~ \mbox{ for a.e. $\omega_X$ }. 
\end{eqnarray*}
Hence, for almost every $\omega_X$, we can apply Lindeberg-Feller criterion to the $\Omega_Y$-measurable variables
$ \frac{1}{\sqrt{n}}  \phi_i \left( \sum_{k=1}^p \lambda_k R^{\frac{1}{2}} N_k R^{\frac{1}{2}} \right) \left\{\left(z_x\right)_i^2 -1\right\}$, $1 \leq i \leq n$,
to show that $\sum_{k=1}^p \lambda_k \frac{1}{\sqrt{n}} \left\{ {\rm{Tr}}\left( M_k \right) + y^t N_k y \right\}$ converges
in distribution to $\mathcal{N}\left(0,v_{\lambda}^t \left(2 \Sigma\right) v_{\lambda}\right)$. 
Hence, $\EE \left(  \exp{ \left[ \mathrm{i} \sum_{k=1}^p \lambda_k \frac{1}{\sqrt{n}} \left\{ {\rm{Tr}}\left(M_k\right) +  y^t N_k y \right\} \right] } \big| X \right)$
converges for almost every $\omega_X$ to
$ \exp{ \left( - \frac{1}{2} v_{\lambda}^t \left(2 \Sigma\right) v_{\lambda} \right) } $. Using the dominated convergence theorem
on $(\Omega_X,\mathcal{F}_X,P_X)$,
$ \EE \left( \exp{ \left[ \mathrm{i} \sum_{k=1}^p \lambda_k \frac{1}{\sqrt{n}} \left\{ {\rm{Tr}}\left(M_k\right) +  y^t N_k y \right\}  \right] } \right)$ converges to
$ \exp{ \left\{ - \frac{1}{2} v_{\lambda}^t \left(2 \Sigma\right) v_{\lambda} \right\} } $.

\end{proof}

\paragraph{Proof of proposition \ref{prop: normaliteEstimateur}}
\begin{proof}
It is enough to consider the case $c\left( \hat{\theta}\right) = 0$, the case $ P \left\{c\left( \hat{\theta} \right) = 0\right\} \to 1$ being deduced from it by modifying $c$
on a set with vanishing probability measure, which does not affect the convergence in distribution. For all $1 \leq k \leq p$
\begin{eqnarray*} 
0 = c_k\left( \hat{\theta} \right)
 = c_k\left( \theta_0 \right) + \left\{\frac{\partial}{\partial \theta} c_k\left( \theta_0 \right)\right\}^t \left( \hat{\theta} - \theta_0\right) + r,
\end{eqnarray*}
with random $r$, so that $|r| \leq \sup_{\tilde{\theta},i,j,k} \left| \frac{\partial^2}{\partial \theta_i \partial \theta_j} c_k\left(\tilde{\theta}\right) \right| \times
| \hat{\theta} - \theta_0 |^2$. 
Hence $r = o_p\left( | \hat{\theta} - \theta_0 | \right)$. We then have
\begin{eqnarray*} 
 - c_k\left(\theta_0\right) & = & \left[ \left\{ \frac{\partial}{\partial \theta} c_k\left(\theta_0\right) \right\}^t + o_p\left(1\right) \right] \left( \hat{\theta} - \theta_0\right),
\end{eqnarray*}
and so
\begin{equation} \label{eq: DLprob} 
\left( \hat{\theta} - \theta_0 \right)  = - \left\{ \frac{\partial}{\partial \theta} c\left(\theta_0\right) + o_p\left(1\right) \right\}^{-1} c\left( \theta_0 \right). 
\end{equation}
We conclude using Slutsky lemma.

\paragraph{Remark}
One can show that, with probability going to one as $n \to + \infty$, the likelihood has a unique global minimizer.
Indeed, we first notice that the set of the minimizers is a subset of any open ball of center $\theta_0$ with probability going to one.
For a small enough open ball, the probability that the likelihood function is strictly convex in this open ball converges to one.
This is because of the third-order regularity
of the likelihood with respect to $\theta$, and because the limit of the second derivative matrix of the Likelihood at $\theta_0$ is positive.

\end{proof}

\section{Exact expressions of the asymptotic variances at $\epsilon=0$ for $d=1$}  \label{section: Toeplitz}

In this section we only address the case $d=1$ and $p=1$, where the observation points $v_i + \epsilon X_i$, $1 \leq i \leq n$, $n \in \NN^*$,
are the $i + \epsilon X_i$, where $X_i$
is uniform on $[-1,1]$,
and $\Theta = [\theta_{inf},\theta_{sup}]$.

We define the Fourier transform function $\hat{s}\left(.\right)$ of a sequence $s_n$ of $\ZZ$ by $\hat{s}\left(f\right) = \sum_{n \in \ZZ} s_n e^{ \mathrm{i} s_n f }$ as
in \cite{TCMR}.
This function is $2 \pi$ periodic on $[-\pi,\pi]$. Then
\begin{itemize}
\item The sequence of the $K_{\theta_0}\left(i\right)$, $i \in \ZZ$, has Fourier transform $f$ which is even and non-negative on $[-\pi,\pi]$.
\item The sequence of the $\frac{\partial}{\partial \theta}K_{\theta_0}\left(i\right)$, $i \in \ZZ$, has Fourier transform $f_{\theta}$ which is even on $[-\pi,\pi]$.
\item The sequence of the $\frac{\partial}{\partial t}K_{\theta_0}\left(i\right) \mathbf{1}_{i \neq 0}$, $i \in \ZZ$, has Fourier transform $\mathrm{i} ~ f_{t}$ which is
odd and imaginary on $[-\pi,\pi]$.
\item The sequence of the $\frac{\partial}{\partial t} \frac{\partial}{\partial \theta} K_{\theta_0}\left(i\right) \mathbf{1}_{i \neq 0}$, $i \in \ZZ$, has Fourier transform
$\mathrm{i} ~ f_{t,\theta}$ which
is odd and imaginary on $[-\pi,\pi]$.
\item The sequence of the $\frac{\partial^2}{\partial t^2}K_{\theta_0}\left(i\right) \mathbf{1}_{i \neq 0}$, $i \in \ZZ$, has Fourier transform $f_{t,t}$ which is even on $[-\pi,\pi]$.
\item The sequence of the $\frac{\partial^2}{\partial t^2} \frac{\partial }{\partial \theta} K_{\theta_0}\left(i\right) \mathbf{1}_{i \neq 0}$, $i \in \ZZ$, has Fourier transform
$f_{t,t,\theta}$ which is even on $[-\pi,\pi]$.
\end{itemize}

In this section we assume in condition \ref{cond: toeplitz} that all these sequences are dominated
by a decreasing exponential function, so that the Fourier transforms are $C^{\infty}$.
This condition could be weakened, but it simplifies the proofs, and it is satisfied in our framework.

\begin{cond} \label{cond: toeplitz}
 There exist $C < \infty$ and $a >0$  so that the sequences of general terms
$K_{\theta_0}\left(i\right)$, $\frac{\partial}{\partial \theta}K_{\theta_0}\left(i\right)$, $\frac{\partial}{\partial t}K_{\theta_0}\left(i\right) \mathbf{1}_{i \neq 0}$,
$\frac{\partial}{\partial t} \frac{\partial}{\partial \theta} K_{\theta_0}\left(i\right) \mathbf{1}_{i \neq 0}$,
$\frac{\partial^2}{\partial t^2}K_{\theta_0}\left(i\right) \mathbf{1}_{i \neq 0}$,
$\frac{\partial^2}{\partial t^2} \frac{\partial }{\partial \theta} K_{\theta_0}\left(i\right) \mathbf{1}_{i \neq 0}$, $i \in \ZZ$,
are bounded by $C e^{-a |i|}$.
\end{cond}

For a $2 \pi$-periodic function $f$ on $[-\pi,\pi]$, we denote by $M\left(f\right)$ the mean value of $f$ on $[-\pi,\pi]$.

Then, proposition \ref{prop: simplification_calcul_d2_toeplitz} gives the closed form expressions of $\Sigma_{ML}$, $\Sigma_{CV,1}$,
$ \Sigma_{CV,2}$ and $\left. \frac{\partial^2}{\partial \epsilon^2}  \Sigma_{ML} \right|_{\epsilon=0}$.

\begin{prop}  \label{prop: simplification_calcul_d2_toeplitz}

Assume that conditions \ref{cond: Ktheta} and \ref{cond: toeplitz} are verified.

For $\epsilon = 0$,
\[
 \Sigma_{ML} = \frac{1}{2} M\left( \frac{f_{\theta}^2 }{ f^2 } \right),
\]
\begin{eqnarray*}
 \Sigma_{CV,1}  & = & 8  M\left(\frac{1}{f}\right)^{-6} M\left( \frac{f_{\theta}}{f^2} \right)^2 M\left(\frac{1}{f^2}\right) \\
& & + 8 M\left(\frac{1}{f}\right)^{-4} M\left( \frac{f_{\theta}^2}{f^4} \right) \\
& & - 16 M\left(\frac{1}{f}\right)^{-5} M\left( \frac{f_{\theta}}{f^2} \right) M\left(\frac{f_{\theta}}{f^3}\right),
\end{eqnarray*}

\begin{eqnarray*}
 \Sigma_{CV,2}  & = & 2  M\left(\frac{1}{f}\right)^{-3} \left\{ M\left( \frac{f_{\theta}^2}{f^3} \right) M\left(\frac{1}{f}\right) - M\left( \frac{f_{\theta}}{f^2} \right)^2  \right\}, \\
\end{eqnarray*}

and
\begin{eqnarray*}
\left. \frac{\partial^2}{\partial \epsilon^2}  \Sigma_{ML} \right|_{\epsilon=0} & =  & 
\frac{2}{3} M\left( \frac{ f_{\theta} }{f^2} \right) M\left( \frac{ f_t^2 ~ f_{\theta}}{ f^2 } \right) \\
& - & \frac{4}{3} M\left( \frac{ 1 }{f} \right) M\left( \frac{ f_{t,\theta} ~ f_t ~ f_{\theta}}{ f^2 } \right)
- \frac{4}{3} M\left( \frac{ f_{\theta} }{f^2} \right) M\left( \frac{ f_{t,\theta} ~ f_{t}}{ f } \right) \\
& + & \frac{2}{3} M\left( \frac{ 1 }{f} \right) M\left( \frac{ f_t^2 ~ f_{\theta}^2}{ f^3 } \right)
+ \frac{2}{3} M\left( \frac{ f_{\theta}^2 }{f^3} \right) M\left( \frac{ f_t^2 }{ f } \right) \\
& - & \frac{2}{3} M\left( \frac{ f_{t,t} ~ f_{\theta}^2 }{f^3} \right) \\
& + & \frac{2}{3} M\left( \frac{ 1 }{f} \right) M\left( \frac{ f_{t,\theta}^2  }{ f } \right) \\
& + & \frac{2}{3} M\left( \frac{ f_{t,t,\theta} ~ f_{\theta} }{f^2} \right). 
\end{eqnarray*}
\end{prop}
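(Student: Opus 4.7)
At $\epsilon = 0$ every matrix entering $M_{ML}^{i,j}$, $M_{CV,1}^{i,j}$, and $M_{CV,2}^{i,j}$ is deterministic and Toeplitz: $R = T_n(f)$, $\partial_\theta R = T_n(f_\theta)$, and the off-diagonal parts of the $t$-derivatives are $T_n(\mathrm{i} f_t)$, $T_n(f_{t,t})$, $T_n(\mathrm{i} f_{t,\theta})$, $T_n(f_{t,t,\theta})$. Condition \ref{cond: toeplitz} places all six symbols in the Wiener class and in $C^\infty$, so the Toeplitz machinery of \cite{TCMR} provides three key inputs: (i) $\tfrac{1}{n}{\rm{Tr}}(T_n(g_1)\cdots T_n(g_k)) \to M(g_1\cdots g_k)$; (ii) $R^{-1}$ is asymptotically equivalent (in the $|\cdot|$-norm of \cite{TCMR}) to $T_n(1/f)$, because $f$ is continuous and strictly positive under condition \ref{cond: Ktheta}; and (iii) the diagonal of any Toeplitz matrix $T_n(g)$ is the scalar matrix $M(g)\,I_n$. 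Combining (iii) with lemma \ref{lem: equivalenceDiag} yields $\diag(R^{-1}) \sim M(1/f)\,I_n$ and $\diag(R^{-1}\partial_\theta R\, R^{-1}) \sim M(f_\theta/f^2)\,I_n$.

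\textbf{The three closed forms.} The identity $\Sigma_{ML} = \tfrac12 M(f_\theta^2/f^2)$ follows at once by substituting (i)--(ii) into $\tfrac{1}{2n}{\rm{Tr}}(R^{-1}\partial_\theta R R^{-1}\partial_\theta R)$. For $\Sigma_{CV,2}$, I replace each diagonal factor in the three-term expression of $M_{CV,2}^{1,1}$ from proposition \ref{prop: gradients_CV} by its asymptotic scalar from (iii); the three traces then reduce to pure Toeplitz products and evaluate to $-8 M(1/f)^{-3} M(f_\theta/f^2)^2$, $+2 M(1/f)^{-2} M(f_\theta^2/f^3)$, and $+6 M(1/f)^{-3} M(f_\theta/f^2)^2$, which sum and factor to the stated expression. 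For $\Sigma_{CV,1}$, I write $M^1_{\theta_0} + (M^1_{\theta_0})^t \sim 2\alpha R^{-2} - \beta\,(R^{-2}\partial_\theta R R^{-1} + R^{-1}\partial_\theta R R^{-2})$, with $\alpha := M(1/f)^{-3} M(f_\theta/f^2)$ and $\beta := M(1/f)^{-2}$; multiplying out $2(M+M^t) R (M+M^t) R$ gives nine Toeplitz-product traces, which collapse by (i) and the permutation invariance of $M$ acting on products of scalar symbols, producing the three-term formula for $\Sigma_{CV,1}$.

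\textbf{The second derivative at $\epsilon = 0$.} Here I invoke proposition \ref{prop: deriveesSigma} to commute $n\to\infty$ with $\partial_\epsilon^2$, so it suffices to compute $\lim_n \tfrac{1}{n}\EE\bigl[\partial_\epsilon^2 {\rm{Tr}}(M_{ML}^{1,1})\bigr|_{\epsilon=0}]$. Let $D_X := \diag(X_1,\dots,X_n)$ and let $A, B, A_\theta, B_\theta$ denote the off-diagonal matrices with entries $K_{\theta_0}'(i-j)$, $K_{\theta_0}''(i-j)$, $\partial_\theta K_{\theta_0}'(i-j)$, $\partial_\theta K_{\theta_0}''(i-j)$ respectively (diagonal zero), which are asymptotically equivalent to Toeplitz matrices with symbols $\mathrm{i} f_t,\, f_{t,t},\, \mathrm{i} f_{t,\theta},\, f_{t,t,\theta}$. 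Direct differentiation gives $\partial_\epsilon R|_{\epsilon=0} = [D_X, A]$, $\partial_\epsilon^2 R|_{\epsilon=0} = D_{X^2} B - 2 D_X B D_X + B D_{X^2}$, with analogous formulas for $\partial_\epsilon\partial_\theta R$ and $\partial_\epsilon^2\partial_\theta R$. Expanding $M_{ML}^{1,1}(\epsilon)$ to second order via $\partial_\epsilon R^{-1} = -R^{-1}(\partial_\epsilon R) R^{-1}$ produces a finite polynomial in these matrices. Taking $\EE$ term-by-term collapses every pair of $D_X$ through $\EE(X_i X_j) = \tfrac{1}{3}\delta_{ij}$, while any isolated $D_X$ vanishes since $\EE X_i = 0$. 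Each surviving trace reduces, by (i), to an $M(\cdot)$ of a pointwise product of the symbols $1/f, f_\theta, f_t, f_{t,t}, f_{t,\theta}, f_{t,t,\theta}$; the nine structures that survive correspond exactly to the nine terms of the formula, and the prefactors $\pm\tfrac{2}{3}, \pm\tfrac{4}{3}$ arise as products of the variance constant $\tfrac{1}{3}$ with pairing multiplicities coming from the commutators $[D_X,A]$, $[D_X,A_\theta]$ and from $(X_i-X_j)^2$ in the second $\epsilon$-derivative.

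\textbf{Main obstacle.} The delicate point is justifying that only ``diagonal'' $D_X$-pairings contribute in the $n\to\infty$ limit: when expanding a trace like ${\rm{Tr}}([D_X,A] M_1 [D_X,A] M_2)$, the cross pairings contract the two $D_X$'s through diagonals of products rather than through a full trace, and these contributions must be shown to be $O(1)$ (hence vanish after $\tfrac{1}{n}$ normalization). This rests on the absolute summability of the symbols from condition \ref{cond: toeplitz}, lemma \ref{lem: sommabilité}, and a Hilbert--Schmidt $|\cdot|$-norm estimate on the residual traces. Once this reduction is in hand, the identification of the nine surviving symbol products with the nine terms in the stated formula is a mechanical bookkeeping.
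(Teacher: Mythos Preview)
Your treatment of $\Sigma_{ML}$, $\Sigma_{CV,1}$, and $\Sigma_{CV,2}$ is correct and is the paper's approach: at $\epsilon=0$ every matrix is deterministic Toeplitz, the Grenander--Szeg\H{o} asymptotics of \cite{TCMR} give $\tfrac{1}{n}{\rm Tr}\bigl(T_n(g_1)\cdots T_n(g_k)\bigr)\to M(g_1\cdots g_k)$ and $R^{-1}\sim T_n(1/f)$, and the observation that diagonals of (asymptotically) Toeplitz matrices are asymptotically scalar reduces the CV expressions to the stated closed forms. The paper relegates the details to its supplementary material, but the setup of the six symbols in section~\ref{section: Toeplitz} and the explicit reliance on \cite{TCMR} make clear this is exactly the route taken.

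For $\partial_\epsilon^2\Sigma_{ML}\big|_{\epsilon=0}$ your framework is also right---proposition~\ref{prop: deriveesSigma} to swap limit and derivative, the commutator form $\partial_\epsilon R|_{\epsilon=0}=[D_X,A]$, and the contraction $\EE[X_iX_j]=\tfrac{1}{3}\delta_{ij}$---but your ``main obstacle'' paragraph misidentifies the mechanism, and this matters. When you expand $\EE\,{\rm Tr}\bigl([D_X,A]\,C_1\,[D_X,A]\,C_2\bigr)$, \emph{every} one of the four terms becomes a diagonal-of-product sum $\sum_i(\,\cdot\,)_{ii}(\,\cdot\,)_{ii}$; none yields a genuine ``full trace'' ${\rm Tr}(AC_1AC_2)$. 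These sums are $O(n)$, not $O(1)$, so after the $\tfrac{1}{n}$ they converge to products of two means $M(g_1)M(g_2)$, not to zero. In fact six of the eight terms in the stated formula are precisely such products and come from these contractions---they are not residuals to be discarded. The correct selection principle is \emph{parity}: a diagonal whose asymptotic symbol is odd (for instance $\mathrm{i}f_t\cdot m$ with $m$ even) has mean zero and drops out, whereas an even symbol (for instance $(\mathrm{i}f_t)\,m\,(\mathrm{i}f_t)=-f_t^2 m$) survives. The two single-$M$ terms, those involving $f_{t,t}$ and $f_{t,t,\theta}$, arise instead from the $D_{X^2}B+BD_{X^2}$ part of $\partial_\epsilon^2 R$, where $\EE[D_{X^2}]=\tfrac{1}{3}I$ \emph{does} produce a full trace (the $-2D_XBD_X$ piece vanishes after expectation because $B$ has zero diagonal). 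Replace the $O(1)$ argument by this parity bookkeeping and the eight (not nine) terms emerge as claimed.
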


Proposition \ref{prop: simplification_calcul_d2_toeplitz} is proved in the supplementary material.

An interesting remark can be made on $\Sigma_{CV,2}$. Using Cauchy-Schwartz inequality, we obtain
\[
 \Sigma_{CV,2} =  2  M\left(\frac{1}{f}\right)^{-3} \left[  M\left\{ \left(\frac{f_{\theta}}{f^{\frac{3}{2}}}\right)^2 \right\} M\left\{ \left(\frac{1}{f^{\frac{1}{2}}}\right)^2 \right\} - M \left\{  \frac{f_{\theta}}{f^{\frac{3}{2}}}   \frac{1}{f^{\frac{1}{2}}}   \right\}^2 \right]
\geq  0,
\]
so that the limit of the second derivative with respect to $\theta$ of the CV criterion at $\theta_0$ is indeed non-negative. Furthermore,
for the limit to be zero, it is necessary that $\frac{f_{\theta}}{f^{\frac{3}{2}}}$ be proportional to 
$\frac{1}{f^{\frac{1}{2}}}$, that is to say $f_{\theta}$ be proportional to $f$. This is equivalent to $\frac{\partial K_{\theta_0}}{\partial \theta}$
being proportional to $K_{\theta_0}$ on $\ZZ$, which happens only when around $\theta_0$,
$K_\theta\left(i\right) = \frac{\theta}{\theta_0} K_{\theta_0}\left(i\right)$, for $i \in \ZZ$. Hence around $\theta_0$, $\theta$ would be a global variance parameter.
Therefore, we have shown that for the regular grid in dimension one, the asymptotic variance is positive, as long as $\theta$ is not only a global variance parameter.

\bibliographystyle{model1b-num-names}
\bibliography{Biblio}

\end{document}